\setlist[enumerate,1]{label=(\roman*), font = \normalfont} 
\let\originalleft\left
\let\originalright\right
\renewcommand{\left}{\mathopen{}\mathclose\bgroup\originalleft}
\renewcommand{\right}{\aftergroup\egroup\originalright}
\newlength{\bibitemsep}
\newlength{\bibparskip}\setlength{\bibparskip}{0pt}
\let\oldthebibliography\thebibliography
\renewcommand\thebibliography[1]{\oldthebibliography{#1}
	\setlength{\parskip}{\bibitemsep}
	\setlength{\itemsep}{\bibparskip}}
\newcommand{\N}{\mathbb{N}}
\newcommand{\Z}{\mathbb{Z}}
\newcommand{\R}{\mathbb{R}}
\renewcommand{\P}{\mathbb{P}}
\newcommand{\E}{\mathbb{E}}
\newcommand{\1}{\mathbbm{1}}
\newcommand{\cB}{\mathcal{B}}
\newcommand{\cC}{\mathcal{C}}
\newcommand{\cF}{\mathcal{F}}
\newcommand{\cL}{\mathcal{L}}
\newcommand{\cN}{\mathcal{N}}
\newcommand{\Ec}[1]{\mathbb{E} \left[#1\right]}
\newcommand{\Pp}[1]{\mathbb{P} \left(#1\right)}
\newcommand{\Ecsq}[2]{\mathbb{E} \left[#1\middle|#2\right]}
\newcommand{\Ppsq}[2]{\mathbb{P} \left(#1\middle|#2\right)}
\newcommand{\diff}{\mathop{}\mathopen{}\mathrm{d}}
\newcommand{\abs}[1]{\left\lvert#1\right\rvert}
\newcommand{\norme}[1]{\left\lVert#1\right\rVert}
\newcommand{\ind}[1]{\mathbbm{1}_{\{#1\}}}
\newcommand{\eps}{\varepsilon}
\theoremstyle{plain}
\newtheorem{thm}{Theorem}[section]
\newtheorem{prop}[thm]{Proposition}
\newtheorem{lem}[thm]{Lemma}
\newtheorem{cor}[thm]{Corollary}
\theoremstyle{definition}
\theoremstyle{remark}
\newtheorem{rem}[thm]{Remark}
\title{Polynomial slowdown in an angle-dependent 2d branching Brownian motion}
\author{Julien Berestycki\footnote{Department of Statistics, University of Oxford, 24-29 St Giles, OX1 3LB Oxford, United Kingdom.} 
\and 
David Geldbach$^{\ast}$ 
\and 
Michel Pain\footnote{Institut de Mathématiques de Toulouse (UMR5219), Université de Toulouse, CNRS; UPS, F-31062 Toulouse Cedex 9, France.}}
\begin{document}

\maketitle

\begin{abstract}
    We consider a branching Brownian motion in $\R^2$ in which particles independently diffuse as standard Brownian motions and branch at an inhomogeneous rate $b(\theta)$ which depends only on the angle $\theta$ of the particle. We assume that $b$ is maximal when $\theta=0$, which is the preferred direction for breeding. Furthermore we assume that
    $
	b(\theta) = 1 - \beta \abs{\theta}^\alpha + O(\theta^2)$,
	as $\theta \to 0$, for $\alpha \in (2/3,2)$ and $\beta>0.$ 
    We show that if $M_t$ is the maximum distance to the origin at time $t$, then $(M_t-m(t))_{t\ge 1}$ is tight where
    \[
m(t) = \sqrt{2} t - \frac{\vartheta_1}{\sqrt{2}} t^{(2-\alpha)/(2+\alpha)} -  \left(\frac{3}{2\sqrt{2}} - \frac{\alpha}{2\sqrt{2}(2+\alpha)}\right) \log t.
\]
and $\vartheta_1$ is explicit in terms of the first eigenvalue of a certain operator.
\end{abstract}

{\noindent \bf 2010 Mathematics Subject Classification:} 
 60J80, 
 60J65, 
 35R99. 
 \\
{\bf Keywords:} Branching Brownian motion, maximal displacement, inhomogenous environment.

\begin{figure}[htbp]
    \centering
    \includegraphics[width=0.95\textwidth]{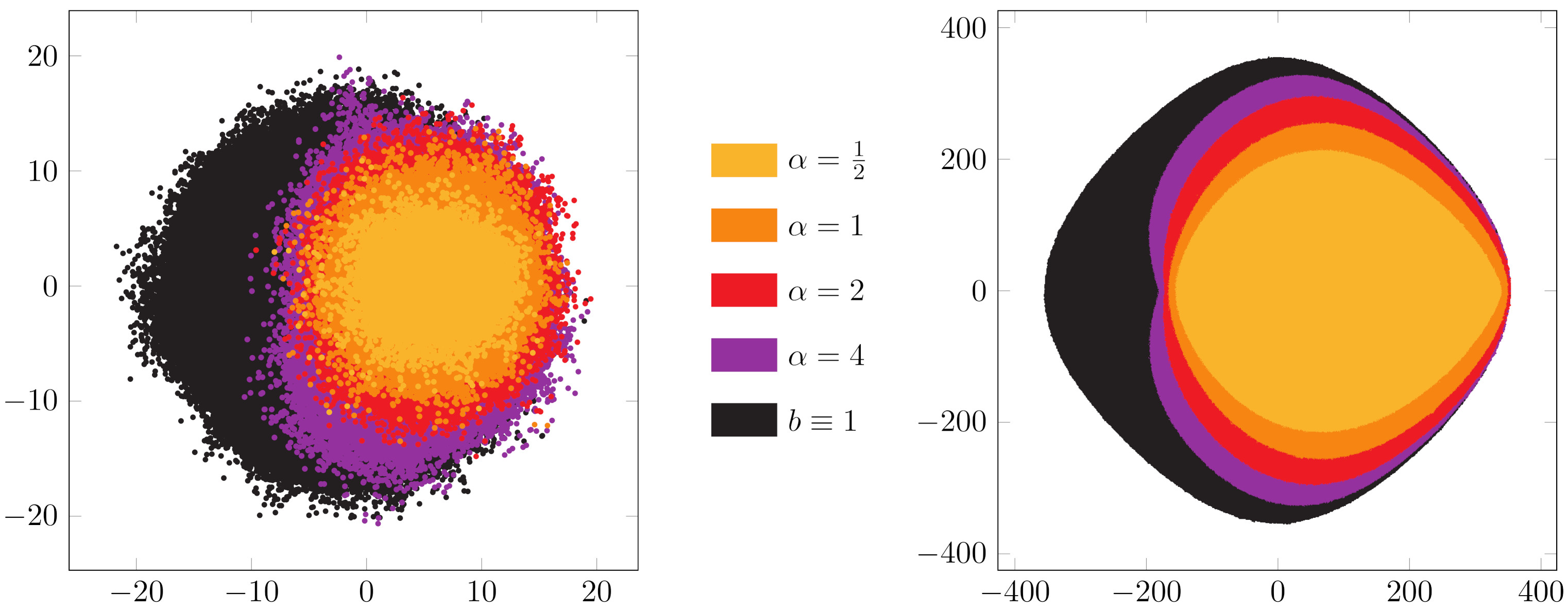} 
    \caption{On the left, a realisation at time $t=16$ of space--inhomogenous BBM with $\alpha\in\{1/2, 1, 2,4\}$ and branching rate $b(\theta) = 1-\abs{\sin(\theta/2)}^\alpha$, as well as homogenous BBM in black (which can be seen as $\alpha = \infty$). 
    The processes are coupled in such a way that every particle that exists for a smaller $\alpha$ also exists for a greater value of~$\alpha$. 
    On the right, a realisation of a version of the model with discrete time and positions in $\Z^2$. 
    In this model, each particle at time $n$ and with angular coordinate $\theta$ has 2 children at time $n+1$ with probability $b(\theta)$ and 1 child otherwise, and each child jumps to one of the four nearest neighbour sites, or stays on site with equal probability.
    This discretisation allows to push the numerical simulation to larger times, here $n=500$, but it changes the limiting shape of the process. 
    However, the behaviour of the maximal displacement, with the appearance of a polynomial slowdown for $\alpha<2$, should be universal.}
    \label{fig:simulation2}
\end{figure}

{
	\hypersetup{linkcolor=black}
	\tableofcontents
}

\section{Introduction}

\subsection{Model and main result}

The standard, one-dimensional Branching Brownian Motion (BBM) is a system of particles on~$\mathbb{R}$: at time $t=0$, there is a single particle at the origin. This particle performs a Brownian motion up to an exponential random time with mean one when the particle is replaced by two particles at the same position. Each daughter particle then starts an independent copy of the same process, and so on. This results in an exponentially growing cloud of branching, diffusing, particles $(X_u^{1\mathrm{d}}(t),u\in \mathcal{N}^{1\mathrm{d}}(t))_{t\geq0}$ where $X_u^{1\mathrm{d}}(t)$ is the position of particle $u$ at time $t$ and $\mathcal{N}^{1\mathrm{d}}(t)$ is the set of particles alive at time $t$. 
This is a well studied process with an extensive literature, including a particular focus on the study of the maximal displacement at time $t$,
\begin{equation*}
M^{1\mathrm{d}}_t \coloneqq \max_{u\in \mathcal{N}^{1\mathrm{d}}(t)} X_u^{1\mathrm{d}}(t). 
\end{equation*}
Classical results by \cite{Bra1978,Bra1983,LalSel1987} determined the asymptotic behavior of $M^{1\mathrm{d}}_t$, let $m^{1\mathrm{d}}(t) = \sqrt{2}t - \frac{3}{2\sqrt{2}}\log t$,
\begin{equation}\label{eq:1d_maximum}
\lim_{t \to \infty}\mathbb{P} \left( M^{1\mathrm{d}}_t - m^{1\mathrm{d}}(t) \leq x \right) = \mathbb{E}\left[e^{-cD_\infty e^{-\sqrt{2}x}}\right],
\end{equation}
where $c>0$ and $D_{\infty}>0$ is the limit of the so-called derivative martingale. 
Further results include for example the convergence of the extremal process \cite{AidBerBruShi2013,ArgBovKis2013} and a precise description of its limit \cite{CorHarLou2019,MytRoqRyz2022,BerBruGraMytRoqRyz2022,HarLouWu2024}.

Branching Brownian motion is an idealised model for how a growing diffusing population invades an environment. In reality, though, environments are rarely perfectly homogeneous and often more than one-dimensional. This has in part motivated the introduction of inhomogeneous branching particle systems. The inhomogeneity can be in time as in \cite{BovHar2014,BovHar2015,BovHar2020,BovKur2004}, or in space such as in \cite{BerBruHarHar2010,BerBruHarHarRob2015} or in \cite{OzEng2019} (see the references therein as well for more information on the branching Brownian motion in Poissonian obstacles). 
The behaviour of the maximal displacement for such variants of the BBM model has been a topic of intense research in the recent past and we discuss some of these results in the next subsection. 

In the present work, we study a spatially inhomogeneous version of BBM in $\mathbb{R}^2$, in which there is a preferred direction along which the environment is more favourable: we choose to make the branching rate of a particle depend on its angular coordinate, with the goal of capturing the effect of the inhomogeneity on the maximal displacement\footnote{Although a natural idea could be to make the branching rate $b$ a function of the $y$ coordinate which is maximum at $y=0$ so that the $x$ axis is the preferred direction, as we discuss below this makes the process very similar to \emph{branching Brownian motion in a strip}. By making $b$ depends on the angle, we uncover a much richer variety of behaviours. We further discuss this in Section \ref{subsec:open_questions}.}.  
Let $b \colon \R \to [0,\infty)$ be a $2\pi$-periodic function.
We start with one particle at $(0,0)$ at time $0$.
Each particle moves according to a 2d Brownian motion and a particle located at a point of polar coordinates $(r,\theta)$ splits into two particles at rate $b(\theta)$.
We denote by $\cN(t)$ the set of particles alive at time $t$ and, for $u \in \cN(t)$, by $(X_u(t),Y_u(t))$ the position of particle $u$ at time $t$.
For $s \leq t$, we also write $(X_u(s),Y_u(s))$ for the position of the ancestor of $u$ alive at time $s$.
Finally, we denote by $(R_u(t),\theta_u(t))$ the polar coordinates of $(X_u(t),Y_u(t))$ chosen such that $\theta_u(t) \in (-\pi,\pi]$.

We are interested in the asymptotic behaviour of 
\[
	M_t \coloneqq \max_{u\in\cN(t)} R_u(t)
\]
in the case where the function $b$ reaches its maximum on $[-\pi,\pi]$ at a single point, say $0$.
More precisely we assume 
\begin{enumerate}[label=(A\arabic*),leftmargin=35pt]
    \item $b$ is continuous and reaches its maximum on $[-\pi,\pi]$ at 0 only. \label{assumption1}
    \item there exist $\alpha \in (2/3,2)$ and $\beta > 0$ such that $b(\theta) = 1 - \beta \abs{\theta}^\alpha + O(\theta^2)$, as $\theta \to 0$, \label{assumption2}
\end{enumerate}
Instead of Assumption \ref{assumption1}, one can assume the weaker assumption
\begin{enumerate}[label=(A\arabic*'),leftmargin=35pt]
    \item $b \leq 1$ and, if $b(\theta_n) \to 1$ for some sequence $(\theta_n)_{n\geq0} \in (-\pi,\pi]^\N$, then $\theta_n \to 0$. \label{assumption1'}
\end{enumerate}
This assumption does not require continuity anymore, but ensures that $b$ can approach the value $1$ only at $0$ (and other multiples of $2\pi$). Note that by Assumption~\ref{assumption2} we still have continuity at $0$. 

To state our result, we introduce
\[
	\vartheta_1 \coloneqq \lambda_0 \cdot \frac{2+\alpha}{2-\alpha} \cdot \frac{\beta^{2/(2+\alpha)}}{2^{2\alpha/(2+\alpha)}},
\]
where $\lambda_0 >0$ is the smallest eigenvalue of the differential operator $\cL$ defined by $(\cL f)(x) \coloneqq -f''(x) + \abs{x}^\alpha f(x)$ (see Section \ref{sec:sturm-liouville} and Proposition~\ref{prop:sturm-liouville} for details),
and 
\[
m(t) \coloneqq \sqrt{2} t - \frac{\vartheta_1}{\sqrt{2}} t^{(2-\alpha)/(2+\alpha)} -  \left(\frac{3}{2\sqrt{2}} - \frac{\alpha}{2\sqrt{2}(2+\alpha)}\right) \log t.
\]

\begin{thm}\label{thm:main} Assume \ref{assumption1'} and \ref{assumption2}.
    Then, as $t \to \infty$, we have
    \[
    M_t  = m(t) + O_\P(1),
    \]
    which means that, for any $\varepsilon>0$, there exists $a\geq 0$ such that for $t$ large enough $\P(\abs{M_t-m(t)} \geq a) \leq \varepsilon$.
\end{thm}

Recalling that the $\theta=0$ direction is the one with maximal branching rate, it is reasonable to expect that particles with the maximal displacement are located in that direction.
More precisely, we can deduce the following result from our proof, which states that particles at a distance $m(t)+O(1)$ from the origin at time $t$ cannot have a too large $Y$-coordinate, so that their displacement is essentially due to the $X$-coordinate. 
The exponent $\alpha/(2+\alpha)$ appearing in the first part is optimal, see Remark~\ref{rem:optimal_exponent} for details.

\begin{prop}\label{porism} Assume \ref{assumption1'} and \ref{assumption2}.
	\begin{enumerate}
		\item\label{it:Y} For any $\varepsilon > 0$ and $a\geq 0$, we have
		\begin{equation*}
		\Pp{ \exists u \in \cN(t) : 
			R_u(t) \geq m(t)-a,
			\abs{Y_u(t)} \geq t^{\frac{\alpha}{2+\alpha} +\varepsilon} }
		\xrightarrow[t\to\infty]{} 0.
		\end{equation*}
		\item\label{it:angle} For any $\varepsilon > 0$ and $a\geq 0$, we have
		\begin{equation*}
		\Pp{ \exists u \in \cN(t) : 
			R_u(t) \geq m(t)-a,
			\abs{\theta_u(t)} \geq t^{-\frac{2}{2+\alpha} +\varepsilon} }
		\xrightarrow[t\to\infty]{} 0.
		\end{equation*}
		\item\label{it:X} The following convergence holds in probability:
		\begin{equation*}
			M_t - \max_{u\in\cN(t)} X_u(t) \xrightarrow[t\to\infty]{\P} 0.
		\end{equation*}
	\end{enumerate}
\end{prop}

Theorem~\ref{thm:main} has to be understood in view of equation \eqref{eq:1d_maximum}. 
Compare $m(t)$ and $m^{1\mathrm{d}}(t)$: introducing the inhomogeneous branching rate results in a strong slowdown of the maximal displacement proportional to $t^{(2-\alpha)/(2+\alpha)}$.
With our assumption of $\alpha \in (2/3, 2)$, the exponent takes values in $(0,1/2)$, see Figure \ref{fig:exponent} for an illustration. 
This range of~$\alpha$ corresponds to one of the phases of the model and we expect different behaviours outside of this range: for $\alpha \geq 2$, the polynomial correction should disappear and, for $\alpha \leq 2/3$, the fact that the exponent becomes larger than $1/2$ leads to new effects resulting in additional polynomial terms in $m(t)$. This is why we are focusing on the case $\alpha \in (2/3, 2)$ and we leave other cases as open questions which are discussed in Section~\ref{subsec:open_questions}.
Another difference between Theorem \ref{thm:main} and \eqref{eq:1d_maximum} is that we obtain tightness instead of convergence in distribution. We still believe that convergence is true for our model but the random variable playing the role of $D_\infty$ would be the limit of an approximate martingale, instead of an exact martingale, see Section~\ref{subsec:open_questions} for more details.

\begin{figure}[h]
	\centering
	\begin{tikzpicture}[scale=4]
	\draw [very thin, gray!30] (0,0) grid[step=.2] (2.15,1.15);
	\draw[->,>=latex] (0,0) -- (2.15,0) node[below right]{$\alpha$};
	\draw[->,>=latex] (0,0) -- (0,1.15);
	\draw (0,0) node[below left]{\small $0$};
	\draw (1,0) node[below]{\small $1$};
	\draw (2,0) node[below]{\small $2$};
	\draw (0,1) node[left]{\small $1$};
	\draw[dashed] (2/3,0) -- (2/3,1/2);
	\draw[dashed] (0,1/2) -- (2/3,1/2);
	\draw (2/3,0) node[below]{\small $\frac{2}{3}$};
	\draw (0,1/2) node[left]{\small $\frac{1}{2}$};
	\draw[NavyBlue,thick,domain=0:2] plot (\x, {(2-\x)/(2+\x)});
	\end{tikzpicture}
\caption{The exponent of the polynomial slowdown as function of $\alpha$, $\alpha \mapsto \frac{2-\alpha}{2+\alpha}$. Note that our assumption $\alpha \in (2/3, 2)$ corresponds to the exponent taking values in $(0,1/2)$.}
\label{fig:exponent}
\end{figure}
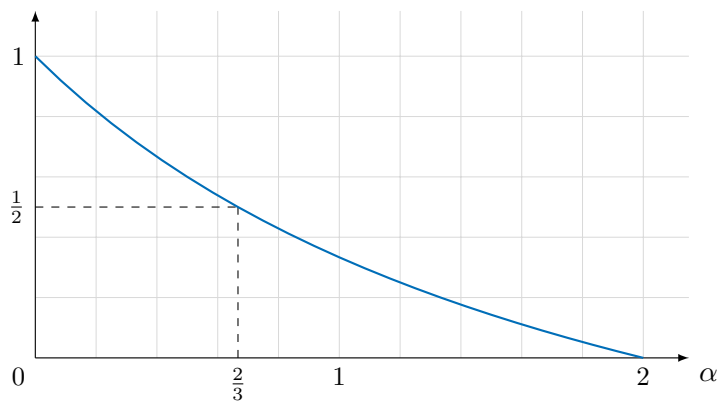

\subsection{Related work}
\label{sec:related_work}

Our result should be compared to other variants of branching Brownian motion where the maximal displacement has been studied.
Firstly, BBM with homogeneous branching rate, i.e.\@ $b\equiv 1$, has been studied in dimensions higher than 1. 
Convergence in distribution of the maximal displacement after recentering, as well as of the extremal process, has been obtained in a series of works \cite{Mal2015b,StaBerMal2021,kim_maximum_2023,BerKimLubMalZei2024}.
In particular, in dimension 2, the proper recentering is $m^{2\mathrm{d}}(t) =  \sqrt{2} t -\frac{1}{\sqrt2} \log t$: comparing this to \eqref{eq:1d_maximum}, we see that changing the dimension yields a change in the logarithmic correction. 

\paragraph{Space-inhomogeneous BBMs.}
Some models with a space-inhomogeneous branching rate have been studied in dimension 1.
In \cite{HarHar2009}, Harris and Harris considered a BBM on $\R$, where a particle located at $x$ branches at rate $\abs{x}^p$ for some $p \in (0,2]$. They proved that the maximal displacement grows like $t^{2/(2-p)}$ if $p<2$, and exponentially fast if $p=2$. Further results on the growth of the population along any path have been obtained in \cite{BerBruHarHar2010,BerBruHarHarRob2015}.
In \cite{RobSch2021,LiuSch2023}, the authors studied a BBM with space-inhomogeneous branching and death rate and a drift, which are tuned in a specific fashion to exhibit a Gaussian travelling wave phenomenon.

Another space-inhomogeneous version of branching Brownian motion is catalytic BBM. Here particles move as Brownian motions on $\mathbb{R}$ and branch at rate $\beta_0 \diff L_t$ where $L_t$ is the local time at $0$ of a given particle. 
In \cite{BocHar2014}, it has been proved that the maximum grows at speed $\beta_0/2$ a.s. and, in \cite{BocHar2016}, that after centering by $(\beta_0/2) t$ the maximum converges in distribution. Note the absence of logarithmic correction. 
Similar results had been obtained previously in \cite{CarHu2014} in the case of the catalytic branching random walk.
Finally, in \cite{BocWan2019}, a BBM with branching rate $\beta + \beta_0 \diff L_t$ is studied and the linear speed is obtained. 

\paragraph{Time-inhomogeneous BBMs.}
Another very related model is \textit{time-inhomogeneous} (or \textit{speed varying}) branching Brownian motion, and its discrete-time counterpart, which has been introduced by Bovier and Kurkova in \cite{BovKur2004}. In time-inhomogeneous BBM, one starts with a profile $\sigma \colon [0,1] \mapsto [0,\infty)$ and a time horizon $T>0$. Particles then branch at rate $1$ and move as Brownian motions on $\mathbb{R}$ with time-inhomogeneous instantaneous variance $\sigma^2(t/T)$ at time $t \leq T$. 
Note that, up to a deterministic time change, the time-inhomogeneous variance can be transformed into a time-inhomogeneous branching rate.
The linear speed of the maximal displacement at time $T$ has been obtained by \cite{BovKur2004} in the discrete-time case, for general profiles $\sigma$.

Of particular interest to us here is the case where $\sigma$ is strictly decreasing, because in that case the maximal displacement exhibits a polynomial correction of order $T^{1/3}$, as proved by \cite{FanZei2012b} for the BBM and \cite{Mal2015a} for the branching random walk.
Maillard and Zeitouni \cite{MaiZei2016} proved the tightness of the maximum after centering by $m(T) = v_\sigma T - w_\sigma T^{1/3} -\frac{\sigma(1)}{\sqrt{2}} \log T$ for some explicit constants $v_\sigma,w_\sigma$.
Hence, our space-inhomogeneous model provides another occurrence of such a polynomial slowdown for the maximal displacement. However, two differences can be noticed. First, our model is consistently defined for all time, it does not depend on the time horizon $T$. Second, all exponents in $(0,1)$ for the polynomial corrections can be obtained in our model, not only $1/3$: indeed, Theorem \ref{thm:main} includes all exponents in $(0,1/2)$ but the case $\alpha \in (0,2/3]$ should include the exponents in $[1/2,1)$ as explained in Section \ref{subsec:open_questions}.


Other cases for the choice of $\sigma$ have been studied, without any polynomial correction appearing in $m(T)$, but various possible coefficients for the logarithmic correction \cite{FanZei2012a,BovHar2014,BovHar2015,BovHar2020,alban_1_2025}. 
Similarly, we believe that, for our model in the case $\alpha \geq 2$, $m(t)$ should not include a polynomial correction but a logarithmic correction with any coefficient in $(1/\sqrt{2},\infty)$, see Section \ref{subsec:open_questions} for details.


\subsection{Heuristics and structure of the paper}
\label{sec:heuristics}

Theorem \ref{thm:main} says that the maximal displacement at time $t$ is found at a distance of $m(t) + O(1)$ from the origin. 
In this section, we discuss informally how to arrive at the right polynomial correction in $m(t)$ and describe the broad strategy of our proof.

We want to find $m(t)$ such that $\P(M_t \geq m(t))$ is strictly between 1 and 0. By a union bound 
\[
\P(M_t \geq m(t)) 
\le \E[\# \{u \in \cN(t): R_u(t) \geq m(t) \}],
\]
hence we want to find $m(t)$ so that this expectation is of order 1.
Note that this union bound is not sharp enough to get the logarithmic correction in $m(t)$, but is sufficient to predict the polynomial term.
By the many-to-one lemma, we have that  
\begin{equation} \label{eq:heuristics1}
    \E \left[\# \{u \in \cN(t): R_u(t) \geq m(t)  \} \right] = \E\left[  \exp\left({\int_0^t b(\theta_s) \diff s } \right)
    \ind{R_t \geq m(t)} \right] ,
\end{equation}
where $(R_t,\theta_t)_{t\geq 0}$ are the polar coordinates of a single two-dimensional Brownian motion of coordinates $(X_t,Y_t)_{t\geq 0}$ (so that $X$ and $Y$ are two independent standard Brownian motions).

Because $b$ reaches its maximum at 0 only, we expect that the maximal displacement will occur in the direction $\theta=0$.
This means that, on the event $\{R_t \geq m(t)\}$, we expect that $X_t \approx R_t \approx \sqrt{2} t$ while $Y_t = o(t)$.
The most efficient way for $(X,Y)$ to achieve this is to have $X_s \approx \sqrt{2} s$ and $Y_s = o(s)$ for all $s \in [s_0,t]$ with some large but fixed $s_0$, 
meaning that the angle $\theta_s$ can be approximated by $Y_s/(\sqrt{2} s)$.
Therefore, the expectation above should be close to the factorised expression
\begin{equation} \label{eq:heuristics:factorisation}
\E\left[ \exp\left({\int_0^t b(\theta_s) \diff s } \right) \ind{R_t \geq m(t)} \right] 
\approx \E\left[  \exp\left(\int_{s_0}^t b \left( \frac{Y_s}{\sqrt{2}s} \right) \diff s \right) \right]\P\Big(X_t \geq m(t)\Big). 
\end{equation}
Hence, using that $b(\theta) = 1 - \beta \abs{\theta}^\alpha + O(\theta^2)$ as $\theta \to 0$ by Assumption \ref{assumption2} and neglecting the error term, we should choose $m(t)$ so that
\begin{equation}
	e^t \cdot \Ec{\exp\left(-\beta  \int_{s_0}^t \abs{\frac{Y_s}{\sqrt{2} s}}^\alpha \diff s \right)} \cdot \P\Big( X_t > m(t) \Big) \approx 1 \label{eq:sc}.
\end{equation}
Gaussian tail estimates yield $\P(X_t > m(t)) = \exp(-m(t)^2/2t + O(\log t))$. On the other hand, we claim that
\begin{equation}\label{eq:don}
    \Ec{\exp\left(-\beta \int_{s_0}^t \left\vert \frac{Y_s}{\sqrt{2}s} \right\vert^\alpha \diff s\right)} = \exp \left({-c t^{(2-\alpha)/(2+\alpha)} } +O(\log t) \right),
\end{equation}
for some $c>0$. 
With this, \eqref{eq:sc} becomes
\begin{equation}\label{eq:appp}
    \exp\left(t-c t^{(2-\alpha)/(2+\alpha)}-\frac{m(t)^2}{2t}+O(\log t)\right)  \approx 1,
\end{equation}
and, solving for $m(t)$, we obtain $m(t)= \sqrt{2}t-\frac{c}{\sqrt{2}}t^{(2-\alpha)/(2+\alpha)}+O(\log t)$. 

Let us argue informally where \eqref{eq:don} comes from. 
We first briefly explain our proof strategy, which relies on a link with a PDE, and then give some alternative probabilistic heuristics. 
Changing variable with $r=s/t$ and using the scaling property of Brownian motion, we rewrite
\begin{equation}\label{eq:how_to_find_rho}
\beta \int_{s_0}^t \abs{\frac{Y_s}{\sqrt{2}s}}^\alpha \diff s
\overset{(\mathrm{d})}{=} \beta t^{1-\alpha/2} \int_{s_0/t}^1 \abs{\frac{Y_r}{\sqrt{2}r}}^\alpha \diff r
= \varrho \int_{s_0/t}^1 \abs{\frac{\sqrt{\varrho} Y_r}{r}}^\alpha \diff r,
\end{equation}
where $\varrho =c(\alpha,\beta)t^{(2-\alpha)/(2+\alpha)}$ is obtained by solving the last equality.
Using this rewriting and then the Feynman--Kac formula, the expectation in \eqref{eq:don} can therefore be expressed in terms of solutions at time $r=1-\frac{s_0}{t}$ of the following PDE on $[0,1)\times \R$:
\begin{equation}\label{eq:first_pde}
\partial_r u = \varrho \left( \partial_{xx}^2 u - (1-r)^{-\alpha} \abs{x}^{\alpha} u \right).
\end{equation}
The exponential decay of these solutions is governed by the largest eigenvalue of the operator on the right-hand side of \eqref{eq:first_pde}, which is proportional to $\varrho$ and thus leads to~\eqref{eq:don}. 
Our treatment of the time-inhomogeneous PDE to precisely estimate its fundamental solution is inspired in large part by the approach of Maillard and Zeitouni \cite{MaiZei2016}.


There is another probabilistic way of seeing why $\exp(-ct^{(2-\alpha)/(2+\alpha)})$ is the right decay, but this heuristics is not precise enough to predict the exact value of $c$. 
The expectation in \eqref{eq:don} is governed by a trade-off between the advantage for $Y$ to stay close to zero in order to minimize the integral in the exponential, and the cost of doing so.
We can expect that the optimal strategy is of the following form  $A_\gamma = \{\forall s \in [s_0,t], \abs{Y_s} \leq s^\gamma\}$ for some parameter $\gamma \in [0,1/2]$ which we need to optimize. 
Note that, by the scaling property of Brownian motion, the cost of satisfying the constraint $\abs{Y_s} \leq s^\gamma$ is roughly the same on any interval of the form $[r,r+r^{2\gamma}]$ for $r$ large enough. Moreover, the number of such intervals needed to cover $[s_0,t]$ is proportional to $t^{1-2\gamma}$, so we deduce that $\P(A_\gamma) \approx \exp(-c_1 t^{1-2\gamma})$. 
On the other hand, on the event $A_\gamma$,  
we have $\int_{s_0}^t \lvert \frac{Y_s}{\sqrt{2}s} \rvert^\alpha \diff s \approx c_2 t^{[1+\alpha(\gamma-1)] \vee 0} $. 
Therefore, we expect that
\begin{align*}
	\Ec{\exp\left(-\beta \int_{s_0}^t \left\vert \frac{Y_s}{\sqrt{2}s} \right\vert^\alpha \diff s\right)} 
	&\approx \sup_{\gamma \in [0,1/2]} 
	\exp\left(-c_1t^{1-2\gamma}- c_2t^{[1+\alpha(\gamma-1)] \vee 0} \right) \\
	&\approx \exp\left( -c t^{(2-\alpha)/(2+\alpha)}\right),
\end{align*}
where the supremum is achieved by $\gamma = \alpha/(2+\alpha)$ (note that is is consistent with the exponent appearing in Proposition~\ref{porism}.\ref{it:Y}). 
While we don't use this approach in our proofs, this heuristic tells us the scale of $Y_s$ that should dominate the integral, namely $s^{\alpha/(2+\alpha)}$. This will be reflected in our proofs, especially in Section \ref{sec:probabilistic_arguments}. 

The structure of this paper follows these heuristics. In Section~\ref{sec:BM_weighted} we make the connection between \eqref{eq:don} and the PDE \eqref{eq:first_pde} rigorous, and we show precise estimates on its solutions using Sturm--Liouville theory as well as methods inspired from \cite{MaiZei2016}. 
Then, in Section \ref{sec:probabilistic_arguments}, in order to incorporate the error term due to the approximation $b(\theta_s) \approx 1 - \lvert \frac{Y_s}{\sqrt{2}s} \rvert^\alpha $, we deal with more general expectations of the form
\begin{equation} \label{eq:general_expectations}
    \Ec{\exp\left(-\beta \int_{s_0}^t \left\vert \frac{Y_s}{\sqrt{2}s}  \right\vert^\alpha (1+f(Y_s,s)) \diff s\right)},
\end{equation}
where $f$ is thought of as a small error term. 
The PDE approach we use in Section \ref{sec:BM_weighted} does not allow to incorporate $f$ directly, so instead we use probabilistic methods to justify afterwards that the contribution of this error term is negligible.
The main idea is to localize the trajectory of $Y$ by showing that, on the event mainly contributing to this expectation, $Y_s$ stays of order $s^{\alpha/(2+\alpha)}$, so that we can bound the contribution of the error term deterministically and compare with the case $f=0$ covered in Section~\ref{sec:BM_weighted}. 
We briefly state the many-to-one and many-to-two lemmas in Section~\ref{sec:many-to-few-lemmas}, before turning to the proof of Theorem~\ref{thm:main}. 
In Section \ref{sec:upper_bound}, we show the upper bound in Theorem \ref{thm:main} relying essentially on first moment calculation.
The main difficulty is to localize the trajectory of a particle $u$ such that $R_u(t) \geq m(t)$, to be able to justify the approximations $\theta_u(s) \approx Y_u(s)/(\sqrt{2}s)$ and $R_u(t) \approx X_u(t)$ and then to add more precise barrier on the $X$-coordinate to determine the logarithmic corrections.
In Section \ref{sec:lower_bound}, we prove the lower bound in Theorem \ref{thm:main} by performing a first and second moment calculation on the number of particles such that $X_u(t) \geq m(t)$, while satisfying appropriate constraints on their trajectories.
Finally, in Section~\ref{sec:porism}, we prove Proposition~\ref{porism} and comment on the optimality of the exponents appearing in its statement.

\subsection{Open questions and remarks}
\label{subsec:open_questions}

\begin{enumerate}
\item In Theorem \ref{thm:main} we show tightness for $(M_t - m(t))_{t\geq 1}$. Can this be extended to convergence in distribution? 
For this, one needs a quantity replacing the limit of the derivative martingale in \eqref{eq:1d_maximum}.
The following process should work:
\[
W_t \coloneqq t^{-\alpha/[2(2+\alpha)]} 
\exp \left( \vartheta_1 t^{\frac{2-\alpha}{2+\alpha}} \right)
\sum_{u\in\cN(t)} (\sqrt{2}t-X_u(t)) e^{\sqrt{2}(X_u(t)-\sqrt{2} t)} 
\varphi_0 \left( \frac{\vartheta_2 Y_u(t)}{t^{\alpha/(2+\alpha)}} \right),
\]
where $\varphi_0$, $\vartheta_1$ and $\vartheta_2$ are introduced in Proposition \ref{prop:estimate_G}.
Note that the dependence in $X_u(t)$ is similar to the definition of the derivative martingale.
However, here $(W_t)_{t\geq 0}$ is not an exact martingale, but only satisfies $\E[W_t|\cF_s] = W_s + o(1)$ for $s$ and $t$ large.
We believe that $(W_t)_{t\geq 0}$ has a limit (in probability at least) and that it plays the role of $D_\infty$ in the convergence in distribution of $M_t-m(t)$.
To go further, we can also ask the question of the convergence of the extremal process.

\item In this work, we focus on the case $\alpha \in (2/3,2)$. We leave the other cases as open questions and briefly discuss here what behaviour we expect for $m(t)$.
\begin{enumerate}
\item[$(\alpha = 2)$]
This is a natural case as this corresponds most naturally to the assumption that $b$ is smooth at $\theta = 0$. Furthermore, as $\alpha \nearrow 2$ we have $\frac{2-\alpha}{2+\alpha} \to 0$, so we expect no polynomial correction. Here, one can show (for example using the methods of \cite{gao_laplace_2003}) that
\begin{equation} \label{eq:case_alpha=2}
    \E \left[\exp \left(- \beta \int_s^t \left( \frac{Y_r}{r} \right)^2 \diff r \right) \right] 
    \sim C(\beta) 
    \left( \frac{s}{t} \right)^{\frac{1}{4}(\sqrt{1+8\beta}-1) },
\end{equation}
as $t/s\to \infty$, $C(\beta)$ can be stated explicitly.
This strongly suggest that the appropriate correction for the maximum is
\begin{equation} \label{eq:conjecture_alpha=2}
    m(t)= \sqrt{2}t-\left(\frac{3}{2\sqrt{2}} + \frac{\sqrt{1+8\beta}-1}{4\sqrt{2}} \right)\log t.
\end{equation}
This guess relies on the fact that trajectories of extremal particles should satisfy $Y_t = O(\sqrt{t})$ and therefore $R_t = X_t + O(1)$. 
The coefficient of the logarithmic correction contains two terms: the first one comes from the fact that the $X$-trajectory has to stay below a straight line (as for the 1d BBM), whereas the second one comes from the constraint on the $Y$-trajectory and \eqref{eq:case_alpha=2}.
Note that this coefficient interpolates between $-3/(2\sqrt{2})$ as $\beta \to 0$ and $-\infty$ as $\beta \to \infty$.
\item[$(\alpha > 2)$] 
In this case, the constraint on the $Y$-trajectory of an extremal particle is weaker: if $\abs{Y_s}$ grows at most like $s^\gamma$ with $\gamma < 1-\frac{1}{\alpha}$, then the integral $\int_{s_0}^t \lvert \frac{Y_s}{s} \rvert^\alpha \diff s$ does not diverge and everything behaves as if the branching rate was equal to 1. 
Therefore, a good proxy for our model would be a 2d BBM with branching rate 1, but where only extremal particles with an angle $\theta_t = O(t^{-1/\alpha})$ are taken into account.
But heuristics coming from the study of 2d BBM \cite{Mal2015a} tell us that the behaviour of extremal particles in two disjoint cones of angle $t^{-1/2}$ is roughly independent. 
Altogether, this suggests that $m(t)$ should be the same as for the maximum of $t^{1/2-1/\alpha}$ independent 1d BBMs, which leads to
\begin{equation*}
     m(t) = \sqrt{2}t - \frac{1}{\sqrt{2}} \left( 1 + \frac{1}{\alpha} \right) \log t.
\end{equation*}
Note that the coefficient of the logarithmic correction interpolates between $-3/(2\sqrt{2})$ and $-1/\sqrt{2}$, that is between the 1d and the 2d BBM.
\item[$(\alpha \leq \frac{2}{3})$] 
Results in Sections \ref{sec:BM_weighted} and \ref{sec:probabilistic_arguments} concerning Brownian motion weighted by an integral are proved for any $\alpha \in (0,2)$, so in particular \eqref{eq:don} still holds for $\alpha \leq 2/3$.
Henceforth, we believe that (a weak version of) our methods should be enough to prove
\[
    M_t = \sqrt{2} t - \frac{\vartheta_1}{\sqrt{2}} t^{(2-\alpha)/(2+\alpha)} \left(1+o_\P(1)\right),
\]
where $o_\P(1)$ denotes a term tending to 0 in probability as $t \to \infty$.
However, getting the asymptotic expansion of $M_t$ up to $O(1)$ seems much harder and we do not have a precise conjecture for the right choice of $m(t)$ in that case.
More polynomial terms probably need to be added in the definition of $m(t)$, for several reasons listed here:
\begin{itemize}
    \item A simple reason already appears in the simplified heuristics that led to \eqref{eq:appp}. Indeed, solving \eqref{eq:appp} for $\alpha <2/3$ requires to add more and more polynomial terms to $m(t)$ as $\alpha$ decreases.  However, as explained in the following points, \eqref{eq:appp} is not the right choice for $m(t)$ anymore.
    \item Because $m(s)$ is roughly the position of extremal particles at time $s$, an extremal particle $u$ at time $t$ has to satisfy $X_u(s) \leq m(s)$ for any $s \leq t$. If $\alpha \in (2/3,2)$, the exponent $(2-\alpha)/(2+\alpha)$ is less than $1/2$ and staying below this barrier has the same cost as staying below a straight barrier with the same endpoints, that is a polynomial cost, which is responsible for the $-\frac{3}{2\sqrt{2}} \log t$  correction to $m(t)$.
    On the other hand, if $\alpha \in (0,2/3)$, this exponent becomes larger than $1/2$ and staying below this barrier has a stretched exponential cost, which results in a polynomial correction to $m(t)$.
    \item Lastly, the approximation $\theta_s \approx Y_s/X_s \approx Y_s/(\sqrt{2} s)$ used in the heuristics in Section \ref{sec:heuristics} is not good enough anymore. Indeed, we expect that $X_s - \sqrt{2} s$ is of order $s^{(2-\alpha)/(2+\alpha)}$, and therefore the resulting error term in the integral in \eqref{eq:don} is not negligible when $\alpha \leq 2/3$: instead, it should grow polynomially fast and therefore result in another polynomial correction to $m(t)$.
\end{itemize}
\end{enumerate}

\item We finally mention two models that can be seen as a limit as $\alpha  \to 0$ of our model.
One of them would be a 2d BBM in which a particle at $(x,y)$ branches at rate $b(y)$, where $b$ is continuous, $b(0)=1$ and $b(y) \to 0$ as $\lvert y \rvert \to \infty$. 
In that case, we expect that the linear order of $m(t)$ would depend on $b$ and would be smaller than $\sqrt{2}$, but there would be no polynomial correction, only a logarithmic one. We believe that a lot of the analysis would rely on understanding BBM in a strip $\R \times [-L,L] $ using the results of \cite{harris_branching_2016}. Another interesting model might be a catalytic 2d BBM, where a particle branches at rate $\beta + \beta_0 L_t^x$, where $L_t^x$ is the local time of the second coordinate of the particle at $0$.



\end{enumerate}

\subsection{Notation}

Throughout the paper, $C$ and $c$ denote positive constants that can change between occurrences. They can depend on some other parameters in a way which is made clear in the statement of each result; in the proof of such a result allowed dependencies are the same as in the statement.
We use standard Landau notation: for $f \colon [0,\infty) \to \R$ and $g \colon [0,\infty) \to (0,\infty)$, we say, as $t \to \infty$, 
that $f(t) = o(g(t))$ if $\lim_{t \to \infty} f(t)/g(t) = 0$, 
that $f(t) = O(g(t))$ if $\limsup_{t \to \infty} \abs{f(t)}/g(t) < \infty$, 
and that $f(t) \sim g(t)$ if $\lim_{t \to \infty} f(t)/g(t) =1$.

It is sometimes convenient to work under probability measures other than $\P$.
We denote by $\P_{(s,x,y)}$ the probability under which our inhomogeneous BBM starts at time $s$ from a single particle located at $(x,y)$. Moreover, under $\P_{(s,x,y)}$, $(X_t,Y_t)_{t\geq 0}$ denotes a 2d Brownian motion starting at time $s$ from $(x,y)$. If $s=0$, we drop the first index and simply write $\P_{(x,y)}$. If the event considered involves only $X$, then we keep only the starting point of $X$ as index; and similarly for $Y$.

\section{Brownian motion weighted by an integral via PDEs}
\label{sec:BM_weighted}

We define the kernel $G$ for $x,y \in \R$ and $0 \leq s < t$ by
\begin{equation} \label{eq:def_G}
G(s,x;t,y) = \frac{1}{\sqrt{2\pi(t-s)}} \exp \left( - \frac{(y-x)^2}{2(t-s)} \right)
\E_{(s,x)} \left[ \exp \left( - \beta \int_{s}^t \abs{\frac{B_r}{\sqrt{2}r}}^\alpha \diff r \right) \middle| B_t = y \right],
\end{equation}
where, under $\P_{(s,x)}$, $B$ is a Brownian motion starting from $x$ at time $s$.
Note that under $\P_{(s,x)}$ and given $B_t = y$, $(B_r)_{r\in[s,t]}$ is a Brownian bridge from $(s,x)$ to $(t,y)$.
The kernel $G$ satisfies, for any measurable function $f \colon \R \to \R_+$,
\begin{equation} \label{eq:int_G}
\int_\R f(y) G(s,x;t,y) \diff y 
= \E_{(s,x)} \left[ \exp \left( - \beta \int_{s}^t \abs{\frac{B_r}{\sqrt{2}r}}^\alpha \diff r \right) f(B_t) \right].
\end{equation}
The goal of this section is to prove the following result, namely a good approximation for $G$. Its proof is technical and done by PDE methods using first a connection via the Feynman--Kac formula. The reader, especially if from a probabilistic audience, might want to skip the remainder of this section (which is self-contained) on a first read of this paper. 

\begin{prop} \label{prop:estimate_G} 
	Let $\alpha \in (0,2)$, $\kappa \coloneqq 2\alpha/(2+\alpha) \in (0,1)$ and $\beta > 0$.
    There exist $K,C,c > 0$ such that, for any $x,y \in \R$, $s \geq K$ and $t \geq s + Ks^\kappa$, we have
    \begin{align*}
    & \abs{(st)^{\kappa/4} \exp\left(\vartheta_1 (t^{1-\kappa}-s^{1-\kappa})\right) G(s,x;t,y) 
    - \vartheta_2
	\varphi_0 \left( \frac{\vartheta_2 x}{s^{\kappa/2}} \right) 
	\varphi_0 \left( \frac{\vartheta_2 y}{t^{\kappa/2}} \right)}
    \nonumber \\
    & \leq C \left( s^{\kappa-1} + e^{-c (t-s)/t^\kappa} \right) \exp\left(-c \left( \frac{\abs{x}}{s^{\kappa/2}} \right)^{\alpha} \right)
    \exp\left(-c \left( \frac{\abs{y}}{t^{\kappa/2}} \right)^{\alpha} \right)
    \end{align*}
    where $\varphi_0$ is defined in Section~\ref{sec:sturm-liouville} and Proposition~\ref{prop:sturm-liouville} together with the constant $\lambda_0 > 0$ (and they only depend on $\alpha$) and we set $\vartheta_1 \coloneqq \lambda_0 \beta^{2/(2+\alpha)} 2^{-2\alpha/(2+\alpha)}/(1-\kappa)$ and $\vartheta_2 \coloneqq (2\beta)^{1/(2+\alpha)}$.
\end{prop}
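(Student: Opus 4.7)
\textbf{Proof plan for Proposition~\ref{prop:estimate_G}.}

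By Feynman--Kac, for fixed $(s,x)$ the kernel $(t,y)\mapsto G(s,x;t,y)$ is the fundamental solution of
\[
\partial_t G = \tfrac{1}{2}\partial_y^2 G - \beta\, 2^{-\alpha/2}\, t^{-\alpha}|y|^\alpha\, G,\qquad G(s,x;s,\cdot)=\delta_x,
\]
and is symmetric in its two spatial arguments. The self-similar rescaling that makes the time-inhomogeneous potential stationary is $z = \vartheta_2\,y/t^{\kappa/2}$ with $\kappa = 2\alpha/(2+\alpha)$, coupled with the time change $\tau'(t)\propto t^{-\kappa}$, i.e.\ $\tau\propto t^{1-\kappa}$. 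The constants $\vartheta_1,\vartheta_2$ are tuned precisely so that the rescaled PDE reads $\partial_\tau \tilde G = -L\tilde G$ with $L$ the Sturm--Liouville operator of Lemma~\ref{lem:sturm-liouville} (hence $\lambda_0\,\tau(t) = \vartheta_1\, t^{1-\kappa}$), at the cost of a residual $O(1/\tau)$ dilation drift $\tfrac{\kappa}{2(1-\kappa)\tau}\,z\,\partial_z \tilde G$ produced by differentiating the self-similar change of variables in time.

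For the unperturbed autonomous semigroup, Mercer's theorem yields
\[
K_{\Delta\tau}(z_s,z_t) \;=\; \sum_{n\ge 0} e^{-\lambda_n \Delta\tau}\,\varphi_n(z_s)\,\varphi_n(z_t),\qquad \Delta\tau = \tau(t)-\tau(s).
\]
Undoing the rescaling (the $\mathrm{d}y\leftrightarrow\mathrm{d}z_t$ Jacobian combined with the natural amplitude from the change of variables produces the $(st)^{-\kappa/4}$ prefactor and the overall $\vartheta_2$ of the proposition), the $n=0$ contribution matches the claimed main term $\vartheta_2\,\varphi_0(\vartheta_2 x/s^{\kappa/2})\,\varphi_0(\vartheta_2 y/t^{\kappa/2})\,(st)^{-\kappa/4}\,e^{-\vartheta_1(t^{1-\kappa}-s^{1-\kappa})}$. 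The modes $n\ge 1$ are uniformly dominated using the spectral gap $\lambda_1-\lambda_0>0$ by $e^{-(\lambda_1-\lambda_0)\Delta\tau}\asymp e^{-c(t-s)/t^\kappa}$ (the hypothesis $t\ge s+K s^\kappa$ is exactly what guarantees $\Delta\tau$ is bounded below by an arbitrary constant), and the WKB-type pointwise decay $|\varphi_n(z)|\le C_n e^{-c|z|^{(\alpha+2)/2}}$, which is much stronger than $e^{-c|z|^\alpha}$, yields the Gaussian-like factors $e^{-c|x/s^{\kappa/2}|^\alpha}e^{-c|y/t^{\kappa/2}|^\alpha}$ in the error bound.

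The residual $O(1/\tau)$ drift perturbation is then handled by a Duhamel expansion: writing the perturbed kernel as $e^{-\Delta\tau L}\,\delta_{z_s} + \int_{\tau_s}^{\tau_t}e^{-(\tau_t-\sigma)L}\bigl[\tfrac{\kappa}{2(1-\kappa)\sigma}\,z\,\partial_z\,\tilde G(\sigma,\cdot)\bigr]\,\mathrm{d}\sigma$ and projecting onto $\varphi_0$, the feedback into the ground-state amplitude becomes integrable in $\sigma$ once the components along $\varphi_{n\ge 1}$ have been killed by the gap, producing a net multiplicative correction of order $1/\tau_s\asymp s^{\kappa-1}$, which is the first term of the error bound. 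The main obstacle, to my mind, is the \emph{pointwise} version of this analysis with the stated Gaussian prefactors: this requires combining short-time Gaussian heat-kernel bounds for $e^{-\sigma L}$ with weighted $L^\infty$ control of $\varphi_n$ and $\partial_z\varphi_n$ (and a careful tracking of the normalisations coming from the change of variables), and only on the long-time scale $\Delta\tau\gtrsim 1$ does the rank-one ground-state approximation truly take effect -- bridging the two regimes uniformly is where the bulk of the analytic work lies.
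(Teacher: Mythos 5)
Your plan captures the core mechanism the paper uses: a self-similar rescaling $z\propto y/t^{\kappa/2}$, $\tau\propto t^{1-\kappa}$, reduces the heat kernel to the Sturm--Liouville operator $\cL$ of Section~\ref{sec:sturm-liouville}; the ground state produces the main term $\vartheta_2\,\varphi_0\,\varphi_0\,(st)^{-\kappa/4}e^{-\vartheta_1(\cdots)}$, the spectral gap $\lambda_1-\lambda_0$ produces the $e^{-c(t-s)/t^\kappa}$ error once $\Delta\tau\asymp(t-s)/t^\kappa$ is bounded below, and the time-inhomogeneity produces the $s^{\kappa-1}\asymp 1/\tau_s$ error. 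Where your implementation differs from the paper's is mostly cosmetic: you keep a \emph{fixed} eigenbasis and treat the dilation term $\tfrac{\kappa}{2(1-\kappa)\tau}\,z\partial_z$ as a Duhamel perturbation, whereas the paper first maps $G$ to a fundamental solution $g$ of a finite-horizon PDE on $[0,1)\times\R$ via \eqref{eq:relation_G_g}, expands in the \emph{instantaneous} eigenbasis $\varphi_{q(t),n}$, and approximates the potential $(1-t)^{-\alpha}$ by $q_*,q^*$ constant near the endpoints (Lemma~\ref{construction:q_star}) so that the coupling matrix $A(t)$ of Lemma~\ref{lem:ode:c} vanishes there and the ODE for the coefficients $c_n(t)$ diagonalizes explicitly. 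These are two encodings of the same object: your drift $z\partial_z$ is precisely what generates the paper's antisymmetric coupling $A$ (up to the Jacobian normalisation that makes the diagonal of $A$ vanish — something your sketch glosses over, since a raw projection of $z\partial_z$ onto $\varphi_0$ does not vanish), so neither approach buys something the other doesn't.

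The genuine unfinished part is exactly what you flag at the end: the \emph{pointwise} error with the Gaussian-like prefactors $e^{-c|x/s^{\kappa/2}|^\alpha}e^{-c|y/t^{\kappa/2}|^\alpha}$. Getting from the $L^2$ statements (which is what the spectral decomposition naturally gives) to these weighted $L^\infty$ bounds requires the quantitative tail estimates of Proposition~\ref{prop:sturm-liouville}.\ref{it:tail} with an explicit $n$-dependence, the $\ell^2$-boundedness of the coupling, and then a careful summation over modes interleaving the WKB tail with the gap decay — this is Lemma~\ref{lem:technical_series} and the bulk of Lemma~\ref{lem:precise_bounds}. Your sketch neither does this nor explains how Duhamel would interact with it (the drift $z\partial_z$ is unbounded, so one cannot simply iterate the Duhamel series in $L^\infty$ without the weighted estimates you are trying to prove). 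The paper's piecewise-constant trick is precisely what gives explicit, pointwise-controllable formulae on the boundary layers $[0,\eps_1]$ and $[T-\eps_2,T]$ where the drift would otherwise be hardest to control; a Duhamel route would need a substitute for this device.
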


As will be seen later in Proposition~\ref{prop:sturm-liouville}, $\varphi_0$ is an even positive function such that $\norme{\varphi_0}_2 = 1$.
See Figure~\ref{fig:phi0} for an illustration.

By integrating these bounds in the $y$ coordinate and bounding them uniformly in the $x$ coordinate we obtain the following corollary.

\begin{cor}\label{cor:estimate_G}
    Let $\alpha \in (0,2)$ and $\beta > 0$.
    There exist $K,C,c > 0$ such that the following holds for any $s \geq K$ and $t \geq s + Ks^\kappa$.
    \begin{enumerate}
    	\item\label{it:UB_integrated_G} For any $x \in \R$,
    	\begin{equation*}
    	\int_\R G(s,x;t,y) \diff y
    	\leq C (t/s)^{\kappa/4} \exp\left(\vartheta_1 (s^{1-\kappa}-t^{1-\kappa})\right).
    	\end{equation*}
    	\item\label{it:LB_integrated_G} For any $\abs{x} \leq s^{\kappa/2}$,
    	\begin{equation*}
    	\int_{\abs{y} \leq t^{\kappa/2}} G(s,x;t,y) \diff y
    	\geq c (t/s)^{\kappa/4} \exp\left(\vartheta_1 (s^{1-\kappa}-t^{1-\kappa})\right) .
    	\end{equation*}
    \end{enumerate}
\end{cor}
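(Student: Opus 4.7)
The plan is to integrate the pointwise estimate of Proposition~\ref{prop:estimate_G} in the variable $y$ and analyse the main term and the error term separately. Rewriting that estimate as
\[
G(s,x;t,y) = (st)^{-\kappa/4} e^{\vartheta_1(s^{1-\kappa}-t^{1-\kappa})}
\left[ \vartheta_2\, \varphi_0\!\left( \tfrac{\vartheta_2 x}{s^{\kappa/2}} \right) \varphi_0\!\left( \tfrac{\vartheta_2 y}{t^{\kappa/2}} \right) + \mathcal{E}(s,x;t,y) \right],
\]
with $\abs{\mathcal{E}(s,x;t,y)} \leq C\bigl(s^{\kappa-1}+e^{-c(t-s)/t^\kappa}\bigr) \exp\!\bigl(-c(\abs{x}/s^{\kappa/2})^\alpha\bigr) \exp\!\bigl(-c(\abs{y}/t^{\kappa/2})^\alpha\bigr)$, I would first record two consequences of $\varphi_0$ being the ground state of the Sturm--Liouville operator in Section~\ref{sec:sturm-liouville}: (a) $\varphi_0$ is bounded and integrable on $\R$, since the potential grows like $\abs{u}^\alpha$ and forces stretched-exponential decay of $\varphi_0$ at infinity; and (b) $\varphi_0$ is continuous and strictly positive, hence bounded below by a positive constant on any fixed compact interval. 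The Gaussian-type integrals $\int_\R e^{-c\abs{u}^\alpha}\diff u$ are also finite.

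For part~\ref{it:UB_integrated_G}, the substitution $u = \vartheta_2 y/t^{\kappa/2}$ together with the identity $(st)^{-\kappa/4}\cdot t^{\kappa/2} = (t/s)^{\kappa/4}$ turns the $y$-integral of the main term into
\[
(t/s)^{\kappa/4} e^{\vartheta_1(s^{1-\kappa}-t^{1-\kappa})}\, \norme{\varphi_0}_1\, \varphi_0\!\left(\tfrac{\vartheta_2 x}{s^{\kappa/2}}\right).
\]
Bounding $\varphi_0 \leq \norme{\varphi_0}_\infty$ and using $e^{-c(\abs{x}/s^{\kappa/2})^\alpha} \leq 1$ in the error term (integrated by the same substitution) gives a total bound of $C(t/s)^{\kappa/4} e^{\vartheta_1(s^{1-\kappa}-t^{1-\kappa})}\bigl(1 + s^{\kappa-1} + e^{-c(t-s)/t^\kappa}\bigr)$, which is the desired upper bound.

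For part~\ref{it:LB_integrated_G}, I restrict the integration to $\abs{y}\leq t^{\kappa/2}$, which after the same substitution limits the $u$-integral to $[-\vartheta_2,\vartheta_2]$. By property~(b) both $\int_{\abs{u}\leq\vartheta_2}\varphi_0(u)\diff u$ and, using $\abs{x}\leq s^{\kappa/2}$ so that $\vartheta_2 x/s^{\kappa/2} \in [-\vartheta_2,\vartheta_2]$, also $\varphi_0(\vartheta_2 x/s^{\kappa/2})$ are bounded below by positive constants; hence the main term contributes at least $c(t/s)^{\kappa/4} e^{\vartheta_1(s^{1-\kappa}-t^{1-\kappa})}$. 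The error term, controlled exactly as in part~\ref{it:UB_integrated_G}, contributes at most $C(t/s)^{\kappa/4} e^{\vartheta_1(s^{1-\kappa}-t^{1-\kappa})}\bigl(s^{\kappa-1}+e^{-c(t-s)/t^\kappa}\bigr)$, which can be made smaller than half the main contribution by enlarging $K$.

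The only slightly delicate point, which I would verify explicitly, is that $e^{-c(t-s)/t^\kappa}$ is indeed small uniformly for $s\geq K$ and $t-s \geq Ks^\kappa$: splitting into $t\leq 2s$, where $(t-s)/t^\kappa \geq K/2^\kappa$, and $t>2s$, where $(t-s)/t^\kappa \geq t^{1-\kappa}/2 \geq K^{1-\kappa}/2$, shows both cases grow with $K$. Beyond this minor case split, the corollary is essentially bookkeeping once Proposition~\ref{prop:estimate_G} and properties~(a)--(b) of $\varphi_0$ are in hand, so there is no real obstacle.
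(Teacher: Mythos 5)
Your proof is correct and follows exactly the paper's stated approach, which is to integrate the pointwise estimate of Proposition~\ref{prop:estimate_G} in $y$ and bound uniformly in $x$ (the paper itself gives only a one-line sketch of this corollary). The details you fill in — the identity $(st)^{-\kappa/4}t^{\kappa/2}=(t/s)^{\kappa/4}$, the substitution $u=\vartheta_2 y/t^{\kappa/2}$, the strict positivity of $\varphi_0$ on the compact $[-\vartheta_2,\vartheta_2]$ (from Proposition~\ref{prop:sturm-liouville}.\ref{it:cL_eigenbasis}, since $\varphi_0$ has no zeros), and the case split $t\leq 2s$ vs.\ $t>2s$ to control $e^{-c(t-s)/t^\kappa}$ uniformly for $K$ large — are all correct and complete.
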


\begin{figure}[bth]
    \centering
    \includegraphics[width=0.7\linewidth]{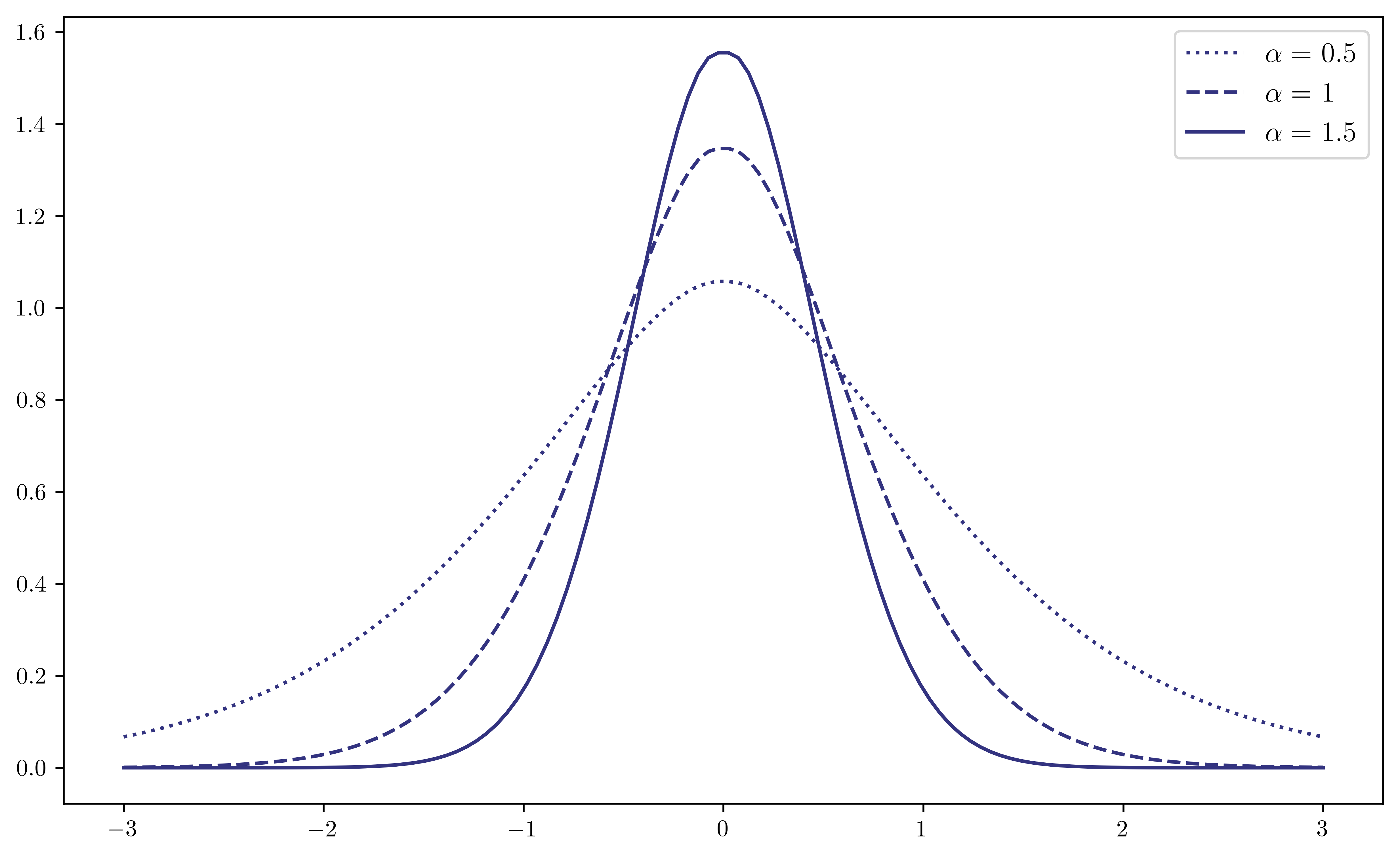}
    \caption{A plot of $\varphi_0$ for different values of $\alpha$.}
    \label{fig:phi0}
\end{figure}

\subsection{Connection with a PDE via Feynman--Kac formula} 

In this section, we first work in a slightly more general context and consider the following PDE on $[0,T] \times \R$, for some $T,\sigma>0$ and some function $k \colon [0,T] \times \R \to \R$,
\begin{equation} \label{eq:PDE_k}
	\partial_t u = \frac{\sigma^2}{2} \partial_{xx}^2 u + ku.
\end{equation}
We write $\cC^{1,2}(I \times \R)$ for the space of functions $u$ on $I\times \R$ such that $\partial_t u$, $\partial_x u$ and $\partial_{xx}^2 u$ exist and are continuous on $I\times \R$.
	A \textit{fundamental solution} $\Gamma$ of \eqref{eq:PDE_k} is a function of $(\tau,\xi;t,x)$ defined for any $0 \leq \tau < t \leq T$ and any $\xi,x\in\R$ such that, for any $\tau \in [0,T)$,
\begin{itemize}
	\item for any $\xi \in \R$, the function $\Gamma(\tau,\xi;\cdot,\cdot)$ is in $\cC^{1,2}((\tau,T] \times \R)$ and satisfies \eqref{eq:PDE_k} on $(\tau,T] \times \R$;
	\item for any $f \colon \R \to \R$ continuous with compact support and any $x_0 \in \R$,
	\begin{equation} \label{def:fund_sol}
		\lim_{(t,x) \to (\tau^+,x_0)} \int_\R \Gamma(\tau,\xi;t,x) f(\xi) \diff \xi = f(x_0).
	\end{equation}
\end{itemize}
The following result shows existence of a fundamental solution under appropriate conditions on the function $k$ and expresses it in terms of a Brownian motion.

\begin{lem} \label{lem:fundamental_solution}
	Assume $k \leq 0$ and $k$ is Hölder continuous on any compact subset of $[0,T] \times \R$.
	For any $0 \leq \tau < t \leq T$ and any $\xi,x\in\R$, let
	\begin{align*}
	\Gamma(\tau,\xi;t,x)
	& \coloneqq \frac{1}{\sqrt{2\pi \sigma^2 (t-\tau)}} 
	\exp \left( - \frac{(\xi-x)^2}{2 \sigma^2 (t-\tau)} \right) \\
        & \hspace{2cm}\times 
	\E_{(\tau,\frac{x}{\sigma})} \left[ \exp \left( \int_\tau^t k(t+\tau-r,\sigma B_r) \diff r \right) \middle| \sigma B_t = \xi \right] \\
	& = \frac{1}{\sqrt{2\pi \sigma^2 (t-\tau)}} 
	\exp \left( - \frac{(\xi-x)^2}{2 \sigma^2 (t-\tau)} \right)\\
    & \hspace{2cm}\times 
	\E_{(\tau,\frac{\xi}{\sigma})} \left[ \exp \left( \int_\tau^t k(r,\sigma B_r) \diff r \right) \middle| \sigma B_t = x \right].
	\end{align*}
	Then, $\Gamma$ is a fundamental solution of the PDE \eqref{eq:PDE_k}.
\end{lem}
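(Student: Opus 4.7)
The plan is to first identify the two forms of $\Gamma$ via Brownian-bridge time reversal, then recognise that integrating $\Gamma$ against a test function reduces matters to a standard Feynman--Kac expectation, and finally deduce the PDE for the kernel itself from the backward-Kolmogorov version of that expectation.

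To see that the two expressions agree, note that under $\P_{(\tau,x/\sigma)}(\,\cdot\mid \sigma B_t = \xi)$ the process $(\sigma B_r)_{r\in[\tau,t]}$ is a Brownian bridge from $(\tau,x)$ to $(t,\xi)$, so its time reversal $(\sigma B_{t+\tau-r})_{r\in[\tau,t]}$ is a Brownian bridge from $(\tau,\xi)$ to $(t,x)$ and hence has the law of $(\sigma B_r)_{r\in[\tau,t]}$ under $\P_{(\tau,\xi/\sigma)}(\,\cdot\mid \sigma B_t = x)$; the change of variable $s=t+\tau-r$ in the integral then identifies the two formulas. Since the Gaussian prefactor is exactly the density of $\sigma B_t$ under $\P_{(\tau,x/\sigma)}$, multiplying by $f$ and integrating in $\xi$ yields
\begin{equation*}
U(t,x) \coloneqq \int_\R \Gamma(\tau,\xi;t,x) f(\xi)\,\diff \xi = \E_{(\tau,x/\sigma)}\!\left[f(\sigma B_t)\exp\!\left(\int_\tau^t k(t+\tau-r,\sigma B_r)\,\diff r\right)\right].
\end{equation*}
The initial condition \eqref{def:fund_sol} is then immediate: as $(t,x)\to(\tau^+,x_0)$, $\sigma B_t \to x_0$ almost surely, the integral in the exponential tends to $0$ (since $k$ is bounded on compact sets and $t-\tau\to 0$), and dominated convergence applies because $k\leq 0$ bounds the exponential by $1$ and $f$ has compact support.

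For the PDE itself, fix $t_0\in(\tau,T]$ and introduce
\begin{equation*}
\widetilde v(s,y)\coloneqq \E_{(s,y/\sigma)}\!\left[f(\sigma B_{t_0})\exp\!\left(\int_s^{t_0} k(t_0+\tau-r,\sigma B_r)\,\diff r\right)\right],\qquad s\in[\tau,t_0].
\end{equation*}
The classical Feynman--Kac argument (Itô applied to $s\mapsto \widetilde v(s,\sigma B_s)\exp(\int_\tau^s k(t_0+\tau-r,\sigma B_r)\,\diff r)$, together with the Hölder continuity of $k$ on compacts giving $C^{1,2}$ regularity of $\widetilde v$) shows that $\partial_s \widetilde v + \tfrac{\sigma^2}{2}\partial_{yy}^2 \widetilde v + k(t_0+\tau-s,y)\widetilde v = 0$ on $(\tau,t_0)\times\R$, with $\widetilde v(t_0,\cdot)=f$. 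A shift of the Brownian motion ($W_s\coloneqq B_{s+t_0-t}$, which starts from $x/\sigma$ at time $\tau$) gives $\widetilde v(\tau+t_0-t,x)=U(t,x)$ for every $t\in(\tau,t_0]$; the substitution $s=\tau+t_0-t$, for which $t_0+\tau-s=t$, then converts the backward PDE for $\widetilde v$ into the forward PDE $\partial_t U = \tfrac{\sigma^2}{2}\partial_{xx}^2 U + k(t,x) U$ on $(\tau,t_0)\times\R$. Letting $t_0\uparrow T$ (and handling $t=T$ by running the same argument on a slightly enlarged interval) covers $(\tau,T]\times\R$.

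To pass from this integrated statement about $U$ to the pointwise PDE and $C^{1,2}$ regularity for $\Gamma$ itself, we invoke standard parabolic theory: Friedman's parametrix construction produces a fundamental solution $\Gamma^{*}$ of \eqref{eq:PDE_k} which is $C^{1,2}$ in $(t,x)$ with Gaussian-type bounds, and the identity $\int \Gamma^{*} f\,\diff\xi = U = \int \Gamma f\,\diff\xi$ for all continuous compactly supported $f$, combined with uniqueness within this class, forces $\Gamma=\Gamma^{*}$. The main technical obstacle in the whole argument is precisely this last step, because $k$ is only assumed Hölder on compact subsets of $[0,T]\times\R$ rather than globally, so Friedman's estimates must be applied after a localisation in $x$; the standing assumption $k\leq 0$ is crucial here as it controls the Feynman--Kac weights uniformly and prevents any growth issues from the localisation.
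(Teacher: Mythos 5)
Your proof shares the paper's first half: the time-reversal identification of the two expressions for $\Gamma$ and the observation that $\int_\R \Gamma(\tau,\xi;t,x)f(\xi)\,\diff\xi$ equals a Feynman--Kac expectation. From there the routes diverge, and your route has a genuine gap in the final step.

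The issue is how you obtain the pointwise $\cC^{1,2}$ regularity and the PDE for $\Gamma(\tau,\xi;\cdot,\cdot)$ itself, as opposed to the integrated quantity $U$. You invoke Friedman's parametrix construction of a classical fundamental solution $\Gamma^*$ together with a uniqueness argument to force $\Gamma=\Gamma^*$. But the parametrix construction requires a uniformly (globally) Hölder and bounded coefficient $k$, which is not assumed here; in the application of Lemma~\ref{lem:fundamental_solution} to the PDE \eqref{eq:main_PDE}, $k(t,x) \sim -(1-t)^{-\alpha}|x|^\alpha$ is unbounded at infinity. You flag the need to localize in $x$ and note that $k\leq 0$ should control things, but this is exactly the hard part and is left unresolved: after truncating $k$ at radius $R$ one must pass to the limit $R\to\infty$, prove convergence of the truncated parametrices, and verify uniqueness in a class containing both kernels, none of which is addressed. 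Moreover, your intermediate claim that Hölder continuity of $k$ on compacts delivers $\cC^{1,2}$ regularity of $\widetilde v$ directly is not something you can get from It\^o alone — applying It\^o to $\widetilde v$ presupposes precisely the regularity in question — so a PDE existence theorem is needed here too, and you do not cite one.

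The paper avoids Friedman entirely. It cites one existence/regularity result tailored to unbounded $k\leq 0$ \cite[Theorem III]{AroBes1967} to produce a $\cC^{1,2}$ solution with a bounded continuous (not necessarily compactly supported) initial condition, and applies Feynman--Kac as a representation theorem (PDE $\Rightarrow$ probability, not the reverse). Then the key trick is Chapman--Kolmogorov: $\Gamma(\tau,\xi;t,x) = \int_\R \Gamma(\tau,\xi;s,y)\Gamma(s,y;t,x)\,\diff y$ for intermediate $s$, and since $y\mapsto\Gamma(\tau,\xi;s,y)$ is bounded and continuous, the previous step applied with this function as initial data immediately gives $\cC^{1,2}$ regularity and the PDE for $\Gamma(\tau,\xi;\cdot,\cdot)$ on $(s,T]\times\R$, for every $s>\tau$. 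This sidesteps both the uniqueness question and any localisation in $x$. I would suggest you replace your Friedman step with this Chapman--Kolmogorov argument, which also lets you dispense with the implicit assertion about the regularity of $\widetilde v$.
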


\begin{proof}
	We first mention that the fact that the two expressions for $\Gamma(\tau,\xi;t,x)$ are equal follows from the fact that if $(P_s)_{s\in[\tau,t]}$ is a Brownian bridge from $(\tau,x/\sigma)$ to $(t,\xi/\sigma)$, then $(P_{t+\tau-s})_{s\in[\tau,t]}$ is a Brownian bridge from $(\tau,\xi/\sigma)$ to $(t,x/\sigma)$.
	
	Let $\tau \in [0,T)$ and $f \colon \R \to \R$ be continuous and bounded.
	Then, by \cite[Theorem III]{AroBes1967} applied with $\lambda = 0$ and $\alpha > 0$ small enough%
	\footnote{First note that their assumptions stated in \cite[Eq.\@ (2.1)-(2.3)]{AroBes1967} are satisfied with $\nu = k_1 = \sigma^2/2$, $\lambda = 0$ and any $k_2,k_3>0$. 
	Then, existence of $u$ is proved up to time $T_\alpha = \min(T,1/(2\beta(\alpha)))$ (see before Eq.\@ (2.4) there) and $\beta(\alpha)$ can be made arbitrarily large by choosing $\alpha,k_2,k_3$ small enough, so that we get $T_\alpha = T$.},
	there exists a solution $u \in \cC^0([\tau,T] \times \R) \cap \cC^{1,2}((\tau,T] \times \R)$ to the PDE \eqref{eq:PDE_k} with initial condition $u(\tau,x) = f(x)$ for $x \in \R$, and there exists $C>0$ such that
	\[
	\forall (t,x) \in [0,T] \times \R, \quad
	\abs{u(t,x)} \leq C \left( \abs{x}^2 + 1 \right).
	\]
	Therefore, we can apply Feynman--Kac formula \cite[Theorem 7.6]{KarShr1991} to $v(t,x) = u(T+\tau-t,x)$ to get, for any $t \in [\tau,T]$ and $x \in \R$,
	\begin{align}
	u(t,x) 
	& = \E_{(\tau,x/\sigma)} \left[ \exp \left( \int_\tau^t k(t+\tau-r,\sigma B_r) \diff r \right) f(\sigma B_t) \right] \nonumber  \\
	& = \int_\R \Gamma(\tau,\xi;t,x) f(\xi) \diff \xi, \label{eq:u_in_terms_of_Gamma}
	\end{align}
	by conditioning on $\sigma B_t = \xi$ and using the first expression for $\Gamma(\tau,\xi;t,x)$.
	This, together with $u \in \cC^0([\tau,T] \times \R))$ and $u(\tau,\cdot) = f$, proves that $\Gamma$ satisfies \eqref{def:fund_sol}.
	
	Now, fix some $(\tau,\xi) \in [0,T) \times \R$. It remains to check that $\Gamma(\tau,\xi;\cdot,\cdot)$ is in $\cC^{1,2}((\tau,T] \times \R)$ and satisfies \eqref{eq:PDE_k} on $(\tau,T] \times \R$.
	For that consider $s \in (\tau,t)$. As a consequence of Markov's property at time $s$ together with the second expression for $\Gamma(\tau,\xi;t,x)$, we have
	\[
	\Gamma(\tau,\xi;t,x) 
	= \int_\R \Gamma(\tau,\xi;s,y) \Gamma(s,y;t,x) \diff y,
	\]
	But, the function $y \mapsto \Gamma(\tau,\xi;s,y)$ is bounded and continuous: this is shown using $k\leq 0$, continuity of $k$ and the fact that if $(P_r)_{r\in[\tau,s]}$ is a Brownian bridge from $(\tau,\xi/\sigma)$ to $(s,0)$, then $(P_r+y(r-\tau)/(s-\tau))_{r\in[\tau,s]}$ is a Brownian bridge from $(\tau,\xi/\sigma)$ to $(s,y)$.
	Therefore, by \eqref{eq:u_in_terms_of_Gamma} and the discussion before, $\Gamma(\tau,\xi;\cdot,\cdot)$ is in $\cC^{1,2}((s,T] \times \R)$ and satisfies \eqref{eq:PDE_k} on $(s,T] \times \R$. Since this is true for any $s \in (\tau,T)$, this concludes the proof.
\end{proof}

Using this connection, the proof of Proposition \ref{prop:estimate_G} boils down to the study of the following PDE on $[0,1) \times \R$
\begin{equation} \label{eq:main_PDE}
\partial_t u = \varrho \left( \partial_{xx}^2 u - (1-t)^{-\alpha} \abs{x}^{\alpha} u \right),
\end{equation}
where $\varrho > 0$ is a parameter meant to be large.
More precisely, let $g$ denote the fundamental solution of \eqref{eq:main_PDE} given by Lemma \ref{lem:fundamental_solution} (it is defined consistently on $[0,T]$ for any $T < 1$).
The remainder of Section \ref{sec:BM_weighted} is dedicated to the proof of the following result.

\begin{prop} \label{prop:estimate_g} 
	Let $\alpha \in (0,2)$ and $\kappa \coloneqq 2\alpha/(2+\alpha) \in (0,1)$.
	There exist $C,c > 0$ such that, for any $\varrho \geq 40$, $x,\xi \in \R$, $T \in (0,1)$ and $\delta>0$ satisfying
	\[
	T \geq \frac{20}{\varrho}, \qquad 
	\varrho (1-T)^{1-\kappa} \geq 10, \qquad 
	\frac{1}{\varrho (1-T)^{1-\kappa}} \leq \delta \leq \frac{1}{10} \wedge \frac{T^2\varrho}{100},
	\]
	we have
	\begin{align*}
	& \abs{\exp \left( \lambda_0 \varrho \int_0^T \frac{\diff s}{(1-s)^{\kappa}} \right) g(0,\xi;T,x) 
		- \varphi_{0}(\xi) \cdot (1-T)^{-\kappa/4} \varphi_{0}\left( (1-T)^{-\kappa/2} x \right)} \\
	& \leq C \left( \delta + e^{-c\varrho T} \right)
	(1-T)^{-\kappa/4}
	\left( e^{-c \abs{\xi}^{(2+\alpha)/2}}
	+ e^{-c (\varrho \delta)^{1/2} (1+\abs{\xi}^{\alpha})} \right) \\
	&  {} \times
	\left( 
	\exp \left( -c \left( \frac{\abs{x}}{(1-T)^{\kappa/2}} \right)^{\frac{2+\alpha}{2}} \right)
	+ \exp\left(-c (\varrho(1-T)^{1-\kappa} \delta)^{1/2}
	\left( 1 + \left( \frac{\abs{x}}{(1-T)^{\kappa/2}} \right)^{\alpha} \right) \right)
	\right),
	\end{align*}
	where $\lambda_0$, $\varphi_0$ are defined in Proposition \ref{prop:sturm-liouville}.
\end{prop}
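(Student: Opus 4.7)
The strategy is to reduce the time-inhomogeneous PDE \eqref{eq:main_PDE} to an essentially autonomous Schr\"odinger-type problem by a self-similar change of variables, then exploit the spectral decomposition of the resulting Sturm--Liouville operator and treat the residual drift as a small Duhamel perturbation. The exponent $\kappa = 2\alpha/(2+\alpha)$ is exactly the one that balances $\partial_{xx}^2$ and $(1-t)^{-\alpha}\abs{x}^\alpha$ under a rescaling of $x$. Accordingly, I would set
\[
\tilde u(s,y) \coloneqq (1-t)^{\kappa/4}\, u\bigl(t, (1-t)^{\kappa/2}y\bigr),
\qquad
s(t) \coloneqq \frac{\varrho}{1-\kappa}\bigl(1 - (1-t)^{1-\kappa}\bigr),
\]
and a direct computation then shows that $\tilde u$ solves $\partial_s \tilde u = L_0 \tilde u - \epsilon(s)L_1 \tilde u$, where $L_0 \coloneqq \partial_{yy}^2 - \abs{y}^\alpha$, $L_1 \coloneqq y\partial_y + \tfrac12$, and $\epsilon(s) \coloneqq \kappa(1-t(s))^{\kappa-1}/(2\varrho)$. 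The hypothesis $\varrho(1-T)^{1-\kappa}\geq 10$ guarantees $\epsilon\leq \kappa/20$ uniformly on $[0,s_T]$. The change of variables relates the fundamental solutions by $g(0,\xi;T,x) = (1-T)^{-\kappa/4}\tilde g(0,\xi;s_T,(1-T)^{-\kappa/2}x)$, so the proposition reduces to showing $\tilde g(0,\xi;s_T,y) \approx e^{-\lambda_0 s_T}\varphi_0(\xi)\varphi_0(y)$.

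Next, spectral analysis produces the leading behaviour. Section~\ref{sec:sturm-liouville} provides the orthonormal eigensystem $\{(\lambda_n,\varphi_n)\}_{n\geq0}$ of $A\coloneqq -L_0$ with $0<\lambda_0<\lambda_1<\cdots$, so Mercer's theorem gives $U_0(s)(\xi,y) = \sum_n e^{-\lambda_n s}\varphi_n(\xi)\varphi_n(y)$ for the unperturbed propagator, and the spectral gap $\lambda_1 - \lambda_0 > 0$ contributes an extra $e^{-(\lambda_1-\lambda_0)s_T}$ factor to the error beyond the ground-state term. The crucial structural observation for the drift is that $L_1 = \tfrac{1}{2}(y\partial_y + \partial_y y)$ is anti-self-adjoint on $L^2(\R)$, so in particular $\langle\varphi_0, L_1\varphi_0\rangle = 0$: each action of $L_1$ on the ground state projects orthogonally off $\varphi_0$, after which $U_0$ decays at the faster rate $\lambda_1$. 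In the Duhamel expansion
\[
\tilde U(s_T) = U_0(s_T) - \int_0^{s_T}\epsilon(r)\,U_0(s_T - r)\,L_1\,\tilde U(r)\,dr,
\]
this gives a first-order correction of size at most $C e^{-\lambda_0 s_T}\epsilon(s_T)/(\lambda_1 - \lambda_0)$. Since $\epsilon$ is monotone increasing with $\epsilon(s_T) = \kappa(1-T)^{\kappa-1}/(2\varrho)\leq C\delta$ under the hypothesis $\delta\geq 1/(\varrho(1-T)^{1-\kappa})$, this produces the $\delta$ factor in the statement, while higher-order iterates form a geometric series converging thanks to $\sup\epsilon\leq\kappa/20$.

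The two families of spatial tails in the error come from distinct mechanisms. The factor $e^{-c\abs{\xi}^{(2+\alpha)/2}}$, together with its $x$-counterpart, is the WKB/Agmon decay of the low-lying eigenfunctions of the confining operator $A$ (the exponent $(2+\alpha)/2$ is the standard Agmon exponent for potential $\abs{y}^\alpha$, obtained from $d_A(y,0) = \int_0^{\abs{y}}\sqrt{r^\alpha}\,dr$). The alternative factor $e^{-c(\varrho\delta)^{1/2}(1+\abs{\xi}^\alpha)}$ is instead a Feynman--Kac bound, useful precisely when $\abs{\xi}$ is so large that the eigenfunction bounds are too weak: on a short initial sliver of time whose length is calibrated by $\delta$, any Brownian bridge starting at $\xi$ accumulates integrated potential of size $\varrho\int (1-r)^{-\alpha}\abs{W_r}^\alpha dr \gtrsim (\varrho\delta)^{1/2}\abs{\xi}^\alpha$ after optimizing the length of the sliver, which via Feynman--Kac yields the stretched tail. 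The $x$-endpoint is treated symmetrically, the additional $(1-T)^{1-\kappa}$ factor in the exponent reflecting the time rescaling near $t = T$.

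The main obstacle is making the Duhamel iteration rigorous despite $L_1$ being unbounded. One would rely on relative-boundedness estimates $\|L_1 f\|\lesssim\|A^{1/2}f\|$, obtained from WKB control $\|y\varphi_n'\|_2\lesssim\lambda_n^{1/2}$, together with the regularizing effect of $U_0(s-r)$ (so that $U_0(s-r)L_1$ has operator norm $\lesssim(s-r)^{-1/2}$), to establish convergence of the iterated kernels in operator norm and translate this into pointwise bounds on $\tilde g$. A secondary technical challenge is interpolating cleanly between the WKB and Feynman--Kac tail regimes so that the sum of two exponentials in the statement holds uniformly across the full range of $(\varrho, T, \delta, \xi, y)$ allowed by the hypotheses.
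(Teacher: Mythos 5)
Your skeleton is genuinely the same mechanism as the paper's, just written in a moving frame: the self-similar rescaling by $(1-t)^{\kappa/2}$ is equivalent to expanding in the time-dependent eigenbasis $\varphi_{q(t),n}$, your perturbation $\epsilon(s)L_1$ with $L_1=y\partial_y+\tfrac12$ is exactly the antisymmetric coupling matrix $A$ of Lemma \ref{lem:ode:c}, the vanishing of $\langle\varphi_0,L_1\varphi_0\rangle$ is the paper's $A_{00}=0$, and $\epsilon(s_T)\asymp 1/(\varrho(1-T)^{1-\kappa})\leq\delta$ is the right size for the leading error. However, the step that makes this quantitative contains a genuine gap: the bounds you invoke to run the Duhamel iteration are false for $\alpha<2$. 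One has $\norme{\varphi_n'}_2^2=\lambda_n-\int\abs{y}^\alpha\varphi_n^2\leq\lambda_n$ but $\varphi_n$ lives on the scale $\abs{y}\sim\lambda_n^{1/\alpha}$, so $\norme{y\varphi_n'}_2\asymp\lambda_n^{1/2+1/\alpha}$, not $\lambda_n^{1/2}$ (check $\alpha=2$ with ladder operators: $\norme{y\varphi_n'}\sim n\sim\lambda_n$). Consequently $L_1$ is not relatively bounded by $A^{1/2}$, and the smoothing estimate $\norme{U_0(\tau)L_1}\lesssim\tau^{-1/2}$ fails; the true exponent $\tau^{-(1/2+1/\alpha)}$ is non-integrable at $\tau=0$ precisely when $\alpha<2$, so the operator-norm Duhamel series does not close. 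The paper avoids ever bounding $L_1$ as an operator: antisymmetry of the full matrix $A$ makes $t\mapsto\norme{c(t)}_2$ non-increasing (Lemma \ref{lemma:a_priori_estimates_1}), and the only coupling norm ever needed is that of the single row $(A_{0j})_{j\geq1}$, which is $\leq C(1+\norme{x\varphi_0'}_2)<\infty$; the drift of $c_0$ and of $\norme{\overline{c}}_2$ is then controlled by a scalar Gr\"onwall argument, not by iterating kernels.

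The second missing ingredient is the initialization and termination. Your expansion starts from $\delta_\xi$, whose coefficient sequence $(\varphi_n(\xi))_n$ is not in $\ell^2$, so the first Duhamel iterate is not even defined; symmetrically, the pointwise evaluation at the endpoint $x$ requires summing $\sum_n c_n(T)\varphi_n(T,x)$ with quantitative decay in $n$. The paper manufactures both by replacing $(1-t)^{-\alpha}$ with the sandwiching functions $q_*,q^*$ of Lemma \ref{construction:q_star}, constant on $[0,\eps_1]$ and $[T-\eps_2,T]$ with $\eps_1=(\delta/\varrho)^{1/2}$ and $\eps_2=(\delta(1-T)^{1+\kappa}/\varrho)^{1/2}$: on these plateaus the coupling vanishes identically and the coefficients decay exactly as $e^{-\varrho\eps_i(\lambda_n-\lambda_0)q^{2/(2+\alpha)}}$, which is what produces, via Lemma \ref{lem:technical_series}, the two competing tails $e^{-c\abs{\xi}^{(2+\alpha)/2}}$ and $e^{-c(\varrho\delta)^{1/2}(1+\abs{\xi}^\alpha)}$ in the statement (and their $x$-counterparts). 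Your Feynman--Kac "sliver" heuristic points at the same tails but is not integrated with the spectral argument; as written it also ignores the bridge escaping $\xi$ quickly. To repair your proof you would either need to import the paper's plateau construction and energy estimate, or find a genuinely different way to regularize $\delta_\xi$ and to exploit antisymmetry without operator-norm bounds on $L_1$.
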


\begin{rem}
	If the constraints on $\varrho$ and $T$ are satisfied, then the condition on $\delta$ is non empty, that is one can check that 
	\[
	\frac{1}{\varrho (1-T)^{1-\kappa}} \leq \frac{1}{10} \wedge \frac{T^2\varrho}{100}.
	\]
	The fact that the left-hand side is at most $1/10$ is direct. To prove that it is less than $T^2\varrho/100$, one can distinguish two cases: if $(1-T)^{1-\kappa} > 1/2$, it follows from $T \geq 20/\varrho$ and, in the opposite case, one has necessarily $T\geq 1/2$ and so it follows from $\varrho (1-T)^{1-\kappa} \geq 10$ and $\varrho \geq 40$.
\end{rem}

As a consequence of this remark, it is always allowed to take $\delta = 1/(\varrho (1-T)^{1-\kappa})$ in the previous result, if the other assumptions on $\varrho$ and $T$ are satisfied. This gives the following simpler bound, where we do not try to optimize the tails in $\xi$ and $x$.

\begin{cor} \label{cor:estimate_g} 
	Let $\alpha \in (0,2)$ and $\kappa \coloneqq 2\alpha/(2+\alpha) \in (0,1)$.
	There exist $C,c > 0$ such that, for any $\varrho \geq 40$, $x,\xi \in \R$, $T \in (0,1)$ and $\delta>0$ satisfying $T \geq 20/ \varrho$ and $\varrho (1-T)^{1-\kappa} \geq 10$,
	we have
	\begin{align*}
	& \abs{\exp \left( \lambda_0 \varrho \int_0^T \frac{\diff s}{(1-s)^{\kappa}} \right) g(0,\xi;T,x) 
		- \varphi_{0}(\xi) \cdot (1-T)^{-\kappa/4} \varphi_{0}\left( (1-T)^{-\kappa/2} x \right)} \\
	& \leq C \left( \frac{1}{\varrho (1-T)^{1-\kappa}} + e^{-c\varrho T} \right)
	(1-T)^{-\kappa/4}
	e^{-c \abs{\xi}^\alpha} 
	\exp \left( -c \left( \frac{\abs{x}}{(1-T)^{\kappa/2}} \right)^\alpha \right).
	\end{align*}
\end{cor}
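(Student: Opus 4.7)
\textbf{Proof plan for Corollary \ref{cor:estimate_g}.} The plan is simply to apply Proposition~\ref{prop:estimate_g} with the specific choice
\[
\delta = \frac{1}{\varrho(1-T)^{1-\kappa}},
\]
and then to simplify the resulting bound by discarding the sharper of the two tail terms in $\xi$ and in $x$. The remark following Proposition~\ref{prop:estimate_g} already verifies that this choice of $\delta$ satisfies the constraint $1/(\varrho(1-T)^{1-\kappa}) \leq \delta \leq 1/10 \wedge T^2\varrho/100$, so there is no feasibility issue given the assumptions $\varrho \geq 40$, $T\geq 20/\varrho$ and $\varrho(1-T)^{1-\kappa}\geq 10$.

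Next I would plug this $\delta$ into the estimate. With this choice, $\varrho\delta = (1-T)^{-(1-\kappa)} \geq 1$ and $\varrho(1-T)^{1-\kappa}\delta = 1$, so the tail factors from Proposition~\ref{prop:estimate_g} reduce to
\[
e^{-c|\xi|^{(2+\alpha)/2}} + e^{-c(1+|\xi|^\alpha)}
\quad\text{and}\quad
\exp\bigl(-c(|x|/(1-T)^{\kappa/2})^{(2+\alpha)/2}\bigr) + \exp\bigl(-c(1+(|x|/(1-T)^{\kappa/2})^{\alpha})\bigr).
\]
Since $\alpha \in (0,2)$, we have $(2+\alpha)/2 > \alpha$, hence for $|\xi|$ large the power $|\xi|^{(2+\alpha)/2}$ dominates $|\xi|^\alpha$, so the first term is bounded by a constant multiple of the second; for $|\xi|$ bounded, both are $O(1)$ and can be absorbed into the constants. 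Thus the sum is bounded by $Ce^{-c|\xi|^\alpha}$, and similarly for the $x$ side by $C\exp(-c(|x|/(1-T)^{\kappa/2})^\alpha)$. Up to relabeling constants, this is the bound claimed in Corollary~\ref{cor:estimate_g}.

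There is no genuine obstacle here: the corollary is a direct specialisation of Proposition~\ref{prop:estimate_g}, and the only point requiring a line of justification is the comparison of the two exponents $(2+\alpha)/2$ and $\alpha$, which is immediate for $\alpha < 2$. Everything else is substitution. The real work lies entirely in the proof of Proposition~\ref{prop:estimate_g} (via the spectral/PDE analysis of the operator behind \eqref{eq:main_PDE}), which is assumed known at this point.
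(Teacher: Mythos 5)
Your proposal is correct and follows exactly the route the paper takes: the corollary is obtained by setting $\delta = 1/(\varrho(1-T)^{1-\kappa})$ in Proposition~\ref{prop:estimate_g}, using the remark after that proposition to check feasibility, and then weakening each pair of tail factors to the slower $|\,\cdot\,|^\alpha$ exponent since $(2+\alpha)/2 > \alpha$ for $\alpha < 2$. Nothing to add.
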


Next, we explain how to deduce Proposition~\ref{prop:estimate_G} from the corollary above.
\begin{proof}[Proof of Proposition \ref{prop:estimate_G}]
	By definition of $G$ in \eqref{eq:def_G} and the scaling property of Brownian motion, one has, for any $0 < s < t$ and $x,y \in \R$, 
	\begin{align*}
	 \sqrt{t} & G\left(s,x\sqrt{t};t,y\sqrt{t}\right) \\
	& = \frac{\sqrt{t}}{\sqrt{2\pi(t-s)}} \exp \left( - \frac{(y-x)^2t}{2(t-s)} \right)
	\E_{(s/t,x)} \left[ \exp \left( - \beta t^{1-\alpha/2} \int_{s/t}^1 \abs{\frac{B_r}{\sqrt{2}r}}^\alpha \diff r \right) \middle| B_1 = y \right] \\
	& = \frac{1}{\sqrt{2\pi(1-\tau)}} \exp \left( - \frac{(y-x)^2}{2(1-\tau)} \right)
	\E_{(\tau,x)} \left[ \exp \left(- \varrho \int_\tau^1 \abs{\frac{\sqrt{2\varrho} B_r}{r}}^\alpha \diff r \right) \middle| B_1 = y \right].
	\end{align*}
	setting $\tau = s/t$ and $\varrho$ such that $\beta t^{1-\alpha/2}2^{-\alpha/2} = \varrho (2\varrho)^{\alpha/2}$, i.e.\ $\varrho = \beta^{2/(2+\alpha)} 2^{-2\alpha/(2+\alpha)} t^{1-\kappa}$. 
	On the other hand, using the first expression for $g$ given by Lemma \ref{lem:fundamental_solution} with $\sigma = \sqrt{2\varrho}$, we have
	\begin{align*}
	& g(0,y;1-\tau,x) \\
	& = \frac{1}{\sqrt{2\pi \sigma^2 (1-\tau)}} 
	\exp \left( - \frac{(y-x)^2}{2 \sigma^2 (1-\tau)} \right)
	\E_{(0,x/\sigma)} \left[ \exp \left( - \varrho \int_0^{1-\tau} \abs{\frac{\sigma B_r}{\tau+r}}^\alpha \diff r \right) \middle| \sigma B_{1-\tau} = y \right].
	\end{align*}
	Doing a time shift by $\tau$ for the Brownian motion in the last expectation, we obtain the following relation
	\begin{equation} \label{eq:relation_G_g}
	G(s,x;t,y) 
	= \sqrt{\frac{2\varrho}{t}} g\left( 0,\sqrt{\frac{2\varrho}{t}} y; 1-\frac{s}{t}, \sqrt{\frac{2\varrho}{t}} x \right).
	\end{equation}
	Then, the result follows from Corollary \ref{cor:estimate_g} with $\varrho$ given above and $T = 1-\frac{s}{t}$.
\end{proof}

\subsection{Associated Sturm--Liouville operators}
\label{sec:sturm-liouville}

Fix $\alpha > 0$.
An important tool in the study of the PDE \eqref{eq:main_PDE} is the following family of Sturm--Liouville operators:
for any $q>0$, let $\cL_q$ be defined by
\[
(\cL_q f)(x) \coloneqq -f''(x) + q\abs{x}^\alpha f(x), \quad x\in \R,
\]
for any $f$ in the domain
\[
	D(\cL_q) \coloneqq \{ f \in L^2(\R) : f,f' \text{ absolutely continuous on } \R, \cL_q f \in L^2(\R) \}.
\]
These operators are densely defined on $L^2(\R)$ \cite[Lemma 9.4]{Tes2014} and self-adjoint \cite[Theorem 9.6]{Tes2014}.
In the case $q=1$, we simply write $\cL = \cL_1$.

\begin{prop} \label{prop:sturm-liouville}
	Let $\alpha > 0$.
	\begin{enumerate}
		\item\label{it:cL_eigenbasis} The spectrum of $\cL$ is discrete and consists of the eigenvalues $0<\lambda_0 < \lambda_1 < \lambda_2 < \cdots$ which all have multiplicity 1. 
		Let $\varphi_n$ be an eigenfunction associated to $\lambda_n$, then it has exactly $n$ zeroes, so it can be chosen uniquely such that $\norme{\varphi_n}_2 = 1$ and $\varphi_n(x) > 0$ for any $x$ greater than its largest zero.
		Moreover, $(\varphi_n)_{n\geq 0}$ is an orthonormal basis of $L^2(\R)$. 
		\item\label{it:link_cL_cL_q} Let $q > 0$. The previous point also holds for $\cL_q$ with eigenvalues $(\lambda_{q,n})_{n\geq 0}$ and eigenfunctions $(\varphi_{q,n})_{n\geq 0}$ such that, for any $n \geq 0$ and $x \in \R$,
	\begin{equation}\label{eq:eigenvalues_Lgamma}
		\lambda_{q,n} = q^{2/(2+\alpha)} \lambda_n
		\qquad \text{and} \qquad
		\varphi_{q,n}(x) = q^{1/[2(2+\alpha)]} \varphi_n(q^{1/(2+\alpha)} x).
	\end{equation}
		\item\label{it:asymp_lambda_n} Let $c_\alpha \coloneqq \frac{2}{\pi} \int_0^1 \sqrt{1-u^\alpha} \diff u$. Then, as $n \to \infty$,
		\begin{equation} \label{eq:asymp_lambda_n}
		\lambda_n \sim \left( \frac{n}{c_\alpha} \right)^{2\alpha/(\alpha+2)}.
		\end{equation}
		\item\label{it:parity} For any $n \geq 0$, the function $\varphi_n$ has the same parity as $n$.
		\item\label{it:tail} There exists $C >0$ depending on $\alpha$ such that, for any $n \geq 0$ and $x \in \R$, 
		\[
		\abs{\varphi_n(x)}
		\leq C (n+1)^3 \left[ 1 \wedge \exp \left( - \frac{1}{2+\alpha} \left( \abs{x}^{(2+\alpha)/2} -  Cn \right) \right) \right].
		\]
		\item\label{it:tail_derivative} There exist $C,c>0$ depending on $\alpha$ such that, for any $x \in \R$, 
		$\abs{\varphi_0'(x)}
		\leq C e^{-c \abs{x}^{(2+\alpha)/2}}$.
	\end{enumerate}
\end{prop}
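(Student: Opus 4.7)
The plan is to exploit the fact that each $\cL_q$ is a one-dimensional Schrödinger operator with confining potential $V(x)=q\abs{x}^\alpha\to+\infty$ as $\abs{x}\to\infty$, so that standard Schrödinger/Sturm--Liouville tools apply.

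\textbf{Parts \ref{it:cL_eigenbasis} and \ref{it:link_cL_cL_q}.} Since $V$ is continuous and tends to $+\infty$, a standard resolvent argument (see e.g.\ \cite[Theorem~10.7]{Tes2014}) shows that $\cL$ has compact resolvent, hence purely discrete spectrum bounded below. Self-adjointness on $D(\cL)$ is already recalled before the statement from \cite[Theorem~9.6]{Tes2014}. Simplicity of eigenvalues together with the zero-counting property follow from classical Sturm oscillation theory for regular/limit-point Sturm--Liouville problems. The positivity $\lambda_0>0$ is obtained via integration by parts: $\lambda_0 = \int (\varphi_0')^2 + \abs{x}^\alpha \varphi_0^2 > 0$ since $\varphi_0 \not\equiv 0$. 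The sign and $L^2$-normalization of $\varphi_n$ are then a convention. For \ref{it:link_cL_cL_q}, I would plug in $\varphi_{q,n}(x) := q^{1/[2(2+\alpha)]}\varphi_n(q^{1/(2+\alpha)}x)$ and check directly that it is an $L^2$-normalized eigenfunction of $\cL_q$ with eigenvalue $q^{2/(2+\alpha)}\lambda_n$; completeness is transferred through the unitary dilation $f\mapsto q^{1/[2(2+\alpha)]} f(q^{1/(2+\alpha)}\,\cdot\,)$, which intertwines $\cL_q$ with $q^{2/(2+\alpha)}\cL$.

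\textbf{Parts \ref{it:asymp_lambda_n} and \ref{it:parity}.} For \ref{it:asymp_lambda_n}, I would invoke Weyl's law for 1d Schrödinger operators with confining potential: the counting function satisfies
\[
N(\lambda) := \#\{n : \lambda_n \leq \lambda\} \sim \frac{1}{\pi} \int_{\R} \sqrt{(\lambda-\abs{x}^\alpha)_+}\,\diff x = \frac{2}{\pi} \lambda^{\frac{1}{\alpha}+\frac{1}{2}} \int_0^1 \sqrt{1-u^\alpha}\,\diff u = c_\alpha\, \lambda^{(2+\alpha)/(2\alpha)},
\]
after the substitution $x=\lambda^{1/\alpha}u$. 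Inverting yields \eqref{eq:asymp_lambda_n}. For \ref{it:parity}, evenness of $V$ means $\varphi_n(-\cdot)$ is an eigenfunction for $\lambda_n$, so simplicity forces $\varphi_n$ to be even or odd. An even eigenfunction cannot vanish at $0$ (otherwise $\varphi_n(0)=\varphi_n'(0)=0$ would force $\varphi_n\equiv 0$ by Cauchy uniqueness), hence its zeros come in symmetric pairs and are even in number; an odd eigenfunction vanishes at $0$ and has an odd number of zeros. Since $\varphi_n$ has exactly $n$ zeros, its parity coincides with that of $n$.

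\textbf{Parts \ref{it:tail} and \ref{it:tail_derivative}.} These are the most delicate. For \ref{it:tail}, I would use an Agmon-type weighted energy estimate: for any Lipschitz $\rho$ with $(\rho')^2 \leq (1-\eps)(\abs{x}^\alpha-\lambda_n)_+$, multiplying the equation $-\varphi_n''+\abs{x}^\alpha\varphi_n=\lambda_n\varphi_n$ by $e^{2\rho}\varphi_n$ and integrating by parts yields $\norme{e^\rho \varphi_n}_2 \leq C_\eps$ uniformly. Choosing $\rho(x) = (1-\eps)\int_0^{\abs{x}} \sqrt{(\abs{s}^\alpha-\lambda_n)_+}\,\diff s$ and noting that for $\abs{x}\geq (2\lambda_n)^{1/\alpha}$ one has $\rho(x) \geq \tfrac{1-\eps}{2+\alpha}\abs{x}^{(2+\alpha)/2} - C\,\lambda_n^{(2+\alpha)/(2\alpha)}$, then substituting $\lambda_n^{(2+\alpha)/(2\alpha)}\sim n/c_\alpha$ from \ref{it:asymp_lambda_n}, produces the exponent $-\tfrac{1}{2+\alpha}(\abs{x}^{(2+\alpha)/2} - Cn)$ (after letting $\eps\to 0$ up to a slightly worse constant, or equivalently absorbing $\eps$ into $C$). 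To promote the $L^2$ bound to a pointwise one I would apply the 1d Sobolev estimate $\abs{f(x)}^2 \leq 2\norme{f}_2\norme{f'}_2$ to $e^{\rho}\varphi_n$, using that $\norme{\varphi_n'}_2^2 \leq \lambda_n$ by integration by parts; the $(n+1)^3$ prefactor is a generous bound that absorbs the resulting sub-polynomial $n$-dependence together with the derivative of $\rho$. For \ref{it:tail_derivative}, I would integrate the equation $\varphi_0'(x) = -\int_x^\infty (\lambda_0-\abs{s}^\alpha)\varphi_0(s)\,\diff s$ for $x>0$ large, and substitute \ref{it:tail} with $n=0$: a Laplace-type estimate on the integral yields the claimed exponential bound, and symmetry handles $x<0$.

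The main obstacle will be part \ref{it:tail}: making the Agmon weight quantitative jointly in $x$ and $n$ so as to extract the sharp constant $1/(2+\alpha)$ together with the linear allowance $Cn$ inside the exponential, while only paying a polynomial prefactor in $n$ after passing from $L^2$ to $L^\infty$.
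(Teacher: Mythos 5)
Your proposal is correct, and for the most delicate part it follows a genuinely different route from the paper. Parts~\ref{it:cL_eigenbasis}, \ref{it:link_cL_cL_q}, \ref{it:parity} and~\ref{it:tail_derivative} are essentially as in the paper (your proof of $\lambda_0>0$ via the quadratic-form identity is a clean alternative to the paper's convexity-contradiction argument), and for Part~\ref{it:asymp_lambda_n} you derive Weyl's law explicitly whereas the paper cites a reference. The real divergence is Part~\ref{it:tail}. You propose an Agmon-type weighted $L^2$ estimate: with $u=e^\rho\varphi_n$ one has the energy identity $\int(u')^2+\int(\abs{x}^\alpha-(\rho')^2-\lambda_n)u^2=0$, giving $\norme{u}_2\le C_\eps$ for the weight $\rho=(1-\eps)\int_0^{\abs{x}}\sqrt{(s^\alpha-\lambda_n)_+}\,\diff s$, and then pass to $L^\infty$ by Sobolev interpolation. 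The paper instead constructs an increasing comparison solution $\psi_n$ of $\cL\psi_n=\lambda_n\psi_n$ on $[b,\infty)$ with $b=\alpha\vee(2\lambda_n)^{1/\alpha}$, exploits the constancy of the Wronskian $W=\varphi_n\psi_n'-\varphi_n'\psi_n$ to get $\abs{\varphi_n}\le W/\psi_n'$, and lower-bounds $\psi_n'$ by comparison with $g(x)=\exp\bigl(\tfrac{1}{2+\alpha}(x^{(2+\alpha)/2}-b^{(2+\alpha)/2})\bigr)$, relying on the auxiliary convexity and $L^\infty$ bounds of Lemma~\ref{lem:sturm-liouville}. Your route is conceptually cleaner and dimension-agnostic; the paper's is more elementary and self-contained. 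Two points to tighten in your sketch: (i) the interpolation $\norme{u}_\infty^2\le 2\norme{u}_2\norme{u'}_2$ needs $\norme{u'}_2$, not $\norme{\varphi_n'}_2$ — but the same energy identity gives $\norme{u'}_2^2\le\lambda_n$ directly (the term $\abs{x}^\alpha-(\rho')^2-\lambda_n$ is nonnegative outside the classically allowed region), so there is no need to track $\rho'$ separately; (ii) since $\sqrt{(s^\alpha-\lambda_n)_+}\ge s^{\alpha/2}/\sqrt{2}$ for $s^\alpha\ge 2\lambda_n$, your $\rho$ actually grows like $\frac{(1-\eps)\sqrt 2}{2+\alpha}\abs{x}^{(2+\alpha)/2}$, so a fixed small $\eps$ already beats the exponent $\tfrac{1}{2+\alpha}$, making the ``letting $\eps\to 0$'' language unnecessary (and indeed one cannot send $\eps\to 0$ since $C_\eps$ blows up).
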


\begin{proof}
	Part \ref{it:cL_eigenbasis}.
	The fact that the essential spectrum of $\cL$ is empty follows from \cite[Proposition 4.5.4]{BenBroWei2020} together with the fact that any self-adjoint realization of $-f'' + \abs{x}^\alpha f$ in $L^2((0,\infty))$ or $L^2((-\infty,0))$ has empty essential spectrum by \cite[Theorem 4.5.8]{BenBroWei2020}. 
	Then, it follows from \cite[Theorem 6.1.9]{BenBroWei2020} that the discrete spectrum is bounded from below so it can be written $\lambda_0 < \lambda_1 < \lambda_2 < \cdots$. Moreover, eigenvalues have multiplicity 1 (this is more clearly stated in \cite[Theorem 10.12.1.(8).(ii)]{Zet2005}) and an eigenfunction $\varphi_n$ associated with $\lambda_n$ has $n$ zeroes. 
	We now chose $\varphi_n$ uniquely as in the statement.
	The fact that $\lambda_0 > 0$ is obtained by contradiction: if $\lambda_0 \leq 0$, then $\varphi_0''(x) = (\abs{x}^\alpha-\lambda_0) \varphi_0(x) \geq 0$ for any $x \in \R$ (using $\varphi_0 > 0$ because it has no zero), so $\varphi_0$ is convex in $\R$, which contradicts $\varphi_0 \in L^2$. 
	Finally, $(\varphi_n)_{n\geq 0}$ is an orthonormal basis of $L^2(\R)$ by \cite[Corollary 4.2.3]{Tes2014}.
	
	Part \ref{it:link_cL_cL_q}. This follows from a direct verification.
	
	Part \ref{it:asymp_lambda_n}. This is a consequence of \cite[Theorem 6.1.12]{BenBroWei2020}, see \cite[Example 6.1.13, Part 5]{BenBroWei2020}.
	
	Part \ref{it:parity}. Let $n \geq 0$. Since $\cL$ preserves parity, the even and the odd parts of $\varphi_n$ are also eigenfunctions associated to $\lambda_n$, so one of them must be zero because $\lambda_n$ has multiplicity 1. So $\varphi_n$ is either even or odd. 
	If $\varphi_n$ is odd, then it has an odd number of zeroes.
	If $\varphi_n$ is even, then $\varphi_n'(0) = 0$ and therefore $\varphi_n(0) \neq 0$ (otherwise $\varphi_n=0$), 
	so $\varphi_n$ has an even number of zeroes.
	But, $\varphi_n$ has $n$ zeroes by Part \ref{it:cL_eigenbasis} so $\varphi_n$ has the same parity as $n$.
	
	Part \ref{it:tail}. This part relies on some preliminary results established in Lemma \ref{lem:sturm-liouville} below. Let $n \geq 0$ and $x \geq 0$ (by parity the result for $x \leq 0$ follows).
	Let $b \coloneqq \alpha \vee (2\lambda_n)^{1/\alpha}$.
	We consider the function $\psi_n$ defined on $[b,\infty)$ by $\cL \psi_n = \lambda_n \psi_n$ together with the initial conditions 
	$\psi_n(b) = 1$ and 
	$\psi_n'(b) = \frac{1}{2} b^{\alpha/2}$.
	Then, the Wronskian 
	\[
	W \coloneqq \varphi_n \psi_n' - \varphi_n' \psi_n,
	\]
	is a constant.
	Moreover, by Lemma \ref{lem:sturm-liouville}.\ref{it:convexity}, $\varphi_n > 0$ and $\varphi_n' < 0$ in $[b,\infty)$ and it follows from a similar argument that $\psi_n$ is convex in $[b,\infty)$ and therefore $\psi_n' > 0$ and $\psi_n \geq 0$.
	Therefore, for any $x \geq b$, 
	\begin{equation} \label{eq:tail1}
	\abs{\varphi_n(x)} 
	= \frac{W + \varphi_n'(x) \psi_n(x)}{\psi_n'(x)}
	\leq \frac{W}{\psi_n'(x)}.
	\end{equation}
	We first bound $W$ by taking its value at $b$: we have $W = \frac{1}{2} b^{\alpha/2} \varphi_n(b) - \varphi_n'(b)$.
	Moreover, using Lemma \ref{lem:sturm-liouville}.\ref{it:convexity},
	\[
	\abs{\varphi_n'(b)} 
	\leq \abs{\varphi_n'(\lambda_n^{1/\alpha})} 
	\leq \int_{y_0}^{\lambda_n^{1/\alpha}} \abs{\varphi_n''(x)} \diff x,
	\]
	where $y_0$ is a zero of $\varphi_n'$ in $[0,\lambda_n^{1/\alpha}]$ (one can see it exists using that either $\varphi_n'(0) = 0$ or $\varphi_n(0) = 0$ by Part \ref{it:parity}).
	Then, using $\varphi_n''(x) = (\abs{x}^\alpha-\lambda_n) \varphi_n(x)$, we get
	\[
	\abs{\varphi_n'(b)} 
	\leq \int_{y_0}^{\lambda_n^{1/\alpha}} \lambda_n \abs{\varphi_n(x)} \diff x
	\leq C \lambda_n^{(\alpha+1)/\alpha} (n+1)^{\alpha/[2(2+\alpha)]},
	\]
	applying Lemma \ref{lem:sturm-liouville}.\ref{it:norm_infinity}. 
	On the other hand, $b^{\alpha/2} \varphi_n(b) \leq C \lambda_n^{1/2} (n+1)^{\alpha/[2(2+\alpha)]}$ by Lemma \ref{lem:sturm-liouville}.\ref{it:norm_infinity} again.
	Hence, we get
	\begin{equation} \label{eq:tail2}
	W \leq C \lambda_n^{(\alpha+1)/\alpha} (n+1)^{\alpha/[2(2+\alpha)]} 
	\leq C (n+1)^3,
	\end{equation}
	using \eqref{eq:asymp_lambda_n}.
	Now, we aim at proving a lower bound for $\psi_n'$.
	For this, we introduce the following function
	\[
	g(x) \coloneqq \exp \left( \frac{1}{2+\alpha} \left( x^{(2+\alpha)/2} - b^{(2+\alpha)/2} \right) \right), \qquad x \geq b.
	\]
	Then, $g'(x) = \frac{1}{2} x^{\alpha/2} g(x)$
	and $g''(x) = (\frac{1}{4} x^{\alpha} + \frac{\alpha}{4} x^{(\alpha-2)/2}) g(x)$.
	In particular, for any $x \geq b$, we have $g''(x) \leq \frac{1}{2} x^{\alpha} g(x) \leq (x^\alpha - \lambda_n) g(x)$, where the first inequality uses $b \geq \alpha$ and the second one $b \geq (2\lambda_n)^{1/\alpha}$.
	On the other hand, $g(b) = \psi_n(b)$ and $g'(b) = \psi_n'(b)$.
	Hence, we get $g \leq \psi_n$ and $g' \leq \psi_n'$ on $[b,\infty)$. Combining this with \eqref{eq:tail1} and \eqref{eq:tail2}, we get, for any $x \geq b$,
	\[
	\abs{\varphi_n(x)} \leq C(n+1)^3 x^{-\alpha/2}  \exp \left( - \frac{1}{2+\alpha} \left( x^{(2+\alpha)/2} - b^{(2+\alpha)/2} \right) \right).
	\]
	The result follows by noting that $b^{(2+\alpha)/2} \leq C n$ by \eqref{eq:asymp_lambda_n} and using that we already now that $\norme{\varphi_n}_\infty \leq C (n+1)^3$ by Lemma \ref{lem:sturm-liouville}.\ref{it:norm_infinity}.
	
	Part \ref{it:tail_derivative}. By Part \ref{it:tail}, we have $\abs{\varphi_0(x)}
	\leq C e^{-c\abs{x}^{(2+\alpha)/2}}$ thus $\abs{\varphi_0''(x)}
	\leq C \abs{x}^\alpha e^{-c\abs{x}^{(2+\alpha)/2}}$ using $\lambda_0 >0$.
	Then, for $x \geq 0$, we have $\abs{\varphi'(x)} \leq \int_x^\infty \abs{\varphi_0''(y)} \diff y$ by Lemma \ref{lem:sturm-liouville}.\ref{it:tend_to_zero} and the result follows.
\end{proof}

\begin{lem} \label{lem:sturm-liouville}
	\begin{enumerate}
		\item\label{it:convexity} For any $n \geq 0$, $\varphi_n$ is convex and decreasing in $[\lambda_n^{1/\alpha},\infty)$;
		\item\label{it:tend_to_zero} For any $n \geq 0$, $\varphi_n$ and $\varphi_n'$ tend to zero at infinity;
		\item\label{it:norm_infinity} There exists $C = C(\alpha) >0$ such that, for any $n \geq 0$, $\norme{\varphi_n}_\infty \leq C (n+1)^{\alpha/[2(2+\alpha)]}$.
	\end{enumerate}
\end{lem}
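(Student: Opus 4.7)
The three statements follow from the ODE $\varphi_n''(x) = (|x|^\alpha - \lambda_n)\varphi_n(x)$ combined with the $L^2$-normalization. The key observation is that in the ``classically forbidden'' region $\{|x|^\alpha > \lambda_n\}$, $\varphi_n$ and $\varphi_n''$ have the same sign, which forces rigidity incompatible with $\varphi_n \in L^2$ unless $\varphi_n$ is signed and decaying. I would establish the items in the order (i) $\Rightarrow$ (ii) $\Rightarrow$ (iii), since each builds on the previous.

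\textbf{Part (i).} I would first argue that every zero of $\varphi_n$ lies in $[-\lambda_n^{1/\alpha},\lambda_n^{1/\alpha}]$. Suppose for contradiction that $\varphi_n(x_0)=0$ for some $x_0>\lambda_n^{1/\alpha}$; since $\varphi_n$ is not identically zero, $\varphi_n'(x_0)\neq 0$, and up to sign we may assume $\varphi_n'(x_0)>0$. Let $b=\sup\{x>x_0:\varphi_n>0\text{ on }(x_0,x)\}$. On $(x_0,b)$ we have $x^\alpha-\lambda_n>0$ and $\varphi_n>0$, so $\varphi_n''>0$, hence $\varphi_n'$ is increasing and stays $\geq \varphi_n'(x_0)>0$; in particular $\varphi_n$ is increasing on $(x_0,b)$ and cannot reach $0$, so $b=+\infty$. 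But then $\varphi_n(x)\geq \varphi_n(x_0+1)>0$ for all $x\geq x_0+1$, contradicting $\varphi_n\in L^2$. By Proposition \ref{prop:sturm-liouville}.\ref{it:cL_eigenbasis}, the normalization gives $\varphi_n>0$ on $(\lambda_n^{1/\alpha},\infty)$, so $\varphi_n''=(x^\alpha-\lambda_n)\varphi_n\geq 0$ there, i.e.\ convexity. For monotonicity, if $\varphi_n'(x_1)>0$ for some $x_1\geq \lambda_n^{1/\alpha}$, convexity forces $\varphi_n'\geq \varphi_n'(x_1)$ on $[x_1,\infty)$, so $\varphi_n$ grows at least linearly, again contradicting $\varphi_n\in L^2$. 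Hence $\varphi_n'\leq 0$ on $[\lambda_n^{1/\alpha},\infty)$.

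\textbf{Part (ii).} Since $\varphi_n$ is positive, convex and decreasing on $[\lambda_n^{1/\alpha},\infty)$, it has a nonnegative limit $\ell$ at $+\infty$; if $\ell>0$, then $\varphi_n\geq \ell$ eventually, contradicting $\varphi_n\in L^2$, so $\ell=0$. Then $\varphi_n'$ is negative and convex-increasing to some $\ell'\leq 0$; if $\ell'<0$, integrating would give $\varphi_n(x)\to -\infty$, impossible. Hence $\varphi_n'\to 0$ at $+\infty$. Parity (Proposition \ref{prop:sturm-liouville}.\ref{it:parity}) transfers the conclusion to $-\infty$.

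\textbf{Part (iii).} Using $\|\varphi_n\|_2=1$, multiply the eigenvalue equation by $\varphi_n$ and integrate on $\R$; integration by parts (legitimate thanks to Part (ii) killing boundary terms) yields
\[
\int_\R \varphi_n'(x)^2\, \diff x + \int_\R |x|^\alpha \varphi_n(x)^2\, \diff x = \lambda_n,
\]
so in particular $\|\varphi_n'\|_2^2\leq \lambda_n$. The one-dimensional Gagliardo–Nirenberg-type bound, obtained by writing $\varphi_n(x)^2=\int_{-\infty}^x 2\varphi_n\varphi_n'\,\diff y$ (valid because $\varphi_n$ vanishes at $-\infty$ by Part (ii)) and applying Cauchy–Schwarz, gives $\|\varphi_n\|_\infty^2\leq 2\|\varphi_n\|_2\|\varphi_n'\|_2\leq 2\sqrt{\lambda_n}$. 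Plugging in the Weyl asymptotic $\lambda_n\sim (n/c_\alpha)^{2\alpha/(\alpha+2)}$ from Proposition~\ref{prop:sturm-liouville}.\ref{it:asymp_lambda_n} (which is proved independently) and absorbing small $n$ into the constant produces $\|\varphi_n\|_\infty\leq C(n+1)^{\alpha/[2(\alpha+2)]}$.

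\textbf{Main obstacle.} The only delicate step is Part (i): the exclusion of zeros in the forbidden region must not rely on Part (ii) (to avoid circularity). The monotonicity-via-convexity-and-$L^2$ argument sketched above is the right tool, but one has to be careful that the contradiction is with $L^2$ integrability alone (polynomial growth suffices to rule out $L^2$). Once (i) is in place, (ii) is essentially a softness argument and (iii) reduces to the standard energy identity plus Sobolev embedding, provided one invokes \ref{it:asymp_lambda_n} as a black box.
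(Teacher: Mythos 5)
Your proposal is correct. Parts \ref{it:convexity} and \ref{it:tend_to_zero} follow essentially the same route as the paper: the paper phrases the exclusion of zeros in $[\lambda_n^{1/\alpha},\infty)$ via convexity of $\abs{\varphi_n}$ in the forbidden region, while you track the sign of $\varphi_n'$ at a putative zero; both derive the contradiction from linear growth versus $L^2$-integrability, and both then get monotonicity and the vanishing of $\varphi_n,\varphi_n'$ at infinity by the same convexity-plus-$L^2$ softness. Your care about proving (i) without (ii) matches the paper's logical order.

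For Part \ref{it:norm_infinity} you take a genuinely different route. The paper localizes: at a point $y$ where $\abs{\varphi_n}$ attains its maximum, $\varphi_n'(y)=0$ and $\abs{\varphi_n''}\leq \lambda_n\norme{\varphi_n}_\infty$ on the classically allowed region, so $\abs{\varphi_n}\geq \tfrac12\norme{\varphi_n}_\infty$ on an interval of length $\asymp\lambda_n^{-1/2}$, and the normalization $\norme{\varphi_n}_2=1$ forces $\norme{\varphi_n}_\infty\leq C\lambda_n^{1/4}$. You instead use the global energy identity $\norme{\varphi_n'}_2^2+\int\abs{x}^\alpha\varphi_n^2=\lambda_n$ plus the Gagliardo--Nirenberg bound $\norme{\varphi_n}_\infty^2\leq 2\norme{\varphi_n}_2\norme{\varphi_n'}_2$, arriving at the same $\norme{\varphi_n}_\infty\leq \sqrt 2\,\lambda_n^{1/4}$; both then invoke the Weyl asymptotics \eqref{eq:asymp_lambda_n}, whose proof is an independent citation, so there is no circularity. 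Your version is shorter and gives an explicit constant, at the cost of justifying the integration by parts (which your Part \ref{it:tend_to_zero} does supply, via the vanishing boundary terms and a truncation argument); the paper's version avoids any Sobolev-type inequality and uses only the pointwise ODE. Either argument is fully valid.
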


\begin{proof}
	Part \ref{it:convexity}. Note that in $[\lambda_n^{1/\alpha},\infty)$, $\varphi_n''$ and $\varphi_n$ have the same sign (considering zero has both signs). 
	Therefore, $\abs{\varphi_n}$ is convex in $[\lambda_n^{1/\alpha},\infty)$. 
	By contradiction, assume $\varphi_n$ has a zero $x_0$ in $[\lambda_n^{1/\alpha},\infty)$. Then, $\varphi_n'(x_0) \neq 0$ (otherwise $\varphi_n = 0$). 
	Then, convexity of $\abs{\varphi_n}$ implies that $\abs{\varphi_n} \to \infty$, which contradicts $\varphi_n \in L^2$. 
	Therefore, $\varphi_n$ has no zero in $[\lambda_n^{1/\alpha},\infty)$ and hence it is positive (by our choice of $\varphi_n$) and so it is convex. 
	If $\varphi_n'$ takes a nonnegative value at $x_1 \in [\lambda_n^{1/\alpha},\infty)$, then $\varphi_n(x) \geq \varphi_n(x_1)$ for all $x \geq x_1$ and this contradicts $\varphi_n \in L^2$. 
	So $\varphi_n' < 0$ and $\varphi_n$ is decreasing in $[\lambda_n^{1/\alpha},\infty)$.
	
	Part \ref{it:tend_to_zero}. This is a consequence of Part \ref{it:convexity}, using again that $\varphi_n \in L^2$.
	
	Part \ref{it:norm_infinity}.
	The idea is to argue that, if $\varphi_n$ takes a large value at some point, then it creates some mass proportional to this value to its $L^2$ norm, which is fixed to 1, hence providing an upper bound for this value. 
	Since $\varphi_n$ is continuous and vanishes at infinity by Part~\ref{it:tend_to_zero}, there exists $y \in \R$ such that $\abs{\varphi_n(y)} = \norme{\varphi_n}_\infty$ and we can assume $y \geq 0$ by parity of $\varphi_n$. Then, $\varphi_n'(y) = 0$ so, for $t \geq 0$,
	\[
	\abs{\varphi_n(y+t)} 
	\geq \abs{\varphi_n(y)} - \frac{t^2}{2} \sup_{x \in [y,y+t]} \abs{\varphi_n''(x)}.
	\]
	Then, on the one hand, we have $y \leq \lambda_n^{1/\alpha}$ as a consequence of Part \ref{it:convexity}. 
	On the other hand, for $x \in [0,(2\lambda_n)^{1/\alpha}]$, $\abs{\varphi''(x)} \leq \abs{x^\alpha-\lambda_n} \cdot \abs{\varphi_n(x)} \leq \lambda_n \norme{\varphi_n}_\infty$.
	Therefore, for any $t \in [0,\lambda_n^{1/\alpha}]$,
	\[
	\abs{\varphi_n(y+t)} 
	\geq \norme{\varphi_n}_\infty - \frac{t^2}{2} \lambda_n \norme{\varphi_n}_\infty.
	\]
	There is a constant $c \in (0,1)$ depending only on $\alpha$ such that $c\lambda_n^{-1/2} \leq \lambda_n^{1/\alpha}$ for any $n \geq 0$.
	Then, we write
    \begin{align*}
	1 &= \norme{\varphi_n}_2^2
	\geq \int_0^{c\lambda_n^{-1/2}} \abs{\varphi_n(y+t)}^2 \diff t \\
	& \geq \norme{\varphi_n}_\infty^2 \int_0^{c\lambda_n^{-1/2}} \left( 1 - \frac{t^2}{2} \lambda_n \right) \diff t 
	= \norme{\varphi_n}_\infty^2 \lambda_n^{-1/2} \left( c - \frac{c^3}{6} \right),
    \end{align*}
	and therefore $\norme{\varphi_n}_\infty \leq C \lambda_n^{1/4}$ and the result follows from \eqref{eq:asymp_lambda_n}.
\end{proof}

\subsection{Proof of Proposition \ref{prop:estimate_g}}

In this section, we prove, up to some postponed lemmas, Proposition \ref{prop:estimate_g} concerning the fundamental solution estimate for the PDE \eqref{eq:main_PDE}.
We follow ideas from~\cite[Proposition A.2]{MaiZei2016}, which study the case $\alpha = 1$ and where the function $(1-t)^{-\alpha}$ is replaced by a $\cC^1$ function on $[0,1]$ (hence, with no explosion at 1). Moreover, we aim at getting better error terms, in particular with explicit tails in terms of $x$ and $y$.

Fix some horizon $T \in (0,1)$.
For $q \colon [0,T] \to (0,\infty)$ a Lipschitz continuous function, we consider the more general PDE on $[0,T] \times \R$
\begin{equation} \label{eq:PDE}
\partial_t u 
= \varrho \left( \partial_{xx}^2 u - q(t) \abs{x}^{\alpha} u \right),
\end{equation}
for some $\alpha > 0$ fixed and $\varrho > 0$ which is a parameter meant to be large. 

Fix some $\xi \in \R$. Recall our aim is to estimate $g(0,\xi;T,x)$.
To do this, we study the PDE~\eqref{eq:PDE} with $q = q_*$ or $q = q^*$, where $q_*$ and $q^*$ are well-chosen functions on $[0,T]$, given by Lemma \ref{construction:q_star} and illustrated by Figure \ref{fig:q}, which satisfy in particular $q_*(t) \leq (1-t)^{-\alpha} \leq q^*(t)$.
Then, if $g_q$ denotes the fundamental solution of the PDE~\eqref{eq:PDE} given by Lemma \ref{lem:fundamental_solution}, it follows from this probabilistic representation that
\begin{equation} \label{eq:comparison}
\forall t \in (0,T], \quad \forall x \in \R, \quad 
g_{q^*}(0,\xi;t,x) \leq g(0,\xi;t,x) \leq g_{q_*}(0,\xi;t,x),
\end{equation}
so it is enough to estimate $g_q$ for $q = q_*$ or $q = q^*$.

\begin{figure}[bt]
	\centering
	\begin{tikzpicture}[xscale=12,yscale=1.5]
	\draw[->,>=latex] (0,0) -- (.9,0);
	\draw (.9,-.04) node[below]{$t$};
	\draw[->,>=latex] (0,0) -- (0,5.2);
	\draw[dashed] (.15,-.04) -- (.15,{1/(1-.15)});
	\draw (.15,-.04) node[below]{\small$\eps_1$};
	\draw[dashed] (.3,-.04) -- (.3,{1/(1-.3)});
	\draw (.3,-.04) node[below]{\small$2\eps_1$};
	\draw[dashed] (.6,-.04) -- (.6,{1/(1-.6)});
	\draw (.6,-.04) node[below]{\small$T-2\eps_2$};
	\draw[dashed] (.7,-.04) -- (.7,{1/(1-.8)});
	\draw (.7,-.04) node[below]{\small$T-\eps_2$};
	\draw[dashed] (.8,-.04) -- (.8,{1/(1-.8)});
	\draw (.8,-.04) node[below]{\small$T$};
	\draw[very thick,NavyBlue] [domain=0:.8,samples=50] plot (\x,{1/(1-\x)});
	\draw[NavyBlue] (.83,4) node{$(1-t)^{-\alpha}$};
	\draw[very thick,BrickRed] (0,1) -- (.15,1) -- (.3,{1/(1-.3)});
	\draw[very thick,BrickRed,loosely dashed] [domain=0.3:.6] plot (\x,{1/(1-\x)});
	\draw[very thick,BrickRed,dashed] [domain=.6:.7] plot (\x,{1/(1-\x)});
	\draw[very thick,BrickRed] (.7,{1/(1-.7)}) -- (.8,{1/(1-.7)});
	\draw[BrickRed] (.76,3.1) node{$q_*(t)$};
	\draw[very thick,ForestGreen] (0,{1/(1-.15)}) -- (.15,{1/(1-.15)});
	\draw[very thick,ForestGreen,dashed] [domain=0.15:.3] plot (\x,{1/(1-\x)});
	\draw[very thick,ForestGreen,loosely dashed] [domain=0.31:.6] plot (\x,{1/(1-\x)});
	\draw[very thick,ForestGreen] (.6,{1/(1-.6)}) -- (.7,{1/(1-.8)}) -- (.8,{1/(1-.8)});
	\draw[ForestGreen] (.6,3.7) node{$q^*(t)$};
	\end{tikzpicture}
	\caption{Construction of $q_*$ and $q^*$ in terms of the two auxiliary parameters $\varepsilon_1$ and $\varepsilon_2$.}
	\label{fig:q}
\end{figure}

Fix some Lipschitz continuous function $q \colon [0,T] \to (0,\infty)$.
Omitting the dependence in $\xi$ which is fixed, we define, for any $t \in (0,T]$ and $x > 0$,
\begin{align} 
    W_t(x) 
    & \coloneqq g_q(0,\xi;t,x) \exp \left( \varrho \int_0^t \lambda_{q(s),0} \diff s \right) \nonumber \\
    & = g_q(0,\xi;t,x) \exp \left( \lambda_0 \varrho \int_0^t q(s)^{2/(2+\alpha)} \diff s \right), \label{eq:def_W}
\end{align}
where we used \eqref{eq:eigenvalues_Lgamma} in the second equality.
Moreover, for any $t > 0$, $g_q(0,\xi;t,\cdot) \in L^2(\R)$ (it is dominated by a Gaussian function by Lemma \ref{lem:fundamental_solution}) and so $W_t \in L^2(\R)$
and we can set, for any $n \geq 0$,
\begin{equation} \label{eq:def_c_n}
    c_n(t) 
    \coloneqq \langle \varphi_{q(t),n}, W_t \rangle
    = \langle \varphi_{q(t),n},g_q(0,\xi;t,\cdot) \rangle 
    \exp \left( \lambda_0 \varrho \int_0^t q(s)^{2/(2+\alpha)} \diff s \right).
\end{equation}
Since $(\varphi_{q(t),n})_{n\geq 0}$ is an orthonormal basis of $L^2(\R)$ by Proposition \ref{prop:sturm-liouville}.\ref{it:link_cL_cL_q}, we get
\begin{equation} \label{eq:decompo_W}
    \forall t \in (0,T], \quad \forall x \in \R, \quad 
    W_t(x) = \sum_{n\geq 0} c_n(t)\varphi_{q(t),n}(x).
\end{equation}
We also define 
\begin{equation} \label{eq:def_c_n(0)}
c_n(0) \coloneqq \varphi_{q(0),n}(\xi),
\end{equation}
which makes $c_n$ continuous at 0 as mentioned in the forthcoming Lemma \ref{lem:ode:c}. 

A key property of the functions $q_*$ and $q^*$ is that they are constant on intervals $[0,\eps_1]$ and $[T-\eps_2,T]$ for some parameters $\eps_1,\eps_2$. On these intervals, $c_0(t)$ stays constant, while $c_n(t)$ for $n \geq 1$ decays exponentially fast (see Corollary \ref{cor:if_q_is_constant}).
Filling the gap between times $\eps_1$ and $T-\eps_2$, we show in Lemma \ref{lem:precise_bounds} that $c_0(T) \simeq c_0(0)$ and that $c_n(T)$ is small for $n \geq 1$.
This lemma is a key tool in the proof of Proposition \ref{prop:estimate_g} below.

\begin{proof}[Proof of Proposition \ref{prop:estimate_g}]
	We first choose the parameters $\eps_1,\eps_2 > 0$. 
	The conditions we need in this proof are the following
	\begin{align}
	\eps_1,\eps_2 \leq T/10
	\qquad & \text{and} \qquad 
	\eps_2 \leq (1-T)/10, 
	\label{eq:requirements_eps_1} \\
	\varrho \eps_1 \geq 1
	\qquad & \text{and} \qquad 
	\varrho \varepsilon_2 (1-T)^{-\kappa} \geq 1,
	\label{eq:requirements_eps_2} \\
	\varrho \eps_1^2 \leq 1
	\qquad & \text{and} \qquad 
	\varrho \eps_2^2 (1-T)^{-\kappa-1} \leq 1.
	\label{eq:requirements_eps_3}
	\end{align}
	We set 
	\begin{equation} \label{eq:choice_eps}
	\varepsilon_1 \coloneqq \left( \frac{\delta}{\varrho} \right)^{1/2}
	\qquad \text{and} \qquad 
	\varepsilon_2 \coloneqq \left( \frac{\delta (1-T)^{1+\kappa}}{\varrho} \right)^{1/2}
	= (1-T) \left( \frac{\delta}{\varrho (1-T)^{1-\kappa}} \right)^{1/2}.
	\end{equation}
	We now check they satisfy the conditions listed above.
	The 1st part of \eqref{eq:requirements_eps_1} follows from $\varepsilon_2 \leq \varepsilon_1$ and $\delta \leq (T^2\varrho)/100$, 
	the 2nd one from $\varrho (1-T)^{1-\kappa} \geq 10$ and $\delta \leq 1/10$.
	The 1st part of \eqref{eq:requirements_eps_2} is obtained by noting that $\delta \geq 1/\varrho$ and the 2nd one follows from $\delta \geq 1/(\varrho(1-T)^{1-\kappa})$.
	Finally, \eqref{eq:requirements_eps_3} only requires $\delta \leq 1$ which is true.
	Throughout the proof, we use only the properties of $\eps_1$ and $\eps_2$ listed in \eqref{eq:requirements_eps_1}-\eqref{eq:requirements_eps_2}-\eqref{eq:requirements_eps_3} and express the bounds in terms of $\eps_1$ and $\eps_2$. Their precise choice is only used to deduce the proposition from these bounds. 
	This highlights the separated roles of $\eps_1$ and $\eps_2$ and hopefully can help to understand the precise choice which is made here.

	By \eqref{eq:requirements_eps_1}, we can apply Lemma \ref{construction:q_star} to consider functions $q_*$ and $q^*$ satisfying the properties listed there (see also Figure~\ref{fig:q}).
	Now, note that, by \eqref{eq:comparison}, it is enough to consider $q=q_*$ or $q = q^*$ and prove the bound for
	\begin{equation} \label{eq:goal}
	    \abs{\exp \left( \lambda_0 \varrho \int_0^T \frac{\diff s}{(1-s)^{\kappa}} \right) g_q(0,\xi;T,x) - \varphi_{0}(\xi) \varphi_{(1-T)^{-\alpha},0}(x)},
	\end{equation}
	where we rewrote $(1-T)^{-\kappa/4} \varphi_{0}( (1-T)^{-\kappa/2} x) = \varphi_{(1-T)^{-\alpha},0}(x)$ by Proposition \ref{prop:sturm-liouville}.\ref{it:link_cL_cL_q}.
	By the triangle inequality, we can bound \eqref{eq:goal} by $T_1 + T_2 + T_3$, where
	\begin{align*}
	    T_1 & \coloneqq \abs{\exp \left( \lambda_0 \varrho \int_0^T \frac{\diff s}{(1-s)^{\kappa}} \right) g_q(0,\xi;T,x) - W_T(x)}, \\
	    T_2 & \coloneqq \abs{W_T(x) - \varphi_{q(0),0}(\xi) \varphi_{q(T),0}(x)},  \\
	    T_3 & \coloneqq \abs{\varphi_{q(0),0}(\xi) \varphi_{q(T),0}(x) - \varphi_{0}(\xi) \varphi_{(1-T)^{-\alpha},0}(x)}.
	\end{align*}
	We start with $T_2$. By \eqref{eq:decompo_W} and recalling that $\varphi_{q(0),0}(\xi) = c_0(0)$, we have
	\begin{align*}
	    T_2 
	    & \leq \abs{c_0(T) - c_0(0)} \varphi_{q(T),0}(x)
	    + \sum_{n\geq1} \abs{c_n(T) \varphi_{q(T),n}(x)} \\
	    & \leq C q(T)^{1/[2(2+\alpha)]} 
	    \left( \frac{1}{\varrho(1-T)^{1-\kappa}} + e^{- c\varrho T} \right)
	    \left( 
	    e^{-c \abs{\xi}^{(2+\alpha)/2}}
	    + e^{-c \varrho \varepsilon_1 (1+\abs{\xi}^{\alpha})}
	    \right) \\
	    & \quad {} \times
	    \left( 
	    \varphi_0 \left( q(T)^{1/(2+\alpha)}x \right)
	    + \sum_{n\geq1} e^{-c \varrho \varepsilon_2 (1-T)^{-\kappa} n^{\kappa}}
	    \abs{\varphi_n \left( q(T)^{1/(2+\alpha)}x \right)}
	    \right), 
	\end{align*}
	by Lemma \ref{lem:precise_bounds} (using here $\varrho\varepsilon_1 \geq 1$ by \eqref{eq:requirements_eps_2}) and \eqref{eq:eigenvalues_Lgamma}.
	Using Proposition~\ref{prop:sturm-liouville}.\ref{it:tail} and that $q(T) \geq c (1-T)^{-\alpha}$, the series on the right-hand side of the last equation is at most
	\begin{align*}
		& C \sum_{n\geq1} e^{-c \varrho \varepsilon_2 (1-T)^{-\kappa} n^{\kappa}} \cdot
	    n^3 \left[ 1 \wedge \exp \left( - c \left( (1-T)^{-\alpha/2} \abs{x}^{(2+\alpha)/2} - Cn \right) \right) \right] \\
	    & \leq C e^{-c \varrho \varepsilon_2 (1-T)^{-\kappa}}
	    \left( 
	    \exp\left(-c (1-T)^{-\alpha/2} \abs{x}^{(2+\alpha)/2} \right)
	    + \exp\left(-c \varrho \varepsilon_2 (1-T)^{-\alpha} \abs{x}^{\alpha} \right)
	    \right),
	\end{align*}
	by Lemma \ref{lem:technical_series} with $u = \varrho \varepsilon_2 (1-T)^{-\kappa} \geq 1$ by \eqref{eq:requirements_eps_2} and $v = (1-T)^{-\alpha/2} \abs{x}^{(2+\alpha)/2}$.
	Using also Proposition~\ref{prop:sturm-liouville}.\ref{it:tail} to bound $\varphi_0$, we get
	\begin{align*}
	    T_2
	    & \leq C (1-T)^{-\kappa/4}
	    \left( \frac{1}{\varrho(1-T)^{1-\kappa}} + e^{- c\varrho T} \right)
	    \left( 
	    e^{-c \abs{\xi}^{(2+\alpha)/2}}
	    + e^{-c \varrho \varepsilon_1 (1+\abs{\xi}^{\alpha})}
	    \right) \\
	    & \quad {} \times
	    \left( 
	    \exp \left( -c \left( \frac{\abs{x}}{(1-T)^{\kappa/2}} \right)^{(2+\alpha)/2} \right)
	    + \exp\left(-c \frac{\varrho \varepsilon_2}{(1-T)^{\kappa}}
	    \left( 1 + \left( \frac{\abs{x}}{(1-T)^{\kappa/2}} \right)^{\alpha} \right) \right)
	    \right).
	\end{align*}
	With our choice of $\eps_1$ and $\eps_2$ in \eqref{eq:choice_eps} and using $\delta \geq 1/(\varrho(1-T)^{1-\kappa})$, this is smaller than the bound appearing in the statement of the proposition.
	
	We now bound $T_3$. For this, we first fix some $\theta > 1$ and note that, for any $q,p > 0$ such that $p/q \in [\theta^{-1},\theta]$ and any $x > 0$, using \eqref{eq:eigenvalues_Lgamma} and standard inequalities, with constants $C,c>0$ that depend only on $\theta$ and~$\alpha$,
	\begin{align*}
	& \abs{\varphi_{p,0}(x) - \varphi_{q,0}(x)} \\
	& \leq \abs{p^{1/[2(2+\alpha)]}-q^{1/[2(2+\alpha)]}} \varphi_0 \left( p^{1/(2+\alpha)}x \right)
	+ q^{1/[2(2+\alpha)]} \abs{\varphi_0 \left( p^{1/(2+\alpha)}x \right) - \varphi_0 \left( q^{1/(2+\alpha)}x \right)} \\
	& \leq C \left( 
	p^{1/[2(2+\alpha)]} \abs{1-\frac{q}{p}} \varphi_0 \left( p^{1/(2+\alpha)}x \right)
	+ p^{3/[2(2+\alpha)]} \abs{1-\frac{q}{p}} \cdot x \cdot \max_{t \geq (p\wedge q)^{1/(2+\alpha)}x} \abs{\varphi_0'(t)}
	\right) \\
	& \leq C p^{1/[2(2+\alpha)]} \abs{1-\frac{q}{p}}
	\exp \left( -c p^{1/2} \abs{x}^{(2+\alpha)/2} \right),
	\end{align*}
	using Proposition~\ref{prop:sturm-liouville}.\ref{it:tail}-\ref{it:tail_derivative} and that $(1+t^{2/(2+\alpha)})e^{-ct} \leq C e^{-ct}$ for $t>0$ up to a modification of $c$.
	By parity, the same inequality holds for $x <0$.
	Therefore, we get, using again Proposition~\ref{prop:sturm-liouville}.\ref{it:tail} together with Lemma \ref{construction:q_star}.\ref{it:encadrement_q},
	\begin{align*}
	T_3 
	& \leq \varphi_{q(0),0}(\xi) \abs{\varphi_{q(T),0}(x) - \varphi_{(1-T)^{-\alpha},0}(x)} 
	+ \varphi_{(1-T)^{-\alpha},0}(x) \abs{\varphi_{q(0),0}(\xi) - \varphi_{0}(\xi)} \\
	& \leq C (1-T)^{-\frac{\alpha}{2(2+\alpha)}} e^{-c \abs{\xi}^{(2+\alpha)/2}} 
	\exp \left( - c (1-T)^{-\alpha/2} \abs{x}^{(2+\alpha)/2} \right) \\
        & \hspace{6cm} \times
	\left( \abs{1-q(0)} + \abs{1-\frac{q(T)}{(1-T)^{-\alpha}}} \right) \\
	& \leq C (1-T)^{-\kappa/4} e^{-c \abs{\xi}^{(2+\alpha)/2}} 
	\exp \left( - c \left( \frac{\abs{x}}{(1-T)^{\kappa/2}} \right)^{(2+\alpha)/2} \right) 
	\left( \eps_1 + \frac{\eps_2}{1-T} \right).
	\end{align*}
	With our choice of $\eps_1$ and $\eps_2$ in \eqref{eq:choice_eps}, we have $\eps_1 \leq \varrho^{-1} + \delta$ and $\eps_2/(1-T) \leq (\varrho (1-T)^{1-\kappa})^{-1} + \delta$, so, recalling $\delta \geq (\varrho(1-T)^{1-\kappa})^{-1}$, this last bound is smaller than the bound appearing in the statement of the proposition.

	Finally, we bound $T_1$. By definition of $W_T(x)$, we have
	\[
	T_1 = \abs{\exp \left( \lambda_0 \varrho \int_0^T \left( (1-s)^{-2\alpha/(2+\alpha)} - q(s)^{2/(2+\alpha)} \right) \diff s \right) - 1} \cdot W_T(x).
	\]
	By Lemma \ref{construction:q_star}.\ref{it:integrale}, the quantity in the exponential above is bounded in absolute values by 
	$10 \lambda_0 \varrho (\eps_1^2 + \eps_2^2 (1-T)^{-\kappa-1})$, which is itself at most $20 \lambda_0$ by \eqref{eq:requirements_eps_3}.
	Hence, using that the exponential is Lipschitz continuous on $(-\infty,20 \lambda_0]$, we get
	\[
	T_1 \leq C \left( \varrho \eps_1^2 + \varrho \eps_2^2 (1-T)^{-\kappa-1} \right) W_T(x).
	\]
	Now, note that $W_T(x) \leq \varphi_{0}(\xi) \varphi_{(1-T)^{-\alpha},0}(x) + T_2 + T_3$, so using again \eqref{eq:eigenvalues_Lgamma} and Proposition~\ref{prop:sturm-liouville}.\ref{it:tail}, we get
	\begin{align*}
	T_1 & \leq C \left( \varrho \eps_1^2 + \varrho \eps_2^2 (1-T)^{-\kappa-1} \right) \\
	& \qquad {} \times
	\left( T_2 + T_3 +
	(1-T)^{-\kappa/4} e^{-c \abs{\xi}^{(2+\alpha)/2}} 
	\exp \left( - c (1-T)^{-\alpha/2} \abs{x}^{(2+\alpha)/2} \right)
	\right).
	\end{align*}
	With our choice of $\eps_1$ and $\eps_2$ in \eqref{eq:choice_eps}, we have $\varrho \eps_1^2 = \varrho \eps_2^2 (1-T)^{-\kappa-1} = \delta$, and combining the bounds for $T_1$, $T_2$ and $T_3$ concludes the proof. 
\end{proof}

\subsection{Approximating the inhomogeneity}

\begin{lem}\label{construction:q_star} 
	Let $\alpha \in (0,2)$.
	For any $T \in (0,1)$ and $\eps_1,\eps_2 \in (0,T/10)$ such that $\eps_2 \leq (1-T)/10$, there exist functions $q^*,q_*\colon [0,T]\to [0,\infty)$ that satisfy the following:
	\begin{enumerate}
		\item\label{it:prop_q} $q_*,q^*$ are non-decreasing, Lipschitz continuous on $[0,T]$, and differentiable on $[0,T]$ except at finitely many points;
	    \item\label{it:constant} $q_*,q^*$ are constant on $[0,\eps_1]$ and $[T-\eps_2,T]$;
	    \item\label{it:equal_part} For any $t\in [2\eps_1, T-2\eps_2]$, $q_*(t) = (1-t)^{-\alpha} = q^*(t)$;
	    \item\label{it:LB_UP} For any $t\in [0,T]$, $q_*(t)\leq (1-t)^{-\alpha} \leq q^*(t)$;
	    \item\label{it:encadrement_q_beginning} For any $t \in [0,2\eps_1]$,
	    \[
	    1 - 4 \varepsilon_1 \leq \frac{q_*(t)}{(1-t)^{-\alpha}} \leq \frac{q^*(t)}{(1-t)^{-\alpha}} \leq 1 + 3 \varepsilon_1.
	    \]
	    \item\label{it:encadrement_q_end} For any $t\in [T-2\eps_2,T]$, 
	    \[
	    1- \frac{2 \varepsilon_2}{1-T} \leq \frac{q_*(t)}{(1-t)^{-\alpha}} \leq \frac{q^*(t)}{(1-t)^{-\alpha}} \leq 1 + \frac{5 \varepsilon_2}{1-T}.
	    \]
	    \item\label{it:encadrement_q} For any, $t\in [0,T]$, 
	    \[
	    \frac{1}{2} \leq \frac{q_*(t)}{(1-t)^{-\alpha}} \leq \frac{q^*(t)}{(1-t)^{-\alpha}}\leq 2.
	    \]
	    \item\label{it:derivative} For any $t \in [0,T]$ and $q = q_*$ or $q^*$,
	    \[
	    \frac{q'(t)}{q(t)} \leq \frac{8}{(1-t)}.
	    \]
	    \item\label{it:integrale} For any $t \in [0,T]$ and $q = q_*$ or $q^*$,
	    \[
	    \int_0^T \abs{(1-t)^{-2\alpha/(2+\alpha)} - q(t)^{2/(2+\alpha)}} \diff t
	    \leq 10 \eps_1^2 + \frac{10 \eps_2^2}{(1-T)^{\kappa+1}}.
	    \]
	\end{enumerate}
\end{lem}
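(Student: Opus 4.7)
The plan is to produce $q^{*}$ and $q_{*}$ explicitly as piecewise functions obtained by composing the reference $f(t) \coloneqq (1-t)^{-\alpha}$ with nondecreasing affine reparametrisations $t \mapsto at+b$. The point of using affine reparametrisations is that, since $f$ is increasing, imposing $at+b \le t$ (resp.\ $\ge t$) on a piece immediately forces $f(at+b) \le f(t)$ (resp.\ $\ge f(t)$), so the sandwich property \ref{it:LB_UP} comes for free, while the chain rule $q'/q = a \alpha/(1-at-b)$ makes the log-derivative bound \ref{it:derivative} a one-line check. Concretely, I would set $q^{*}(t) = f(\eps_1)$ on $[0,\eps_1]$, $f(t)$ on $[\eps_1,T-2\eps_2]$, $f(2t-T+2\eps_2)$ on $[T-2\eps_2,T-\eps_2]$, and $f(T)$ on $[T-\eps_2,T]$; and $q_{*}(t) = 1 = f(0)$ on $[0,\eps_1]$, $f(2(t-\eps_1))$ on $[\eps_1,2\eps_1]$, $f(t)$ on $[2\eps_1,T-\eps_2]$, and $f(T-\eps_2)$ on $[T-\eps_2,T]$. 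Continuity at the (finitely many) gluing points, monotonicity and Lipschitz-continuity are immediate, as are properties \ref{it:prop_q}--\ref{it:LB_UP}.

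For the quantitative two-sided bounds \ref{it:encadrement_q_beginning}--\ref{it:encadrement_q}, I would compute $q/f = ((1-t)/(1-at-b))^{\alpha}$ on each transition piece and reduce everything to bounding $(1-\eps_1)^{\pm \alpha}$ and $(1 + \eps_2/(1-T))^{\pm \alpha}$. The assumption $\alpha < 2$ allows to dominate these by their $\alpha=2$ counterparts, after which the elementary inequalities $1/(1-x)^{2} \le 1 + 2.5x$ and $(1-x)^{2}\ge 1-2x$ for $x \le 1/10$ (and the symmetric statements around $1+x$) produce the explicit constants $3,4,5,2$. The same template handles the loose bound \ref{it:encadrement_q} on the whole of $[0,T]$. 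For \ref{it:derivative}, on a piece where $q = f(at+b)$ with $a \in \{1,2\}$, the desired inequality $a\alpha/(1-at-b) \le 8/(1-t)$ is linear in $t$, so it suffices to verify it at the endpoints of each piece using the size constraints $\eps_1\le T/10$, $\eps_2 \le T/10$ and $\eps_2 \le (1-T)/10$.

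The main technical step is \ref{it:integrale}, where one has to confirm that the two different scales $\eps_1^2$ and $\eps_2^2(1-T)^{-\kappa-1}$ come out with the correct power of $1-T$. The integrand vanishes outside the transition intervals, so only two contributions remain. Near $0$, both $(1-t)^{-\kappa}$ and $q(t)^{2/(2+\alpha)}$ differ from $1$ by $O(\eps_1)$ (since the derivative of $s\mapsto s^{-\kappa}$ is bounded on $[1-2\eps_1,1]$), giving pointwise bound $O(\eps_1)$ on an interval of length $O(\eps_1)$, hence $O(\eps_1^{2})$. Near $T$, on each piece one is comparing $(1-t)^{-\kappa}$ with $(1-s)^{-\kappa}$ where $s$ is an affine function of $t$ satisfying $|s-t| \le 2\eps_2$ and both $1-t,1-s$ lie in $[1-T,1-T+2\eps_2]$; the mean value theorem applied to $s \mapsto s^{-\kappa}$ on this interval yields pointwise bound $\kappa |t-s|(1-T)^{-\kappa-1}$, and integrating in $t$ over a length-$O(\eps_2)$ interval gives the desired $O(\eps_2^{2}/(1-T)^{\kappa+1})$ term.

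The only subtle point, and what I expect to be the main obstacle, is precisely this last estimate: one must check that the bound $(1-T)^{-\kappa-1}$ on the derivative is correct, which relies critically on the assumption $\eps_2 \le (1-T)/10$ ensuring that $1-T$ and $1-T+2\eps_2$ are comparable up to a universal constant. Everything else is elementary one-variable calculus, and the slack constants in \ref{it:encadrement_q_beginning}--\ref{it:integrale} (factors of $3,4,5,10$) leave plenty of room for the crude bounds used above.
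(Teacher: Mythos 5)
Your proposal is correct and follows the same overall skeleton as the paper: both build $q_*,q^*$ as piecewise functions that are constant on $[0,\eps_1]$ and $[T-\eps_2,T]$, equal to $(1-t)^{-\alpha}$ on the middle range, with short transition pieces, and both then verify the quantitative bounds by elementary one-variable estimates. The one genuine difference is the transition pieces: the paper linearly interpolates the \emph{values} of $(1-\cdot)^{-\alpha}$ (so the transitions are affine in $t$), whereas you take $(1-\cdot)^{-\alpha}$ composed with an affine \emph{time} reparametrisation $t\mapsto at+b$. Your choice buys a cleaner proof of the sandwich property (iv): since the reference function is increasing, $at+b\le t$ (resp.\ $\ge t$) on each piece immediately gives $q_*\le (1-t)^{-\alpha}\le q^*$, whereas the paper must run a small convexity/derivative-comparison argument at $t=2\eps_1$ to check that its interpolant stays below the curve. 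It also makes (viii) a one-line chain-rule computation. For (ix) you use the mean value theorem on $s\mapsto s^{-\kappa}$ directly, while the paper goes through $\lvert 1-x^{2/(2+\alpha)}\rvert\le\lvert 1-x\rvert$ together with the ratio bounds (v)--(vi); both routes land comfortably within the stated constants (your estimates give roughly $6.25\,\eps_1^2$ and $4\,\eps_2^2(1-T)^{-\kappa-1}$), and the hypothesis $\eps_2\le(1-T)/10$ is used in exactly the place you identify, to make $1-T$ and $1-T+2\eps_2$ comparable.
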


\begin{proof}
We define $q_*$ and $q^*$ as follows (see also Figure \ref{fig:q}):
\begin{itemize}
	\item for $t \in[0,\eps_1]$, $q_*(t)=1$ and $q^*(t) = (1-\eps_1)^{-\alpha}$,
	\item for $t \in [\eps_1,2\eps_1]$, $q^*(t) = (1-t)^{-\alpha}$ and $q_*$ is obtained by linear interpolation between the values at the endpoints.
    \item for $t \in[2\eps_1,T-2\eps_2]$, $q_*(t) = q^*(t) = (1-t)^{-\alpha}$,
    \item for $t\in [T-2\eps_2, T-\eps_2]$, $q_*(t) = (1-t)^{-\alpha}$ and $q^*$ is obtained by linear interpolation between the values at the endpoints.
    \item for $t \in [T-\eps_2,T]$, $q_*(t)=(1-T+\eps_2)^{-\alpha}$ and $q^*(t) = (1-T)^{-\alpha}$.
\end{itemize}

Parts \ref{it:prop_q}-\ref{it:constant}-\ref{it:equal_part} are clearly true.
For Part \ref{it:LB_UP}, the only non-trivial thing to check is that $q_*(t) \leq (1-t)^{-\alpha}$ for $t \in [\eps_1,2\eps_1]$. By convexity of $s \mapsto (1-s)^{-\alpha}$, it is enough to compare the left-derivatives at $2\eps_1$, which amounts to show that
\[
	\frac{(1-2\eps_1)^{-\alpha}-1}{\eps_1} \geq \alpha (1-2\eps_1)^{-\alpha-1}.
\]
We have $(1-2\eps_1)^{-\alpha}-1 \geq 2 \alpha \eps_1$, hence it is enough to show $(1-2\eps_1)^{-\alpha-1} \leq 2$, which is equivalent to $\eps_1 \leq \frac{1}{2}(1-2^{-1/(\alpha+1)})$. This is true by noting that $\eps_1 \leq 1/10 \leq \frac{1}{2}(1-2^{-1/3}) \leq \frac{1}{2}(1-2^{-1/(\alpha+1)})$. 
Hence, Part \ref{it:LB_UP} is proved.

For Part \ref{it:encadrement_q_beginning}, consider $t \in [0,2\eps_1]$. Then, 
\[
\frac{q_*(t)}{(1-t)^{-\alpha}} \geq \frac{1}{(1-2\eps_1)^{-\alpha}}
= (1-2\eps_1)^{\alpha} 
\geq \begin{cases}
1 - 2\alpha \eps_1, & \text{if } \alpha \geq 1, \\
1 - 2\eps_1, & \text{if } \alpha < 1.
\end{cases}
\]
This proves the lower bound. For the upper bound, using $(1-t)^{-\alpha} \geq 1$, we have
\begin{equation} \label{eq:UB_eps_1}
\frac{q_*(t)}{(1-t)^{-\alpha}}
\leq (1-\eps_1)^{-\alpha} 
\leq 1 + \eps_1 \alpha (1-\eps_1)^{-\alpha-1} 
\leq 1 + 2\eps_1 (9/10)^{-3},
\end{equation}
using $\eps_1 \leq 1/10$ and $\alpha \leq 2$. This proves Part \ref{it:encadrement_q_beginning}.

For Part \ref{it:encadrement_q_end}, consider $t\in [T-2\eps_2,T]$. Then, 
\[
\frac{q_*(t)}{(1-t)^{-\alpha}} 
\geq \frac{(1-T+\eps_2)^{-\alpha}}{(1-T)^{-\alpha}}
= \left( 1 + \frac{\eps_2}{1-T} \right)^{-\alpha} 
\geq 1 - \alpha \frac{\eps_2}{1-T},
\]
by convexity of $s \mapsto (1-s)^{-\alpha}$. On the other hand,
\begin{equation} \label{eq:UB_eps_2}
\frac{q^*(t)}{(1-t)^{-\alpha}}
\leq \frac{(1-T)^{-\alpha}}{(1-T+2\eps_2)^{-\alpha}}
\leq \left( 1 + \frac{2\eps_2}{1-T} \right)^2
= 1 + \frac{4\eps_2}{1-T} + \frac{4\eps_2^2}{(1-T)^2}
\leq 1 + \frac{5 \varepsilon_2}{1-T},
\end{equation}
using $\eps_2 \leq (1-T)/10$. This proves Part \ref{it:encadrement_q_end}.

Part \ref{it:encadrement_q} follows directly from Parts \ref{it:encadrement_q_beginning}-\ref{it:encadrement_q_end} and the assumptions on $\eps_1$ and $\eps_2$.

Part \ref{it:derivative} is immediate for $t \in [0,\eps_1] \cup [2\eps_1, T-2\eps_2] \cup [T-\eps_2,T]$.
For $t \in [\eps_1,2\eps_1]$, it has to be checked for $q_*$: using $q_*(t) \geq 1$ and then $(1-2\eps_1)^{-\alpha} \leq 1 + 4\eps_1 (8/10)^{-3}$ by proceeding as in \eqref{eq:UB_eps_1}, we get
\[
\frac{q_*'(t)}{q_*(t)} 
\leq \frac{(1-2\eps_1)^{-\alpha}-1}{\eps_1}
\leq 4 \cdot (8/10)^{-3}
\leq \frac{8}{(1-t)}.
\]
For $t \in [T-2\eps_2,T-\eps_2]$, it has to be checked for $q^*$: using $q^*(t) \geq (1-T+2\eps_2)^{-\alpha}$ and then proceeding as in \eqref{eq:UB_eps_2}, we get
\[
\frac{(q^*)'(t)}{q^*(t)} 
\leq \frac{(1-T)^{-\alpha}-(1-T+2\eps_2)^{-\alpha}}{\eps_2 (1-T+2\eps_2)^{-\alpha}}
\leq \frac{5}{1-T}
\leq \frac{6}{1-t},
\]
using that $1-t \leq 1-T+2\eps_2 \leq \frac{6}{5} (1-T)$ by assumption on $\eps_2$.
This proves Part \ref{it:derivative}.

Finally it remains to prove Part \ref{it:integrale}. Using $\lvert 1-x^{2/(2+\alpha)}\rvert \leq \abs{1-x}$ for any $x > 0$, we get
\begin{align*}
\int_0^T \abs{(1-t)^{-2\alpha/(2+\alpha)} - q(t)^{2/(2+\alpha)}} \diff t
 &\leq \int_0^T (1-t)^{-\kappa} \abs{1 - \frac{q(t)}{(1-t)^{-\alpha}}} \diff t \\
 \leq & \int_0^{2\eps_1} (1-2\eps_1)^{-\kappa} 4\eps_1 \diff t 
+ \int_{T-2\eps_2}^T (1-T)^{-\kappa} \frac{5 \eps_2}{1-T} \diff t,
\end{align*}
using Parts \ref{it:encadrement_q_beginning}-\ref{it:encadrement_q_end}.
Finally, using $(1-2\eps_1)^{-\kappa} \leq 5/4$ yields the result.
\end{proof}

\subsection{General properties of the coefficients \texorpdfstring{$c_n(t)$}{cn(t)}}

In this section, we establish several results concerning the coefficients $c_n(t)$ holding for general functions $q$.
Recall their definition in \eqref{eq:def_c_n} and \eqref{eq:def_c_n(0)} and that they implicitly depend on $\alpha$, $\varrho$ and $\xi$. 
Recall also the definition of the $\lambda_n$'s and $\varphi_n$'s in Proposition \ref{prop:sturm-liouville}.
\begin{lem} \label{lem:ode:c}
	Let $\alpha > 0$, $\varrho \geq 1$, $\xi\in\R$ and $T\in (0,1)$.
	Let $q \colon [0,T] \to (0,\infty)$ be a Lipschitz continuous function. Then, for any $n \geq 0$, $c_n$ is continuous on $[0,T]$.
	
	If moreover $q$ is differentiable at some $t \in (0,T]$, then $c_n$ is differentiable at $t$ and, writing $c(t)=(c_0(t),c_1(t),\ldots)$, we have $\partial_t c(t)= (-D(t) + A(t))c(t)$, where we set
	\[
	\begin{cases}
	D(t) = \varrho q(t)^{2/(2+\alpha)} D, \\
	D = \mathrm{Diag}((\lambda_i - \lambda_0)_{i\geq 0}),
	\end{cases}
	\quad \quad 
	\begin{cases}
	A(t) = \frac{q'(t)}{q(t)}A, \\
	A = \frac{1}{2(2+\alpha)} (\langle \varphi_{j}, x\varphi_{i}'\rangle - \langle x\varphi_j', \varphi_i \rangle)_{i,j \geq 0}.
	\end{cases}
	\]
\end{lem}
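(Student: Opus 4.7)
The plan is to combine the probabilistic representation of $g_q$ from Lemma \ref{lem:fundamental_solution} with the eigenfunction scaling \eqref{eq:eigenvalues_Lgamma} to differentiate $c_n(t)$ directly in $t$.

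For \textbf{continuity} on $[0,T]$, I would use the probabilistic formulation. Lemma \ref{lem:fundamental_solution} gives
\begin{equation*}
    c_n(t) = \exp\left(\lambda_0 \varrho \int_0^t q(s)^{2/(2+\alpha)} \diff s\right) \E_{(0,\xi/\sigma)}\!\left[\exp\!\left(-\varrho \int_0^t q(r) \abso{\sigma B_r}^\alpha \diff r\right) \varphi_{q(t),n}(\sigma B_t)\right]
\end{equation*}
with $\sigma = \sqrt{2\varrho}$. The integrand is bounded in absolute value by $\norme{\varphi_n}_\infty q(t)^{1/[2(2+\alpha)]}$ by \eqref{eq:eigenvalues_Lgamma} and Proposition \ref{prop:sturm-liouville}.\ref{it:tail}, so dominated convergence combined with continuity of $q$ and joint continuity of $(t,x) \mapsto \varphi_{q(t),n}(x)$ yields continuity at every $t > 0$. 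At $t=0$, $\sigma B_t \to \xi$ almost surely as $t\to 0^+$ by continuity of paths, and the same dominated-convergence argument gives $c_n(t) \to \varphi_{q(0),n}(\xi) = c_n(0)$.

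For the \textbf{ODE}, I would rely on two ingredients. First, since $g_q$ satisfies the PDE \eqref{eq:PDE} by Lemma \ref{lem:fundamental_solution}, the definition \eqref{eq:def_W} of $W_t$ yields, at every $t$ where $q$ is differentiable,
\begin{equation*}
    \partial_t W_t = -\varrho\bigl(\cL_{q(t)} - \lambda_{q(t),0}\bigr) W_t.
\end{equation*}
Second, starting from the scaling identity in Proposition \ref{prop:sturm-liouville}.\ref{it:link_cL_cL_q}, a direct chain-rule calculation gives
\begin{equation*}
    \partial_t \varphi_{q(t),n}(x) = \frac{q'(t)}{q(t)}\left( \frac{1}{2(2+\alpha)} \varphi_{q(t),n}(x) + \frac{1}{2+\alpha}\, x\, \partial_x \varphi_{q(t),n}(x) \right).
\end{equation*}
Differentiating $c_n(t) = \ps{\varphi_{q(t),n}}{W_t}$ under the integral then produces two terms. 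For $\ps{\varphi_{q(t),n}}{\partial_t W_t}$, self-adjointness of $\cL_{q(t)}$ together with $\cL_{q(t)} \varphi_{q(t),n} = \lambda_{q(t),n} \varphi_{q(t),n}$ gives $-\varrho q(t)^{2/(2+\alpha)}(\lambda_n - \lambda_0) c_n(t)$, which is precisely the $n$-th coordinate of $-D(t)c(t)$. For $\ps{\partial_t \varphi_{q(t),n}}{W_t}$, expanding $W_t$ via \eqref{eq:decompo_W} and using the change of variables $u = q(t)^{1/(2+\alpha)} x$ with the scaling formula shows that the cross-pairings $\ps{x\,\partial_x \varphi_{q(t),n}}{\varphi_{q(t),j}}$ are independent of $q(t)$ and equal $\ps{u \varphi_n'}{\varphi_j}$. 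A single integration by parts then converts these into the antisymmetric combination $\tfrac{1}{2}(\ps{\varphi_j}{u\varphi_n'} - \ps{u\varphi_j'}{\varphi_n}) - \tfrac{1}{2}\delta_{nj}$ entering the definition of $A_{ij}$, and collecting terms yields the $\frac{q'(t)}{q(t)}Ac(t)$ contribution.

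The \textbf{main obstacle} is justifying the formal manipulations, principally the differentiation under the integral sign and the self-adjoint pairing $\ps{\varphi_{q(t),n}}{\cL_{q(t)} W_t} = \lambda_{q(t),n} c_n(t)$, which requires $W_t$ to lie in the domain of $\cL_{q(t)}$ with enough decay to integrate by parts. These rest on the Gaussian decay in $x$ of $g_q(0,\xi;t,\cdot)$ and its first two $x$-derivatives, obtained by differentiating the heat-kernel prefactor and using the bounded Feynman--Kac weight, combined with the stretched-exponential tails of $\varphi_n$ and $\varphi_n'$ from Proposition \ref{prop:sturm-liouville}.\ref{it:tail}-\ref{it:tail_derivative}, which together provide uniform integrability on compact subintervals of $(0,T]$.
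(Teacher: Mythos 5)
Your proposal is correct and follows essentially the same route as the paper: differentiate $c_n(t) = \langle \varphi_{q(t),n}, W_t \rangle$, use the PDE for $g_q$ together with self-adjointness of $\cL_{q(t)}$ for the $-D(t)$ part, compute $\partial_t \varphi_{q(t),n}$ via the scaling identity, observe the cross-pairings $\langle x\partial_x \varphi_{q,n}, \varphi_{q,j}\rangle = \langle u\varphi_n', \varphi_j\rangle$ are $q$-independent, and integrate by parts to produce the antisymmetric matrix $A$. Your continuity argument at $t=0$ (paths $\sigma B_t \to \xi$ a.s.\@ plus dominated convergence) is a mild variant of the paper's (weak convergence of $g_q(0,\xi;t,\cdot)$ to $\delta_\xi$), and your identification of the technical justifications needed for differentiating under the integral and for the self-adjoint pairing is a fair flagging of steps the paper also leaves implicit.
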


\begin{proof}
	Continuity of $c_n$ on $(0,T]$ follows from the expression for $g_q$ given in Lemma \ref{lem:fundamental_solution} and properties of $\varphi_{q,n}$ listed in Proposition \ref{prop:sturm-liouville}.
	Moreover, $c_n$ is continuous at 0 because, as $t\to 0^+$, $\varphi_{q(t),n}$ converges uniformly to $\varphi_{q(0),n}$ and $g_q(0,\xi;t,\cdot)$ converges weakly to $\delta_\xi$. 
	
	Now, we assume $q$ is differentiable at some $t \in (0,T]$.
    Recall $g_q(0,\xi;\cdot,\cdot)$ solves the PDE~\eqref{eq:PDE}, hence $\partial_t g_q(0,\xi;t,\cdot) = -\varrho \cL_{q(t)} g_q(0,\xi;t,\cdot)$. We use this together with the self-adjointness of $\cL_{q(t)}$, we get
	\begin{align*}
	\partial_t c_n(t) 
	& = \langle \partial_t \varphi_{q(t), n},W_t \rangle
	+ \varrho\langle \cL_{q(t)} \varphi_{q(t),n},W_t \rangle 
	+ \lambda_0 \varrho q(t)^{2/(2+\alpha)}  \langle \varphi_{q(t),n}, W_t \rangle \\
	&= -\varrho q(t)^{2/(2+\alpha)}(\lambda_{n}-\lambda_{0}) c_n(t) + \sum_{k\geq0} c_k(t) \langle \varphi_{q(t),k}, \partial_t \varphi_{q(t), n} \rangle,
	\end{align*}
	where we used that $\varphi_{q(t),n}$ is an eigenfunction of $\cL_{q(t)}$ with $\lambda_{q(t),n} = q(t)^{2/(2+\alpha)}\lambda_n$.
	Then, recalling from \eqref{eq:eigenvalues_Lgamma} that $\varphi_{q,n}(y) = q^{1/[2(2+\alpha)]} \varphi_n(q^{1/(2+\alpha)} y)$, we have
	\[
	\langle \varphi_{q,k}, \partial_q \varphi_{q, n} \rangle
	= \frac{1}{(2+\alpha)q}
	\left( \frac{1}{2} \langle \varphi_{k}, \varphi_{n} \rangle 
	+ \langle \varphi_k, y \varphi_n' \rangle \right),
	\]
	and therefore
	\begin{equation}
	\partial_t c_n(t) 
	= -\varrho q(t)^{2/(2+\alpha)}(\lambda_{n}-\lambda_{0})c_n(t)   + \sum_{k\geq0} c_k(t)\frac{1}{2+\alpha}\frac{q'(t)}{q(t)}\left( \frac{1}{2} \langle \varphi_{k},\varphi_{n}\rangle + \langle \varphi_{k}, y\varphi_{n}'\rangle \right). \label{eq:ode_c}
	\end{equation}
	By  integration by parts, using the decay at infinity of $\varphi_n$ (see Proposition~\ref{prop:sturm-liouville}.\ref{it:tail}), we have
	\begin{equation} \label{eq:IBP}
	\langle \varphi_{k},\varphi_{n}\rangle + 2 \langle \varphi_{k}, x\varphi_{n}'\rangle 
	= \langle \varphi_{k}, x\varphi_{n}'\rangle - \langle x\varphi_k', \varphi_n \rangle.
	\end{equation}
	Combining this with \eqref{eq:ode_c} proves the result.
\end{proof}

A key observation is that when $q$ is constant on some interval, then $A(t) = 0$ on this interval and the ODE satisfied by $c(t)$ can be explicitly solved as follows.

\begin{cor}\label{cor:if_q_is_constant}
	Let $0\leq s_1 < s_2 \leq T$.
	Assume $q$ is constant on $[s_1,s_2]$. 
	Then, for any $n \geq 0$, 
	\[
	c_n(s_2)=c_n(s_1)\exp\left(-\varrho(\lambda_n-\lambda_0)(s_2-s_1) q(s_1)^{2/(2+\alpha)} \right).
	\]
	In particular, $c_0(s_2) = c_0(s_1)$.
\end{cor}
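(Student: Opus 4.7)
The plan is to deduce this corollary directly from the system of ODEs derived in Lemma \ref{lem:ode:c}. The key observation is that the coupling between different modes $c_n$ in that system comes exclusively through the matrix $A(t) = \frac{q'(t)}{q(t)} A$, whereas the diagonal operator $D(t)$ is always present. When $q$ is held constant, the coupling disappears and the system decouples into independent one-dimensional linear ODEs.

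More precisely, on $(s_1, s_2)$ the function $q$ is constant (and positive), hence differentiable with $q'(t) = 0$. By Lemma \ref{lem:ode:c}, each $c_n$ is continuous on $[s_1, s_2]$ and differentiable on $(s_1, s_2)$ with
\begin{equation*}
\partial_t c_n(t)
= -\varrho q(s_1)^{2/(2+\alpha)} (\lambda_n - \lambda_0) c_n(t),
\end{equation*}
since the $A(t)$ term drops out. This is a scalar linear ODE in $c_n$ with constant coefficient, which integrates immediately to
\begin{equation*}
c_n(t) = c_n(s_1) \exp\bigl( -\varrho(\lambda_n-\lambda_0)(t-s_1) q(s_1)^{2/(2+\alpha)}\bigr),
\end{equation*}
for every $t \in [s_1, s_2]$, using the continuity of $c_n$ at $s_1$ and $s_2$ to extend from the open interval. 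Taking $t = s_2$ gives the announced formula. The special case $n=0$ follows because the coefficient $\lambda_0 - \lambda_0 = 0$ kills the exponential.

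There is essentially no obstacle here beyond checking that the ODE of Lemma \ref{lem:ode:c} is legitimately applicable pointwise on $(s_1, s_2)$ and that no boundary issues appear at the endpoints; both are guaranteed by the continuity and (local) differentiability statements already contained in that lemma. The substantive content is thus entirely in Lemma \ref{lem:ode:c}, and this corollary is just the statement that in the constant-$q$ regime the basis $(\varphi_{q(t),n})_{n\geq 0}$ does not rotate, so the spectral decomposition \eqref{eq:decompo_W} of $W_t$ simply undergoes pure exponential damping along each eigenmode.
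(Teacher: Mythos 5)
Your proposal is correct and matches the paper's (implicit) argument exactly: the paper states this corollary without a separate proof, relying precisely on the observation that $q'=0$ makes $A(t)$ vanish in Lemma \ref{lem:ode:c}, so each $c_n$ solves a decoupled scalar linear ODE with constant coefficient, solved explicitly and extended to the closed interval by continuity.
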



Finally, we conclude this section with some rough bounds on the coefficients $c_n(t)$.

\begin{lem}\label{lemma:a_priori_estimates_1} 
	Let $\alpha > 0$, $\varrho \geq 1$, $\xi\in\R$ and $T\in (0,1)$.
	Let $q \colon [0,T] \to (0,\infty)$ be a function Lipschitz continuous on $[0,T]$, and differentiable on $[0,T]$ except at finitely many points.
	Then, the following holds.
	\begin{enumerate}
	    \item\label{it:L_2_norm_decreases} The function $t \in (0,T] \mapsto \norme{c(t)}_2$ is decreasing; 
	    \item\label{it:overline_c} Let $\overline{c}(t) \coloneqq (0,c_2(t),c_3(t),\ldots)$. There exists $C=C(\alpha) > 0$ such that, for $0 < t_0 \leq t \leq T$,
	    \begin{align*}
	        \norme{\overline{c}(t)}_2 
	        &\leq \norme{\overline{c}(t_0)}_2 \exp\left(-\varrho(\lambda_1-\lambda_0) \int_{t_0}^t q(s)^{2/(2+\alpha)} \diff s \right) 
	    \\
	    & \quad + C \norme{c(t_0)}_2 \int_{t_0}^t \frac{\abs{q'(s)}}{q(s)} \exp\left( -\varrho(\lambda_1-\lambda_0)\int_s^t q(r)^{2/(2+\alpha)} \diff r \right) \diff s.
	    \end{align*}
	\end{enumerate}
\end{lem}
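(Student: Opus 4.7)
\medskip

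\textbf{Proof plan.} The key structural observation is that the matrix $A$ is \emph{antisymmetric}: from the definition, $A_{ij}+A_{ji}=0$ by inspection of the formula for $A$. (Equivalently, $A$ represents in the basis $(\varphi_n)$ the antisymmetric generator of the dilation group $M$ defined by $Mf = \frac{1}{2(2+\alpha)}f+\frac{1}{2+\alpha}xf'$.) Combined with the fact that $D(t)$ is diagonal with non-negative entries (since $\lambda_n\geq\lambda_0$ for all $n$), this immediately suggests working with the $\ell^2$-norm of $c(t)$.

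For Part \ref{it:L_2_norm_decreases}, I would compute, at any point $t$ where $q$ is differentiable,
\[
\frac{1}{2}\frac{d}{dt}\norme{c(t)}_2^2
= \langle c(t),\partial_t c(t)\rangle
= -\langle c(t),D(t)c(t)\rangle + \langle c(t),A(t)c(t)\rangle.
\]
The first term is $\leq 0$, and the second vanishes by antisymmetry of $A$. Since the map $t\mapsto \norme{c(t)}_2$ is continuous on $[0,T]$ (by Lemma \ref{lem:ode:c}) and $q$ is differentiable outside finitely many points, this yields monotonicity on each subinterval and hence on $[0,T]$. The justification of the formal manipulations (interchange of $\frac{d}{dt}$ with the infinite sum, and absolute convergence of the double sum $\sum_{i,j}A_{ij}c_ic_j$) is obtained by truncating to the first $N$ coordinates $P_N c(t)$, doing the computation in the finite-dimensional setting, and passing to the limit $N\to\infty$, using the rapid decay of $c_n(t)=\langle \varphi_{q(t),n},W_t\rangle$ in $n$ that comes from $W_t$ being an $L^2$-function with Gaussian tails (via Lemma \ref{lem:fundamental_solution}) and the tail bounds on $\varphi_n$ in Proposition \ref{prop:sturm-liouville}.\ref{it:tail}.

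For Part \ref{it:overline_c}, let $P$ be the orthogonal projection onto $\{e_0\}^\perp$, so $\overline{c}(t)=Pc(t)$. Since $D(t)$ commutes with $P$ (and annihilates $e_0$),
\[
\partial_t\overline{c}(t)
= -D(t)\overline{c}(t) + PA(t)\overline{c}(t) + c_0(t)\,PA(t)e_0.
\]
Taking inner product with $\overline{c}(t)$: the first term contributes at most $-\varrho q(t)^{2/(2+\alpha)}(\lambda_1-\lambda_0)\norme{\overline{c}(t)}_2^2$; the second vanishes because $\langle\overline{c}(t),A(t)\overline{c}(t)\rangle = 0$ by antisymmetry of $A$ (again justified by truncation on the subspace $\{e_0\}^\perp$); the third is bounded by Cauchy--Schwarz,
\[
\bigl|c_0(t)\,\langle \overline{c}(t),PA(t)e_0\rangle\bigr|
\leq |c_0(t)|\,\norme{\overline{c}(t)}_2 \cdot \tfrac{|q'(t)|}{q(t)}\,\norme{Ae_0}_2.
\]
The crucial finiteness $\norme{Ae_0}_2 = \norme{M\varphi_0}_2 < \infty$ follows from Proposition~\ref{prop:sturm-liouville}.\ref{it:tail}--\ref{it:tail_derivative}: both $\varphi_0$ and $x\varphi_0'(x)$ have super-exponential decay. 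Bounding $|c_0(t)|\leq \norme{c(t)}_2\leq \norme{c(t_0)}_2$ via Part~\ref{it:L_2_norm_decreases}, dividing by $\norme{\overline{c}(t)}_2$, one obtains the differential inequality
\[
\frac{d}{dt}\norme{\overline{c}(t)}_2
\leq -\varrho q(t)^{2/(2+\alpha)}(\lambda_1-\lambda_0)\norme{\overline{c}(t)}_2
+ C\,\frac{|q'(t)|}{q(t)}\,\norme{c(t_0)}_2,
\]
and the stated integral bound follows by the variation of constants formula.

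The main technical obstacle is the rigorous justification of the antisymmetry-based cancellation $\langle \overline{c}(t),A\overline{c}(t)\rangle=0$, since $A$ is an \emph{unbounded} operator on $\ell^2$ (it represents the generator of dilations). I would handle this by approximation: carry out the above differential analysis for $P_N c(t)$ and the truncated matrix $P_N A P_N$ (where antisymmetry is purely finite-dimensional linear algebra), obtain the Grönwall estimate uniformly in $N$ using that $\norme{P_N A e_0}_2\leq \norme{Ae_0}_2 = \norme{M\varphi_0}_2$, and pass to the limit using the $L^2$-expansion $W_t=\sum c_n(t)\varphi_{q(t),n}$ together with the finiteness of $\norme{c(t_0)}_2$.
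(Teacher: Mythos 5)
Your proposal is correct and follows essentially the same route as the paper: antisymmetry of $A$ plus positive semi-definiteness of the diagonal $D(t)$ for Part (i), and for Part (ii) the projection onto $\{e_0\}^\perp$, Cauchy--Schwarz on the coupling term $c_0(t)\sum_j A_{0j}(t)c_j(t)$ with $\norme{(A_{0j})_j}_2<\infty$ coming from the decay of $\varphi_0$ and $x\varphi_0'$, followed by Grönwall and the bound $\abs{c_0(s)}\leq\norme{c(t_0)}_2$ from Part (i). Your additional care about justifying the formal manipulations by finite-dimensional truncation is a reasonable supplement to what the paper leaves implicit.
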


\begin{proof} 
	Part \ref{it:L_2_norm_decreases}. We use Lemma \ref{lem:ode:c}, noting that $D$ is diagonal with $D_{00}=0$ and other entries positive and that $A$ is anti-symmetric, to get, for any $t \in (0,T]$ where $q$ is differentiable,
	\[
	    \partial_t \norme{c(t)}^2_2 
	    = c(t)^T (A(t)^T-D(t)^T+A(t)-D(t)) c(t) 
	    = - 2c(t)^T D(t) c(t) 
	    \leq 0.
	\]
	Together with the continuity of the function $t \in [0,T] \mapsto \norme{c(t)}^2_2$, this implies that it is a decreasing function and hence the same is true for $\norme{c(t)}_2$. 

	Part \ref{it:overline_c}. Using again Lemma \ref{lem:ode:c} and the fact that $A$ is antisymmetric, we have
	\begin{align*}
	    \partial_t \norme{\overline{c}(t)}_2 
	    & = \frac{1}{2\norme{\overline{c}(t)}_2}
	    \partial_t \norme{\overline{c}(t)}_2^2 
	    \\
	    & = -\frac{1}{\norme{\overline{c}(t)}_2} \overline{c}(t)^T D(t) \overline{c}(t) - \frac{1}{\norme{\overline{c} (t)}_2} c_0(t)\sum_{j=1}^\infty A_{0j}(t)c_j(t) 
	    \\
	    & \leq - \varrho q(t)^{2/(2+\alpha)}(\lambda_1-\lambda_0) \norme{\overline{c}(t)}_2
	    + \abs{c_0(t)} \frac{\abs{q'(t)}}{q(t)}\norme{(A_{0j})_{j\geq 1}}_2,
	\end{align*}
	using $\lambda_n \geq \lambda_1$ for the first term and Cauchy-Schwarz inequality in $\ell^2$ for the second one.
	By \eqref{eq:IBP}, we have $A_{0j} = \frac{1}{2(2+\alpha)} (\langle \varphi_j,\varphi_0\rangle + 2 \langle \varphi_j, x\varphi_0'\rangle)$ and, using that $(\varphi_j)_{j\geq 0}$ is an orthonormal basis, it follows that $\norme{(A_{0j})_{j\geq 1}}_2 \leq \frac{1}{(2+\alpha)} (1+2\norme{x \varphi_0'}_2)$, which is a finite constant depending only on $\alpha$.
    Now, Grönwall's inequality yields
	\begin{align*}
	    \norme{\overline{c}(t)}_2 
	    & \leq \norme{\overline{c}(t_0)}_2 \exp\left(-\varrho(\lambda_1-\lambda_0)\int_{t_0}^t q(s)^{2/(2+\alpha)} \diff s  \right) 
	    \\
	    & \quad + C \int_{t_0}^t \abs{c_0(s)} \frac{\abs{q'(s)}}{q(s)} \exp\left( -\varrho(\lambda_1-\lambda_0)\int_s^t q(r)^{2/(2+\alpha)} \diff r \right) \diff s,
	\end{align*}
	and the result follows using that $\abs{c_0(s)} \leq \norme{c(s)}_2 \leq \norme{c(t_0)}_2$ by Part \ref{it:L_2_norm_decreases}.
\end{proof}

\subsection{Precise estimates for the coefficients \texorpdfstring{$c_n(t)$}{cn(t)}}

In this section, we build upon the general bounds of Lemma \ref{lemma:a_priori_estimates_1} to prove more precise estimates for $c_n(T)$ in the case $q = q_*$ or $q=q^*$. In particular, we use that these functions $q$ are constant on $[0,\varepsilon_1]$ and $[T-\varepsilon_2,T]$ and rely on Corollary \ref{cor:if_q_is_constant} on these intervals.

\begin{lem} \label{lem:precise_bounds}
	Let $\alpha \in (0,2)$, $\varrho \geq 1$ and $\xi\in\R$.
	Let $T \in (0,1)$, $\eps_1,\eps_2 \in (0,T/10)$ such that $\eps_2 \leq (1-T)/10$.
	Let $q$ be either $q_*$ or $q^*$ given by Lemma \ref{construction:q_star}. 
	Assume $\varrho \varepsilon_1 \geq 1$ and $\varrho(1-T)^{1-\kappa} \geq 1$.
	Then, there exist $c=c(\alpha)>0$ and $C=C(\alpha)>0$ such that
	\[
	    \abs{c_0(T) - c_0(0)}
	    \leq \frac{C}{\varrho (1-T)^{1-\kappa}} 
	    \left( 
	    e^{-c \abs{\xi}^{(2+\alpha)/2}}
	    + e^{-c \varrho \varepsilon_1 (1+\abs{\xi}^{\alpha})}
	    \right),
	\]
	and, for any $n \geq 1$,
	\begin{align*}
	\abs{c_n(T)}
	\leq C \left( \frac{1}{\varrho(1-T)^{1-\kappa}} + e^{- c\varrho T} \right)
	\left( 
	e^{-c \abs{\xi}^{(2+\alpha)/2}}
	+ e^{-c \varrho \varepsilon_1 (1+\abs{\xi}^{\alpha})}
	\right) 
	e^{-c \varrho \varepsilon_2 (1-T)^{-\kappa} n^{\kappa}}.
	\end{align*}
\end{lem}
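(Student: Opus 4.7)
The plan is to exploit the piecewise structure of $q$ (either $q_*$ or $q^*$ from Lemma \ref{construction:q_star}): constant on $[0,\varepsilon_1]$, equal to $(1-t)^{-\alpha}$ on $[2\varepsilon_1,T-2\varepsilon_2]$, and constant on $[T-\varepsilon_2,T]$. I would split $[0,T]$ into a warm-up phase $[0,\varepsilon_1]$, a middle phase $[\varepsilon_1,T-\varepsilon_2]$, and a cool-down phase $[T-\varepsilon_2,T]$, using Corollary \ref{cor:if_q_is_constant} on the two constant-$q$ phases (which gives clean exponential decay in $\lambda_n$) and Lemma \ref{lemma:a_priori_estimates_1}.\ref{it:overline_c} on the middle one.

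For the warm-up, Corollary \ref{cor:if_q_is_constant} yields the exact formula $c_n(\varepsilon_1) = \varphi_{q(0),n}(\xi) \exp(-\varrho(\lambda_n-\lambda_0)\varepsilon_1 q(0)^{2/(2+\alpha)})$, with $q(0) \in [1,(1-\varepsilon_1)^{-\alpha}]$ bounded. Combining the tail bound $|\varphi_n(\xi)| \leq C(n+1)^3[1 \wedge e^{-c(|\xi|^{(2+\alpha)/2}-Cn)}]$ from Proposition \ref{prop:sturm-liouville}.\ref{it:tail} with $\lambda_n - \lambda_0 \geq cn^\kappa$ from \eqref{eq:asymp_lambda_n}, I would split the analysis at $n$ of order $|\xi|^{(2+\alpha)/2}$: for smaller $n$ the exponential part of the $\varphi_n$-bound contributes $e^{-c|\xi|^{(2+\alpha)/2}}$, while for larger $n$ one has $n^\kappa \geq c|\xi|^\alpha$ (since $\kappa(2+\alpha)/2 = \alpha$), and the Corollary's factor $e^{-c\varrho\varepsilon_1 n^\kappa}$ then supplies $e^{-c\varrho\varepsilon_1|\xi|^\alpha}$. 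Using $\varrho\varepsilon_1 \geq 1$ to absorb the constant term in $(1+|\xi|^\alpha)$, this yields
\begin{equation*}
|c_n(\varepsilon_1)| \leq C(n+1)^C \bigl(e^{-c|\xi|^{(2+\alpha)/2}} + e^{-c\varrho\varepsilon_1(1+|\xi|^\alpha)}\bigr) e^{-c\varrho\varepsilon_1 n^\kappa},
\end{equation*}
and summing squares controls $\norme{c(\varepsilon_1)}_2$ by the same factor (without the polynomial in $n$).

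On the middle phase I would plug these into Lemma \ref{lemma:a_priori_estimates_1}.\ref{it:overline_c} with $t_0 = \varepsilon_1$, using $q(s)^{2/(2+\alpha)} \geq c(1-s)^{-\kappa}$ and $|q'(s)|/q(s) \leq 8/(1-s)$ from Lemma \ref{construction:q_star}.\ref{it:encadrement_q}-\ref{it:derivative}. The first term is bounded by $\norme{c(\varepsilon_1)}_2 e^{-c\varrho t}$ since $\int_{\varepsilon_1}^t q^{2/(2+\alpha)} \geq ct$. The key computation is for the second term: the change of variables $u = (1-s)^{1-\kappa}$ turns $\int_{\varepsilon_1}^t \frac{ds}{1-s}\exp(-c\varrho[(1-s)^{1-\kappa}-(1-t)^{1-\kappa}])$ into $\int u^{-1}e^{-c\varrho(u-(1-t)^{1-\kappa})}du$, which, using $\varrho(1-T)^{1-\kappa} \geq 1$, is of order $1/[\varrho(1-t)^{1-\kappa}]$. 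Evaluating at $t = T-\varepsilon_2$:
\begin{equation*}
\norme{\overline{c}(T-\varepsilon_2)}_2 \leq C\Bigl(e^{-c\varrho T} + \tfrac{1}{\varrho(1-T)^{1-\kappa}}\Bigr)\norme{c(\varepsilon_1)}_2.
\end{equation*}
The cool-down is then a second application of Corollary \ref{cor:if_q_is_constant} with $q \equiv q(T-\varepsilon_2) \geq c(1-T)^{-\alpha}$, which preserves $c_0$ and multiplies $|c_n(T-\varepsilon_2)|$ by $\exp(-\varrho(\lambda_n-\lambda_0)\varepsilon_2 q(T-\varepsilon_2)^{2/(2+\alpha)})$, producing the factor $e^{-c\varrho\varepsilon_2(1-T)^{-\kappa}n^\kappa}$ and finishing the $|c_n(T)|$ bound (after absorbing the polynomial in $n$).

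Finally, for $|c_0(T)-c_0(0)|$, the constancy of $c_0$ on both constant-$q$ intervals reduces the problem to $c_0(T-\varepsilon_2)-c_0(\varepsilon_1) = \int_{\varepsilon_1}^{T-\varepsilon_2}\frac{q'(s)}{q(s)}\sum_{k\geq 1}A_{0k}c_k(s)\,ds$ by Lemma \ref{lem:ode:c} and $A_{00}=0$. Cauchy--Schwarz with $\norme{(A_{0k})_{k\geq 1}}_2 \leq C(\alpha)$ (as in the proof of Lemma \ref{lemma:a_priori_estimates_1}.\ref{it:overline_c}), combined with the $\norme{\overline{c}(s)}_2$ bound above, leaves two integrals: the double integral reduces to $\int_{\varepsilon_1}^{T-\varepsilon_2}(1-s)^{-(2-\kappa)}ds \leq C(1-T)^{-(1-\kappa)}$, giving exactly the announced $1/[\varrho(1-T)^{1-\kappa}]$ factor; and the term $\int \frac{e^{-c\varrho s}}{1-s}ds$, which I would split at $s = 1/2$, bounding it by $C/\varrho$ plus a small remainder controlled via $\varrho(1-T)^{1-\kappa} \geq 1$ (both then absorbed into $C/[\varrho(1-T)^{1-\kappa}]$ since $(1-T)^{1-\kappa} \leq 1$). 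The main obstacle throughout is this careful bookkeeping of the $1/(1-s)$ singularity against the spectral-gap exponential decay: verifying that the change of variables $u = (1-s)^{1-\kappa}$ produces exactly the right power of $1-T$ in the middle phase, and checking that the $e^{-c\varrho T}$ term does not contaminate the $|c_0|$ bound.
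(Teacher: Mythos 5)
Your proposal is correct and follows essentially the same three-step decomposition as the paper's proof: Corollary \ref{cor:if_q_is_constant} on the constant plateaus $[0,\varepsilon_1]$ and $[T-\varepsilon_2,T]$, Lemma \ref{lemma:a_priori_estimates_1}.\ref{it:overline_c} on the middle interval, and Lemma \ref{lem:ode:c} plus Cauchy--Schwarz for the $c_0$ drift, with the same split of the residual integral at $s=1/2$. The only cosmetic difference is that you carry out the $n$-split directly where the paper packages it into Lemma \ref{lem:technical_series}, and you use the substitution $u=(1-s)^{1-\kappa}$ where the paper splits the integration range at $2(1-t)$; both yield the same $1/[\varrho(1-t)^{1-\kappa}]$ estimate.
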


\begin{proof} 
	Throughout this proof, constants $c$ and $C$ depend only on $\alpha$.
	
    \textit{Step 1: from $0$ to $\varepsilon_1$.} Since $q$ is constant on $[0,\eps_1]$ by Lemma \ref{construction:q_star}.\ref{it:constant}, we have by Corollary~\ref{cor:if_q_is_constant}, for all $n\geq 0$,
    \begin{equation} \label{eq:0_to_eps1}
    c_n(\varepsilon_1)
    = c_n(0) \exp\left( - \varrho \varepsilon_1 (\lambda_n-\lambda_0) q(0)^{2/(2+\alpha)} \right).
    \end{equation}
    Since $c_n(0) = \varphi_{q(0),n} (\xi)$, $q(0) \geq 1$ and $\lambda_n - \lambda_0 \geq cn^{2\alpha/(2+\alpha)}$ by \eqref{eq:asymp_lambda_n}, we get
    \begin{align*}
        \sum_{n\geq 1} c_n(\varepsilon_1)^2 
        & \leq \sum_{n\geq 1} \varphi_{q(0),n}(\xi)^2 
        \exp\left(-c \varrho \varepsilon_1 n^{2\alpha/(2+\alpha)} \right) \\
        & \leq C \sum_{n\geq 1} n^6 \left[ 1 \wedge \exp \left( - \frac{2}{2+\alpha} \left( \abs{\xi}^{(2+\alpha)/2} -  Cn \right) \right) \right]
        \exp\left(-c \varrho \varepsilon_1 n^{2\alpha/(2+\alpha)}\right),
    \end{align*}
    where we have used \eqref{eq:eigenvalues_Lgamma}, Proposition~\ref{prop:sturm-liouville}.\ref{it:tail} and $1 \leq q(0) \leq C$.
    Applying Lemma \ref{lem:technical_series} (using here $\varrho \varepsilon_1 \geq 1$), we get
    \begin{align*}
        \sum_{n\geq 1} c_n(\varepsilon_1)^2 
        & \leq C e^{-c\varrho \varepsilon_1} \left( e^{-c\abs{\xi}^{(2+\alpha)/2}} + e^{-c\varrho \varepsilon_1 \abs{\xi}^{\alpha}} \right).
    \end{align*}
    On the other hand, $c_0(\varepsilon_1) = c_0(0) = \varphi_{q(0),0}(\xi)$.
    In particular, using again \eqref{eq:eigenvalues_Lgamma}, Proposition~\ref{prop:sturm-liouville}.\ref{it:tail} and $1 \leq q(0) \leq C$,  we get
    \begin{equation} \label{eq:conclusion_step_1}
    \norme{c(\varepsilon_1)}_2
    \leq C \left( 
    e^{-c \abs{\xi}^{(2+\alpha)/2}}
    + e^{-c \varrho \varepsilon_1 (1+\abs{\xi}^{\alpha})}
    \right).
    \end{equation}
    
    \noindent\textit{Step 2: from $\varepsilon_1$ to $T-\varepsilon_2$.} 
    By Lemma \ref{lem:ode:c}, recalling that $D_{00} = 0$, we have 
    $\partial_t c_0(t) = \sum_{j\geq 1} A_{0j}(t)c_j(t)$ as soon as $q'(t)$ exists.
    Therefore, 
    \begin{equation} \label{eq:step2}
    \abs{c_0(T-\varepsilon_2) - c_0(\varepsilon_1)}
    \leq \int_{\varepsilon_1}^{T-\varepsilon_2}
    \abs{\sum_{j\geq 1} A_{0j}(t)c_j(t)} \diff t
    \leq C \int_{\varepsilon_1}^{T-\varepsilon_2}
    \norme{\overline{c}(t)}_2 \frac{q'(t)}{q(t)} \diff t,
    \end{equation}
    using Cauchy-Schwarz inequality and $\norme{(A_{0j})_{j\geq 1}}_2 \leq C$ (like in the proof of Lemma~\ref{lemma:a_priori_estimates_1}).
    We now bound $\norme{\overline{c}(t)}_2$.
    By Lemma \ref{lemma:a_priori_estimates_1}.\ref{it:overline_c}, we have
    \begin{align*}
    \norme{\overline{c}(t)}_2 
    &\leq \norme{\overline{c}(\varepsilon_1)}_2 \exp\left(-\varrho(\lambda_1-\lambda_0) \int_{\varepsilon_1}^t q(s)^{2/(2+\alpha)} \diff s \right) 
    \\
    & \quad + C \norme{c(\varepsilon_1)}_2 \int_{\varepsilon_1}^t \exp\left( -\varrho(\lambda_1-\lambda_0)\int_s^t q(r)^{2/(2+\alpha)} \diff r \right) \frac{q'(s)}{q(s)} \diff s 
    \\
    &\leq C \norme{c(\varepsilon_1)}_2 \left( \exp\left(- c\varrho \int_{\varepsilon_1}^t \frac{\diff s}{(1-s)^{\kappa}} \right) 
    + \int_{1-t}^{1-\varepsilon_1} \exp\left( - c\varrho \int_{1-t}^s \frac{\diff r}{r^{\kappa}} \right)
    \frac{\diff s}{s} \right),
    \end{align*}
    using Lemma \ref{construction:q_star}.\ref{it:encadrement_q}-\ref{it:derivative}, replacing $r$ by $1-r$ and $s$ by $1-s$ in the second term, and recalling that $\kappa = 2\alpha/(2+\alpha)$. 
    Then, we use that, for $s \geq 1-t$,
    \[
    \int_{1-t}^s \frac{\diff r}{r^{\kappa}}
    = \frac{1}{1-\kappa} \left( s^{1-\kappa} - (1-t)^{1-\kappa} \right)
    \geq \begin{cases}
    c (1-t)^{-\kappa} (s-(1-t)), & \text{if } s \leq 2(1-t), \\
    cs^{1-\kappa}, & \text{otherwise},
    \end{cases}
    \]
    and simply bound $\int_{\varepsilon_1}^t \frac{\diff s}{(1-s)^{\kappa}} \geq (t-\varepsilon_1)$. This yields
    \begin{align}
    \norme{\overline{c}(t)}_2 
    &\leq C \norme{c(\varepsilon_1)}_2 
    \left( e^{- c\varrho (t-\varepsilon_1)}
    + \int_{1-t}^{2(1-t)} e^{- c \varrho (1-t)^{-\kappa} (s-(1-t))}
    \frac{\diff s}{1-t}  + \int_{2(1-t)}^{1-\varepsilon_1} e^{-c \varrho s^{1-\kappa}}
    \frac{\diff s}{s} \right) \nonumber \\
    & \leq C \norme{c(\varepsilon_1)}_2 
    \left( e^{- c\varrho (t-\varepsilon_1)}
    + \frac{1}{\varrho(1-t)^{1-\kappa}} \right), \label{eq:bound_ov_c}
    \end{align}
    bounding by the integral from $1-t$ to $\infty$ for the second term and using the inequality $e^{-x}\leq \frac{1}{x}$ for $x>0$ for the third term.
    Coming back to \eqref{eq:step2} and using again $q'(t)/q(t) \leq C (1-t)^{-1}$ by Lemma \ref{construction:q_star}.\ref{it:derivative}, we get 
    \begin{align} 
    \abs{c_0(T-\varepsilon_2) - c_0(\varepsilon_1)}
    & \leq C \norme{c(\varepsilon_1)}_2 
    \int_{\varepsilon_1}^{T-\varepsilon_2}
    \left( e^{- c\varrho (t-\varepsilon_1)}
    + \frac{1}{\varrho(1-t)^{1-\kappa}} \right)  \frac{\diff t}{1-t}. \label{eq:bound_c_0_1}
    \end{align}
   	Now, note that
   	\begin{align*} 
   	\int_{\varepsilon_1}^{T-\varepsilon_2}
   	e^{- c\varrho (t-\varepsilon_1)} \frac{\diff t}{1-t}
   	& \leq 2 \int_{\varepsilon_1}^{1/2}
   	e^{- c\varrho (t-\varepsilon_1)} \diff t
   	+  \int_{1/2}^{(T-\varepsilon_2) \wedge(1/2)} e^{- c\varrho}
   	\frac{\diff t}{1-t} \\
   	& \leq \frac{C}{\varrho} + e^{-c\varrho} \log \frac{1}{1-T} \\
   	& \leq \frac{C}{\varrho},
   	\end{align*}
    using here the assumptions $(1-T)^{-1} \leq \varrho^{1/(1-\kappa)}$ and $\varrho \geq 1$.
    Coming back to \eqref{eq:bound_c_0_1}, we get
    \begin{align} 
    \abs{c_0(T-\varepsilon_2) - c_0(\varepsilon_1)}
    & \leq \frac{C\norme{c(\eps_1)}_2}{\varrho (1-T)^{1-\kappa}}. \label{eq:bound_c_0}
    \end{align}
    
    \noindent\textit{Step 3: from $T-\varepsilon_2$ to $T$.} 
    Since $q$ is constant on $[T-\varepsilon_2,T]$, by Corollary \ref{cor:if_q_is_constant}, we have, for all $n\geq 0$,
    \[
    c_n(T) 
    = c_n(T-\varepsilon_2) 
    \exp\left(- \varrho \varepsilon_2 (\lambda_n-\lambda_0) q(T)^{2/(2+\alpha)} \right).
    \]
    On the one hand, combining this with \eqref{eq:0_to_eps1} and \eqref{eq:bound_c_0}, we get
    \[
    \abs{c_0(T) - c_0(0)} 
    = \abs{c_0(T-\varepsilon_2) - c_0(\varepsilon_1)}
    \leq \frac{C\norme{c(\varepsilon_1)}_2}{\varrho (1-T)^{1-\kappa}}.
    \]
    On the other hand, for $n \geq 1$, using \eqref{eq:asymp_lambda_n}, this yields
    \begin{align*}
    \abs{c_n(T)}
    & \leq \norme{\overline{c}(T-\varepsilon_2)}_2 
    \exp\left(-c n^{2\alpha/(2+\alpha)} \varrho \varepsilon_2 (1-T)^{-2\alpha/(2+\alpha)}\right) \\
    & \leq C \norme{c(\varepsilon_1)}_2 
    \left( e^{- c\varrho T} + \frac{1}{\varrho(1-T)^{1-\kappa}} \right)
    \exp\left(-c n^{2\alpha/(2+\alpha)} \varrho \varepsilon_2 (1-T)^{-2\alpha/(2+\alpha)}\right),
    \end{align*}
    using \eqref{eq:bound_ov_c} and the fact that $\varepsilon_1 \leq T/4$.
    Combining the two last inequalities with \eqref{eq:conclusion_step_1} yields the result.
\end{proof}

\subsection{Technical bound on a series}

\begin{lem} \label{lem:technical_series}
    For any $\kappa \in (0,1)$ and $a_1,a_2,a_3,a_4>0$, there exists $C,c>0$ such that, for any $u \geq 1$ and $v \geq 0$,
    \[
    \sum_{n\geq 1} n^{a_4} \left( 1 \wedge e^{-a_1(v-a_2n)} \right) e^{-a_3un^\kappa}
    \leq C e^{-cu} \left( e^{-cv} + e^{-cuv^\kappa} \right).
    \]
\end{lem}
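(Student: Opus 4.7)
My plan is to split the sum at the threshold $N := \lceil v/(2a_2)\rceil$, which separates the two regimes of the truncation factor $1\wedge e^{-a_1(v-a_2 n)}$. In the low range $1\leq n\leq N$, one has $v-a_2 n\geq v/2$, so the truncation factor is at most $e^{-a_1 v/2}$; this already produces the $e^{-cv}$ contribution. In the high range $n>N$, I simply bound the truncation factor by $1$, but now $n\geq v/(2a_2)$ gives $n^\kappa\geq c\, v^\kappa$, so peeling off half of the Gaussian-like weight, $e^{-a_3 u n^\kappa}\leq e^{-c u v^\kappa}\,e^{-(a_3/2)u n^\kappa}$, produces the $e^{-cuv^\kappa}$ contribution.

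In both ranges the residual sum has the form $\sum_{n\geq 1} n^{a_4} e^{-a u n^\kappa}$ for some $a>0$. The extra $e^{-cu}$ factor is extracted from the elementary inequality
\[
e^{-a u n^\kappa}\leq e^{-(a/2)u}\,e^{-(a/2) n^\kappa}, \qquad n\geq 1,\ u\geq 1,
\]
which uses both $n^\kappa\geq 1$ and $u\geq 1$. Hence $\sum_{n\geq 1} n^{a_4} e^{-a u n^\kappa}\leq C e^{-cu}$, the polynomial prefactor $n^{a_4}$ being absorbed into the convergent tail $\sum n^{a_4}e^{-(a/2)n^\kappa}$. Putting the two ranges together yields a bound of the form $Ce^{-cu}(e^{-cv}+e^{-cuv^\kappa})$, as required.

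There is no real obstacle: the lemma is just a careful two-region splitting, and the only mild point is remembering to split each exponential weight into three ``mini-pieces'' (one producing $e^{-cv}$ or $e^{-cuv^\kappa}$, one producing $e^{-cu}$, and one ensuring convergence of the residual $n$-sum) so that the three desired factors can be extracted simultaneously.
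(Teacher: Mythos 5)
Your argument is correct and amounts to the same basic strategy as the paper's proof — split the sum at an $n$-threshold proportional to $v$, which is where the truncation factor $1\wedge e^{-a_1(v-a_2n)}$ changes character — but it is cleaner in two of the three places where one must work. The paper splits at $n_0 \approx v/a_2$, so on the low range $n<n_0$ the truncation factor equals $e^{-a_1(v-a_2n)}$ exactly, and the growing factor $e^{a_1a_2 n}$ then has to be balanced against $e^{-a_3un^\kappa}$; the paper does this by noting that $x\mapsto a_1a_2x-a_3ux^\kappa$ is convex, so its maximum on $[1,v/a_2]$ is at an endpoint. Your shifted threshold $N\approx v/(2a_2)$ makes this unnecessary: on your low range the truncation factor is uniformly at most $e^{-a_1v/2}$, so the residual sum in $n$ is just a convergent series and the $e^{-cv}$ factor falls out for free. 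On the high range the paper proceeds by a sum–integral comparison plus a change of variables, whereas your direct peeling $e^{-a_3un^\kappa}\le e^{-cuv^\kappa}\,e^{-(a_3/3)u}\,e^{-(a_3/3)n^\kappa}$ (valid because $n>v/(2a_2)$, $n\ge 1$, $u\ge 1$) gets the three desired factors in one line. Your proof is therefore a genuine simplification of the low- and high-range estimates, even though the overall decomposition is the same.

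One small repair: with $N=\lceil v/(2a_2)\rceil$, the claim $v-a_2n\ge v/2$ for $1\le n\le N$ is not quite right (for $n=N$ one only gets $v-a_2N\ge v/2 - a_2$, and for small $v$ the low range may include $n$ with $v-a_2n<0$). Use $N=\lfloor v/(2a_2)\rfloor$ instead; then $a_2n\le v/2$ on $\{1,\ldots,N\}$ and the claimed bound holds, while for $n>N$ one still has $n\ge N+1> v/(2a_2)$ and hence $n^\kappa>(2a_2)^{-\kappa}v^\kappa$ as needed. With this adjustment the proof goes through exactly as you outlined. Also note that in the low range the $e^{-cv}$ factor comes from the truncation term, so the Gaussian-like weight only needs a two-way split there; the three-way split is only needed in the high range — a small mismatch with the summary sentence, but not an error.
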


\begin{proof} In this proof, constants $C$ and $c$ can depend on $\kappa,a_1,a_2,a_3,a_4$.
    First note that, bounding $e^{-a_3un^\kappa} \leq e^{-a_3 u/2} \cdot e^{-a_3un^\kappa/2}$, we can get a factor $e^{-a_3 u/2}$ in front of the series and therefore, it is enough to prove
    \begin{equation} \label{eq:new_goal}
    \sum_{n\geq 1} n^{a_4} \left( 1 \wedge e^{-a_1(v-a_2n)} \right) e^{-a_3un^\kappa}
    \leq C \left( e^{-cv} + e^{-cuv^\kappa} \right).
    \end{equation}
    Letting $n_0 \coloneqq \lceil v/a_2 \vee 2 \rceil$, we split the series into a part $n < n_0$ and a part $n \geq n_0$.
    
    The part $n \geq n_0$ equals
    \begin{equation} \label{eq:part_gretaer_n_0}
    \sum_{n\geq n_0} n^{a_4} e^{-a_3un^\kappa}
    \leq \int_{n_0-1}^\infty (x+1)^{a_4} e^{-a_3ux^\kappa} \diff x
    \leq C u^{-a_4/\kappa} \int_{u(n_0-1)^\kappa}^\infty y^{a_4/\kappa-1} e^{-a_3y} \diff y,
    \end{equation}
    bounding $x+1 \leq 2x$ because $x \geq n_0-1 \geq 1$ and changing variables with $y = ux^\kappa$.
    Noting that $u^{-a_4/\kappa} \leq 1$ together with $y^{a_4/\kappa-1} e^{-a_3y} \leq C e^{-a_3y/2}$ and $(n_0-1)^\kappa \geq (v/2a_2)^\kappa$, this shows the right-hand side of \eqref{eq:part_gretaer_n_0} is at most $Ce^{-cuv^\kappa}$.

    We now consider the part $n < n_0$. 
    If $v/a_2 < 2$, then this part of sum is smaller than $1$ and therefore than $Ce^{-v}$ with $C = e^{2a_2}$.
    We now consider the case $v/a_2 \geq 2$. 
    Then, this part equals
    \begin{equation} \label{eq:part_smaller_n_0}
        \sum_{n=1}^{n_0-1} n^{a_4} e^{-a_1(v-a_2n)} e^{-a_3un^\kappa}
        \leq (v/a_2)^{a_4+1} e^{-a_1v} \max_{x \in [1,v/a_2]} e^{a_1a_2 x-a_3u x^\kappa},
    \end{equation}
    where we bounded $n_0-1 \leq v/a_2$, which holds because $v/a_2 \geq 2$ so $n_0 = \lceil v/a_2 \rceil$.
    Now, note that the function $x \mapsto a_1a_2 x-a_3u x^\kappa$ is convex on $[1,v/a_2]$, so it achieves its maximum on the boundary of the interval.
    This shows that the right-hand side of \eqref{eq:part_smaller_n_0} is at most
    \[
    C (v/a_2)^{a_4+1} \left( 
    e^{-a_1v} e^{-a_3u} + e^{-a_3u (v/a_2)^\kappa}
    \right) 
    \leq C \left( e^{-cv} + e^{-cuv^\kappa} \right),
    \]
    using here again that $u \geq 1$ to argue that $y^{a_4+1} e^{-a_3u y^\kappa} \leq C e^{-a_3u y^\kappa/2}$ for any $y > 0$. This yields \eqref{eq:new_goal} and concludes the proof
\end{proof}

\section{Brownian motion weighted by an integral via probability}
\label{sec:probabilistic_arguments}

In this section, we continue studying the heat kernel of Brownian motion weighted by an integral, relying now on probabilistic arguments. 
Our two main goals are to generalise estimates of the previous section to cases including error terms in the integral weight and to obtain sharper bounds on the tail of the kernel.

As in the previous section, we fix some parameters $\alpha \in (0,2)$ and $\beta>0$.
For $f \colon \R\times (0,\infty) \to \R$ measurable, we define the kernel $\widetilde{G}$ for $x,y \in \R$ and $0 \leq s < t$ by
\begin{equation} \label{eq:G_tilde}
\widetilde{G}(s,x;t,y) =  \frac{e^{-(y-x)^2/[2(t-s)]}}{\sqrt{2\pi(t-s)}}
\E_{(s,x)} \left[ \exp \left( - \beta \int_{s}^t \abs{\frac{B_r}{\sqrt{2} r}}^\alpha \left( 1+f(B_r,r) \right) \diff r \right) \middle| B_t = y \right].
\end{equation}
The contribution of $f$ has to be thought as an error term: note that the case $f=0$ corresponds exactly to the kernel $G$ studied in Section~\ref{sec:BM_weighted}.
More precisely, for some $L,a,b>0$, we work under the assumption $f^-_{L,a,b} \leq f \leq f^+_{L,a,b}$, where we set
\begin{equation} \label{eq:def_f_-_+}
f^+_{L,a,b}(y,r) = \left[ L \left( \abs{\frac{y}{r}}^a + r^{-b} \right) \right] \wedge 1 
\quad \text{and} \quad 
f^-_{L,a,b}(y,r) = - \left( \left[ L \left( \abs{\frac{y}{r}}^a + r^{-b} \right) \right] \wedge \zeta \right),
\end{equation}
where $\zeta = \zeta(L,a,b,\alpha)$ is chosen as the largest possible real number in $(0,1/2]$ such that, for any $r \geq r_0:= (2L)^{1/b}$, the function $y \in [0,\infty) \mapsto (y/r)^\alpha (1+f^-_{L,a,b}(y,r))$ is non-decreasing. 
To see that such an $\zeta$ necessarily exist, use that $Lr^{-b} \leq 1/2$ and note that the function $u \in [0,u_0] \mapsto u^\alpha(\frac{1}{2}-Lu^a)$ is non-decreasing if $u_0$ is chosen small enough depending on $\alpha,a,L$.
Finally, note that we write $f^-_{L,a,b}$ even if this function actually also depends on $\alpha$ through $\zeta$.

\subsection{Comparing \texorpdfstring{$\widetilde{G}$}{Gtilde} to \texorpdfstring{$G$}{G}}

Recall from Proposition~\ref{prop:estimate_G} that, around time $t$, the natural time scale of $G$ is $t^\kappa$ and the natural space scale is $t^{\kappa/2}$, where $\kappa \coloneqq 2\alpha/(2+\alpha) \in (0,1)$: as soon as $t-s$ is much larger than $t^\kappa$, the density $G(s,x;t,\cdot)$ is close to its equilibrium shape, which is given by $\varphi_0$ rescaled by $\vartheta_2/t^{\kappa/2}$. 
It is helpful to keep these scales in mind throughout the section.

The first result of this section establishes that, if $a$ and $b$ are large enough in terms of $\alpha$, then $\widetilde{G}$ and $G$ are of the same order as long as we are on a time scale longer than the natural one and on a space scale not much longer than the natural one.

\begin{prop}\label{prop:approximating_G_with_G}
	Let $\alpha \in (0,2)$ and $\beta > 0$.
	Let $L>0$, $a> (1-\kappa)/(1-\frac{\kappa}{2}) = 1-\alpha/2$ and $b>1-\kappa$.
	Then, there exists $\eps_0 = \eps_0(\alpha,a,b) > 0$ such that for all $\eps \in (0,\eps_0)$, there are $C,c>0$ such that for $s$ large enough, for any $t\geq s + s^\kappa$, any $x,y\in \R$ with $\abs{x} \leq s^{(\kappa + \eps)/2}$ and $\abs{y} \leq t^{(\kappa + \eps)/2}$ and any function $f$ satisfying $f^-_{L,a,b} \leq f \leq f^+_{L,a,b}$, we have
	\begin{equation*}
	\widetilde{G}(s,x;t,y) = (1+o(1)) G(s,x;t,y),
	\end{equation*}
	where the $o(1)$ holds as $s\to\infty$, uniformly in $t,x,y,f$ (but depending on $\alpha,\beta,L,a,b,\varepsilon$).
\end{prop}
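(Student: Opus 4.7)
The strategy is to sandwich $f$: since $f^-_{L,a,b} \leq f \leq f^+_{L,a,b}$ and $f$ enters the exponent in \eqref{eq:G_tilde} with a negative sign, one has $\widetilde G_+ \leq \widetilde G \leq \widetilde G_-$, where $\widetilde G_\pm$ is defined by \eqref{eq:G_tilde} with $f^\pm_{L,a,b}$ in place of $f$. It therefore suffices to prove $\widetilde G_\pm(s,x;t,y) = (1+o(1))\, G(s,x;t,y)$ uniformly in the allowed range of $t,x,y$. Dividing by $G$ gives
$$
\frac{\widetilde G_\pm(s,x;t,y)}{G(s,x;t,y)} = \mathbb{E}_w\!\left[\exp\!\left(-\beta \int_s^t \Big|\tfrac{B_r}{\sqrt{2}\, r}\Big|^\alpha f^\pm_{L,a,b}(B_r,r)\,dr\right)\right],
$$
where $\mathbb{E}_w$ denotes expectation under the probability measure obtained by reweighting the Brownian bridge from $(s,x)$ to $(t,y)$ by $\exp(-\beta \int_s^t |B_r/(\sqrt 2 r)|^\alpha dr)$ and normalising.

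The heart of the argument is a first-moment estimate: using $|f^\pm_{L,a,b}(y,r)| \leq L(|y/r|^a + r^{-b})$, set
$$
\Xi \coloneqq \int_s^t \Big|\tfrac{B_r}{\sqrt{2}\, r}\Big|^\alpha |f^\pm_{L,a,b}(B_r,r)|\,dr \leq C L \int_s^t \!\left(\tfrac{|B_r|^{\alpha+a}}{r^{\alpha+a}} + \tfrac{|B_r|^{\alpha}}{r^{\alpha+b}}\right) dr,
$$
with the goal of proving $\mathbb{E}_w[\Xi] \to 0$. For the $\widetilde G_+$ direction, the exponent in the ratio is nonpositive, so $\widetilde G_+/G \in [1-\beta\,\mathbb{E}_w[\Xi],\,1]$, yielding $1+o(1)$. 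For the $\widetilde G_-$ direction, the exponent is nonnegative and bounded pathwise by $(\beta/2)\int_s^t |B_r/(\sqrt 2 r)|^\alpha dr$ since $|f^-_{L,a,b}| \leq \eta \leq 1/2$; combining the inequality $e^x \leq 1 + x + x^2 e^x$ with a truncation at a slowly growing threshold and a stretched-exponential tail control on $|B_r|$ under $\mathbb{P}_w$ (obtained from the same density estimate below via the tail of $\varphi_0$) also gives $\widetilde G_-/G \leq 1+o(1)$ from the same first-moment bound.

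The key moment estimate $\mathbb{E}_w[|B_r|^p] \leq C_p\, r^{p\kappa/2}$ for intermediate $r \in [s,t]$ follows from the bridge decomposition
$$
\mathbb{E}_w[g(B_r)] = \frac{1}{G(s,x;t,y)} \int_{\mathbb R} g(z)\, G(s,x;r,z)\, G(r,z;t,y)\, dz,
$$
after applying Proposition~\ref{prop:estimate_G} to each of the three $G$-factors (permitted when $r-s \geq Ks^\kappa$ and $t-r \geq Kr^\kappa$). The dominant exponential prefactors cancel, and the two $\varphi_0(\vartheta_2 \cdot /r^{\kappa/2})$ factors combine into the probability density $z \mapsto \vartheta_2\, r^{-\kappa/2}\, \varphi_0(\vartheta_2 z/r^{\kappa/2})^2$; its moments are bounded by the change of variable $u = \vartheta_2 z/r^{\kappa/2}$ and Proposition~\ref{prop:sturm-liouville}.\ref{it:tail}. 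Substituting into $\Xi$ yields
$$
\mathbb{E}_w[\Xi] \leq C \int_s^t \!\left(r^{-(\alpha+a)(1-\kappa/2)} + r^{-\kappa-b}\right) dr \leq C\!\left(s^{1-(\alpha+a)(1-\kappa/2)} + s^{1-\kappa-b}\right),
$$
and both exponents are strictly negative precisely when $a > 1-\alpha/2$ and $b > 1-\kappa$ (using $\alpha(1-\kappa/2) = \kappa$), so $\mathbb{E}_w[\Xi] = o(1)$.

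I expect the main obstacle to lie in the treatment of the edge regions $r \in [s,\,s+Ks^\kappa]$ and $r \in [t-Kr^\kappa,\,t]$, where Proposition~\ref{prop:estimate_G} cannot be applied to one of the three $G$-factors. In these regions I would fall back on the trivial bound that the Feynman--Kac weight lies in $[0,1]$ (dominating the corresponding $G$-factor by the free Gaussian density), combined with $|x| \leq s^{(\kappa+\eps)/2}$ and $|y| \leq t^{(\kappa+\eps)/2}$ to control $|B_r|$ by $\sim s^{(\kappa+\eps)/2}$ deterministically on these short intervals (modulo a Gaussian fluctuation of order $s^{\kappa/2}$). A direct computation then shows that these edge contributions are of order $s^{\kappa - (\alpha+a)(1-(\kappa+\eps)/2)} + s^{-b + O(\eps)}$, which remain $o(1)$ provided $\eps_0$ is chosen small enough depending on $\alpha,a,b$; this pins down the threshold $\eps_0$ in the statement.
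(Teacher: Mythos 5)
Your overall strategy (sandwich by $f^\pm_{L,a,b}$, then show the error integral is negligible along typical paths) is the same as the paper's, but your implementation of "typical paths" — a tilted bridge measure $\P_w$ and fixed-time moment bounds obtained by applying Proposition~\ref{prop:estimate_G} to the three factors $G(s,x;r,z)$, $G(r,z;t,y)$, $G(s,x;t,y)$ — has a gap that I do not think can be repaired without importing the paper's actual mechanism. Proposition~\ref{prop:estimate_G} is an \emph{additive} approximation whose error term has tails $\exp(-c(\abso{x}/s^{\kappa/2})^\alpha)$, whereas the main term decays like $\varphi_0(\vartheta_2 x/s^{\kappa/2})\asymp \exp(-c(\abso{x}/s^{\kappa/2})^{(2+\alpha)/2})$ (Proposition~\ref{prop:sturm-liouville}.\ref{it:tail}). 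Since $(2+\alpha)/2>\alpha$, the error \emph{dominates} the main term as soon as $\abso{x}/s^{\kappa/2}\gg(\log s)^{2/(2+\alpha)}$, so in the range $\abso{x}\leq s^{(\kappa+\eps)/2}$ with fixed $\eps>0$ Proposition~\ref{prop:estimate_G} gives no useful lower bound on $G(s,x;r,z)$ or on the normalisation $G(s,x;t,y)$: the claimed two-sided multiplicative control, and hence the identification of the $\P_w$-density of $B_r$ with $\vartheta_2 r^{-\kappa/2}\varphi_0(\vartheta_2\cdot/r^{\kappa/2})^2$ and the moment bound $\E_w[\abso{B_r}^p]\leq C_p r^{p\kappa/2}$, fail exactly where the proposition must be proved uniformly. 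The paper avoids this by never asking for sharp asymptotics of $G$ at large arguments: it proves a pathwise localization (Lemma~\ref{lem:localization_Gtilde}) by chaining the crude one-step bounds of Lemma~\ref{lem:step_t^kappa} with the monotone coupling of reflected bridges (Lemma~\ref{lem:coupling}), which compares $\widetilde G$ at different endpoints directly.

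There is a second problem in the $f^-$ direction, where the exponent is nonnegative and you must show $\E_w[e^{\Xi}]\leq 1+o(1)$ rather than $\geq 1-\E_w[\Xi]$. Your proposed truncation cannot close: $\E_w[e^{2\Xi}]\leq\E_w[\exp(\beta\int_s^t\abso{B_r/(\sqrt2 r)}^\alpha\diff r)]$ is the ratio of the free Gaussian kernel to $G$, which grows like $\exp(\vartheta_1(t^{1-\kappa}-s^{1-\kappa}))$, while Markov's inequality only makes $\P_w(\Xi>\delta)$ polynomially small; Cauchy--Schwarz therefore produces a factor $e^{ct^{1-\kappa}}\cdot o(1)^{1/2}$ that diverges. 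What is really needed is a localization that holds \emph{uniformly in $r\in[s,t]$} — on the event $\{\forall r,\ \abso{B_r}<r^{(\kappa+\eta)/2}\}$ the integral $\Xi$ is deterministically $o(1)$ (your exponent computation here is correct and matches \eqref{eq:integral_error_term}), and the complementary event must be shown to contribute a negligible \emph{fraction of $\widetilde G$ itself}, not merely to have small $\P_w$-probability at each fixed time. That uniform statement is precisely Lemma~\ref{lem:localization_Gtilde}, and its proof requires beating the $e^{ct^{1-\kappa}}$ growth interval by interval via the coupling, not fixed-time moments. So the exponent arithmetic and the role of the hypotheses $a>1-\alpha/2$, $b>1-\kappa$ in your sketch are right, but the probabilistic control of the paths is the missing core of the proof.
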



\subsubsection{Preliminary results}

We prove here two preliminary lemmas. The first one gives some rough estimates on the natural time scale.

\begin{lem} \label{lem:step_t^kappa}
	Let $\alpha \in (0,2)$ and $\beta > 0$. 
	There exist $C,c>0$ such that, for any $s\geq 1$, $t \in [s + s^\kappa,s + 3 s^\kappa]$ and any measurable function $f \colon \R\times (0,\infty) \to \R$, the following holds.
	\begin{enumerate}
		\item\label{it:LB_step_t^kappa} If $f \leq 1$, then, for any $M \geq 1$, $\abs{x} \leq Ms^{\kappa/2}$ and $\abs{y} \leq Mt^{\kappa/2}$, 
		\[
		\widetilde{G}(s,x;t,y) \geq \frac{ce^{-CM^2}}{t^{\kappa/2}}.
		\]
		\item\label{it:UB_step_t^kappa} If $f \geq -1/2$, then, for any $x,y\in \R$ and $\eta \geq 0$,
		\begin{align}
		& \frac{e^{-(y-x)^2/[2(t-s)]}}{\sqrt{2\pi(t-s)}} \nonumber\\
		& \qquad \times \E_{(s,x)} \left[ \exp \left( - \beta \int_{s}^t \abs{\frac{B_r}{\sqrt{2} r}}^\alpha \left( 1+f(B_r,r) \right) \diff r \right)
		\ind{\exists r \in [s,t], \abs{B_r} \geq r^{(\kappa + \eta)/2}}
		\middle| B_t = y \right] \nonumber \\
		& \leq C \exp \left( - c t^{\eta \alpha/2}
		- c \left( \frac{\abs{x}}{s^{\kappa/2}} \right)^{\alpha} 
		- c \left( \frac{\abs{y}}{t^{\kappa/2}} \right)^{\alpha}
		\right). \label{eq:UB_step_t^kappa} 
		\end{align}		
	\end{enumerate}
\end{lem}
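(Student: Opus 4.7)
For Part 1 (the lower bound), I would separately bound below the Gaussian prefactor and the conditional expectation in \eqref{eq:G_tilde}. Since $t-s \in [s^\kappa, 3 s^\kappa]$ and $(y-x)^2 \leq 4M^2 t^\kappa \leq C M^2 (t-s)$, the prefactor $e^{-(y-x)^2/[2(t-s)]}/\sqrt{2\pi(t-s)}$ is at least $c t^{-\kappa/2} e^{-CM^2}$. For the conditional expectation, I would represent the Brownian bridge as $B_r = x + \frac{r-s}{t-s}(y-x) + W_r$, where $W$ is a centered Brownian bridge of duration $t-s$, and restrict to the positive-probability event $\{\sup_r |W_r| \leq \sqrt{t-s}\}$. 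On this event $|B_r| \leq (M+1) t^{\kappa/2}$ throughout $[s,t]$, and the scaling identity $\kappa\alpha/2 - \alpha = -\kappa$ gives $|B_r/r|^\alpha \leq C(M+1)^\alpha t^{-\kappa}$, so the integral is bounded by $C(M+1)^\alpha \leq CM^2$ (using $\alpha < 2$ and $M \geq 1$). The hypothesis $f \leq 1$ then yields a conditional weight at least $\exp(-CM^2)$, completing Part~1.

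For Part 2 (the upper bound), the recurring scaling identity is $\int_s^{s+s^\kappa} (h/r)^\alpha \,dr \asymp (h/s^{\kappa/2})^\alpha$, which is valid because $\kappa - \alpha = -\kappa\alpha/2$. Using $1+f \geq 1/2$, I would bound the exponential weight by $\exp(-\beta' \int_s^t |B_r/(\sqrt{2}r)|^\alpha \,dr)$ for some $\beta' > 0$, and then aim to produce each of the three tail factors in \eqref{eq:UB_step_t^kappa} separately. The $\exp(-c(|x|/s^{\kappa/2})^\alpha)$ factor should arise from the fact that a Brownian bridge started at $x$ satisfies $|B_r| \geq |x|/2$ on a macroscopic subinterval of $[s, s + s^\kappa]$ with conditional probability bounded below by an absolute constant, so the weight over that subinterval contributes at least $c(|x|/s^{\kappa/2})^\alpha$ to the exponent by the scaling identity above. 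A symmetric argument near $t$ produces the $|y|$ tail.

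For the $\exp(-c t^{\eta\alpha/2})$ factor, I would use a dichotomy. When $\max(|x|,|y|) \ll t^{(\kappa+\eta)/2}$, the bridge reflection estimate $\P(\max_{r\in[s,t]} |B_r| \geq h \mid B_s=x, B_t=y) \leq 2\exp(-2(h-|x|)_+(h-|y|)_+/(t-s))$ applied with $h \asymp t^{(\kappa+\eta)/2}$ gives a bound of $\exp(-ct^\eta)$, which is much stronger than the required $\exp(-ct^{\eta\alpha/2})$ since $\alpha < 2$. When instead $|x|$ or $|y|$ is comparable to $t^{(\kappa+\eta)/2}$, the corresponding boundary-tail factor $\exp(-c(|x|/s^{\kappa/2})^\alpha)$ already dominates $\exp(-ct^{\eta\alpha/2})$, so the desired decay is obtained for free.

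The main technical obstacle is ensuring that the three tail factors appear as a product rather than as a sum when combined. My plan is to apply the strong Markov property at the first exit time $\tau = \inf\{r \geq s : |B_r| \geq r^{(\kappa+\eta)/2}\}$: the pre-$\tau$ segment contributes the $|x|$ tail and the $t^{\eta\alpha/2}$ factor through the Brownian estimates described above, while the post-$\tau$ segment, which conditionally is a Brownian motion from $(\tau, B_\tau)$ ending at $y$ at time $t$, contributes the $|y|$ tail by a symmetric analysis. Bookkeeping these contributions uniformly in the parameters $s,t,x,y,f$, while tracking the Gaussian prefactor coming from the density of $B_t$ at $y$, will be the most delicate part of the argument.
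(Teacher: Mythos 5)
Your Part 1 is correct and is essentially the paper's argument: bound the Gaussian prefactor by $ct^{-\kappa/2}e^{-CM^2}$, confine the bridge to a tube of width $O(Mt^{\kappa/2})$ (an event of probability bounded below by a universal constant), and use the scaling identity $\alpha\kappa/2-\alpha=-\kappa$ to bound the integral by $CM^\alpha\le CM^2$; the paper does the confinement via the reflection formula for the bridge rather than your decomposition into linear interpolation plus centred bridge, but this is immaterial.

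Part 2 contains a genuine gap in the mechanism you propose for extracting the endpoint tail factors. You write that the bridge started at $x$ stays above $|x|/2$ on a macroscopic subinterval ``with conditional probability bounded below by an absolute constant, so the weight over that subinterval contributes at least $c(|x|/s^{\kappa/2})^\alpha$ to the exponent.'' For an \emph{upper} bound on $\E[e^{-V}\mathbf{1}_{\cdots}\mid B_t=y]$ this inference is invalid: writing $A$ for your good event, you only get $\E[e^{-V}\mid\cdot]\le e^{-v}\P(A)+\P(A^c)$, and unless $\P(A^c)$ is itself of order $e^{-c(|x|/s^{\kappa/2})^\alpha}$ the second term destroys the bound. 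Whether $\P(A^c)$ is small depends crucially on the terminal point $y$: if $y\le x/2$ the bridge \emph{must} leave the region $\{B\ge x/2\}$, so $\P(A^c)$ is of order one and the decay in $x$ cannot come from the weight at all --- it has to come from the Gaussian prefactor $e^{-(y-x)^2/[2(t-s)]}\le e^{-x^2/(24s^\kappa)}$. If instead $y>x/2$, the reflection estimate gives $\P(\exists r:\,B_r\le x/4\mid B_t=y)\le e^{-2(x/2)(x/4)/(t-s)}$, which is the exponentially small bound you need for the bad event, and on the good event the weight is \emph{deterministically} at most $\exp(-c(x/s^{\kappa/2})^\alpha)$. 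This dichotomy on the relative positions of $x$ and $y$ (together with the preliminary case $|x|\vee|y|\le s^{(\kappa+\eta)/2}/2$, where the pure reflection bound $e^{-cs^\eta}$ already beats everything since $\alpha<2$) is exactly how the paper's proof runs, and it is the missing ingredient in yours. Your dichotomy for the $t^{\eta\alpha/2}$ factor is sound, and the strong-Markov decomposition at the exit time $\tau$ is an avoidable complication (conditioning the bridge's pre-$\tau$ law is delicate and the paper's direct case analysis sidesteps it), but the proposal as written does not yield the claimed tail in $x$ and $y$.
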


\begin{proof}
	Part \ref{it:LB_step_t^kappa}. We use that $(y-x)^2/(t-s) \leq (2Mt^{\kappa/2})^2/s^\kappa \leq C M^2$ and that $f \leq 1$ to get
	\begin{align}
	\widetilde{G}(s,x;t,y) 
	&\geq \frac{e^{-CM^2}}{\sqrt{2\pi(t-s)}} 
	\E_{(s,x)} \left[ \exp \left( - 2 \beta \int_{s}^t \abs{\frac{B_r}{\sqrt{2} r}}^\alpha \diff r \right) \middle| B_t = y \right] \nonumber \\
	& \geq \frac{c e^{-CM^2}}{t^{\kappa/2}} 
	\P_{(s,x)} \left( \forall r \in [s,t], \abs{B_r} \leq 3 M t^{\kappa/2}\middle| B_t = y \right), \label{eq:LB_beginning}
	\end{align}
	where we restricted ourselves to the event where the Brownian bridge is bounded by $3 M t^{\kappa/2}$ so that we can bound $\int_{s}^t \lvert \frac{B_r}{r} \rvert^\alpha \diff r \leq C M^\alpha t^{\kappa +\alpha\kappa/2-\alpha} = C M^\alpha$ because $\kappa +\alpha\kappa/2-\alpha=0$.
	Now, recall the following formula (see e.g.\@ the proof of Lemma~2 in \cite{Bra1978}): for any $0<s<t$ and any $x,y<K$,
	\begin{equation} \label{eq:bound_Brownian_bridge}
	\P_{(s,x)} \left( \exists r \in [s,t], B_r \geq K \middle| B_t = y \right)
	= \exp \left( -\frac{2(K-x)(K-y)}{t-s} \right).
	\end{equation}
	Using this (and its symmetric version for the event $\{\exists r \in [s,t], B_r \leq -K\}$), we get that, for $s,t,x,y$ satisfying the assumptions of Part \ref{it:LB_step_t^kappa},
	\begin{align*}
	\P_{(s,x)} \left( \forall r \in [s,t], \abs{B_r} \leq 3 M t^{\kappa/2} \middle| B_t = y \right) 
	&\geq 1 - 2 \exp \left( - \frac{2\cdot 2Mt^{\kappa/2} \cdot 2Mt^{\kappa/2}}{t-s} \right) \\
	&\geq 1 - 2e^{-8/3} > 0,
	\end{align*}
	where we used $t-s \leq 3s^\kappa \le 3t^\kappa$ and $M \geq 1$.
	Coming back to \eqref{eq:LB_beginning}, this yields the result.

	Part \ref{it:UB_step_t^kappa}. Using $f \geq -1/2$ and $t-s \leq 3s^\kappa$, the left-hand side of \eqref{eq:UB_step_t^kappa} is at most 
	\begin{equation}
	e^{-(y-x)^2/(6s^\kappa)}
	\E_{(s,x)} \left[ \exp \left( - \frac{\beta}{2} \int_s^t \abs{\frac{B_r}{\sqrt{2} r}}^\alpha \diff r \right)
	\ind{\exists r \in [s,t], \abs{B_r} \geq s^{(\kappa + \eta)/2}}
	\middle| B_t = y \right], \label{eq:UB_step_t^kappa_2} 
	\end{equation}	
	which we now aim at bounding.
	We distinguish several cases. First, assume that $\abs{x} \vee \abs{y} \leq s^{(\kappa+\eta)/2}/2$. Then, keeping only the event in the indicator function and then applying \eqref{eq:bound_Brownian_bridge}, we get that \eqref{eq:UB_step_t^kappa_2} is at most
	\[
		\P_{(s,x)} \left( \exists r \in [s,t], \abs{B_r} \geq s^{(\kappa + \eta)/2}
		\middle| B_t = y \right)
		\leq 2 \exp \left( -\frac{s^{\kappa + \eta}}{2(t-s)} \right) \leq 2 \exp \left( -\frac{s^{\eta}}{6} \right),
	\]
	which is smaller than the right-hand side of \eqref{eq:UB_step_t^kappa} in this case (note that $s \geq t/4$ and $\alpha<2$).
	We can now assume that $\abs{x} \vee \abs{y} > s^{(\kappa+\eta)/2}/2$. We restrict ourselves to the case where $\abs{x} = \abs{x} \vee \abs{y}$ and $x>0$, the other cases being treated similarly.
	On the one hand, if $y\leq x/2$, then, bounding the expectation by 1, \eqref{eq:UB_step_t^kappa_2} is at most 
	\[
	e^{-(y-x)^2/(6s^\kappa)}
	\leq e^{-x^2/(24s^\kappa)}
    \leq \exp \left( - \frac{1}{24 s^{\kappa}} 
    \left( \frac{(s^{(\kappa+\eta)/2}/2)^2}{3} + \frac{x^2}{3} + \frac{y^2}{3} \right) 
    \right),
 	\]
	which is smaller than the right-hand side of \eqref{eq:UB_step_t^kappa}. 
	On the other hand, if $y > x/2$, then \eqref{eq:UB_step_t^kappa_2} is at most
	\begin{align}
	 \E_{(s,x)}& \left[ \exp \left( - \frac{\beta}{2} \int_s^t \abs{\frac{B_r}{\sqrt{2} r}}^\alpha \diff r \right) \middle| B_t = y \right] \nonumber \\
	& = \E_{(s,x)} \left[ \exp \left( - \frac{\beta}{2} \int_s^t \abs{\frac{B_r}{\sqrt{2} r}}^\alpha \diff r \right)	\ind{\forall r \in [s,t], B_r \geq x/4} \middle| B_t = y \right] \\
        & \hspace{2cm}
	+ \P_{(s,x)} \left(	\exists r \in [s,t], B_r \leq x/4 \middle| B_t = y \right) \nonumber \\
	& \leq \exp \left( - \frac{\beta}{2} (t-s) \abs{\frac{x}{4\sqrt{2} s}}^\alpha \right)
	+ \exp \left( -\frac{2(x/2)(x/4)}{t-s} \right), \label{eq:UB_step_t^kappa_3}
	\end{align}	
	using \eqref{eq:bound_Brownian_bridge} for the second term. Using $s^\kappa \leq t-s \leq 3s^\kappa$ and $\alpha-\kappa = \alpha\kappa/2$, the right-hand side of \eqref{eq:UB_step_t^kappa_3} is at most $\exp(- c x^\alpha/s^{\alpha\kappa/2} ) + \exp \left( - cx^2/s^\kappa \right) \leq 2 \exp(- c x^\alpha/s^{\alpha\kappa/2})$, which is again smaller than the right-hand side of \eqref{eq:UB_step_t^kappa} using $\abs{x} \geq \abs{y}$,  $\abs{x} \geq s^{(\kappa+\eta)/2}/2$ and $s \geq t/4$. 
	This concludes the proof.
\end{proof}


Before stating the next lemma, we define, for $0 < s < t$ and $x,y\geq 0$,
\begin{equation} \label{eq:def_G_tilde_||}
\widetilde{G}^{|\cdot|}(s,x;t,y) \coloneqq \widetilde{G}(s,x;t,y)+\widetilde{G}(s,x;t,-y).
\end{equation}
It can be rewritten in the following way:
\begin{align} 
\widetilde{G}^{|\cdot|}(s,x;t,y)
& = \frac{e^{-(y-x)^2/[2(t-s)]}+e^{-(y+x)^2/[2(t-s)]}}{\sqrt{2\pi(t-s)}} \nonumber \\
& \qquad \times \E_{(s,x)} \left[ \exp \left( - \beta \int_{s}^t \abs{\frac{B_r}{\sqrt{2} r}}^\alpha \left( 1+f(B_r,r) \right) \diff r \right) \middle| \abs{B_t} = y \right]. \label{eq:formula_G_tilde_||}
\end{align}
If $f(\cdot,r)$ is even for any $r \geq s$, then $f(B_r,r)$ can be replaced by $f(\abs{B_r},r)$, and $\widetilde{G}^{|\cdot|}$ can be seen as the heat kernel of reflected Brownian motion weighted by an integral.
Moreover under this additional assumption, for any $0<s<t$ and $x,y\geq 0$, we have
\begin{equation} \label{eq:other_expression_G_tilde_||}
\widetilde{G}^{|\cdot|}(s,x;t,y) = \widetilde{G}(s,x;t,y)+\widetilde{G}(s,-x;t,y),
\end{equation}
because $\widetilde{G}(s,-x;t,y) = \widetilde{G}(s,x;t,-y)$ by symmetry of Brownian motion and of $f$.
It is convenient to work with this new kernel in the following result for coupling reasons which appear in its proof.

\begin{lem} \label{lem:coupling}
	Let $\alpha \in (0,2)$ and $\beta > 0$. 
	Let $f \colon \R\times (0,\infty) \to \R$ measurable and $r_0 >0$ be such that, for any $r \geq r_0$, the function $f(\cdot,r)$ is even and the function $y \in [0,\infty) \mapsto (y/r)^\alpha (1+f(y,r))$ is non-decreasing.
	For any $r_0 \leq s < t$, $0 \leq x' \leq x$ and $0 \leq y' \leq y$,
	\begin{align*}
		\widetilde{G}^{|\cdot|}(s,x;t,y) \leq 2 \exp \left( \frac{(x'-y')^2 - (x-y)^2}{2(t-s)}\right) \widetilde{G}^{|\cdot|}(s,x';t,y').
	\end{align*}
\end{lem}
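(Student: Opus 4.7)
The plan is to decompose $\widetilde{G}^{|\cdot|}(s,x;t,y) = p^{|\cdot|}(s,x;t,y) \cdot F(s,x;t,y)$ via \eqref{eq:formula_G_tilde_||}, where
\[
p^{|\cdot|}(s,x;t,y) := \frac{e^{-(y-x)^2/[2\Delta]} + e^{-(y+x)^2/[2\Delta]}}{\sqrt{2\pi\Delta}}, \qquad \Delta := t-s,
\]
is the transition density of reflected Brownian motion and $F(s,x;t,y) := \E_{(s,x)}[\phi(B) \mid \abs{B_t}=y]$ is the associated conditional expectation of $\phi(B) := \exp(-\beta \int_s^t \abs{B_r/(\sqrt{2}r)}^\alpha(1+f(B_r,r))\,\diff r)$. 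Since $s \geq r_0$, the assumption on $f$ implies that for every $r \in [s,t]$ the integrand $z \mapsto (z/r)^\alpha(1+f(z,r))$ is non-decreasing on $[0,\infty)$, and the evenness of $f(\cdot,r)$ means that $\phi(B)$ depends on $B$ only through the reflected path $R := \abs{B}$ and is a non-increasing functional of $R$. I will treat the two factors separately; the factor $2$ in the statement comes entirely from the prefactor, while $F$ is handled by monotonicity.

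For the prefactor, the inequality $(y-x)^2 \leq (y+x)^2$ (valid since $x,y \geq 0$) yields $p^{|\cdot|}(s,x;t,y) \leq 2 e^{-(y-x)^2/[2\Delta]}/\sqrt{2\pi\Delta}$, while keeping only the first exponential in the definition gives $p^{|\cdot|}(s,x';t,y') \geq e^{-(y'-x')^2/[2\Delta]}/\sqrt{2\pi\Delta}$. Dividing,
\[
\frac{p^{|\cdot|}(s,x;t,y)}{p^{|\cdot|}(s,x';t,y')} \leq 2 \exp\left(\frac{(x'-y')^2 - (x-y)^2}{2\Delta}\right),
\]
which is exactly the exponential factor appearing in the statement.

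It remains to prove the monotonicity $F(s,x;t,y) \leq F(s,x';t,y')$ whenever $(x,y) \geq (x',y') \geq 0$. Under the conditional law, $R = \abs{B}$ is a reflected Brownian bridge from $(s,x)$ to $(t,y)$, so by the non-increasing property of $\phi$ as a functional of $R$ it suffices to construct, on a common probability space, a coupling $(R, R')$ of two such RBM bridges with the respective endpoints such that $R_r \geq R'_r$ almost surely for every $r \in [s,t]$. I would produce this coupling via Doob's $h$-transform: each bridge solves the reflected SDE
\[
\diff R = \diff W + \partial_z \log p^{|\cdot|}(r, R; t, y)\,\diff r + \diff L,
\]
with Skorokhod local time $L$ at $0$, and driving both by the same Brownian motion $W$, the standard comparison theorem for one-dimensional reflected SDEs yields the pathwise ordering, provided that $x \geq x'$ (given) and that the drift is non-decreasing in the target $y$ for each fixed $(r,z)$. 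The last point reduces to the log-supermodularity $\partial_y \partial_z \log p^{|\cdot|}(r,z;t,y) \geq 0$ on $\{y,z \geq 0\}$, which is the key technical input: using the identity $p^{|\cdot|}(r,z;t,y) = 2 e^{-(y^2+z^2)/[2(t-r)]}\cosh(yz/(t-r))/\sqrt{2\pi(t-r)}$, a direct computation gives
\[
\partial_y \partial_z \log p^{|\cdot|}(r,z;t,y) = \frac{\tanh(yz/(t-r))}{t-r} + \frac{yz}{(t-r)^2\cosh^2(yz/(t-r))} \geq 0.
\]

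The main obstacle is to make the coupling rigorous: the $h$-transform drift is singular as $r \to t$ and interacts with the reflecting boundary at $0$, so the comparison theorem should be applied on $[s, t-\delta]$ and extended by continuity of the bridges as $\delta \to 0$. An alternative route avoiding the singular drift is the mixture representation $R = \abs{B}$, where $B$ is a signed Brownian bridge whose terminal value is $\pm y$ with probabilities $p_\pm(x,y)$; the monotonicity $p_+(x,y) = (1+e^{-2xy/\Delta})^{-1}$ in the product $xy$ allows a coupling of the sign choices consistent with the endpoint ordering, though the pathwise comparison of absolute values then requires a further case analysis at zero crossings of the smaller bridge. Either way, combined with the prefactor bound the monotonicity of $F$ yields the desired inequality.
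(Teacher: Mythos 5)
Your decomposition of $\widetilde{G}^{|\cdot|}$ into the reflected Gaussian prefactor times a conditional expectation, and your bound on the prefactor ratio, are exactly the paper's; the difference lies entirely in how you order the two reflected Brownian bridges. The paper uses a Doeblin-type coupling: run an independent reflected bridge $\widetilde{X}^2$ from $(s,x')$ to $(t,y')$ alongside $X^1$, and if the two paths meet, replace $\widetilde{X}^2$ by $X^1$ between their first and last meeting times; path continuity plus the ordering of the endpoints forces $X^1 \geq X^2$, and the backward strong Markov property for Markov bridges (the cited result of Chaumont--Uribe Bravo) guarantees the surgered path is still a bridge with the right law. This requires no analysis of the bridge dynamics at all. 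Your route instead goes through the $h$-transform SDE and a comparison theorem, with the genuinely new ingredient being the log-supermodularity $\partial_y\partial_z \log p^{|\cdot|} \geq 0$ of the reflected heat kernel — your computation of this is correct, and it is exactly what orders the drifts. The comparison itself does go through for reflected SDEs driven by the same noise (note $R'-R$ has finite variation since the martingale parts cancel, so Tanaka's formula shows $(R'-R)^+$ is non-increasing, the local time of $R'$ being inactive on $\{R'>R\}\subset\{R'>0\}$), and the singularity of the drift as $r\to t$ can indeed be handled by working on $[s,t-\delta]$ and passing to the limit. So your argument is salvageable, but the flagged obstacles are real work, whereas the paper's coupling buys the same monotonicity using only continuity of paths and the bridge Markov property; if you pursue your version, the comparison lemma for reflected SDEs with singular terminal drift should be stated and proved explicitly rather than invoked as "standard".
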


\begin{proof}
	We use expression \eqref{eq:formula_G_tilde_||} to compare $\widetilde{G}^{|\cdot|}(s,x;t,y)$ and $\widetilde{G}^{|\cdot|}(s,x';t,y')$.
	For the prefactor, we bound crudely
	\begin{equation*} 
	\frac{e^{-(y-x)^2/[2(t-s)]}+e^{-(y+x)^2/[2(t-s)]}}{e^{-(y'-x')^2/[2(t-s)]}+e^{-(y'+x')^2/[2(t-s)]}}
	\leq \frac{2 e^{-(y-x)^2/[2(t-s)]}}{e^{-(y'-x')^2/[2(t-s)]}}.
	\end{equation*}
	For the expectation, one can couple%
    \footnote{To do so, consider $\widetilde{X}^2$ a reflected Brownian motion bridge from $(s,x')$ to $(t,y')$ independent of $X^1$. On the event where $X^1$ and $\widetilde{X}^2$ do not intersect, we set $X^2 = \widetilde{X}^2$. On the complement of this event, let $S$ be the first hitting time of $X^1$ and $\widetilde{X}^2$ and $L$ the last hitting time: we set $X^2 = \widetilde{X}^2$ on $[s,t] \setminus [S,L]$ and $X^2 = X^1$ on $[S,L]$. By continuity of the paths, this ensures that $X^1 \geq X^2$. The fact that $X^2$ is a reflected Brownian motion bridge from $(s,x')$ to $(t,y')$ follows from the backward strong Markov property stated in \cite[Theorem 2]{ChaUri2011}. This coupling is often referred to as a Doeblin coupling.
    \label{footnote:Doeblin}}
    a reflected Brownian motion bridge $X^1$ from $(s,x)$ to $(t,y)$ with a reflected Brownian motion bridge $X^2$ from $(s,x')$ to $(t,y')$ such that $X^1_r \geq X^2_r$ for any $r \in [s,t]$. 
	By assumptions on $f$, we then get
	\begin{equation*} 
	\exp \left( - \beta \int_{s}^t \abs{\frac{X^1_r}{\sqrt{2} r}}^\alpha \left( 1+f(X^1_r,r) \right) \diff r \right)
	\leq \exp \left( - \beta \int_{s}^t \abs{\frac{X^2_r}{\sqrt{2} r}}^\alpha \left( 1+f(X^2_r,r) \right) \diff r \right).
	\end{equation*}
	Combining this gives the desired inequality.
\end{proof}

\subsubsection{Localizing typical paths}

The main ingredient of the proof of Proposition \ref{prop:approximating_G_with_G} is the following result localizing the paths contributing to $\widetilde{G}(s,y;t,z)$.

\begin{lem} \label{lem:localization_Gtilde}
	Let $\alpha \in (0,2)$ and $\beta > 0$. 
	For any $\eps \in (0,1-\kappa]$ and $\eta > 2\eps/\alpha$, there are $C,c>0$ such that the following holds.
	Let $f \colon \R\times (0,\infty) \to [-1/2,1]$ measurable and $r_0>0$ be such that, for any $r \geq r_0$, the function $f(\cdot,r)$ is even and the function $y \in [0,\infty) \mapsto (y/r)^\alpha (1+f(y,r))$ is non-decreasing.
	For any $s \geq r_0$, $t\geq s + s^\kappa$, $\abs{x} \leq s^{(\kappa + \eps)/2}$ and $\abs{y} \leq t^{(\kappa + \eps)/2}$, 
	\begin{align}
	& \frac{e^{-(y-x)^2/[2(t-s)]}}{\sqrt{2\pi(t-s)}}
	\E_{(s,x)} \left[ \exp \left( - \beta \int_{s}^t \abs{\frac{B_r}{\sqrt{2} r}}^\alpha \left( 1+f(B_r,r) \right) \diff r \right)
	\ind{\exists r \in [s,t], \abs{B_r} \geq r^{(\kappa + \eta)/2}}
	\middle| B_t = y \right] \nonumber \\
	& \leq C \exp \left( - c s^{\eta \alpha/2} \right) 
	\widetilde{G}(s,x;t,y). \label{eq:localization_Gtilde}
	\end{align}
\end{lem}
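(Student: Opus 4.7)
The plan is to partition $[s,t]$ into short subintervals on which Lemma~\ref{lem:step_t^kappa} applies, then use a union bound over which subinterval contains the exit time and reassemble via the Markov property.

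First I would set $s_0 := s$ and $s_{i+1} := s_i + s_i^\kappa$ iteratively, stopping at the largest $N$ with $s_N \leq t$, and then set $s_{N+1} := t$ (merging the last short piece into the previous interval if $t - s_N < s_N^\kappa$, so that $s_{i+1} - s_i \in [s_i^\kappa, 3 s_i^\kappa]$ for all $i$). Letting $H_i := \{\exists r \in [s_i,s_{i+1}] : \abs{B_r} \geq r^{(\kappa+\eta)/2}\}$, the event appearing in \eqref{eq:localization_Gtilde} is covered by $\bigcup_{i=0}^N H_i$, so it suffices to bound each restricted kernel $\widetilde{G}^{H_i}$ and sum. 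For each $i$ (with the obvious modification when $i = 0$ or $i = N$, where one of the intermediate times collapses), the Markov property of the Brownian bridge applied at $s_i$ and $s_{i+1}$ gives
\[
\widetilde{G}^{H_i}(s,x;t,y) = \int \int \widetilde{G}(s,x;s_i,z_1) \, \widetilde{G}^{H_i,\mathrm{loc}}(s_i,z_1;s_{i+1},z_2) \, \widetilde{G}(s_{i+1},z_2;t,y) \diff z_1 \diff z_2,
\]
where $\widetilde{G}^{H_i,\mathrm{loc}}$ denotes the kernel on the single interval $[s_i, s_{i+1}]$ restricted to the exit event. Since $f \geq -1/2$, Lemma~\ref{lem:step_t^kappa}.\ref{it:UB_step_t^kappa} applied on $[s_i, s_{i+1}]$ yields
\[
\widetilde{G}^{H_i,\mathrm{loc}}(s_i,z_1;s_{i+1},z_2) \leq C \exp\!\left( - c\, s_{i+1}^{\eta\alpha/2} - c \abs{z_1/s_i^{\kappa/2}}^\alpha - c \abs{z_2/s_{i+1}^{\kappa/2}}^\alpha \right).
\]

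The heart of the argument is then to show that the resulting double integral
\[
\mathcal{A}_i := \int \int \widetilde{G}(s,x;s_i,z_1)\, \widetilde{G}(s_{i+1},z_2;t,y)\, e^{-c \abs{z_1/s_i^{\kappa/2}}^\alpha - c\abs{z_2/s_{i+1}^{\kappa/2}}^\alpha} \diff z_1 \diff z_2
\]
is bounded by $C s_{i+1}^{\kappa/2}\, \widetilde{G}(s,x;t,y)$. A matching lower bound on $\widetilde{G}(s,x;t,y)$ comes from Chapman--Kolmogorov combined with Lemma~\ref{lem:step_t^kappa}.\ref{it:LB_step_t^kappa} (with $M = 1$), which gives $\widetilde{G}(s_i,z_1;s_{i+1},z_2) \geq c / s_{i+1}^{\kappa/2}$ on the box $\{\abs{z_1} \leq s_i^{\kappa/2},\, \abs{z_2} \leq s_{i+1}^{\kappa/2}\}$. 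Since the integrand defining $\mathcal{A}_i$ factorizes in $(z_1,z_2)$, the required bound reduces to proving the one-dimensional comparison
\[
\int \widetilde{G}(s,x;s_i,z) e^{-c\abs{z/s_i^{\kappa/2}}^\alpha} \diff z \;\leq\; C \int_{\abs{z} \leq s_i^{\kappa/2}} \widetilde{G}(s,x;s_i,z) \diff z \quad (\dagger)
\]
and its analogue for the $(s_{i+1},t,y)$ factor. I would establish $(\dagger)$ by a dyadic decomposition in $\abs{z}$, passing to the symmetrization $\widetilde{G}^{|\cdot|}$ of \eqref{eq:def_G_tilde_||} (permitted because $f(\cdot,r)$ is even) and invoking Lemma~\ref{lem:coupling} to dominate $\widetilde{G}^{|\cdot|}(s,x;s_i,z)$ by $\widetilde{G}^{|\cdot|}(s,x;s_i,z')$ for a suitably chosen reference $z' \in [0, s_i^{\kappa/2}]$; the stretched-exponential factor $e^{-c\abs{z/s_i^{\kappa/2}}^\alpha}$ is then used to absorb the Gaussian correction produced by the coupling, under the constraints $\abs{x} \leq s^{(\kappa+\eps)/2}$ and $s_i - s \geq s^\kappa$.

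Summing over $i$ yields $\widetilde{G}^H(s,x;t,y) \leq C\, \widetilde{G}(s,x;t,y) \sum_{i=0}^N s_{i+1}^{\kappa/2} e^{-c s_{i+1}^{\eta\alpha/2}}$. Since $s_i^{1-\kappa} - s^{1-\kappa} \asymp i$, a Laplace-type estimate bounds this series by a polynomial in $s$ times $e^{-c s^{\eta\alpha/2}}$, and the polynomial prefactor can be absorbed into the stretched exponential at the cost of slightly decreasing $c$, provided $s$ is large enough; this is where the hypothesis $\eta > 2\eps/\alpha$ enters, to ensure that the $s^\eps$-factors coming from the $\abs{x}$-dependence in $(\dagger)$ do not overwhelm the decay. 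The main obstacle is precisely the application of Lemma~\ref{lem:coupling} in $(\dagger)$: the Gaussian correction it introduces depends on the interplay between $x$, $z$, and $s_i - s$, and some care is required when $\abs{x}$ is comparable to or larger than $s_i^{\kappa/2}$, where the naive choice $z' = 0$ fails and $z'$ must be taken closer to the typical position of the bridge at time $s_i$.
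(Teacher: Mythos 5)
Your proposal is correct and follows essentially the same route as the paper's proof: the same subdivision of $[s,t]$ into intervals of length $\asymp s_k^{\kappa}$, the union bound over the subinterval containing the exit, the two-sided Chapman--Kolmogorov sandwich using both parts of Lemma~\ref{lem:step_t^kappa}, the reduction to the one-dimensional comparison $(\dagger)$ via the symmetrized kernel and Lemma~\ref{lem:coupling} (translating $z$ by multiples of $s_i^{\kappa/2}$ into $[0,s_i^{\kappa/2}]$, with the Gaussian correction $\exp(Cs_i^{\eps})$ absorbed thanks to $\eta>2\eps/\alpha$), and the same handling of the boundary indices. The "main obstacle" you flag at the end is resolved exactly by the translation device you describe, so there is no gap.
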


\begin{proof}
	We divide $[s,t]$ into shorter intervals by choosing $s=s_0<s_1<\dots<s_n = t$ such that for any $0 \leq k \leq n-1$, $s_{k+1} \in [s_k+s_k^\kappa,s_k+3s_k^\kappa]$.
	By a union bound, the left-hand side of \eqref{eq:localization_Gtilde} is at most
	\begin{align*}
	& \sum_{k=0}^{n-1} \frac{e^{-(y-x)^2/[2(t-s)]}}{\sqrt{2\pi(t-s)}} \\
        & \hspace{1cm}\times
	\E_{(s,x)} \left[ \exp \left( - \beta \int_{s}^t \abs{\frac{B_r}{\sqrt{2} r}}^\alpha \left( 1+f(B_r,r) \right) \diff r \right)
	\ind{\exists r \in [s_k,s_{k+1}], \abs{B_r} \geq r^{(\kappa + \eta)/2}}
	\middle| B_t = y \right]
	\end{align*}
	and we now aim at proving, for any $0 \leq k \leq n-1$,
	\begin{align}
	& \frac{e^{-(y-x)^2/[2(t-s)]}}{\sqrt{2\pi(t-s)}} \nonumber \\
    & \hspace{1cm}\times
	\E_{(s,x)} \left[ \exp \left( - \beta \int_{s}^t \abs{\frac{B_r}{\sqrt{2} r}}^\alpha \left( 1+f(B_r,r) \right) \diff r \right)
	\ind{\exists r \in [s_k,s_{k+1}], \abs{B_r} \geq r^{(\kappa + \eta)/2}}
	\middle| B_t = y \right] \nonumber \\
	& \leq C \exp \left( - c s_k^{\eta \alpha/2} \right) 
	\widetilde{G}(s,x;t,y). \label{eq:new_goal_subinterval}
	\end{align}
	The desired result then follows by summing these inequalities and using that 
    \begin{equation} \label{eq:sum_s_k}
        \sum_{k=0}^{n-1} \exp \left( - c s_k^{\eta \alpha/2} \right)  
        \le C \exp \left( - c s^{\eta \alpha/2} \right).
    \end{equation}
    Indeed, we have $s_{k+1} \geq s_k+s_k^\kappa \geq s_k + s^\kappa$, so $s_k \geq s + k s^\kappa$, and using this together with a sum-integral comparison proves \eqref{eq:sum_s_k} (up to a polynomial prefactor in $s$ which can be absorbed in the exponential by modifying $c$). 
    Hence, it remains to prove \eqref{eq:new_goal_subinterval}.
	
	We start with the case $1 \leq k \leq n-2$.
	Applying Markov's property at times $s_k$ and $s_{k+1}$ and then Lemma~\ref{lem:step_t^kappa}.\ref{it:LB_step_t^kappa} (with $M=1$) after having restricted the range of integration, we get 
	\begin{align}
	\widetilde{G}(s,x;t,y) 
	& = \int_{\R} \int_{\R}	\widetilde{G}(s,x;s_k,x_k) \widetilde{G}(s_k,x_k;s_{k+1},x_{k+1}) 
	\widetilde{G}(s_{k+1},x_{k+1};t,y) \diff x_k \diff x_{k+1} \nonumber \\
	& \geq \frac{c}{s_{k+1}^{\kappa/2}} \int_{\abs{x_{k+1}} \leq s_{k+1}^{\kappa/2}} \int_{\abs{x_k} \leq s_k^{\kappa/2}}
	\widetilde{G}(s,x;s_k,x_k) 
	\widetilde{G}(s_{k+1},x_{k+1};t,y) \diff x_k \diff x_{k+1} \nonumber \\
	& = \frac{c}{s_{k+1}^{\kappa/2}} 
	\Biggl( \int_0^{s_k^{\kappa/2}} \widetilde{G}^{|\cdot|}(s,x;s_k,x_k) \diff x_k \Biggr)
	\Biggl( \int_0^{s_{k+1}^{\kappa/2}} \widetilde{G}^{|\cdot|}(s_{k+1},x_{k+1};t,y) \diff x_{k+1} \Biggr), \label{eq:localization_Gtilde_1}
	\end{align}
	using the definition of $\widetilde{G}^{|\cdot|}$ in \eqref{eq:def_G_tilde_||} for the second integral in the last line, as well as \eqref{eq:other_expression_G_tilde_||} for the first integral.
	On the other hand, proceeding similarly but with Lemma~\ref{lem:step_t^kappa}.\ref{it:UB_step_t^kappa}, the left-hand side of \eqref{eq:new_goal_subinterval} is at most
	\begin{align}
	& C \exp \left( - c s_{k+1}^{\eta \alpha/2} \right) 
	\Biggl( \int_0^\infty 
	\widetilde{G}^{|\cdot|}(s,x;s_k,x_k) 
	\exp \Biggl( - c \left( \frac{\abs{x_k}}{s_k^{\kappa/2}} \right)^{\alpha} \Biggr)
	\diff x_k \Biggr) \nonumber \\
	& \hspace{2cm} \times 
	\Biggl( \int_0^\infty
	\widetilde{G}^{|\cdot|}(s_{k+1},x_{k+1};t,y)
	\exp \Biggl( - c \Biggl( \frac{\abs{x_{k+1}}}{s_{k+1}^{\kappa/2}} \Biggr)^{\alpha} \Biggr)
	\diff x_{k+1} \Biggr). \label{eq:localization_Gtilde_2}
	\end{align}
	Then cutting $[0,\infty)$ into intervals of length $s_k^{\kappa/2}$, we get
	\begin{align*}
	& \int_0^\infty 
	\widetilde{G}^{|\cdot|}(s,x;s_k,x_k) 
	\exp \Biggl( - c \left( \frac{\abs{x_k}}{s_k^{\kappa/2}} \right)^{\alpha} \Biggr)
	\diff x_k \\
	& \leq \sum_{\ell \geq 0} e^{- c \ell^\alpha}
	\int_{\ell s_k^{\kappa/2}}^{(\ell +1)s_k^{\kappa/2}}
	\widetilde{G}^{|\cdot|}(s,x;s_k,x_k) 
	\diff x_k \\
	& \leq C
	\int_{0}^{s_k^{\kappa/2}}
	\exp \left( \frac{(x-x_k')^2}{2(s_k-s)}\right)
	\widetilde{G}^{|\cdot|}(s,x;s_k,x_k') 
	\diff x_k',
	\end{align*}
	where we used Lemma \ref{lem:coupling} to compare $\widetilde{G}^{|\cdot|}(s,x;s_k,x_k)$ and $\widetilde{G}^{|\cdot|}(s,x;s_k,x_k')$ with $x_k' = x_k - \ell s_k^{\kappa/2}$ and bounded $\exp \{ (x-x'_k)^2-(x-x_k)^2/(2(s_k-s)) \}\le \exp \{ (x-x'_k)^2/(2(s_k-s)) \}$.
	Using here that $\abs{x} \leq s^{(\kappa + \eps)/2}$, we have $(x-x_k')^2 \leq 4s_k^{\kappa + \eps}$. Moreover, we bound $s_k-s \geq s_{k}-s_{k-1}\ge s_{k-1}^\kappa \ge (s_k/4)^\kappa$ and therefore we get
	\begin{equation} \label{eq:localization_Gtilde_3}
	\int_0^\infty 
	\widetilde{G}^{|\cdot|}(s,x;s_k,x_k) 
	\exp \Biggl( - c \left( \frac{\abs{x_k}}{s_k^{\kappa/2}} \right)^{\alpha} \Biggr)
	\diff x_k
	\leq C \exp \left( C s_k^\eps \right)
	\int_{0}^{s_k^{\kappa/2}} \widetilde{G}^{|\cdot|}(s,x;s_k,x_k) \diff x_k.
	\end{equation}
	Proceeding similarly, the second integral in \eqref{eq:localization_Gtilde_2} is at most 
    \begin{align} 
	& C \int_0^{s_{k+1}^{\kappa/2}} 
    \exp \left( \frac{(x_{k+1}-y)^2}{2(t-s_{k+1})} \right)
    \widetilde{G}^{|\cdot|}(s_{k+1},x_{k+1};t,y) \diff x_{k+1} \nonumber \\
    & \leq C \exp \left( \frac{2t^{\kappa+\eps}}{t-s_{k+1}} \right)
	\int_0^{s_{k+1}^{\kappa/2}} \widetilde{G}^{|\cdot|}(s_{k+1},x_{k+1};t,y) \diff x_{k+1} \nonumber \\
	& \leq C \exp \left(C s_{k+1}^\eps\right)
	\int_0^{s_{k+1}^{\kappa/2}} \widetilde{G}^{|\cdot|}(s_{k+1},x_{k+1};t,y) \diff x_{k+1}, 
	\label{eq:localization_Gtilde_4}
	\end{align}
    where, in the first inequality, we bound $(x'_{k+1}-y)^2 \le 4t^{\kappa+\epsilon}$ and, in the second one, we note that, if $s_{k+1} \geq t/2$, then $t^{\kappa+\eps}/(t-s_{k+1}) \leq (2s_{k+1})^{\kappa+\eps}/s_{k+1}^\kappa \leq 2 s_{k+1}^\eps$ and, if $s_{k+1} \leq t/2$, then $t^{\kappa+\eps}/(t-s_{k+1}) \leq 2t^{\kappa+\eps-1} \leq 2$ because $\eps \leq 1-\kappa$.
	Combining \eqref{eq:localization_Gtilde_1}, \eqref{eq:localization_Gtilde_2}, \eqref{eq:localization_Gtilde_3} and \eqref{eq:localization_Gtilde_4} proves \eqref{eq:new_goal_subinterval}, noting that the factors $s_{k+1}^{\kappa/2}$ and  $\exp(C s_{k+1}^\eps)$ can be absorbed in $\exp(-c s_{k+1}^{\eta \alpha/2})$ up to a modification of $c$ (recall that $\eta > 2\eps/\alpha$).
	
	We now consider the case $k=0$. Proceeding as in \eqref{eq:localization_Gtilde_1} and \eqref{eq:localization_Gtilde_2} but applying Markov's property only at time $s_{k+1} = s_1$ yields
	\begin{align}
	\widetilde{G}(s,x;t,y) 
	& \geq \frac{c}{s_1^{\kappa/2}} 
	\int_0^{s_1^{\kappa/2}} \widetilde{G}^{|\cdot|}(s_1,x_1;t,y) \diff x_1, \label{eq:localization_Gtilde_1_k=0}
	\end{align}
	and shows that the left-hand side of \eqref{eq:new_goal_subinterval} is at most
	\begin{align}
	& C \exp \left( - c s_1^{\eta \alpha/2} \right) 
	\int_0^\infty \widetilde{G}^{|\cdot|}(s_1,x_1;t,y)
	\exp \Biggl( - c \Biggl( \frac{\abs{x_1}}{s_1^{\kappa/2}} \Biggr)^{\alpha} \Biggr)
	\diff x_1. \label{eq:localization_Gtilde_2_k=0}
	\end{align}
	Then proceeding as in \eqref{eq:localization_Gtilde_4} proves \eqref{eq:new_goal_subinterval}.
	The case $k=n-1$ is covered similarly, but applying Markov's property only at time $s_k = s_{n-1}$.
	Finally note that this argument for cases $k=0$ and $k=n-1$ works only if $0<n-1$. But in the case $n=1$,  \eqref{eq:localization_Gtilde} is a direct consequence of Lemma~\ref{lem:step_t^kappa}.\ref{it:LB_step_t^kappa} (with $M=t^\eps$) and Lemma~\ref{lem:step_t^kappa}.\ref{it:UB_step_t^kappa}.
\end{proof}

\subsubsection{Proof of Proposition \ref{prop:approximating_G_with_G}}

\begin{proof}[Proof of Proposition \ref{prop:approximating_G_with_G}]
	We first note that increasing values of $f$ results in decreasing~$\widetilde{G}$. 
	Therefore, it is enough to prove the result for $f = f^-_{L,a,b}$ and $f=f^+_{L,a,b}$ defined in \eqref{eq:def_f_-_+}. 
	We assume we are in one of these two cases.
	Then, the function $f$ satisfies the assumptions of Lemma~\ref{lem:localization_Gtilde}: for $f=f^+_{L,a,b}$ this is direct with $r_0=1$ for example, and for $f^-_{L,a,b}$ this follows from our choice of $\zeta$ in its definition, with $r_0 = (2L)^{1/b}$.
	Therefore, if $\eps \leq 1-\kappa$ and $\eta > 2\eps/\alpha$, we get
	\begin{align*}
	\widetilde{G}(s,x;t,y)
	& = (1+o(1)) 
	\frac{e^{-(y-x)^2/[2(t-s)]}}{\sqrt{2\pi(t-s)}} \\
	& \quad \times \E_{(s,x)} \left[ \exp \left( - \beta \int_s^t \abs{\frac{B_r}{\sqrt{2} r}}^\alpha \left( 1+f(B_r,r) \right) \diff r \right)
	\ind{\forall r \in [s,t], \abs{B_r} < r^{(\kappa + \eta)/2}}
	\middle| B_t = y \right],
	\end{align*}
	where the $o(1)$ holds as $s\to\infty$, uniformly in $t,x,y,f$ (as required in the statement of the proposition).
	On the event $\{\forall r \in [s,t], \abs{B_r} < r^{(\kappa + \eta)/2}\}$, using also $\abs{f(y,r)} \leq L ( \rvert \frac{y}{r} \lvert^a + r^{-b})$, we can bound
	\begin{align}
	\int_s^t \abs{\frac{B_r}{r}}^\alpha f(B_r,r) \diff r 
	& \leq L \int_s^t r^{\alpha(\kappa + \eta)/2-\alpha} \left( r^{a(\kappa + \eta)/2-a} + r^{-b} \right) \diff r \nonumber  \\
	& = L \int_s^t \left( r^{-\kappa - a(1-\kappa/2) +(\alpha+a) \eta/2} + r^{-\kappa-b+\alpha \eta/2} \right) \diff r, \label{eq:integral_error_term}
	\end{align}
	using in particular that $\alpha\kappa/2 - \alpha = -\kappa$.
	The two exponent on the right-hand side of \eqref{eq:integral_error_term} can be made smaller than $-1$ by choosing
	\begin{equation}
	\eta < \eta_0 \coloneqq 2 \left( \frac{a(1-\frac{\kappa}{2})-(1-\kappa)}{a+\alpha}
	\wedge \frac{b-(1-\kappa)}{\alpha} \right).
	\end{equation}
	By our assumptions on $a$ and $b$, we have $\eta_0> 0$, so if we choose $\eps < \alpha \eta_0 / 2$, it is possible to choose such an $\eta$ which also satisfies the previous condition $\eta > 2\eps/\alpha$.
	Then, the right-hand side of \eqref{eq:integral_error_term} is a $o(1)$ in the same sense as before. Hence, we get that $\widetilde{G}(s,x;t,y)$ equals
	\begin{align*}
	& (1+o(1)) 
	\frac{e^{-(y-x)^2/[2(t-s)]}}{\sqrt{2\pi(t-s)}} \E_{(s,x)} \left[ \exp \left( - \beta \int_s^t \abs{\frac{B_r}{\sqrt{2} r}}^\alpha \diff r \right)
	\ind{\forall r \in [s,t], \abs{B_r} < r^{(\kappa + \eta)/2}}
	\middle| B_t = y \right] \\
	& = (1+o(1)) G(s,x;t,y),
	\end{align*}
	where the last equality follows from Lemma \ref{lem:localization_Gtilde} but with $f=0$ (for which $\widetilde{G}=G$). This concludes the proof. 
\end{proof}

\subsection{Bounding the total mass of \texorpdfstring{$\widetilde{G}$}{Gtilde}}

We prove here the following result, where we bound the total mass of $\widetilde{G}$, which can be written as
\begin{equation}\label{eq:formula_integrated_G_tilde}
	\int_\R \widetilde{G}(s,x;t,y) \diff y
	= \E_{(s,x)} \left[ \exp \left( - \beta \int_{s}^t \abs{\frac{B_r}{\sqrt{2} r}}^\alpha \left( 1+f(B_r,r) \right) \diff r \right) \right].
\end{equation}
This is analogous to Corollary \ref{cor:estimate_G} for $G$.

\begin{prop}\label{prop:bound_on_integrated_G_tilde}
	Let $\alpha \in (0,2)$ and $\beta>0$.
    Let $\vartheta_1 >0$ be defined as in Proposition \ref{prop:estimate_G}.
	Let $L>0$, $a> (1-\kappa)/(1-\frac{\kappa}{2})$ and $b>1-\kappa$.
    Let $\eta > 0$.
	There exist $K,C,c > 0$ such that the following holds for any $s \geq K$ and $t \geq s + Ks^\kappa$.
	\begin{enumerate}
		\item\label{it:UB_integrated_G_tilde} For any $x \in \R$ and any function $f$ satisfying $f \geq f^-_{L,a,b}$,
		\begin{equation}\label{eq:UB_integrated_G_tilde}
		\int_\R \widetilde{G}(s,x;t,y) \diff y
		\leq C (t/s)^{\kappa/4} \exp\left(\vartheta_1 (s^{1-\kappa}-t^{1-\kappa})\right).
		\end{equation}
		\item\label{it:LB_integrated_G_tilde} For any $\abs{x} \leq s^{\kappa/2}$ and any function $f$ satisfying $f \leq f^+_{L,a,b}$,
		\begin{align}
		& \E_{(s,x)} \left[ \exp \left( - \beta \int_{s}^t \abs{\frac{B_r}{\sqrt{2} r}}^\alpha \left( 1+f(B_r,r) \right) \diff r \right) 
        \ind{\abs{B_t} \leq t^{\kappa/2}}
        \ind{\forall r \in [s,t], \abs{B_r} \leq r^{(\kappa+\eta)/2}} 
        \right] \nonumber  \\
		& \geq c (t/s)^{\kappa/4} \exp\left(\vartheta_1 (s^{1-\kappa}-t^{1-\kappa})\right). 
        \label{eq:LB_integrated_G_tilde}
		\end{align}
	\end{enumerate}
\end{prop}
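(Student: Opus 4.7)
My plan is to reduce both bounds to the case $f \equiv 0$ (where $\widetilde{G} = G$) by exploiting that on paths localized at the natural space-scale, $\abs{B_r} \leq r^{(\kappa+\eta')/2}$ for all $r \in [s,t]$, the error from $f$ contributes boundedly. Precisely, the same computation as in \eqref{eq:integral_error_term} shows that on this event
\begin{equation*}
\beta L \int_s^t \abs{\frac{B_r}{\sqrt{2}\, r}}^\alpha \left( \abs{\frac{B_r}{r}}^a + r^{-b} \right) dr \leq C_0,
\end{equation*}
for $s$ large enough, provided $\eta'$ is small enough that both exponents of $r$ stay strictly below $-1$; this is possible precisely because of the assumptions $a > 1-\alpha/2$ and $b > 1-\kappa$, as was already used in the proof of Proposition \ref{prop:approximating_G_with_G}.

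For the lower bound (Part \ref{it:LB_integrated_G_tilde}), $f \leq f^+_{L,a,b}$ gives $\widetilde{G}_f \geq \widetilde{G}_{f^+}$, and I take $\eta' = \eta$, the parameter in the statement. The bound above on the localization event, combined with pulling out the factor $e^{-\beta C_0}$, reduces the problem to showing
\begin{equation*}
\E_{(s,x)} \left[ \exp \left(-\beta \int_s^t \abs{\frac{B_r}{\sqrt{2}\, r}}^\alpha dr \right) \1_{\abs{B_t} \leq t^{\kappa/2}} \1_{\forall r \in [s,t],\, \abs{B_r} \leq r^{(\kappa+\eta)/2}} \right] \geq c (t/s)^{\kappa/4} e^{\vartheta_1(s^{1-\kappa} - t^{1-\kappa})}
\end{equation*}
for $\abs{x} \leq s^{\kappa/2}$. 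Applying Lemma \ref{lem:localization_Gtilde} with $f \equiv 0$ (so that $\widetilde{G} = G$, and the lemma's range assumptions $\abs{x} \leq s^{(\kappa+\varepsilon)/2}$, $\abs{y} \leq t^{(\kappa+\varepsilon)/2}$ are met for any small $\varepsilon<\eta\alpha/2$) shows that removing the localization indicator costs only a factor $1 - O(e^{-cs^{\eta\alpha/2}})$, after which the expectation becomes $\int_{\abs{y} \leq t^{\kappa/2}} G(s,x;t,y)\, dy$ and Corollary \ref{cor:estimate_G}.\ref{it:LB_integrated_G} concludes.

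For the upper bound (Part \ref{it:UB_integrated_G_tilde}), $f \geq f^-_{L,a,b}$ gives $\widetilde{G}_f \leq \widetilde{G}_{f^-}$, and by construction of $\eta$ in \eqref{eq:def_f_-_+} the function $f^-$ satisfies the evenness and monotonicity requirements of Lemmas \ref{lem:coupling} and \ref{lem:localization_Gtilde}. For moderate starting points $\abs{x} \leq s^{(\kappa+\varepsilon)/2}$ with $\varepsilon > 0$ small, splitting the expectation into the localized and unlocalized events and applying Lemma \ref{lem:localization_Gtilde} to the latter yields the self-bootstrapping inequality
\begin{equation*}
\int_\R \widetilde{G}(s,x;t,y)\, dy \leq e^{\beta C_0} \int_\R G(s,x;t,y)\, dy + C e^{-cs^{\eta'\alpha/2}} \int_\R \widetilde{G}(s,x;t,y)\, dy,
\end{equation*}
which for $s$ large absorbs the last term on the left and reduces the claim to Corollary \ref{cor:estimate_G}.\ref{it:UB_integrated_G}.

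The main obstacle will be handling $\abs{x} > s^{(\kappa+\varepsilon)/2}$ in Part \ref{it:UB_integrated_G_tilde}, where Lemma \ref{lem:localization_Gtilde} does not apply directly. My plan is to transfer the moderate-$x$ estimate to arbitrary $x$ via Lemma \ref{lem:coupling}, which gives, for $x' = s^{\kappa/2} \leq \abs{x}$,
\begin{equation*}
\widetilde{G}^{\abs{\cdot}}(s,\abs{x};t,y) \leq 2 \exp \left( \frac{(x'-y)^2 - (\abs{x}-y)^2}{2(t-s)} \right) \widetilde{G}^{\abs{\cdot}}(s,x';t,y):
\end{equation*}
the exponential prefactor, together with the Gaussian density built into the kernel at $x'$, recenters the Gaussian around $\abs{x}$, producing enough decay in $\abs{x}/\sqrt{t-s}$ (after integration in $y$) to dominate the prefactor on the right-hand side of \eqref{eq:UB_integrated_G_tilde} and reduce the statement to the moderate-$x$ case already handled.
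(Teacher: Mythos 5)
Your treatment of Part \ref{it:LB_integrated_G_tilde} is essentially the paper's argument (the paper packages the ``bounded error on localized paths'' step as Proposition \ref{prop:approximating_G_with_G} and then subtracts the unlocalized contribution via Lemma \ref{lem:localization_Gtilde}; you unfold the same computation inline). One caveat: the statement allows an \emph{arbitrary} $\eta>0$, while your bound $\beta L\int_s^t \abs{B_r/(\sqrt2 r)}^\alpha(\abs{B_r/r}^a+r^{-b})\,\diff r\leq C_0$ on the event $\{\forall r,\ \abs{B_r}\leq r^{(\kappa+\eta)/2}\}$ needs $\eta$ small. This is harmless — the event shrinks as $\eta$ decreases, so you may replace $\eta$ by $\eta\wedge\eta_0$ — but you should say so rather than ``take $\eta'=\eta$''.

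The upper bound, however, has two genuine gaps. First, your self-bootstrapping inequality integrates over all $y\in\R$, but Lemma \ref{lem:localization_Gtilde} controls the unlocalized contribution to $\widetilde{G}(s,x;t,y)$ only for $\abs{y}\leq t^{(\kappa+\eps)/2}$; for $\abs{y}$ beyond the window the path violates the localization at $r=t$, the ``unlocalized part'' \emph{is} all of $\widetilde{G}(s,x;t,y)$, and the claimed inequality would read $\widetilde{G}\leq Ce^{-cs^{\eta'\alpha/2}}\widetilde{G}$, which is false. The tail $\int_{\abs{y}>t^{(\kappa+\eps)/2}}\widetilde{G}(s,x;t,y)\,\diff y$ must be handled separately; the paper does this by applying the Markov property at the time $r$ with $r+r^\kappa=t$ and invoking Lemma \ref{lem:step_t^kappa}.\ref{it:UB_step_t^kappa} (you cannot use Proposition \ref{prop:tail_Gtilde} here, since it is proved \emph{from} the present proposition). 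Second, your reduction from large $\abs{x}$ to $x'=s^{\kappa/2}$ via Lemma \ref{lem:coupling} does not close: with $y'=y$ the prefactor $\exp\bigl(((x'-y)^2-(x-y)^2)/(2(t-s))\bigr)$ is increasing and unbounded in $y$, so $\int \text{prefactor}(y)\,\widetilde{G}^{|\cdot|}(s,x';t,y)\,\diff y$ cannot be bounded using only the moderate-$x$ estimate for $\int\widetilde{G}^{|\cdot|}(s,x';t,y)\,\diff y$ — you would again need pointwise decay of $\widetilde{G}(s,x';t,\cdot)$ in $y$, which is not available at this stage. (Bounding the conditional expectation by $1$ instead just yields $\int\widetilde{G}\,\diff y\leq 2$, losing the decisive factor $\exp(\vartheta_1(s^{1-\kappa}-t^{1-\kappa}))$.) The paper sidesteps both issues at once: since $f^-_{L,a,b}(\cdot,r)$ is even and $y\mapsto (y/r)^\alpha(1+f^-_{L,a,b}(y,r))$ is non-decreasing, a monotone coupling of reflected Brownian \emph{motions} (not bridges) shows that $x\mapsto\int_\R\widetilde{G}(s,x;t,y)\,\diff y$ is maximal at $x=0$, reducing the whole of Part \ref{it:UB_integrated_G_tilde} to the single starting point $x=0$ before any $y$-splitting. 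I would recommend adopting that reduction and then treating the $y$-tail as the paper does.
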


\begin{proof}
		Part \ref{it:UB_integrated_G_tilde}. By monotonicity, it is enough to prove the result for $f=f^-_{L,a,b}$. Then, $f(\cdot,r)$ is even so we can assume $x \geq 0$ by symmetry and write $f(B_r,r) = f(\abs{B_r},r)$ in \eqref{eq:formula_integrated_G_tilde} so that the expectation on the right-hand side of \eqref{eq:formula_integrated_G_tilde} can be seen as depending only on a reflected Brownian motion starting at $(s,x)$.
	But one can couple%
    \footnote{This is again a Doeblin coupling as in Footnote \ref{footnote:Doeblin}, but in the simpler context of Markov processes instead of Markov bridges. Consider $\widetilde{X}^2$ a reflected Brownian motion starting at $(s,0)$ independent of $X^1$, and define $X^2$ as $\widetilde{X}^2$ up to the first hitting time of $X^1$ and $\widetilde{X}^2$, and then as $X^1$. Here the fact that $X^2$ is a reflected Brownian motion starting at $(s,0)$ follows from the usual strong Markov property.}
    a reflected Brownian motion $X^1$ starting at $(s,x)$ with a reflected Brownian motion $X^2$ starting at $(s,0)$ such that $X^1_r \geq X^2_r$ for any $r \geq s$.
	Recalling that the function $y \in [0,\infty) \mapsto (y/r)^\alpha (1+f(y,r))$ is non-decreasing (by definition of $f^-_{L,a,b}$), this shows the expectation is maximal for $x=0$ so we can focus on this case.
	
	Now, we decompose
	\begin{equation} \label{eq:decompo}
	\int_\R \widetilde{G}(s,0;t,y) \diff y
	= \int_{\abs{y} \leq t^{\kappa/2}} \widetilde{G}(s,0;t,y) \diff y
	+ \int_{\abs{y} \geq t^{\kappa/2}} \widetilde{G}(s,0;t,y) \diff y.
	\end{equation}
	In the first term in \eqref{eq:decompo}, we can bound $\widetilde{G}(s,0;t,y)$ by $2G(s,0;t,y)$ by Proposition \ref{prop:approximating_G_with_G} and then get the desired bound by Corollary \ref{cor:estimate_G}.\ref{it:UB_integrated_G}.
	For the second term in \eqref{eq:decompo}, applying Markov's property at time $r \in (s,t)$ such that $r+r^\kappa = t$, it is at most
	\begin{align} 
	\int_{w \in \R} \widetilde{G}(s,0;r,w) 
	& \Biggl( 
	\int_{\abs{y} \geq t^{\kappa/2}}  \widetilde{G}(r,w;t,y) \diff y \Biggr) \diff w \nonumber \\
	& \leq C 
	\int_{w \in \R} \widetilde{G}(s,0;r,w) \exp \left(- c \left( \frac{\abs{w}}{r^{\kappa/2}} \right)^{\alpha} \right) \diff w, \label{eq:bound_2nd_integral}
	\end{align}
	by Lemma \ref{lem:step_t^kappa}.\ref{it:UB_step_t^kappa} (with $\eta = 0$, note that the indicator is automatically satisfied because $\abs{y} \geq t^{\kappa/2}$).
	Then, by definition of $\widetilde{G}^{|\cdot|}$ and cutting the integral, the right-hand side of \eqref{eq:bound_2nd_integral} is at most
	\begin{equation}
	C \sum_{\ell \geq 0} e^{-c\ell^\alpha}
	\int_{\ell r^{\kappa/2}}^{(\ell+1) r^{\kappa/2}}
	\widetilde{G}^{|\cdot|}(s,0;r,w) \diff w
	\leq C \int_0^{r^{\kappa/2}} \widetilde{G}^{|\cdot|}(s,0;r,w) \diff w,
	\label{eq:bound_2nd_integral_2}
	\end{equation}
	using that $\widetilde{G}^{|\cdot|}(s,0;r,w) \leq 2 \widetilde{G}^{|\cdot|}(s,0;r,w')$ for any $0 \leq w' \leq w$ by Lemma \ref{lem:coupling}.
	Finally, on the right-hand side of \eqref{eq:bound_2nd_integral_2}, we can bound $\widetilde{G}^{|\cdot|}(s,0;r,w)$ by $2G^{|\cdot|}(s,0;r,w)$ by Proposition \ref{prop:approximating_G_with_G} and then get the desired bound by Corollary \ref{cor:estimate_G}.\ref{it:UB_integrated_G} (note that $r^{1-\kappa} \geq t^{1-\kappa} -1$). Note that we have to take the constant $K$ larger than in Corollary \ref{cor:estimate_G} to ensure that we can apply Corollary \ref{cor:estimate_G} between times $s$ and $r$.
	
	Part \ref{it:LB_integrated_G_tilde}. By monotonicity again, it is enough to prove the result for $f=f^+_{L,a,b}$. 
    We first claim that, for any $\abs{x} \leq s^{\kappa/2}$,
	\begin{equation}\label{eq:LB_integrated_G_tilde2}
		\int_{\abs{y} \leq t^{\kappa/2}} \widetilde{G}(s,x;t,y) \diff y
		\geq c (t/s)^{\kappa/4} \exp\left(\vartheta_1 (s^{1-\kappa}-t^{1-\kappa})\right).
	\end{equation}
    Indeed, we can apply Proposition \ref{prop:approximating_G_with_G} to write $\widetilde{G}(s,x;t,y) \geq G(s,x;t,y)/2$ on the right-hand side, and then Corollary \ref{cor:estimate_G}.\ref{it:LB_integrated_G} concludes the proof of \eqref{eq:LB_integrated_G_tilde2}.
    Then, note that the difference between the left-hand side of \eqref{eq:LB_integrated_G_tilde2} and the left-hand side of \eqref{eq:LB_integrated_G_tilde} equals
    \begin{align*}
		& \E_{(s,x)} \left[ \exp \left( - \beta \int_{s}^t \abs{\frac{B_r}{\sqrt{2} r}}^\alpha \left( 1+f(B_r,r) \right) \diff r \right) 
        \ind{\abs{B_t} \leq t^{\kappa/2}}
        \ind{\exists r \in [s,t], \abs{B_r} > r^{(\kappa+\eta)/2}} 
        \right] \nonumber  \\
		& \leq C \exp \left( - c s^{\eta \alpha/2} \right) 
	    \int_{\abs{y} \leq t^{\kappa/2}} \widetilde{G}(s,x;t,y) \diff y
	\end{align*}
    by Lemma \ref{lem:localization_Gtilde}. Choosing $K$ large enough the prefactor in front of the last integral is less than $1/2$ and we get the desired result.
\end{proof}
\subsection{Bounding the tail of \texorpdfstring{$\widetilde{G}$}{Gtilde}}

We prove here the following bound on the tail of $\widetilde{G}(s,x;t,y)$ for $y$ in some time-dependent window. 
	
\begin{prop} \label{prop:tail_Gtilde} 
	Let $\alpha \in (0,2)$ and $\beta>0$.
	Let $L>0$, $a> (1-\kappa)/(1-\frac{\kappa}{2})$ and $b>1-\kappa$.
	There exist $C,c > 0$ such that, for $s$ large enough, for any $t\geq 2s$, $x\in\R$, $\abs{y} \leq t$, and any function $f$ satisfying $f \geq f^-_{L,a,b}$,
	\[
	\widetilde{G}(s,x;t,y) 
	\leq \frac{C}{(st)^{\kappa/4}} \exp\left(\vartheta_1 (s^{1-\kappa}-t^{1-\kappa})\right) 
	\exp\left(-c \left( \frac{\abs{y}}{t^{\kappa/2}} \right)^{(2+\alpha)/2} \right).
	\]
\end{prop}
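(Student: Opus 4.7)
The plan is to reduce the desired tail bound to the estimates established for $G$ and for the integrated kernel $\int_\R \widetilde{G}(s,x;r,\cdot)\,\diff w$ via a single Markov decomposition near time $t$, combined with the coupling Lemma \ref{lem:coupling}. By monotonicity of $\widetilde{G}$ in $f$, I would restrict to $f = f^-_{L,a,b}$ (for which $\widetilde{G}$ is largest), and using that $f^-_{L,a,b}(\cdot,r)$ is even together with the symmetry $\widetilde{G}(s,x;t,y)=\widetilde{G}(s,-x;t,-y)$, assume $x,y\geq 0$. The central step is to apply Chapman--Kolmogorov at a time $r = t - K_0 t^\kappa$, where $K_0$ is chosen large enough that $t-r \geq K r^\kappa$ so that Propositions \ref{prop:approximating_G_with_G} and \ref{prop:estimate_G} apply on $[r,t]$, while $r > s$ thanks to $t \geq 2s$ and $s$ large:
\[
    \widetilde{G}(s,x;t,y) = \int_{\R} \widetilde{G}(s,x;r,w)\,\widetilde{G}(r,w;t,y)\,\diff w.
\]

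For the second factor I apply the coupling lemma to $\widetilde{G}^{|\cdot|}$ with $x' = y' = 0$:
\[
    \widetilde{G}(r,w;t,y) \leq \widetilde{G}^{|\cdot|}(r,|w|;t,y) \leq 4\, e^{-(|w|-y)^2/[2(t-r)]}\,\widetilde{G}(r,0;t,0),
\]
and bound $\widetilde{G}(r,0;t,0) \leq C\, t^{-\kappa/2}$ using Propositions \ref{prop:approximating_G_with_G} and \ref{prop:estimate_G} on $[r,t]$ (the exponential in Proposition \ref{prop:estimate_G} is $O(1)$ since $t-r \asymp t^\kappa$). Substituting and splitting the $w$-integral at $|w|=y/2$: on $\{|w|\leq y/2\}$ the Gaussian factor is at most $e^{-y^2/[8(t-r)]}$ and Proposition \ref{prop:bound_on_integrated_G_tilde} applied to the remaining integral gives, after absorbing an $O(1)$ factor into $C$,
\[
    \frac{C}{(st)^{\kappa/4}}\exp\!\bigl(\vartheta_1(s^{1-\kappa}-t^{1-\kappa})\bigr)\cdot e^{-c\,y^{2}/t^{\kappa}}.
\]
Since $(2+\alpha)/2 < 2$, one has $y^{2}/t^{\kappa} \geq (y/t^{\kappa/2})^{(2+\alpha)/2}$ whenever $y \geq t^{\kappa/2}$, and for $y < t^{\kappa/2}$ the claimed tail factor is of order $1$; so this contribution gives a bound of the required form.

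The delicate contribution is from $\{|w|>y/2\}$, where the Gaussian factor $e^{-(|w|-y)^2/[2(t-r)]}$ is at most $1$ and we must extract the decay in $y$ from the first factor alone. My approach would be to combine another application of Lemma \ref{lem:coupling} (this time to $\widetilde{G}(s,x;r,w) \leq \widetilde{G}^{|\cdot|}(s,x;r,|w|)$, giving a Gaussian tail in $||w|-x|$ on the scale $r-s$) with the localization Lemma \ref{lem:localization_Gtilde}: the event that $|B_\rho|$ exceeds $\rho^{(\kappa+\eta)/2}$ somewhere on $[s,r]$ is super-polynomially suppressed, so for $|w| > y/2 \geq r^{(\kappa+\eta)/2}$ we pick up that suppression, while on the complementary event we can invoke Proposition \ref{prop:approximating_G_with_G} to replace $\widetilde{G}$ by $G$ at the endpoints and then use the tail of $\varphi_0$ from Lemma \ref{lem:sturm-liouville}.\ref{it:tail}. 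Finally, when $x$ is comparable to $y$, the Gaussian factor $e^{-(y-x)^2/[2(t-s)]}$ already built into the definition of $\widetilde{G}$ gives enough decay directly.

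\textbf{Main obstacle.} The serious difficulty is exactly this $\{|w| > y/2\}$ regime: the naive coupling bound on the first factor yields only a Gaussian tail $e^{-y^2/[c(r-s)]}$ on the scale $r-s \asymp t$, which on the range $y \leq t$ is \emph{strictly weaker} than the target $e^{-c(y/t^{\kappa/2})^{(2+\alpha)/2}}$ (a short computation using $\kappa = 2\alpha/(2+\alpha)$ shows the ratio of exponents is $(y/t)^{(2-\alpha)/2} \leq 1$). Extracting the extra decay requires using either the eigenfunction tail of $\varphi_0$ through the comparison $\widetilde{G}\approx G$ in the natural scale $|w|\lesssim r^{\kappa/2}$, or the localization Lemma \ref{lem:localization_Gtilde} beyond that scale, and carefully gluing the two while tracking the dependence on arbitrary $x \in \R$.
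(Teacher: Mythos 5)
Your proposal is sound on the easy half (monotonicity, symmetry, the Markov decomposition, and the regime where the Gaussian factor $e^{-(|w|-y)^2/[2(t-r)]}$ with $|w|\leq y/2$ supplies the decay), and you have correctly identified where it breaks: on $\{|w|>y/2\}$ none of the tools you list can deliver the exponent $(2+\alpha)/2$. Concretely, the coupling bound on $\widetilde{G}(s,x;r,w)$ only gives a Gaussian tail on scale $r-s\asymp t$, which as you computed is too weak; Lemma \ref{lem:localization_Gtilde} yields a suppression $e^{-cs^{\eta\alpha/2}}$ that is a \emph{fixed} constant in $s$, does not scale with $y$ (for $y\asymp t$ one needs decay of order $e^{-ct}$), and moreover its hypotheses require $|w|\leq r^{(\kappa+\eta)/2}$ and $|x|\leq s^{(\kappa+\varepsilon)/2}$, neither of which holds here; and the route through $\widetilde{G}\approx G$ plus the tail of $\varphi_0$ is blocked because Proposition \ref{prop:approximating_G_with_G} only applies in the window $|w|\leq r^{(\kappa+\varepsilon)/2}$ and the error terms in Proposition \ref{prop:estimate_G} carry tails with exponent $\alpha$, not $(2+\alpha)/2$ (the paper remarks on exactly this after the statement of the proposition). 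So the gap you flag is real and your suggested repairs do not close it.

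The missing idea is that the decay in $y$ should be extracted entirely from the \emph{second} factor $\widetilde{G}(r,w;t,y)$, uniformly in $w$, with the intermediate time $r$ chosen \emph{adaptively in $y$} rather than fixed at $t-K_0t^\kappa$. Bounding $f\geq -1/2$ and splitting on whether the bridge from $(r,w)$ to $(t,y)$ stays above $y/2$ on $[r,t]$, one gets
\[
\widetilde{G}(r,w;t,y)\leq \frac{1}{\sqrt{t-r}}\Bigl(e^{-c(t-r)y^\alpha/t^\alpha}+e^{-y^2/[8(t-r)]}\Bigr)
\]
for every $w$: if the path stays above $y/2$ it pays the integral weight, and if it dips below it pays either the bridge-crossing cost (when $w\geq 3y/4$) or the Gaussian prefactor already present in $\widetilde{G}$ (when $w<3y/4$). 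The two exponents are balanced by taking $t-r\asymp (y/t^{\kappa/2})^{(2-\alpha)/2}\,t^\kappa$, which produces precisely $\exp(-c(y/t^{\kappa/2})^{(2+\alpha)/2})$; the first factor is then handled by integrating $\int\widetilde{G}(s,x;r,w)\,\diff w$ via Proposition \ref{prop:bound_on_integrated_G_tilde}, and the discrepancy $t^{1-\kappa}-r^{1-\kappa}\asymp (y/t^{\kappa/2})^{(2-\alpha)/2}$ caused by the $y$-dependent $r$ is absorbed into the tail since $(2-\alpha)/2<(2+\alpha)/2$. Without this $y$-dependent splitting and the stay-above/dip-below dichotomy, the argument cannot be completed with the lemmas available.
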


Note that, up to constant factors, this bound is better than the upper bound given by Proposition \ref{prop:estimate_G} for $G$: the exponent for the tail in $y$ would be $\alpha$ there (because of the error term), whereas it is $(2+\alpha)/2$ here. 
Moreover, $(2+\alpha)/2$ is the exponent appearing in the tail of $\varphi_0$ (see Proposition \ref{prop:sturm-liouville}.\ref{it:tail}) and it reappears here via a probabilistic argument.

\begin{proof}
	By monotonicity, it is enough to deal with the case $f = f^-_{L,a,b}$. 
	Then, by symmetry of $f(\cdot,r)$, we can assume w.l.o.g.\@ that $y\geq 0$.
	Let $r \in [3t/4,t-t^\kappa]$ which will be chosen in terms of $y$ later.
	Applying Markov's property at time $r$, we get
	\begin{equation} \label{eq:proof_bound_Gtilde_1}
	\widetilde{G}(s,x;t,y) 
	= \int_{\R} \widetilde{G}(s,x;r,w) \widetilde{G}(r,w;t,y) \diff w.
	\end{equation} 
	To bound $\widetilde{G}(r,w;t,y)$, we use $f \geq -1/2$ and then distinguish according to whether the event $E = \{ \forall q \in [r,t], B_q \geq y/2 \}$ holds or not: this yields
	\begin{align*}
	\widetilde{G}(r,w;t,y)
	& \leq \frac{e^{-(y-w)^2/[2(t-r)]}}{\sqrt{2\pi(t-r)}}
	\E_{(r,w)} \left[ \exp \left( - \frac{\beta}{2} \int_r^t \abs{\frac{B_q}{\sqrt{2} q}}^\alpha \diff q \right)
	\middle| B_t = y \right]  \\
	& \leq \frac{e^{-(y-w)^2/[2(t-r)]}}{\sqrt{2\pi(t-r)}}
	\left( \exp \left( - \frac{\beta}{2} \int_r^t \abs{\frac{y/2}{\sqrt{2} q}}^\alpha \diff q \right)
	+ \P_{(r,w)} \left( E^c \middle| B_t = y \right) \right).
	\end{align*}
	If $w \geq 3y/4$, then $\P_{(r,w)}(E^c|B_t = y) \leq e^{-y^2/[4(t-r)]}$ by \eqref{eq:bound_Brownian_bridge}. On the other hand, if $w < 3y/4$ then $e^{-(y-w)^2/[2(t-r)]} \leq e^{-y^2/[8(t-r)]}$.
	Combining this and using $q\geq r \geq 3t/4$ yields
	\begin{equation} \label{eq:proof_bound_Gtilde_2}
	\widetilde{G}(r,w;t,y)
	\leq \frac{1}{\sqrt{t-r}}
	\left( e^{-c(t-r)y^\alpha/t^\alpha} + e^{-y^2/[8(t-r)]} \right).
	\end{equation}
	Note that the right-hand side is optimized when $r$ is chosen such that $t-r$ is of the same order as $y^{(2-\alpha)/2} t^{\alpha/2} = (y/t^{\kappa/2})^{(2-\alpha)/2} t^\kappa$ recalling $\kappa = 2\alpha/(2+\alpha)$.
	This motivates the following choice, which also fulfils the constraint $r \in [3t/4,t-t^\kappa]$ (recall $y \in [0,t]$):
	\[
		t-r \coloneqq 
		\left( \frac{1}{4} \left( \frac{y}{t^{\kappa/2}} \right)^{(2-\alpha)/2} \vee 1 \right) t^\kappa.
	\]
	With this choice of $r$, using that $\sqrt{t-r} \geq t^{\kappa/2}$, \eqref{eq:proof_bound_Gtilde_2} becomes 
	\begin{equation} \label{eq:proof_bound_Gtilde_3}
	\widetilde{G}(r,w;t,y)
	\leq \frac{C}{t^{\kappa/2}} \exp\left(-c \left( \frac{y}{t^{\kappa/2}} \right)^{(2+\alpha)/2} \right),
	\end{equation}
	where we used in the case $t-r =t^\kappa$ (or equivalently $y \leq 4^{2/(2-\alpha)} t^{\kappa/2}$) that the exponential on the right-hand side is lower bounded by a positive constant.
	Plugging this into \eqref{eq:proof_bound_Gtilde_1} and applying Proposition \ref{prop:bound_on_integrated_G_tilde}.\ref{it:UB_integrated_G} yields
	\begin{equation} \label{eq:proof_bound_Gtilde_4}
	\widetilde{G}(s,x;t,y) 
	\leq C (r/s)^{\kappa/4} \exp\left(\vartheta_1 (s^{1-\kappa}-r^{1-\kappa})\right) 
	\cdot \frac{1}{t^{\kappa/2}} \exp\left(-c \left( \frac{y}{t^{\kappa/2}} \right)^{(2+\alpha)/2} \right).
	\end{equation}
	Finally, using that $(1-x)^{1-\kappa} \geq 1-Cx$ for $x \in [0,1/4]$, we get
	\begin{equation*} 
	t^{1-\kappa}-r^{1-\kappa}
	= t^{1-\kappa} \left( 1-  \left( 1 - \frac{t-r}{t} \right)^{1-\kappa}\right)
	\leq C t^{1-\kappa} \frac{t-r}{t}
	= C \left( \frac{1}{4} \left( \frac{y}{t^{\kappa/2}} \right)^{(2-\alpha)/2} \vee 1 \right).
	\end{equation*}
	Note that the exponent $(2-\alpha)/2$ appearing here is smaller than the exponent $(2+\alpha)/2$ appearing in the last exponential in \eqref{eq:proof_bound_Gtilde_4}.
	Therefore, when $y \geq Kt^{\kappa/2}$ with $K$ large enough chosen in terms of the previous constants $c$ and $C$, the error made when replacing $r^{1-\kappa}$ by $t^{1-\kappa}$ on the right-hand side of \eqref{eq:proof_bound_Gtilde_4} can be included in the last exponential factor (up to replacing $c$ by $c/2$). 
	On the other hand, when $y \leq Kt^{\kappa/2}$, the error can simply be bounded by a constant factor.
	This gives the desired result.
\end{proof}

\section{Many-to-few lemmas}
\label{sec:many-to-few-lemmas}
Two typical tools from the study of branching processes are the many-to-one and many-to-two lemmas. They allow us to reduce the certain expectations of the branching process to expectations of just one or two particles. 


%


\begin{lem}[Many-to-one] \label{lem:many_to_one}
	Let $t \geq s_0 \geq 0$ and $f \colon \cC([s_0,t],\R^2) \to [0,\infty)$ be a measurable functional. Then, for any $x,y \in \R^2$,
	\begin{align*}
	    & \E_{(s_0,x,y)} \left[\sum_{u \in \mathcal{N}(t)} f\left((X_u(s),Y_u(s))_{s\in [s_0,t]} \right) \right] \\
	    & = \E_{(s_0,x,y)} \left[f\left((X_s,Y_s)_{s\in [s_0,t]} \right) \exp\left( \int_{s_0}^t b(\theta_s) \diff s \right) \right],
	\end{align*}
	where $(X_s,Y_s)_{s\in [s_0,t]}$ is a Brownian motion on $\R^2$ starting from $(x,y)$ at time $s_0$ under $\P_{(s_0,x,y)}$, and $(R_s,\theta_s)_{s\in [s_0,t]}$ is a representation of its polar coordinates. 
\end{lem}

For a proof see \cite[Theorem 8.5]{hardy_spine_2009}, they deal with the one-dimensional case but the proof does not change. Similarly, we need a version of the many-to-two lemma for our process. Again we do not prove this as a proof is messy, a more general result is the many-to-few lemma \cite[Lemma 1]{harris_many--few_2017} which also incorporates the inhomogeneous branching. To state the many-to-two lemma, we need to introduce two random processes $\xi^{1}$ and $\xi^{2,r}$.
Under $\P_{(s_0,x,y)}$, $\xi^{1}$ is a Brownian motion on $\R^2$  starting from $(x,y)$ at time $s_0$, and $\xi^{2,r}$ coincide with $\xi^{1}$ on $[s_0,r]$ and then move as a Brownian motion independently of $\xi^{1}$.

\begin{lem} [Many-to-two] \label{lem:many_to_two}
	Let $t \geq s_0 \geq 0$ and $f,g \colon \cC([s_0,t],\R^2) \to [0,\infty)$ be measurable functionals. Then, for any $x,y \in \R^2$,
	\begin{align}
	    &\E_{(s_0,x,y)} \left[ \sum_{u,v \in \mathcal{N}(t), u \neq v} f\left((X_u(s),Y_u(s))_{s\in [s_0,t]} \right) g\left((X_v(s),Y_v(s))_{s\in [s_0,t]} \right)\right] \nonumber \\ 
	    &= \int_{s_0}^t \E_{(s_0,x,y)} \left[ f\left((\xi^1_s)_{s\in [s_0,t]} \right) g\left((\xi^{2,r}_s)_{s\in [s_0,t]} \right)
	    b(\xi_r^1)
	    \exp\left( \int_{s_0}^t b(\xi_s^1) \diff s 
	    + \int_r^t b(\xi_s^{2,r}) \diff s \right) \right] 2\diff r,
	    \label{eq:many-to-two}
	\end{align}
	where, for $\xi \in \R^2$, we write $b(\xi) = b(\theta)$ where $\theta$ is the angular coordinate of $\xi$.
\end{lem}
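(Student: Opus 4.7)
The plan is to decompose the left-hand side of \eqref{eq:many-to-two} according to the most recent common ancestor (MRCA) of the pair $(u,v)$. Writing $\tau_u$ for the trajectory $(X_u(s),Y_u(s))_{s\in[0,t]}$ for brevity, every ordered pair $(u,v) \in \cN(t)^2$ with $u\neq v$ has a unique MRCA which splits at some time $r \in [0,t)$ into two offspring: one of $u,v$ lies in the subtree of each offspring. The factor $2$ in \eqref{eq:many-to-two} will come from this binary choice. Denoting by $B$ the random set of branching times in the tree and, for each $r \in B$, by $\cN_L^r(t)$ and $\cN_R^r(t)$ the descendants at time $t$ of the two offspring produced at time $r$, one has the almost sure identity
\[
    \sum_{\substack{u,v \in \cN(t) \\ u \ne v}} f(\tau_u) g(\tau_v)
    = 2 \sum_{r \in B} \sum_{u \in \cN_L^r(t)} \sum_{v \in \cN_R^r(t)} f(\tau_u)\, g(\tau_v).
\]

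Taking expectation, I would condition on the trajectory of the MRCA up to time $r$ together with the branching event at~$r$. By the branching property, given this information the two subtrees are conditionally independent BBMs started at the common MRCA position at time $r$. Applying Lemma~\ref{lem:many_to_one} (many-to-one) separately to each subtree produces the weights $\exp(\int_r^t b(\xi^1_s) \diff s)$ and $\exp(\int_r^t b(\xi^{2,r}_s) \diff s)$, where $\xi^1$ and $\xi^{2,r}$ are independent Brownian motions on $[r,t]$ starting from the common MRCA value at time~$r$.

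It then remains to integrate over the MRCA time $r$ and the ancestral trajectory. Conditional on the ancestral trajectory, branching events along it form a time-inhomogeneous Poisson process with intensity $b(\cdot)\diff s$, so by Campbell's formula the outer sum $\sum_{r \in B}$ on the ancestral line turns into an integral $\int_0^t \diff r$ weighted by $b(\xi^1_r)$. A final application of many-to-one to the MRCA trajectory itself produces the factor $\exp(\int_0^r b(\xi^1_s) \diff s)$, which combined with the weight $\exp(\int_r^t b(\xi^1_s) \diff s)$ from the first subtree assembles into the full exponent $\exp(\int_0^t b(\xi^1_s) \diff s)$ appearing in \eqref{eq:many-to-two}.

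The main obstacle is to make this combined spine/Palm argument fully rigorous, since one must simultaneously integrate out the MRCA trajectory, the Poissonian branching times along it, and the two independent subtree genealogies that it produces. The cleanest route is to invoke directly the many-to-few lemma of \cite{harris_many--few_2017} with $k=2$ spines, whose spinal change of measure yields \eqref{eq:many-to-two} essentially without further calculation; alternatively, one can proceed by a direct induction on the underlying Yule structure combined with the strong Markov property applied at each branching event.
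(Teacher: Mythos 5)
Your proposal is correct and, in the route you ultimately endorse, coincides with the paper's: the paper also does not prove the identity from scratch but derives it from the $k=2$ case of the many-to-few lemma of Harris and Roberts. The one substantive difference is that the paper's derivation is not quite "without further calculation": Lemma 1 of that reference is stated in terms of the spine splitting time $T=\inf\{t:\int_0^t b(\xi^1_s)\diff s>\hat T\}$ with $\hat T\sim\mathrm{Exp}(2)$, and one still has to condition on $(\xi^1,\xi^2)$, identify $\P(T\in\diff r\mid\xi^1)=2\,b(\xi^1_r)\exp(-2\int_0^r b(\xi^1_s)\diff s)\diff r$, and integrate this against the weight $\exp(\int_0^T b(\xi^1)+\int_0^t b(\xi^1)+\int_0^t b(\xi^{2,T}))$; the cancellation of the $e^{-2\int_0^r b(\xi^1)}$ factor is precisely what produces the prefactor $b(\xi^1_r)$ and the exponent $\int_0^t b(\xi^1_s)\diff s+\int_r^t b(\xi^{2,r}_s)\diff s$ in \eqref{eq:many-to-two}. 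Your first-principles MRCA decomposition is the classical alternative and is sound in outline, but two points there are loose: the displayed identity with the factor $2$ is not an almost sure identity for general $f\neq g$ (only the symmetrized sum over both orderings is; the factor $2$ appears after taking expectations, by exchangeability of the two offspring subtrees), and the sum $\sum_{r\in B}$ runs over all branching events of the tree rather than over a single ancestral line, so turning it into $\int_0^t b(\xi^1_r)e^{\int_0^r b(\xi^1)}\,\diff r$ requires a many-to-one formula for branching events, not just Campbell's formula along a fixed spine. Since you flag these as the steps needing care and fall back on the citation exactly as the paper does, there is no gap.
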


\begin{proof}
	We prove the case $s_0 = 0$, the general case is a consequence by a time shift.
    We derive this expression from \cite[Lemma 1]{harris_many--few_2017}. There, we have a stopping time $T$ which is given by 
    \begin{equation*}
        \inf \left\{t\geq 0: \int_0^t b(\xi^1_s) \diff s > \hat{T} \right\},
    \end{equation*}
    where $\hat{T}$ is an exponential random variable of rate $2$ which is independent of $\xi^1$ and $\xi^2$. Observe that conditional on $(\xi_s^1)_{s \geq 0}$ we have
    \begin{equation*}
        \P(T > r \vert \xi^1) = \exp\left( -2\int_0^r b(\xi_s^1) \diff s \right),
    \end{equation*}
    and therefore
    \begin{equation*}
        \P(T \in \diff r \vert \xi^1 ) = 2 b(\xi_r^1) \exp\left( -2\int_0^r b(\xi_s^1) \diff s \right) \diff r.
    \end{equation*}
    Then \cite[Lemma 1]{harris_many--few_2017} states that the left-hand side of \eqref{eq:many-to-two} equals
    \begin{align*}
        &\E_{(x,y)} \Bigg[ \ind{T\leq t} 
        f\left((\xi^1_s)_{s\in [0,t]} \right) 
        g\left((\xi^{2,T}_s)_{s\in [0,t]} \right) \\
        & \hspace{2cm} \times
        \exp\left( \int_0^T b(\xi_s^1) \diff s 
        + \int_0^t b(\xi_s^1) \diff s 
        + \int_0^t b(\xi_s^{2,T}) \diff s \right) \Bigg].
    \end{align*}
    Conditioning on $\xi^1$ and $\xi^2$, and using the density of $T$ yields our version of the many-to-two lemma.
\end{proof}

\section{Upper bound for the maximum}
\label{sec:upper_bound}

The goal of this section is to show that with high probability there are no particles with a modulus much greater than $m(t)$ in the sense that 
$$\limsup_{a \to \infty} \limsup_{t\to\infty} \Pp{M_t \geq m(t)+ a} = 0.$$ 
This shows the upper half of the tightness claimed in Theorem \ref{thm:main}.

\subsection{An upper bound on the branching rate}

We start this subsection by introducing a useful event. First note that, dominating by a one-dimensional BBM with branching rate 1, we have (see e.g.\@ \cite[Eq.\@ (20)]{LalSel1987})
\[
\max_{u\in\cN(s)} X_u(s) - \sqrt{2} s 
\xrightarrow[s\to\infty]{\text{a.s.}} - \infty. 
\]
Therefore, setting
\begin{equation} \label{eq:def_A}
A_{s_0} \coloneqq \left\{ \forall s \geq s_0, \max_{u\in\cN(s)} X_u(s) \leq \sqrt{2} s -1 \right\},
\end{equation}
we have 
\begin{equation} \label{eq:prob_A}
\Pp{A_{s_0}} \xrightarrow[s_0 \to \infty]{} 1.
\end{equation}
Hence, in this section, we can and will regularly restrict our study to the event $A_{s_0}$ with $s_0$ large enough.
It is also sometimes useful to consider the larger event
\begin{equation} \label{eq:def_Abis}
A_{s_0,t} \coloneqq \left\{ \forall s \in [s_0,t], \max_{u\in\cN(s)} X_u(s) \leq \sqrt{2} s -1 \right\}.
\end{equation}

In particular, working on these events is useful to get the following upper bound on the branching rate.
Recall $(X_s,Y_s)_{s\in [0,t]}$ denotes a Brownian motion on $\R^2$ and $(R_s,\theta_s)_{s\in [0,t]}$ denotes a representation of its polar coordinates. 

\begin{lem}\label{lem:branching_rate_upper_bound}
	There exists $\sigma,L > 0$ such that, for any $0 \leq s_0 \leq t$,  on the event $\{ \forall s \in [s_0,t], X_s \leq \sqrt{2} s \text{ and } \abs{\theta_s} < \sigma \}$ as well as on the event $\{ \forall s \in [s_0,t], X_s \leq \sqrt{2} s \text{ and } \abs{Y_s} < \sigma s \}$, we have
	\begin{equation*}
	\forall s \in [s_0,t], \quad b(\theta_s) \leq 1 - \beta \abs{\frac{Y_s}{\sqrt{2} s}}^\alpha (1+f(Y_s,s)),
	\end{equation*}
    where $f = f^-_{L,2-\alpha,1} = - \bigl( \bigl[ L \bigl( \abs{\frac{y}{s}}^{2-\alpha} + s^{-1} \bigr) \bigr] \wedge \zeta \bigr)$ where $\zeta \in (0,1/2]$ is chosen in \eqref{eq:def_f_-_+}.
\end{lem}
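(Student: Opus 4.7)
The plan is to reduce the integrated bound to a pointwise one. Splitting the integral at $s_0$ and using $b\le 1$ on $[0,s_0]$, it will suffice to prove that, on each of the two events and for every $s\in[s_0,t]$,
\[
b(\theta_s)\le 1-\beta\left|\frac{Y_s}{\sqrt{2}s}\right|^\alpha\bigl(1+f(Y_s,s)\bigr).
\]
For the second event, continuity and the uniqueness of the maximum of $b$ in Assumption~\ref{assumption1} yield $\sigma_0,c_0>0$ with $b(\theta)\le 1-c_0$ for $|\theta|\ge\sigma_0$ modulo $2\pi$; choosing $\sigma$ so that $\beta(\sigma/\sqrt{2})^\alpha\le c_0$ handles the sub-case $|\theta_s|\ge\sigma_0$ immediately (since $|Y_s/s|<\sigma$ and $1+f\le 1$). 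Both events thus reduce to the regime $|\theta_s|<\sigma_0$ together with $X_s\le\sqrt{2}s$, which in particular forces $X_s>0$.

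In this small-angle regime I would apply Assumption~\ref{assumption2}: for $\sigma_0$ small, $b(\theta_s)\le 1-\beta|\theta_s|^\alpha+C_1\theta_s^2$. Parametrising by $a:=X_s/(\sqrt{2}s)\in(0,1]$ (so that $a\le 1$ is exactly $X_s\le\sqrt{2}s$) and $t:=|\tan\theta_s|=|Y_s|/X_s\in(0,\tan\sigma_0]$, one has $|Y_s|/(\sqrt{2}s)=at$ and $|\theta_s|=\arctan t$. Using the elementary bounds $(\arctan t)^\alpha\ge t^\alpha-\tfrac{\alpha}{3}t^{\alpha+2}$ (via Bernoulli applied to $\arctan t\ge t(1-t^2/3)$) and $(\arctan t)^2\le t^2$, the pointwise inequality rearranges into
\[
\beta t^\alpha(1-a^\alpha)+\beta a^\alpha t^\alpha(-f)\ge C_1 t^2+\tfrac{\beta\alpha}{3}t^{\alpha+2}.
\]
I would then treat the two regimes in the definition of $f$ separately. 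When $-f=\eta$, the inequality $1-a^\alpha(1-\eta)\ge\eta$ for $a\in[0,1]$ yields slack $\beta\eta t^\alpha$, which dominates $C_1 t^2$ for $t\le\tan\sigma_0$ and $\sigma_0$ small (using $\alpha<2$). When $-f=L(|Y_s/s|^{2-\alpha}+s^{-1})$, the first summand contributes exactly $\beta a^\alpha t^\alpha\cdot L(at\sqrt{2})^{2-\alpha}=\beta L\cdot 2^{(2-\alpha)/2}a^2 t^2$ of slack (the second summand is positive and can be discarded).

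The hard part will be making the argument uniform in $a\in(0,1]$, since $\beta t^\alpha(1-a^\alpha)$ vanishes as $a\to 1$ while $\beta L\cdot 2^{(2-\alpha)/2}a^2 t^2$ vanishes as $a\to 0$. Dividing by $t^2$, I would study
\[
g(a):=\beta t^{\alpha-2}(1-a^\alpha)+\beta L\cdot 2^{(2-\alpha)/2}a^2,\qquad a\in[0,1].
\]
At its unique interior critical point $a^*$ (defined by $(a^*)^{2-\alpha}=\alpha\beta t^{\alpha-2}/(2\beta L\cdot 2^{(2-\alpha)/2})$), a direct computation exploiting $\alpha<2$ yields
\[
\min_{a\in[0,1]}g(a)\ge\min\Bigl(\tfrac{\alpha\beta}{2}t^{\alpha-2},\,\beta L\cdot 2^{(2-\alpha)/2}\Bigr),
\]
the first term dominating when $a^*\in[0,1]$ and the second when $a^*\ge 1$. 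Choosing $L$ large enough in terms of $\alpha,\beta,C_1$ forces the second term above $2C_1$, and then choosing $\sigma$ small enough (so that $(\tan\sigma_0)^{\alpha-2}$ is large) forces the first term above $2C_1$. The residual $\tfrac{\beta\alpha}{3}t^{\alpha+2}$ is absorbed by a final shrinking of $\sigma_0$, which closes the pointwise inequality and hence the lemma.
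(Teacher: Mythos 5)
Your proof is correct in its overall architecture but takes a genuinely different route at the key step. Both arguments dispose of large angles via Assumption \ref{assumption1} (or \ref{assumption1'}) and then invoke the expansion $b(\theta)\le 1-\beta\abso{\theta}^\alpha+C_1\theta^2$ from Assumption \ref{assumption2} for small angles; the difference lies in how the $X_s$-dependence is eliminated when passing from $\abso{\theta_s}^\alpha$ to $\abso{Y_s/(\sqrt2 s)}^\alpha$. The paper first replaces $b$ by a \emph{non-increasing} majorant $\overline b$ on $[0,\pi]$ (equal to $1-\beta\abso{\theta}^\alpha+K\theta^2$ on $[-\sigma_0,\sigma_0]$ and constant outside), then uses $\theta_s=\arctan(Y_s/X_s)\ge\arctan(Y_s/(\sqrt2 s))\ge \frac{Y_s}{\sqrt2 s}-\frac13(\frac{Y_s}{\sqrt2 s})^3$ together with the monotonicity of $\overline b$ to get $b(\theta_s)\le\overline b\bigl(\frac{Y_s}{\sqrt2 s}-\frac13(\frac{Y_s}{\sqrt2 s})^3\bigr)$; this collapses the problem to a one-variable Taylor estimate in $u=Y_s/(\sqrt2 s)$ and produces a specific $L$ from the expansion. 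You instead keep the ratio $a=X_s/(\sqrt2 s)$ as a free parameter and close the pointwise inequality by minimising $g(a)$ over $[0,1]$, which forces you to take $L$ large so that the endpoint $a=1$ works. That is perfectly admissible: $f^-_{L,2-\alpha,1}$ only becomes more negative as $L$ grows, so the bound is weaker, and the results of Section \ref{sec:probabilistic_arguments} that consume this lemma hold for any fixed $L>0$. Your route is more computational but dispenses with the monotone majorant; the paper's is shorter once $\overline b$ is set up.

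One small repair is needed: the inequality $(\arctan t)^\alpha\ge t^\alpha-\frac{\alpha}{3}t^{\alpha+2}$ ``via Bernoulli'' fails for $\alpha\in(2/3,1)$, since for exponents below $1$ Bernoulli gives $(1-u)^\alpha\le 1-\alpha u$, the wrong direction. This only costs a constant: for $u=t^2/3\le 1/2$ the mean value theorem gives $(1-u)^\alpha\ge 1-\alpha 2^{1-\alpha}u$, hence $(\arctan t)^\alpha\ge t^\alpha-C_\alpha t^{\alpha+2}$ with $C_\alpha$ depending only on $\alpha$, and the residual $C_\alpha\beta t^{\alpha+2}$ is absorbed exactly as in your final shrinking of $\sigma_0$. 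Everything else checks out, including the case split on $-f$, the order in which $C_1$, $L$, $\eta$ and $\sigma_0$ are fixed, and the evaluation of $\min_{[0,1]}g$.
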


Note that the parameters $a=2-\alpha$ and $b=1$ appearing here in $f^-_{L,a,b}$ satisfy the assumptions $a> 1-\frac{\alpha}{2}$ and $b > 1-\kappa$ of the propositions of Section \ref{sec:probabilistic_arguments}.

\begin{proof}
	By Assumption \ref{assumption2}, $b(\theta) = 1 - \beta \abs{\theta}^\alpha + O(\theta^2)$ as $\theta \to 0$, so there exist $K> 0$ and $\sigma_0 \in (0,\pi/2)$ such that, for any $\abs{\theta} \leq \sigma_0$, $b(\theta) \leq 1 - \beta \abs{\theta}^\alpha + K\theta^2$.
	Moreover, one can choose $\sigma_0$ small enough such that the function $\theta \in [0,\sigma_0] \mapsto 1 - \beta \abs{\theta}^\alpha + K\theta^2$ is decreasing.
	On the other hand, by Assumption \ref{assumption1} or \ref{assumption1'}, $\sup_{[-\pi,\pi]\setminus[-\sigma_0,\sigma_0]} b <1$. Hence, choosing $\sigma_0$ small enough, we get
	\[
	b(\theta) \leq \overline{b}(\theta) 
	\coloneqq \begin{cases}
	1 - \beta \abs{\theta}^\alpha + K\theta^2, 
	& \text{if } \theta \in [-\sigma_0,\sigma_0], \\
	1 - \beta \abs{\sigma_0}^\alpha + K\sigma_0^2,
	& \text{if } \theta \in [-\pi,\pi]\setminus[-\sigma_0,\sigma_0],
	\end{cases}
	\]
	and the function $\overline{b}$ is non-increasing on $[0,\pi]$.
	
	We now work on $E_1 \coloneqq \{ \forall s \in [s_0,t], X_s \leq \sqrt{2} s \text{ and } \abs{\theta_s} < \sigma \}$ or $E_2 \coloneqq \{ \forall s \in [s_0,t], X_s \leq \sqrt{2} s \text{ and } \abs{Y_s} < \sigma s \}$.
	Then, for any $s \in [s_0,t]$ such that $\theta_s \in [0,\pi/2)$, we have 
	\[
	\theta_s = \arctan \left( \frac{Y_s}{X_s} \right) 
	\geq \arctan \left( \frac{Y_s}{\sqrt{2} s} \right) 
	\geq \frac{Y_s}{\sqrt{2} s} 
	- \frac{1}{3} \left( \frac{Y_s}{\sqrt{2} s} \right)^3,
	\]	
	and therefore
	\[
	b(\theta_s) 
	\leq \overline{b} (\theta_s)
	\leq \overline{b} \left( \frac{Y_s}{\sqrt{2} s} 
	- \frac{1}{3} \left( \frac{Y_s}{\sqrt{2} s} \right)^3\right).
	\]
	Then, one can choose $\sigma$ small enough such that the last argument of $\overline{b}$ necessarily belongs to $[0,\sigma_0]$ both on $E_1$ and on $E_2$, and therefore we get, for some $L = L(K,\sigma_0,\alpha,\beta) > 0$,
	\begin{equation} \label{eq:UB_b}
	b(\theta_s) 
	\leq 1 - \beta \abs{\frac{Y_s}{\sqrt{2} s}}^\alpha \left( 1 - L \abs{\frac{Y_s}{s} }^{2-\alpha} \right),
	\end{equation}
	and the same holds if $\theta_s \in (-\pi/2,0]$ by the same argument.
	On $E_1$, it is enough to consider the case $\theta_s \in (-\pi/2,\pi/2)$ by choosing $\sigma < \pi/2$.
	On $E_2$, we could also have $\theta_s \in [-\pi,\pi] \setminus (-\pi/2,\pi/2)$, but then 
	$b(\theta_s) \leq 1 - \beta \abs{\sigma_0}^\alpha + K\sigma_0^2$ and the inequality \eqref{eq:UB_b} stays true if $\sigma$ is chosen small enough. 
	Finally, $\sigma$ can be chosen small enough such that, on $E_1$ or on $E_2$, $L \lvert Y_s/s \rvert^{2-\alpha} \leq \zeta$, where $\zeta$ appears in the definition of $f^-_{L,2-\alpha,1}$ in \eqref{eq:def_f_-_+}, so that the result follows from \eqref{eq:UB_b}.
\end{proof}

\subsection{The pseudo-derivative martingale}

For $t \geq 0$, let 
\begin{equation}\label{eq:def_Zt}
Z_t \coloneqq t^{-\kappa/4} \exp \left( \vartheta_1 t^{1-\kappa} \right)
\sum_{u\in\cN(t)} (\sqrt{2}t-X_u(t)) e^{\sqrt{2}(X_u(t)-\sqrt{2} t)}.
\end{equation}
This quantity naturally appears in the upper bound argument when computing conditional first moment give $\cF_t$, so we need to bound its first moment.
Note that this is not a martingale, but it is defined analogously to the derivative martingale for the standard BBM introduced by Lalley and Sellke \cite{LalSel1987}.
The goal of this section is to prove the following result.
\begin{lem} \label{lem:Z}
	For any $s_0 > 0$, there exists $C > 0$ such that, for any $t \geq s_0$,
	\[
	\Ec{Z_t \1_{A_{s_0,t}}} \leq C.
	\]
\end{lem}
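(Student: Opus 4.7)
The plan is to reduce via many-to-one to a single-particle expectation, then Girsanov-tilt the $X$-coordinate so that the $X$-contribution decouples from the $Y$-contribution. On $A_{s_0,t}$ every particle $u \in \cN(t)$ satisfies $X_u(s) \leq \sqrt{2}s - 1$ for $s \in [s_0, t]$, so in particular $\sqrt{2}t - X_u(t) > 0$; bounding $\1_{A_{s_0,t}}$ by the indicator of this individual path and applying Lemma~\ref{lem:many_to_one}, we get
\[
\Ec{Z_t \1_{A_{s_0,t}}} \leq t^{-\kappa/4} e^{\vartheta_1 t^{1-\kappa}} \Ec{(\sqrt{2}t - X_t) e^{\sqrt{2}(X_t - \sqrt{2}t)} \1_E \exp\!\left(\int_0^t b(\theta_s)\diff s\right)},
\]
with $E = \{X_s \leq \sqrt{2}s - 1,\ \forall s \in [s_0,t]\}$ and $(X,Y)$ a 2d Brownian motion. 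The weight $e^{\sqrt{2}(X_t - \sqrt{2}t)} = e^{-t} e^{\sqrt{2}X_t - t}$ contains the Girsanov martingale $e^{\sqrt{2}X_t - t}$; under the tilted measure $\tilde{\P}$, $W_s := \sqrt{2}s - X_s$ is a standard Brownian motion independent of $Y$, and $E$ becomes $\{W_s \geq 1,\ \forall s \in [s_0, t]\}$.

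The proof of Lemma~\ref{lem:branching_rate_upper_bound} is really pointwise in $s$: whenever $X_s \leq \sqrt{2}s$ one has $b(\theta_s) \leq 1 - \beta|Y_s/(\sqrt{2}s)|^\alpha (1+f(Y_s,s))$ for $|Y_s| < \sigma s$ (with $f = f^-_{L,2-\alpha,1}$), and $b(\theta_s) \leq 1 - \delta$ for some $\delta > 0$ when $|Y_s| \geq \sigma s$ (since Assumption~\ref{assumption1} or \ref{assumption1'} forces $\sup_{|\theta|\geq \theta_0} b < 1$). Integrating, on $E$,
\[
\int_0^t b(\theta_s)\diff s \leq t - \beta \int_{s_0}^t \left|\tfrac{Y_s}{\sqrt{2}s}\right|^\alpha (1 + f(Y_s,s)) \1_{|Y_s| < \sigma s}\diff s - \delta T^*,
\]
where $T^* := \int_{s_0}^t \1_{|Y_s| \geq \sigma s}\diff s$. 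Factoring by $W$-$Y$ independence, the $W$-factor $\tilde{\E}[W_t \1_{W_s \geq 1,\ \forall s \in [s_0,t]}]$ is bounded uniformly in $t$ by a standard Brownian barrier calculation (conditioning on $W_{s_0}$ and applying the reflection principle). On the event $F = \{|Y_s| < \sigma s,\ \forall s \in [s_0,t]\}$, the $Y$-factor reduces to $\tilde{\E}[\1_F \exp(-\beta \int_{s_0}^t |Y_s/(\sqrt{2}s)|^\alpha(1+f)\diff s)]$, which Proposition~\ref{prop:bound_on_integrated_G_tilde}.\ref{it:UB_integrated_G_tilde} bounds by $C(t/s_0)^{\kappa/4} e^{\vartheta_1(s_0^{1-\kappa} - t^{1-\kappa})}$ (uniformly in $Y_{s_0}$, then integrated against the Gaussian density); combined with the prefactor $t^{-\kappa/4} e^{\vartheta_1 t^{1-\kappa}}$ this gives the required $O(1)$ contribution.

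The main obstacle is controlling the contribution on $F^c$, i.e.\@ trajectories escaping the cone. There the indicator $\1_{|Y_s| < \sigma s}$ truncates the $\tilde{G}$-integrand, so Proposition~\ref{prop:bound_on_integrated_G_tilde} cannot be applied directly, and one must extract genuine decay from the additional $e^{-\delta T^*}$ factor. I would attack this by a stopping-time decomposition using the successive entry/exit times of the cone $\tau_1 < \tau_2 < \tau_3 < \cdots$: each excursion of length $\tau_{2k} - \tau_{2k-1}$ contributes an $e^{-\delta(\tau_{2k}-\tau_{2k-1})}$ factor whose tail dominates the Gaussian probability of the excursion, and between excursions one can re-apply Proposition~\ref{prop:bound_on_integrated_G_tilde}. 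The main technical difficulty is handling the re-entry points, where $|Y_{\tau_{2k}}| \asymp \sigma\tau_{2k}$ is much larger than the natural scale $\tau_{2k}^{\kappa/2}$ required for Proposition~\ref{prop:bound_on_integrated_G_tilde}.\ref{it:LB_integrated_G_tilde}; this will likely require combining the sharper tail bound of Proposition~\ref{prop:tail_Gtilde} with a Markov step allowing $Y$ to relax back to the natural scale before restarting the $\tilde{G}$-machinery, and then summing the resulting geometric series over all admissible excursion patterns.
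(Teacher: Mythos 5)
Your reduction is sound up to the point where you face the cone-escape event: the many-to-one step, the decoupling of $X$ from $Y$ via Lemma~\ref{lem:branching_rate_upper_bound}, the use of Proposition~\ref{prop:bound_on_integrated_G_tilde}.\ref{it:UB_integrated_G_tilde} for the $Y$-contribution inside the cone, and the uniform bound on the $X$-barrier expectation are all correct. (The Girsanov tilt is equivalent to the paper's direct use of the martingale $(\sqrt{2}t-X_t)e^{\sqrt{2}X_t-t}\1_{\forall s, X_s\leq\sqrt{2}s}$.) However, the treatment of $F^c$ is left as a sketch, and the sketch has two issues.

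First, the excursion decomposition you propose is more machinery than the problem needs, and it aims at the wrong source of decay. The dominant decay does not come from the $e^{-\delta T^*}$ factor (which is useless if $Y$ briefly grazes the cone boundary and returns immediately), but from the Gaussian cost of $|Y_s| \geq \sigma s$ occurring at all: $\P(\exists s \in [u, 2u): |Y_s| > \sigma s) \leq C e^{-cu}$. The paper exploits this with a much simpler decomposition: cover $F^c$ by the events $F_k = \{\exists s \in [2^{k-1}s_0, 2^ks_0): |Y_s| > \sigma s\} \cap \{\forall s \in [2^ks_0, t]: |Y_s| \leq \sigma s\}$, so the index $k$ tracks the dyadic scale of the last exit. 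On $F_k$, one simply bounds $b\leq 1$ before time $2^ks_0$ and applies Lemma~\ref{lem:branching_rate_upper_bound} together with Proposition~\ref{prop:bound_on_integrated_G_tilde}.\ref{it:UB_integrated_G_tilde} on $[2^ks_0, t]$. The net factor is $(2^ks_0)^{-\kappa/4}\exp(\vartheta_1(2^ks_0)^{1-\kappa})\cdot e^{-c 2^ks_0}$, which is summable in $k$ because $1-\kappa<1$, and no geometric series over excursion patterns is required.

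Second, the technical difficulty you flag --- that at a re-entry time $\tau_{2k}$ the starting point $|Y_{\tau_{2k}}|$ is of order $\sigma\tau_{2k}$, far above the natural scale $\tau_{2k}^{\kappa/2}$ --- is not actually an obstacle here, because you cite Proposition~\ref{prop:bound_on_integrated_G_tilde}.\ref{it:LB_integrated_G_tilde} when you should be citing Part~\ref{it:UB_integrated_G_tilde}. For an upper bound on $\E[Z_t\1_{A_{s_0,t}}]$ you want an upper bound on the $Y$-expectation, and the upper bound in Proposition~\ref{prop:bound_on_integrated_G_tilde}.\ref{it:UB_integrated_G_tilde} is uniform over the starting point $x\in\R$; the constraint $|x|\leq s^{\kappa/2}$ only appears in the lower-bound part. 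So once you apply the Markov property at the last exit time (or, as the paper does, at a dyadic time $2^ks_0$ bounding it from above), the $Y$-contribution from there onward is controlled regardless of where $Y$ is when it re-enters. Sorting this out collapses your proposed construction to the paper's one-page argument.
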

\begin{proof}
	By the many-to-one lemma (Lemma \ref{lem:many_to_one}),
	\[
	\Ec{Z_t \1_{A_{s_0,t}}} 
	\leq t^{-\kappa/4} e^{\vartheta_1 t^{1-\kappa}}
	\Ec{ \exp \left( \int_0^t b(\theta_s) \diff s \right) (\sqrt{2}t-X_t) e^{\sqrt{2} X_t-2t} 
		\ind{\forall s \in [s_0,t], X_s \leq \sqrt{2} s}}.
	\]
	Let $\sigma > 0$ be the constant given by Lemma \ref{lem:branching_rate_upper_bound}.
	Define the events $E \coloneqq \{ \forall s \in [s_0,t], \abs{Y_s} \leq \sigma s \}$ and, for $k \geq 1$, 
	\[
	F_k \coloneqq \{ \exists s \in [2^{k-1} s_0, 2^k s_0) : \abs{Y_s} > \sigma s \}
	\cap \{ \forall s \in [2^k s_0,t] : \abs{Y_s} \leq \sigma s \}.
	\]
	Let $k_t = \min \{ k \geq 1: 2^k s_0 \geq t \}$. We have $E^c \subset \bigcup_{k=1}^{k_t} F_k$, so we use the bound $1 \leq \1_E + \sum_{k=1}^{k_t} \1_{F_k}$.
	
	Using Lemma \ref{lem:branching_rate_upper_bound} and bounding $b(\theta_s) \leq 1$ for $s \leq s_0$, the part on event $E$ is at most
	\begin{align}
	& t^{-\kappa/4} e^{\vartheta_1 t^{1-\kappa}}
	\Ec{ \exp \left( - \beta \int_{s_0}^t \abs{\frac{Y_s}{\sqrt{2} s}}^\alpha (1+f(Y_s,s)) \diff s \right) 
	(\sqrt{2}t-X_t) e^{\sqrt{2}X_t-t} \ind{\forall s \in [s_0,t], X_s \leq \sqrt{2} s}} \nonumber \\
	& \leq C s_0^{-\kappa/4} e^{\vartheta_1 s_0^{1-\kappa}}
	\times \Ec{(\sqrt{2}s_0-X_{s_0}) e^{\sqrt{2}X_{s_0} - s_0}},
	\label{eq:bound_on_E}
	\end{align}
	using independence between $X$ and $Y$, Proposition \ref{prop:bound_on_integrated_G_tilde}.\ref{it:UB_integrated_G_tilde} for the $Y$-contribution and the fact that $((\sqrt{2}t-X_t) e^{\sqrt{2}X_t - t} \ind{\forall s \in [s_0,t], X_s \leq \sqrt{2} s})_{t \geq s_0}$ is a martingale.
	The right-hand side of \eqref{eq:bound_on_E} is a constant depending on $s_0$, so it concludes this part.
	
	We now bound the part on event $F_k$ for some $1 \leq k \leq k_t$.
	Using Lemma \ref{lem:branching_rate_upper_bound} between times $2^k s_0$ and $t$, this part is at most
	\begin{align}
	& t^{-\kappa/4} \exp \left( \vartheta_1 t^{1-\kappa} \right)
	\Ec{ \exp \left( - \beta \int_{2^k s_0}^t \abs{\frac{Y_s}{\sqrt{2} s}}^\alpha (1+f(Y_s,s)) \diff s \right)
		\ind{\exists s \in [2^{k-1} s_0, 2^k s_0) : \abs{Y_s} > \sigma s}} 
	\nonumber \\
	& \quad {} \times \Ec{ (\sqrt{2}t-X_t) e^{\sqrt{2} X_t-t} \ind{\forall s \in [s_0,t], X_s \leq \sqrt{2} s}}
	\nonumber \\
	& \leq C(s_0) (2^k s_0)^{-\kappa/4}  \exp \left( \vartheta_1 (2^k s_0)^{1-\kappa} \right)
	\Pp{\exists s \in [2^{k-1} s_0, 2^k s_0) : \abs{Y_s} > \sigma s},
	\label{eq:bound_on_F_k}
	\end{align}
	proceeding as in \eqref{eq:bound_on_E}.
	Then, this last probability can be bounded by
	\begin{align} 
	\Pp{\max_{s \in [0,2^k s_0]} \abs{Y_s} > \sigma 2^{k-1} s_0} 
	& \leq 
	2 \cdot \Pp{\max_{s \in [0,2^k s_0]} Y_s > \sigma 2^{k-1} s_0}
	\nonumber \\
	& =  2 \cdot \Pp{\abs{Y_{2^k s_0}} > \sigma 2^{k-1} s_0},
	\label{eq:gaussian_bounds}
	\end{align}
	using that $\max_{s\in[0,T]} Y_s$ has the same distribution as $\abs{Y_T}$ for any $T>0$. Together with the tail bound $\P(\abs{Y_T} \geq x) \leq 2e^{-x^2/(2T)}$ for $x,T> 0$, we get that the right-hand side of \eqref{eq:bound_on_F_k} is at most $C(s_0) \exp (\vartheta_1 (2^k s_0)^{1-\kappa} - \frac{\sigma^2}{8} 2^k s_0)$.
	Summing over $k \geq 0$, this is bounded by $C(s_0)$, so it concludes the proof.
\end{proof}

\subsection{Localization of the trajectory of an extremal particle}

In this section, we prove several properties of the trajectory of an extremal particle, that is a particle $u \in \cN(t)$ such that $R_u(t) \geq m(t)+O(1)$. In this first lemma, we show that such a particle typically has a small angle after time $t/2$. This allows us afterwards to restrict ourselves to angles smaller than $\sigma$ given by Lemma \ref{lem:branching_rate_upper_bound}, so that we can bound the branching rate.

\begin{lem} \label{lem:theta}
    For any $\eta>0$, there exists $c>0$ such that, for any $a \geq 0$, for $t$ large enough,
	\begin{align*}
	\Pp{ \exists u \in \cN(t) : 
		R_u(t) \geq m(t)-a, \max_{s\in [t/2,t]} \abs{\theta_u(s)} > \eta }
	\leq e^{-ct}.
	\end{align*}
\end{lem}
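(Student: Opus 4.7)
My plan is to reduce the problem to a single-particle Brownian-motion expectation via Markov's inequality and Lemma~\ref{lem:many_to_one}, which yields
\[
\Pp{\exists u\in\cN(t):R_u(t)\ge m(t)-a,\ \max_{s\in[t/2,t]}\abs{\theta_u(s)}>\eta}\le \Ec{\exp\left(\int_0^t b(\theta_s)\diff s\right)\1_{E}},
\]
where $E\coloneqq\{R_t\ge m(t)-a,\ \max_{s\in[t/2,t]}\abs{\theta_s}>\eta\}$ and $(X_s,Y_s)_{s\ge 0}$ is a planar standard Brownian motion. The crude bound $b\le 1$ together with the explicit radial tail $\Pp{R_t\ge m(t)-a}=e^{-(m(t)-a)^2/(2t)}=e^{-t+\vartheta_1 t^{(2-\alpha)/(2+\alpha)}+O(\log t)}$ (which is exact since $R_t^2/t$ is $\mathrm{Exp}(1/2)$-distributed) cancels $e^t$ only up to the sub-exponential factor $e^{\vartheta_1 t^{\rho}}$ with $\rho=(2-\alpha)/(2+\alpha)<1/2$. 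Hence I must extract extra gain from the angular constraint, either by reducing $\int_0^t b(\theta_s)\diff s$ or by sharpening the probability of $E$.

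The key dichotomy I introduce is the event $F\coloneqq\{\forall s\in[t/2,t]:\abs{\theta_s-\theta_t}\le \eta/4\}$, and I split the expectation according to $F$ and $F^c$. On $E\cap F$, the triangle inequality and $\max\abs{\theta_s}>\eta$ together force $\abs{\theta_t}>3\eta/4$ and hence $\abs{\theta_s}>\eta/2$ throughout $[t/2,t]$; since Assumption~\ref{assumption1} (or \ref{assumption1'}) yields $\eps\coloneqq 1-\sup_{\abs\theta\ge \eta/2}b(\theta)>0$, one has $\int_0^t b(\theta_s)\diff s\le t-\eps t/2$, and the contribution is at most $e^{-\eps t/2+\vartheta_1 t^\rho+O(\log t)}\le e^{-ct}$. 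On $E\cap F^c$ the branching rate is no longer useful, so I use $\int b\le t$ and instead control $\Pp{F^c\mid X_t,Y_t}$ via a Brownian-bridge argument. Conditionally on $(X_t,Y_t)$ with $R_t\ge m(t)-a$, the bridge mean $\tfrac{s}{t}(X_t,Y_t)$ has norm of order $t$ for every $s\in[t/2,t]$, so an angular deviation of size $\eta/4$ translates into a spatial deviation of order $t$ in the direction perpendicular to $\theta_t$; the Gaussian maximal inequality then gives $\Pp{F^c\mid X_t,Y_t}\le e^{-ct}$ uniformly over the conditioning, and this contribution is bounded by $e^t\cdot e^{-ct}\cdot\Pp{R_t\ge m(t)-a}\le e^{-ct/2}$.

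The only non-routine step is the Brownian-bridge estimate underlying the $F^c$ case. Concretely, decomposing the bridge fluctuation $(X_s,Y_s)-\tfrac{s}{t}(X_t,Y_t)$ in the orthonormal basis $(\cos\theta_t,\sin\theta_t),(-\sin\theta_t,\cos\theta_t)$ yields two independent one-dimensional Brownian bridges of variance at most $t/4$, and the constraint $\abs{\theta_s-\theta_t}>\eta/4$ forces either the parallel component to exceed $R_t/2$ or the perpendicular one to exceed $cR_t\ge ct$; both events, and their suprema over $s\in[t/2,t]$, have probability $e^{-ct}$ by standard maximal and sub-Gaussian bounds. Once this estimate is in hand, all remaining ingredients are routine: the many-to-one identity, the Gaussian tail of $R_t$, and the compactness argument underlying the positive gap $\eps$ provided by the assumptions on $b$.
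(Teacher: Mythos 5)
Your proposal is correct, and it covers the same two structural ingredients as the paper's proof --- a ``branching rate drop'' argument when the angle stays large on all of $[t/2,t]$, and a Brownian large-deviation estimate when the angle makes a macroscopic excursion --- but it routes the second ingredient quite differently, and I think the comparison is worth spelling out.

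The paper proceeds in three steps: (i) localize the radial coordinate, showing $\min_{s\in[t/2,t]} R_u(s)\ge \sqrt{2}t/4$ with high probability via the Bessel density bound; (ii) show that $\min_{s\in[t/2,t]}\abs{\theta_u(s)}\le \eta/2$ must hold, using $\sup_{\abs\theta\ge\eta/2}b<1$; (iii) combine (i) and (ii): the angle must traverse an interval of length $\eta/2$ while $R\ge\sqrt{2}t/4$, and via the skew-product representation $\theta_t=\theta_s+W_{\tau(t)}$ with clock $\tau(t)=\int_s^t R_q^{-2}\,\diff q\le 4/t$, this costs $e^{-c\eta^2 t}$. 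You instead split on $F=\{\forall s\in[t/2,t]:\abs{\theta_s-\theta_t}\le\eta/4\}$. On $E\cap F$ the triangle inequality forces $\abs{\theta_s}>\eta/2$ for all $s\in[t/2,t]$, which is exactly the paper's Step~(ii). On $E\cap F^c$ you condition on $(X_t,Y_t)$ and decompose the bridge fluctuation in the $\theta_t$-adapted orthonormal frame; since the bridge mean at time $s\in[t/2,t]$ has norm $\gtrsim t$ and the bridge variance is at most $t/4$, the event $\abs{\theta_s-\theta_t}>\eta/4$ requires a deviation of either component of order $t$ and hence has probability $e^{-ct}$ uniformly in the conditioning, and $e^t\cdot e^{-ct}\cdot\Pp{R_t\ge m(t)-a}\le e^{-ct/2}$ closes the argument. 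Your route buys two things: you bypass the skew-product representation entirely, and you do not need the separate radial localization step --- the bridge decomposition supplies the relevant lower bound on the radius automatically through the drift term $\frac{s}{t}(X_t,Y_t)$. The paper's route is more modular (each localization statement is quoted later, in Corollary~\ref{cor:localization}) and arguably more transparent probabilistically since it makes the angular diffusion explicit; your version is slightly more elementary and shorter. Both are sound.

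One point to make fully explicit when you write this out: in the $E\cap F^c$ case, since $E$ depends on the whole path (through $\max_s\abs{\theta_s}$), the chain should be $\E[e^{\int b}\1_E\1_{F^c}]\le e^t\,\P(\{R_t\ge m(t)-a\}\cap F^c)=e^t\,\E[\1_{R_t\ge m(t)-a}\P(F^c\mid X_t,Y_t)]$; you cannot condition on $E$ itself. Your sketch implicitly drops the path-dependent part of $E$ before conditioning, which is the right move, but the bookkeeping should be visible. Also note that the perpendicular/parallel dichotomy needs a brief remark covering the branch $\abs{\theta_s-\theta_t}\ge\pi/2$ (which forces the parallel deviation to exceed $\frac{s}{t}R_t\ge R_t/2$ on its own), but this is the easier sub-case and presents no difficulty.
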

\begin{proof} 
	Before diving into the proof, we recall some facts on the polar coordinates $(R_s,\theta_s)_{s\geq 0}$ of the planar Brownian motion.
    Firstly, $(R_s)_{s\geq 0}$ is a 2-dimensional Bessel process starting from $r=(x^2+y^2)^{1/2}$ and its density at time $s >0$ is given by (see \cite[Eq.\@ 4.1.0.6]{borodin_handbook_2002})
	\begin{equation} \label{eq:density_Bessel}
	\P_{(x,y)}(R_s \in \diff z) = \frac{z}{s} e^{-(r^2+z^2)/2s} I_0 \left( \frac{rz}{s} \right) \diff z,
	\end{equation}
	where $I_0$ is the modified Bessel function defined by $I_0(z) = \sum_{k\geq 0} (z/2)^{2k}/(k!)^2$.
	Note that, for $z \geq 0$, $I_0(z) \leq (e^{z/2})^2 = e^z$, so we get the following upper bound for the density of $R_s$: 
	\begin{equation} \label{eq:density_Bessel_UB}
	\P_{(x,y)}(R_s \in \diff z) \leq \frac{z}{s} e^{-(z-r)^2/2s} \diff z.
	\end{equation}
    Concerning the angular coordinate $(\theta_s)_{s\geq 0}$, note that the representation can be chosen to be continuous, except at $s=0$ if the Brownian motion starts from the origin.
    Moreover, we have the following skew-product representation: for any $s \geq 0$ ($s > 0$ if the Brownian motion starts from the origin), there exists a Brownian motion $W$ on $\R$ starting from 0 at time 0 and independent of $R$ such that
    \begin{equation} \label{eq:skew-prod}
        \forall t \geq s, \quad \theta_t = \theta_s + W_{\tau(t)}, \qquad 
        \text{with } \tau(t) \coloneqq \int_s^t R_q^{-2} \diff q
    \end{equation}
    see e.g. \cite[Corollary 19.7]{Kal2021}.
	We now split the proof into three steps.

	\textit{Step 1.} We first prove that an extremal particle needs to have a large radial coordinate on $[t/2,t]$, more precisely: for any $a \geq 0$, for $t$ large enough,
	\begin{equation} \label{eq:angle_step1}
	\Pp{ \exists u \in \cN(t) : 
		R_u(t) \geq m(t)-a, \min_{s\in [t/2,t]} R_u(s) < \frac{\sqrt{2}}{4} t}
	\leq e^{-ct}.
	\end{equation}
    As in the statement of the lemma, the constant $c$ throughout the proof does not depend on $a$, but the threshold involved in the ``$t$ large enough'' statement can.
    By the many-to-one lemma (Lemma \ref{lem:many_to_one}) with the crude bound $b(\theta_s) \leq 1$, we can bound the probability in \eqref{eq:angle_step1} by
    \begin{equation} \label{eq:angle_step1_2}
        e^t \cdot \Pp{ R_t \geq m(t)-a, \min_{s\in [t/2,t]} R_s < \frac{\sqrt{2}}{4} t} 
        = e^t \cdot \Ec{\phi(T_0) \ind{T_0 < t}},
    \end{equation}
	using Markov property at the stopping time $T_0 \coloneqq \inf \{ s \geq t/2 : R_s \leq \sqrt{2}t/4 \}$
    and setting $\phi(s) \coloneqq \P_{\sqrt{2}t/4}(R_{t-s} \geq m(t)-a)$ for $s\in [t/2,t)$, where $(R_q)_{q \geq 0}$ starts from $\sqrt{2}t/4$ at time 0 under $\P_{\sqrt{2}t/4}$.
    Using the upper bound \eqref{eq:density_Bessel_UB}, we get, for $t$ large enough and uniformly in $s\in [t/2,t)$,
    \begin{align*}
        \phi(s) 
        \leq \int_{m(t)-a}^\infty 
        \frac{z}{t-s} e^{-(z-\sqrt{2} t/4)^2/2(t-s)}
        \diff z
        \leq \frac{Ct}{\sqrt{t-s}} e^{-(m(t)-a-\sqrt{2} t/4)^2/(t-s)}
        \leq e^{-9t/8+o(t)}.
    \end{align*}
    Coming back to \eqref{eq:angle_step1_2}, this proves \eqref{eq:angle_step1}.

    \textit{Step 2.} We now prove that the angle of an extremal particle cannot be far from 0 on the whole interval $[t/2,t]$, that is, for any $a \geq 0$, for $t$ large enough,
	\begin{equation} \label{eq:angle_step2}
	\Pp{ \exists u \in \cN(t) : 
		R_u(t) \geq m(t)-a, \min_{s\in [t/2,t]} \abs{\theta_u(s)} > \frac{\eta}{2}
		}
	\leq e^{-ct}.
	\end{equation} 
    By Assumption \ref{assumption1} or \ref{assumption1'}, we have $c(\eta) \coloneqq \sup_{[-\pi,\pi] \setminus [-\eta/2,\eta/2]} b < 1$.
    Therefore, using the many-to-one lemma and bounding $b(\theta_s)$ by $c(\eta)$ for $s \in [t/2,t]$ and by 1 otherwise, we get that the probability in \eqref{eq:angle_step2} is at most
	\[
	\exp \left( \frac{t}{2} + \frac{t}{2} c(\eta) \right)
	\cdot 	
	\Pp{R_t \geq m(t)-a}
	= \exp \left( \frac{t}{2} (c(\eta)-1) + o(t) \right),
	\]
    using that $\Pp{R_t \geq m(t)-a} = e^{-t+o(t)}$ as a consequence of \eqref{eq:density_Bessel_UB} and standard Gaussian bounds.
    This proves \eqref{eq:angle_step2}.
 
	\textit{Step 3.} Finally, it is enough to prove that, for any $a \geq 0$, for $t$ large enough,
	\begin{align}
	& \mathbb{P} \Bigg( \exists u \in \cN(t) : 
		R_u(t) \geq m(t)-a,  \nonumber \\
	& \hspace{1cm}	\min_{s\in [t/2,t]} R_u(s) \geq \frac{\sqrt{2}}{4} t,
		\min_{s\in [t/2,t]} \abs{\theta_u(s)} \leq \frac{\eta}{2},
		\max_{s\in [t/2,t]} \abs{\theta_u(s)} > \eta \bigg)
	 \leq e^{-ct}.
    \label{eq:angle_step3}
	\end{align}
    The key idea is that this event requires an angular displacement of at least $\eta/2$ over the time interval $[t/2,t]$, which has a cost exponential in $t$ for a 2d Brownian motion with radius of order $t$.
	We first apply the many-to-one lemma with the bound $b(\theta_s) \leq 1$ to get that the probability in \eqref{eq:angle_step3} is at most
	\begin{equation} \label{eq:angle_step3_2}
	    e^t \cdot
	    \Pp{ R_t \geq m(t)-a, 
		\min_{s\in [t/2,t]} R_s \geq \frac{\sqrt{2}}{4} t,
		T_1 \leq t, T_2 \leq t },
	\end{equation}
	with $T_1 \coloneqq \inf \{ s \geq t/2 : \abs{\theta_s} \leq \frac{\eta}{2} \}$ and $T_2 \coloneqq \inf \{ s \geq t/2 : \abs{\theta_s} \geq \eta \}$. 
	There are two cases to distinguish, $T_1 < T_2$ and $T_2 < T_1$, but they are treated similarly, so we focus on the case $T_1 < T_2$.
	By Markov property at time $T_1$,
	\begin{equation} \label{eq:step_3}
	\Pp{ R_t \geq m(t)-a, 
		\min_{s\in [t/2,t]} R_s \geq \frac{\sqrt{2}}{4} t,
		T_1 < T_2 \leq t } 
	\leq \Ec{ \chi(T_1,R_{T_1}) \ind{T_1 < t}},
	\end{equation}
	where we set, for any $s \in [t/2,t)$ and $r > 0$,
	\[
	\chi(s,r) \coloneqq \P_{(s,r,\eta/2)} \left( R_t \geq m(t)-a, 
	\min_{q\in [s,t]} R_q \geq \frac{\sqrt{2}}{4} t, T_2 \leq t \right),
	\]
	where under $\P_{(s,r,\eta/2)}$ the process $(R_q,\theta_q)_{q\geq s}$ starts from $(r,\eta/2)$ at time~$s$.
	Now, using the representation of the angle in \eqref{eq:skew-prod} and then using the lower bound on $R$ to upper bound $\tau(t)$, we get
	\begin{align*}
	\chi(s,r) 
	& \leq \P_{(s,r,\eta/2)} \left( R_t \geq m(t)-a, 
	\min_{q\in [s,t]} R_q \geq \frac{\sqrt{2}}{4} t, 
	\max_{p \in [0,\tau(t)]} W_p \geq \frac{\eta}{2} \right) \\
	& \leq \P_{(s,r,\eta/2)} \left( R_t \geq m(t)-a, 
	\max_{p \in [0,4/t]} W_p \geq \frac{\eta}{2} \right) \\
	& \leq 2 \exp \left( - \frac{\eta^2 t}{32} \right) \cdot \P_{(s,r,\eta/2)} \left( R_t \geq m(t)-a \right),
	\end{align*}
    by independence of $R$ and $W$ and using that $\max_{p \in [0,4/t]} W_p$ is distributed as $\abs{W_{4/t}}$ together with a Gaussian tail bound.
	Coming back to \eqref{eq:step_3} and using again Markov property at time $T_1$, the right-hand side of \eqref{eq:step_3} is at most
	\begin{align*}
	2 \exp \left( - \frac{\eta^2 t}{32} \right) \cdot \Pp{R_t \geq m(t)-a}
	= \exp \left( -\frac{\eta^2 t}{32} - t  + o(t) \right)
	\leq e^{-t-ct}.
	\end{align*}
	Proceeding similarly in the case $T_2 < T_1$, we get that \eqref{eq:angle_step3_2} is bounded by $e^{-ct}$, which concludes the proof of \eqref{eq:angle_step3} and hence of the lemma.
\end{proof}

In the next lemma, we prove that the final position of an extremal particle $u$ needs to have a final $y$-coordinate $Y_u(t)$ of order roughly $t^{\kappa/2}$, which is the typical space scale of the Brownian motion weighted by an integral via PDEs studied in Sections \ref{sec:BM_weighted} and \ref{sec:probabilistic_arguments}.
In particular, we deduce $Y_u(t) = o(\sqrt{t})$, which together with $R_u(t) \geq m(t)$ implies that $R_u(t) = X_u(t) + o(1)$, allowing us afterwards to replace the radial coordinate of an extremal particle by its $x$-coordinate. We also note that this lemma, together with the previous lemma, implies Proposition \ref{porism}.

\begin{lem} \label{lem:control_Y}
	For small enough $\eta > 0$, for any $\varepsilon > 0$ and $a \geq 0$, we have
	\begin{align*}
	\Pp{ \exists u \in \cN(t) : 
		R_u(t) \geq m(t)-a, \max_{s\in [t/2,t]} \abs{\theta_u(s)} \leq \eta,
		\abs{Y_u(t)} \geq t^{\frac{\kappa}{2} +\varepsilon} }
	\xrightarrow[t\to\infty]{} 0.
	\end{align*}
\end{lem}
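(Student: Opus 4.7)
\emph{Strategy.} The plan is a first-moment bound: I will show that the expected number of particles $u$ meeting the four listed constraints, restricted to the event $A_{s_0,t}$ of \eqref{eq:def_Abis}, vanishes as $t \to \infty$, and then conclude by \eqref{eq:prob_A} and Markov's inequality. I choose $\eta \leq \sigma$, where $\sigma$ is the constant of Lemma~\ref{lem:branching_rate_upper_bound}, small enough that $\tan(\eta)\sqrt{2}\, t < m(t) - a$ for all large $t$. On $A_{s_0,t}$, every particle $u$ satisfies $X_u(s) \leq \sqrt{2}\, s - 1$ for $s \in [s_0,t]$, so Lemma~\ref{lem:many_to_one} bounds the expected count by
\[
\mathbb{E}\Bigl[\exp\Bigl({\textstyle\int_0^t b(\theta_s)\,ds}\Bigr)\,\mathbf{1}_G\Bigr],
\]
where $G \coloneqq \bigl\{R_t \geq m(t)-a,\ \max_{s\in[t/2,t]}|\theta_s| \leq \eta,\ |Y_t| \geq t^{\kappa/2+\varepsilon},\ \forall s \in [s_0,t],\ X_s \leq \sqrt{2}\, s - 1\bigr\}$. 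On $G$ the hypotheses of Lemma~\ref{lem:branching_rate_upper_bound} are met on $[t/2,t]$, giving $\int_0^t b(\theta_s)\,ds \leq t - \beta \int_{t/2}^t |Y_s/(\sqrt{2}\, s)|^\alpha (1+f(Y_s,s))\,ds$ with $f = f^-_{L,2-\alpha,1}$, and the constraint $|\theta_t|\leq \eta$ combined with $X_t \leq \sqrt{2}\, t$ forces $|Y_t| \leq \tan(\eta)\sqrt{2}\, t < m(t)-a$.

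\emph{Factorisation and $X$-tail.} By independence of $X$ and $Y$, the expectation above equals
\[
\mathbb{E}_Y\Bigl[\exp\Bigl({-\beta\textstyle\int_{t/2}^t |Y_s/(\sqrt{2}\, s)|^\alpha (1+f)\,ds}\Bigr)\,\phi_t(Y_t)\,\mathbf{1}_{t^{\kappa/2+\varepsilon} \leq |Y_t| \leq \tan(\eta)\sqrt{2}\, t}\Bigr],
\]
with $\phi_t(y) \coloneqq \mathbb{P}(\sqrt{X_t^2+y^2} \geq m(t)-a,\ \forall s \in [s_0,t],\ X_s \leq \sqrt{2}\, s - 1)$. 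For $y$ in the indicated range, $\sqrt{X_t^2+y^2}\geq m(t)-a$ translates into $|X_t| \geq m(t)-a - y^2/(2m(t)) + O(y^4/t^3)$, and a standard Brownian ballot-type estimate (as in \cite{Bra1978,LalSel1987}) gives
\[
\phi_t(y) \leq C_a\, t^{-1/2-\zeta}\,(1 + t^{1-\kappa} + y^2/t)\, e^{-t}\, e^{-\vartheta_1 t^{1-\kappa}}\, e^{-\sqrt{2}\, y^2/m(t)},
\]
where $\zeta = \tfrac{3}{2} - \tfrac{\alpha}{2(2+\alpha)}$ comes from the logarithmic correction in $m(t)$.

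\emph{$Y$-tail and conclusion.} By the Markov property for $Y$ at $t/2$ and the definition of $\widetilde G$, the $Y$-expectation becomes $\int p_{Y_{t/2}}(z)\,dz\int \widetilde G(t/2,z;t,y)\,\phi_t(y)\,\mathbf{1}_{|y|\geq t^{\kappa/2+\varepsilon}}\,dy$, and Proposition~\ref{prop:tail_Gtilde} then yields
\[
\widetilde G(t/2,z;t,y) \leq C\, t^{-\kappa/2}\, e^{-(1-2^{\kappa-1})\vartheta_1 t^{1-\kappa}}\, \exp\bigl(-c\,(|y|/t^{\kappa/2})^{(2+\alpha)/2}\bigr)
\]
uniformly in $z$. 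Multiplying by the $e^t$ from many-to-one cancels the $e^{-t}$ inside $\phi_t(y)$; what remains is polynomial in $t$ times $e^{-(2-2^{\kappa-1})\vartheta_1 t^{1-\kappa}}$ and, decisively, the stretched-exponential factor $\exp(-c\, t^{\varepsilon(2+\alpha)/2})$ obtained by integrating the kernel tail over $|y| \geq t^{\kappa/2+\varepsilon}$ after the change of variables $u = y/t^{\kappa/2}$. Since this last factor decays faster than any polynomial, the expected count tends to $0$ and Markov's inequality yields the lemma. The main obstacle is the combined bookkeeping of $\phi_t(y)$ and $\widetilde G(t/2,z;t,y)$: the polynomial growth $y^2/t$ inside $\phi_t(y)$ must be dominated by either the Gaussian factor $e^{-\sqrt{2}\,y^2/m(t)}$ (for $|y| \lesssim \sqrt{t}$) or by the stretched-exponential tail of the kernel (for larger $|y|$), both of which hold since $(|y|/t^{\kappa/2})^{(2+\alpha)/2}$ grows faster than $y^2/t$ on the relevant scale $|y| \geq t^{\kappa/2+\varepsilon}$.
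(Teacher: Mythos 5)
Your overall architecture (first moment via many--to--one, the branching-rate bound of Lemma~\ref{lem:branching_rate_upper_bound} on $[t/2,t]$, factorisation of the $X$ and $Y$ contributions, the kernel tail of Proposition~\ref{prop:tail_Gtilde}, and the cutoff $\abs{Y_t}\le C\eta t$ needed to let the stretched-exponential tail beat the Gaussian growth) is the right one and matches the paper. However, there are two linked problems that leave a genuine gap.

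First, the signs in your claimed bound on $\phi_t(y)$ are wrong. Since $m(t)$ sits \emph{below} $\sqrt{2}t$ by $\tfrac{\vartheta_1}{\sqrt{2}}t^{1-\kappa}$, and since a large $\abs{Y_t}=\abs{y}$ \emph{lowers} the threshold that $X_t$ must exceed (the constraint is $X_t\gtrsim m(t)-a-y^2/(2m(t))$), both effects make the $X$-event \emph{more} likely: the correct estimate carries the factors $e^{+\vartheta_1 t^{1-\kappa}}$ and $e^{+y^2/(2t)(1+o(1))}$, not $e^{-\vartheta_1 t^{1-\kappa}}$ and $e^{-\sqrt{2}y^2/m(t)}$ (compare the paper's display following \eqref{eq:in_terms_of_Gtilde}). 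The growing factor $e^{+cy^2/t}$ is exactly the term that must be dominated by $\exp(-c(\abs{y}/t^{\kappa/2})^{(2+\alpha)/2})$, which works only because $\abs{y}\le 2\eta t$ with $\eta$ small; your closing paragraph gestures at this, but your stated bound hides it.

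Second, and more structurally, once the sign of $e^{+\vartheta_1 t^{1-\kappa}}$ is corrected your bookkeeping does not close. You apply the branching-rate penalty only on $[t/2,t]$, so the $Y$-kernel supplies decay $e^{\vartheta_1((t/2)^{1-\kappa}-t^{1-\kappa})}=e^{-(1-2^{\kappa-1})\vartheta_1 t^{1-\kappa}}$; against the $X$-tail's $e^{+\vartheta_1 t^{1-\kappa}}$ this leaves an uncompensated \emph{growing} factor $e^{+\vartheta_1 (t/2)^{1-\kappa}}$. The decay $e^{-ct^{\varepsilon(2+\alpha)/2}}$ you extract from integrating the kernel tail over $\abs{y}\ge t^{\kappa/2+\varepsilon}$ does not beat this for small $\varepsilon$ (the lemma must hold for every $\varepsilon>0$, and $\varepsilon(2+\alpha)/2<1-\kappa$ is possible). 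The deficit $e^{\vartheta_1(t/2)^{1-\kappa}}$ must be recovered from the $Y$-trajectory on $[s_0,t/2]$, where no angle constraint is available. This is precisely why the paper conditions on $\cF_{t/2}$, bounds the resulting sum over $\cN(t/2)$ by the pseudo-derivative quantity $Z_{t/2}$ of \eqref{eq:def_Zt} (whose normalisation absorbs $e^{\vartheta_1(t/2)^{1-\kappa}}$), and invokes Lemma~\ref{lem:Z}; the proof of Lemma~\ref{lem:Z} in turn handles the missing angle control on $[s_0,t/2]$ via the decomposition into the events $E$ and $F_k$. Without some equivalent of this step your expected count is not $o(1)$.
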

\begin{proof}
	We assume that $\eta \leq \sigma$, where $\sigma$ is given by Lemma \ref{lem:branching_rate_upper_bound}.
	By \eqref{eq:prob_A}, it is enough to work on the event $A_{s_0}$ for some large $s_0$.
	Using $A_{s_0} \subset A_{s_0,t/2}$ and a union bound over $u\in \cN(t)$, we first get
	\begin{align*}
	& \Ppsq{ A_{s_0} \cap \left\{ \exists u \in \cN(t) : 
		R_u(t) \geq m(t)-a, \max_{s\in [t/2,t]} \abs{\theta_u(s)} \leq \eta,
		\abs{Y_u(t)} \geq t^{\frac{\kappa}{2}+\varepsilon} \right\} }{\cF_{t/2}}
	\\
	& \leq \1_{A_{s_0,t/2}} \Ecsq{ \sum_{u \in \cN(t)} 
	\ind{R_u(t) \geq m(t)-a, \max_{s\in [t/2,t]} \abs{\theta_u(s)} \leq \eta,
	\abs{Y_u(t)} \geq t^{\frac{\kappa}{2}+\varepsilon}}}{\cF_{t/2}}.
	\end{align*}
	Then, applying the branching property at time $t/2$ and then the many-to-one lemma (Lemma \ref{lem:many_to_one}), the last expectation equals 
	\begin{align*}
	& \sum_{u \in \cN(t/2)}
	\E_{(t/2,X_u(t/2),Y_u(t/2))} \left[ \sum_{u \in \cN(t)} 
	\ind{R_u(t) \geq m(t)-a, \max_{s\in [t/2,t]} \abs{\theta_u(s)} \leq \eta,
	\abs{Y_u(t)} \geq t^{\frac{\kappa}{2}+\varepsilon}} \right] 
	\\
	& = \sum_{u \in \cN(t/2)}
	\E_{(t/2,X_u(t/2),Y_u(t/2))} \left[ \exp \left( \int_{t/2}^t b(\theta_s) \diff s \right) 
	\ind{R_t \geq m(t)-a, 
	\max_{s\in [t/2,t]} \abs{\theta_s} \leq \eta,
	\abs{Y_t} \geq t^{\frac{\kappa}{2}+\varepsilon}} \right].
	\end{align*}
	Finally, using Lemma \ref{lem:branching_rate_upper_bound} and noting that, on the event in the last indicator function, we have $\abs{Y_t} = \abs{X_t \tan \theta_t} \leq 2 \eta t$, by choosing $\eta$ small enough such that $\tan \eta \leq \sqrt{2} \eta$, we obtain the following bound
	\begin{align}
	& \Ppsq{ A_{s_0} \cap \left\{ \exists u \in \cN(t) : 
		R_u(t) \geq m(t)-a, \max_{s\in [t/2,t]} \abs{\theta_u(s)} \leq \eta,
		\abs{Y_u(t)} \geq t^{\frac{\kappa}{2}+\varepsilon} \right\} }{\cF_{t/2}}
	\nonumber \\
	& \leq e^{t/2} \1_{A_{s_0,t/2}} \sum_{u \in \cN(t/2)}
	\E_{(t/2,X_u(t/2),Y_u(t/2))} \Biggl[ \exp \left( - \beta \int_{t/2}^t \abs{\frac{Y_s}{\sqrt{2} s}}^\alpha (1+f(Y_s,s)) \diff s \right) 
	\nonumber \\
	& \hspace{8.5cm} {} \times \ind{R_t \geq m(t)-a, \abs{Y_t} \in [t^{\kappa/2+\varepsilon}, 2 \eta t]} \Biggr].
	\label{eq:cond_proba}
	\end{align}
	Then, for any $x \leq t/\sqrt{2}-1$ (recall that on $A_{s_0,t/2}$ we have $X_u(t/2) \leq \sqrt{2} \cdot t/2-1$) and for any $y \in \R$, we have to bound
	\begin{align}
	& \E_{(t/2,x,y)} \left[ \exp \left( - \beta \int_{t/2}^t \abs{\frac{Y_s}{\sqrt{2} s}}^\alpha (1+f(Y_s,s)) \diff s \right) 
	\ind{R_t \geq m(t)-a, \abs{Y_t} \in [t^{\kappa/2+\varepsilon},2 \eta t]}
	\right] \nonumber \\
	& = \int_{\abs{z} \in [t^{\kappa/2+\varepsilon},2 \eta t]} 
	\P_{(t/2,x)} \left( \sqrt{X_t^2+z^2} \geq m(t)-a \right) 
	\widetilde{G}(t/2,y;t,z)
	\diff z, \label{eq:in_terms_of_Gtilde}
	\end{align}
	where $\widetilde{G}$ is defined in \eqref{eq:G_tilde}.
    Choosing $\eta < 1/\sqrt{2}$, we have, for $t$ large enough, for any $\abs{z} \leq 2\eta t$, $\abs{z} \leq m(t)-a$, and therefore $\sqrt{(m(t)-a)^2-z^2} \geq m(t)-a - z^2/(m(t)-a)^2 \geq m(t)-a - z^2/(2t)$.
	Hence, for $t$ large enough, for any $x \leq t/\sqrt{2}$ and $\abs{z} \leq 2\eta t$, we get
	\begin{align*}
	\P_{(t/2,x)} &\left( \sqrt{X_t^2+z^2} \geq m(t)-a \right) \\
	& = \P \left( \cN(0,t/2) \geq \sqrt{(m(t)-a)^2-z^2} - x \right) \\
	& \leq \P \left( \cN(0,t/2) \geq m(t)-a- \frac{z^2}{2t}-x \right)  \\
	& \leq \exp \left( - \frac{t}{2}  - \sqrt{2} \overline{x} + \vartheta_1 t^{1-\kappa} - \frac{\overline{x}^2}{t} + \frac{z^2}{t^2} \left( \frac{t}{\sqrt{2}} + \overline{x} \right) + O(\log t) \right),
	\end{align*}
	where we wrote $m(t)-a-x = \frac{t}{\sqrt{2}} + \overline{x} - \frac{\vartheta_1}{\sqrt{2}} t^{1-\kappa} + O(\log t)$ with $\overline{x} \coloneqq t/\sqrt{2}-x \geq 1$,  we  used that $\kappa \in (1/2,1)$, and noted that $m(t)-a-x- z^2/(2t) \geq 0$ if $\eta$ is chosen small enough in order to apply the Gaussian tail bound $\P(\cN(0,v) \geq u) \leq e^{-u^2/2v}$ for $u \geq 0$.
	Furthermore, maximizing in $\overline{x}$, we can bound
	\[
	\frac{z^2}{t^2} \left( \frac{t}{\sqrt{2}} + \overline{x} \right) -  \frac{\overline{x}^2}{t}
	\leq \frac{z^2}{\sqrt{2}t} + \frac{z^4}{4t^3}
	\leq \frac{z^2}{t},
	\]
	by choosing $\eta$ small enough.
	Coming back to \eqref{eq:in_terms_of_Gtilde}, we get
	\begin{align*}
	& \E_{(t/2,x,y)} \left[ \exp \left( - \beta \int_{t/2}^t \abs{\frac{Y_s}{\sqrt{2} s}}^\alpha (1+f(Y_s,s)) \diff s \right) 
	\ind{R_t \geq m(t)-a, \abs{Y_t} \in [t^{\kappa/2+\varepsilon},2 \eta t]}
	\right] \nonumber \\
	& \leq \exp \left( - \frac{t}{2}  - \sqrt{2} \overline{x} + \vartheta_1 t^{1-\kappa} + O(\log t) \right) \int_{\abs{z} \in [t^{\kappa/2+\varepsilon},2 \eta t]} 
	e^{z^2/t} \widetilde{G}(t/2,y;t,z) \diff z \\
	& \leq \exp \left( - \frac{t}{2}  - \sqrt{2} \overline{x} + \vartheta_1 (t/2)^{1-\kappa} + O(\log t) \right)
	\int_{\abs{z} \in [t^{\kappa/2+\varepsilon},2\eta t]} 
	e^{z^2/t - c \abs{z}^{(2+\alpha)/2} t^{-\alpha/2}}
	\diff z,
	\end{align*}
	using Proposition \ref{prop:tail_Gtilde} and that $\kappa = 2\alpha/(2+\alpha)$.
    Note that, if $\eta$ is chosen small enough, one has $z^2/t - c \abs{z}^{(2+\alpha)/2} t^{-\alpha/2} \leq - \frac{1}{2}c \abs{z}^{(2+\alpha)/2} t^{-\alpha/2}$ for $\abs{z} \leq 2 \eta t$.
	Thus, the last integral is a $O(\exp(-c' t^{\varepsilon(2+\alpha)/2}))$.
	Coming back to \eqref{eq:cond_proba}, we get
	\begin{align*}
	& \Ppsq{ A_{s_0} \cap \left\{ \exists u \in \cN(t) : 
		R_u(t) \geq m(t)-a, \max_{s\in [t/2,t]} \abs{\theta_u(s)} \leq \eta,
		\abs{Y_u(t)} \geq t^{\frac{\kappa}{2}+\varepsilon} \right\} }{\cF_{t/2}} \\
	& \leq \exp \left( \vartheta_1 (t/2)^{1-\kappa} -c' t^{\varepsilon(2+\alpha)/2} + O(\log t) \right) \cdot 
	\1_{A_{s_0,t/2}} \sum_{u \in \cN(t/2)}
	e^{\sqrt{2}(X_u(t/2) - \sqrt{2} \cdot t/2)} \\
	& \leq \exp \left( -c' t^{\varepsilon(2+\alpha)/2} + O(\log t) \right) \cdot 
	\1_{A_{s_0,t/2}} Z_{t/2},
	\end{align*}
	using that $1 \leq (\sqrt{2} \cdot t/2 - X_u(t/2))$ on $A_{s_0,t/2}$.
	We conclude by taking the expectation and applying Lemma \ref{lem:Z}.
\end{proof}

In the next lemma, we improve the straight barrier given by the event $A_{s_0}$ to replace it by a curved barrier which includes the same polynomial correction as $m(t)$.
For $s>0$, let 
\begin{equation} \label{eq:def_m+}
m^+(s) \coloneqq \sqrt{2} s - \frac{\vartheta_1}{\sqrt{2}} s^{1-\kappa} + 10 \log s.
\end{equation} 
Note that $m^+(s)$ is above $m(s)$, at a distance of order $\log s$.
The coefficient 10 here has been arbitrarily chosen.

\begin{lem} \label{lem:no_hitting_the_barrier}
	We have
	\[
	\Pp{ \exists u \in \cN(t) : 
		\max_{s \in [t/2,t]} (X_u(s) - m^+(s)) > 0}	
	\xrightarrow[t\to\infty]{} 0.
	\]
\end{lem}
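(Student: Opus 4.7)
The plan is a first moment argument on the number of particles whose $X$-trajectory crosses $m^+$ in a short time window. Set $t_k=t/2+k$ for $k=0,1,\dots,K=\lceil t/2\rceil$ and first restrict to $A_{s_0}$ (see \eqref{eq:def_A}), whose complement has vanishing probability as $s_0\to\infty$ by \eqref{eq:prob_A}. Since BBM particles do not die, for any $u\in\cN(t)$ with $X_u(s)>m^+(s)$ for some $s\in[t_{k-1},t_k]$, the ancestor $v\in\cN(t_k)$ of $u$ also satisfies $X_v(s)>m^+(s)$. Letting $N_k$ count the particles $v\in\cN(t_k)$ with $\max_{r\in[t_{k-1},t_k]}(X_v(r)-m^+(r))>0$ and $X_v(r)\le\sqrt 2 r-1$ for all $r\in[s_0,t_k]$ (the latter automatic on $A_{s_0}$), a union bound yields
\begin{equation*}
\Pp{A_{s_0}\cap\{\exists u\in\cN(t):\max_{s\in[t/2,t]}(X_u(s)-m^+(s))>0\}}\le \sum_{k=1}^K \E[N_k].
\end{equation*}

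Bound $\E[N_k]$ via the many-to-one lemma (Lemma \ref{lem:many_to_one}) and then follow the dyadic decomposition from the proof of Lemma \ref{lem:Z}: with $\sigma$ from Lemma \ref{lem:branching_rate_upper_bound}, set $E=\{\forall s\in[s_0,t_k],\,\abs{Y_s}\le\sigma s\}$ and, for $j\ge 1$, $F_j=\{\exists s\in[2^{j-1}s_0,2^j s_0),\,\abs{Y_s}>\sigma s\}\cap\{\forall s\in[2^j s_0,t_k],\,\abs{Y_s}\le\sigma s\}$. On $E$ together with the barrier $X_s\le\sqrt 2 s$, Lemma \ref{lem:branching_rate_upper_bound} gives $\int_0^{t_k}b(\theta_s)\diff s\le t_k-\beta\int_{s_0}^{t_k}\abs{Y_s/(\sqrt 2 s)}^\alpha(1+f(Y_s,s))\diff s$ with $f=f^-_{L,2-\alpha,1}$, and the independence of $X$ and $Y$ yields
\begin{equation*}
\E[N_k\1_E]\le e^{t_k}\,\E\!\left[\exp\!\left(-\beta\int_{s_0}^{t_k}\abs{\tfrac{Y_s}{\sqrt 2 s}}^{\alpha}\!(1+f(Y_s,s))\diff s\right)\right]\Pp{\exists s\in[t_{k-1},t_k]:X_s>m^+(s)}.
\end{equation*}
Markov at $s_0$ and Proposition \ref{prop:bound_on_integrated_G_tilde}.\ref{it:UB_integrated_G_tilde} bound the $Y$-factor by $C(s_0)\,t_k^{\kappa/4}e^{-\vartheta_1 t_k^{1-\kappa}}$. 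For the $X$-factor, the reflection principle together with a standard Gaussian tail bound, combined with the expansion $m^+(t_k)^2/(2t_k)=t_k-\vartheta_1 t_k^{1-\kappa}+10\sqrt 2\log t_k+o(1)$ (using $\kappa>1/2$, which makes the cross-term $\vartheta_1^2 t_k^{1-2\kappa}/4$ negligible), give an upper bound of order $t_k^{-10\sqrt 2}e^{-t_k+\vartheta_1 t_k^{1-\kappa}}$. The two exponentials cancel exactly, so $\E[N_k\1_E]\le C(s_0)\,t_k^{\kappa/4-10\sqrt 2}$, and summing over $k\le K$ gives $O(t^{1+\kappa/4-10\sqrt 2})$, which tends to $0$ since $\kappa<1$ and $10\sqrt 2$ dominates comfortably.

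For the $F_j$ pieces, Lemma \ref{lem:branching_rate_upper_bound} applies only from time $2^j s_0$, and Markov at $2^j s_0$ together with Proposition \ref{prop:bound_on_integrated_G_tilde}.\ref{it:UB_integrated_G_tilde} replaces the $Y$-factor by $C(t_k/(2^j s_0))^{\kappa/4}e^{\vartheta_1((2^j s_0)^{1-\kappa}-t_k^{1-\kappa})}\cdot \Pp{\exists s\in[2^{j-1}s_0,2^j s_0):\abs{Y_s}>\sigma s}$; the Gaussian cost $Ce^{-c 2^j s_0}$ of the last probability dominates the growing prefactor $e^{\vartheta_1(2^j s_0)^{1-\kappa}}$ for $s_0$ large, since $\kappa<1$. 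Combining with the same $X$-factor and summing in $j$ then $k$ produces a vanishing contribution. The delicate step, beyond the bookkeeping borrowed from Lemma \ref{lem:Z}, is verifying that the coefficient $10$ of $\log s$ in $m^+$ translates cleanly into the $t_k^{-10\sqrt 2}$ factor of the Gaussian tail; this is precisely where the hypothesis $\alpha>2/3$ (equivalently $\kappa>1/2$) is used, as it kills the otherwise troublesome polynomial residual $\vartheta_1^2 s^{1-2\kappa}/4$ in $m^+(s)^2/(2s)$.
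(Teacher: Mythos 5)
Your argument is correct, and it reaches the conclusion by a genuinely different route than the paper. The paper first discretizes time to a grid of mesh $t^{-2}$ (using a separate estimate that no particle moves by more than $1$ over such a window), takes a union bound over the $\sim t^{3}$ grid points, and for each fixed time invokes Lemma \ref{lem:theta} to restrict to small angles, conditions on $\cF_{t/2}$, and controls the resulting sum by the pseudo-derivative martingale $Z_{t/2}$ via Lemma \ref{lem:Z}; the factor $t^{-10\sqrt2}$ then has to beat the $t^{3}$ from the union bound. You instead cover $[t/2,t]$ by unit intervals, control the crossing of $m^{+}$ on each interval for a \emph{single} Brownian path by the reflection principle (legitimate since $m^{+}$ is increasing there), and apply the many-to-one formula once from time $0$, redoing inline the $E/F_j$ decomposition from the proof of Lemma \ref{lem:Z} so that Lemma \ref{lem:branching_rate_upper_bound} can be invoked in its $\{|Y_s|\le\sigma s\}$ form — which lets you bypass Lemma \ref{lem:theta} and the conditional-expectation scaffolding entirely. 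The bookkeeping checks out: the $Y$-factor $C(s_0)t_k^{\kappa/4}e^{-\vartheta_1 t_k^{1-\kappa}}$ from Proposition \ref{prop:bound_on_integrated_G_tilde}.\ref{it:UB_integrated_G_tilde} and the $X$-factor $Ct_k^{-10\sqrt2}e^{-t_k+\vartheta_1 t_k^{1-\kappa}}$ combine with $e^{t_k}$ to give $C(s_0)t_k^{\kappa/4-10\sqrt2}$, summable over the $O(t)$ intervals since $10\sqrt2>1+\kappa/4$; your observation that $\kappa>1/2$ is what makes the residual $\vartheta_1^2 t^{1-2\kappa}/4$ in the squared barrier negligible is also exactly the point where the paper's Gaussian computation uses $\alpha>2/3$. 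The trade-off is that the paper's route reuses Lemma \ref{lem:Z} as a black box (it is needed elsewhere anyway) and rehearses the conditioning structure of the main upper-bound proof, whereas yours is more self-contained and avoids the fine time-discretization, at the cost of essentially re-proving a variant of Lemma \ref{lem:Z} in place. Two cosmetic points: the notation $\E[N_k\1_E]$ conflates an event of the single spine Brownian motion with the branching process (it should be read as the restriction of the many-to-one expectation to $E$), and when you drop the indicator of the straight barrier after applying Lemma \ref{lem:branching_rate_upper_bound} you should note that the pointwise branching-rate bound has already been used on that event, so discarding the indicator afterwards is a valid upper bound.
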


\begin{proof}
    In order to discretize time afterwards, we first argue that no particle is moving more than~1 on a time interval of length $t^{-2}$ between times $t/2$ and $t$: by a union bound over $s$ and over $u \in\cN(s)$ and the many-to-one lemma (bounding the branching rate by 1), we have
    \begin{align*}
        & \Pp{\exists s \in [t/2,t+t^{-2}] \cap t^{-2}\Z, \exists u \in \cN(s) : \max_{r\in[s-t^{-2},s]} \abs{X_u(r) - X_u(s)} > 1} \\
        & \leq \sum_{s \in [t/2,t+t^{-2}] \cap t^{-2}\Z} e^s \cdot 
        \Pp{\max_{r\in[0,t^{-2}]} \abs{B_r} > 1} 
        \leq t^3 \cdot e^t \cdot 4 e^{-t^2/2} 
        \xrightarrow[t\to\infty]{} 0,
    \end{align*}
    using in the second inequality that $\max_{r\in[0,t^{-2}]} B_r$ is distributed as $\lvert B_{t^{-2}} \rvert$ and a classical Gaussian tail bound. It follows that it is sufficent to prove that
    \begin{equation}
	\Pp{ \exists u \in \cN(t) : 
		\max_{s \in [t/2,t]\cap t^{-2}\Z} (X_u(s) - m^+(s)+1) > 0}	
	\xrightarrow[t\to\infty]{} 0. \label{eq:discretized}
	\end{equation}
    Recalling that $\Pp{A_{s_0}^c} \to 0$ as $s_0 \to \infty$ by \eqref{eq:prob_A}, it is enough to prove that
	\begin{equation} \label{eq:adding_A}
	\Pp{ A_{s_0} \cap \left\{ \exists u \in \cN(t) : 
		\max_{s \in [t/2,t]\cap t^{-2}\Z} (X_u(s) - m^+(s)+1) > 0 \right\} } 
	\xrightarrow[t\to\infty]{} 0,
	\end{equation}
   	for any fixed $s_0 > 0$.
	Using a union bound over $s \in [t/2,t] \cap t^{-2}\Z$,
	the left-hand side of \eqref{eq:adding_A} is at most
	\begin{align*}
	& \sum_{s \in [t/2,t] \cap t^{-2}\Z} 
	\Pp{ A_{s_0} \cap \left\{ \exists u \in \cN(s) : X_u(s) > m^+(s)-1 \right\} } \\
	& \leq \sum_{s \in [t/2,t] \cap t^{-2}\Z}  
	\Pp{ A_{s_0} \cap \left\{ \exists u \in \cN(s) : X_u(s) > m^+(s)-1, \max_{r\in [s/2,s]} \abs{\theta_u(r)} \leq \sigma \right\} } 
	+e^{-cs} ,
	\end{align*}
	where $\sigma>0$ is given by Lemma \ref{lem:branching_rate_upper_bound} and $c$ is then given by  Lemma \ref{lem:theta}, noting that $X_u(s) > m^+(s)-1$ implies $R_u(s) > m(s)$.
	Therefore, it is now enough to prove that
	\begin{equation} \label{eq:curved_barrier_new_goal}
	\Pp{ A_{s_0} \cap \left\{ \exists u \in \cN(t) : X_u(t) > m^+(t)-1, \max_{s\in [t/2,t]} \abs{\theta_u(s)} \leq \sigma \right\} } 
	= o \left( t^{-3} \right), 
	\end{equation}
   	as $t \to\infty$, for any fixed $s_0 > 0$.
   	
    We first work conditionally on $\cF_{t/2}$. We first proceed as for the proof of \eqref{eq:cond_proba}: by a union bound over $u\in \cN(t)$, the branching property at time $t/2$, the many-to-one lemma (Lemma \ref{lem:many_to_one}) and Lemma \ref{lem:branching_rate_upper_bound}, we get
    \begin{align}
    & \Ppsq{ A_{s_0} \cap \left\{ \exists u \in \cN(t) : X_u(t) > m^+(t)-1, \max_{s\in [t/2,t]} \abs{\theta_u(s)} \leq \sigma \right\} }{\cF_{t/2}} \nonumber \\
    & \leq e^{t/2} \1_{A_{s_0,t/2}}  \nonumber \\
    & \hspace{0.6cm} \times
    \sum_{u \in \cN(t/2)}
    \E_{(t/2,X_u(t/2),Y_u(t/2))} \Biggl[ \exp \left( - \beta \int_{t/2}^t \abs{\frac{Y_s}{\sqrt{2} s}}^\alpha (1+f(Y_s,s)) \diff s \right) \ind{X_t > m^+(t)-1} \Biggr] \nonumber \\
    & \leq e^{t/2} \1_{A_{s_0,t/2}} \sum_{u \in \cN(t/2)} 
    C \exp\left(\vartheta_1 \left((t/2)^{1-\kappa}-t^{1-\kappa} \right)\right)
    \P_{(t/2,X_u(t/2))} \left( X_t > m^+(t)-1 \right),
    \label{eq:curved_barrier_given_t/2}
    \end{align}
	using the independence between $X$ and $Y$ and applying Proposition~\ref{prop:bound_on_integrated_G_tilde}.\ref{it:UB_integrated_G_tilde} to the $Y$ part.
    For the $X$ contribution, note that, under $\P_{(t/2,X_u(t/2))}$, $X_t$ is Gaussian with mean $X_u(t/2)$ and variance $t/2$. Moreover, note that on event $A_{s_0,t/2}$ we have $X_u(t/2) \leq \sqrt{2} \cdot t/2$ so that $m^+(t) - X_u(t/2) \geq 0$.
    Hence, we can apply the Gaussian tail bound $\P(\cN(0,v) \geq a) \leq e^{-a^2/2v}$ for $a \geq 0$ to get
    \begin{align*}
        & \P_{(t/2,X_u(t/2))}\left( X_t \geq m^+(t)-1 \right) \\
        & \leq \exp\left(-t/2-\sqrt{2}(\sqrt{2}t/2-X_u(t/2)) + \vartheta_1 t^{1-\kappa} - 10 \sqrt{2}\log t +\sqrt{2} \right).
    \end{align*}
    Therefore, the right-hand side of \eqref{eq:curved_barrier_given_t/2} is at most
    \[
    C t^{-10\sqrt{2}} \exp\left(\vartheta_1 (t/2)^{1-\kappa}\right) 
    \1_{A_{s_0,t/2}} 
    \sum_{u \in \cN(t/2)} e^{-\sqrt{2}(\sqrt{2}t/2-X_u(t/2))}
    \leq C t^{-10\sqrt{2}+\kappa/4} \1_{A_{s_0,t/2}} Z_{t/2},
    \]
    using that on $A_{s_0,t/2}$ we have $1 \leq (\sqrt{2}t/2 - X_u(t/2))$. 
    Taking the expectation and applying Lemma \ref{lem:Z} yields \eqref{eq:curved_barrier_new_goal} and concludes the proof.
\end{proof}

We conclude this subsection by the following corollary which gathers the three previous lemmas and is the starting point of the proof of the upper bound of Theorem~\ref{thm:main} in the next subsection.
\begin{cor} \label{cor:localization}
	Recall the definition of $m^+(s)$ in \eqref{eq:def_m+}.
	For any $\eta > 0$ and $a \geq 0$, as $t \to \infty$,
	\begin{align*}
	& \Pp{M_t \geq m(t)+a+1} \\
	& \leq \mathbb{P} \big(\exists u \in \cN(t) : 
		X_u(t) \geq m(t)+a, 
		\max_{s\in [t/2,t]} \abs{\theta_u(s)} \leq \eta,
		\max_{s \in [t/2,t]} X_u(s) - m^+(s) \leq 0\big)
	+ o(1).
	\end{align*}
\end{cor}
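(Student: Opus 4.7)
The plan is to combine Lemmas \ref{lem:theta}, \ref{lem:control_Y}, and \ref{lem:no_hitting_the_barrier} by sequentially absorbing bad events of probability $o(1)$, and at the crucial point to convert the radial constraint on the extremal particle into a Cartesian one via an elementary estimate. Fix $\eta > 0$ and $a \geq 0$, let $\eta_0 > 0$ denote the angular threshold supplied by Lemma \ref{lem:control_Y}, and set $\eta' \coloneqq \min(\eta, \eta_0)$, which serves as the working angular threshold throughout.

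Starting from $\{M_t \geq m(t) + a + 1\}$, there exists a witness particle $u \in \cN(t)$ with $R_u(t) \geq m(t)+a+1 \geq m(t) - a$. Applying Lemma \ref{lem:theta} with threshold $\eta'$, we may, up to an exponentially small error, further assume that $\max_{s \in [t/2,t]} |\theta_u(s)| \leq \eta'$. Since $\eta' \leq \eta_0$, Lemma \ref{lem:control_Y} then lets us impose $|Y_u(t)| \leq t^{\kappa/2 + \varepsilon}$ at cost $o(1)$, for any fixed $\varepsilon > 0$. Finally, by Lemma \ref{lem:no_hitting_the_barrier}, with probability $1 - o(1)$ no particle ever exceeds the curved barrier $m^+(s)$ on $[t/2,t]$, so in particular our $u$ satisfies $\max_{s \in [t/2,t]} (X_u(s) - m^+(s)) \leq 0$.

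The only step that is not purely bookkeeping is the transition from $R_u(t) \geq m(t)+a+1$ to $X_u(t) \geq m(t)+a$. Choosing $\varepsilon < (1-\kappa)/2$ so that $\kappa - 1 + 2\varepsilon < 0$, and using that $R_u(t) \geq m(t) + a + 1 \sim \sqrt{2}\,t$ while $|Y_u(t)| \leq t^{\kappa/2 + \varepsilon}$, one estimates
\[
X_u(t) = \sqrt{R_u(t)^2 - Y_u(t)^2} \geq R_u(t) - \frac{Y_u(t)^2}{R_u(t)} \geq m(t) + a + 1 - C\, t^{\kappa - 1 + 2\varepsilon},
\]
which is at least $m(t) + a$ for $t$ sufficiently large. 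Combining the four inclusions and using that $\eta' \leq \eta$ (so that the event with $\eta'$ is contained in the event with $\eta$) then yields the corollary. I do not foresee any genuine obstacle beyond careful bookkeeping of the three $o(1)$ error terms and the initial choice of $\eta'$ below both $\eta$ and the specific $\eta_0$ required by Lemma \ref{lem:control_Y}.
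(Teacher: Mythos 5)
Your proposal is correct and follows essentially the same route as the paper's proof: apply Lemmas \ref{lem:theta} and \ref{lem:control_Y} (after reducing to an angular threshold small enough for the latter), convert $R_u(t)\geq m(t)+a+1$ into $X_u(t)\geq m(t)+a$ using $R_u(t)-X_u(t)=O(Y_u(t)^2/X_u(t))=o(1)$ under the constraint $|Y_u(t)|\leq t^{\kappa/2+\varepsilon}$ with $\varepsilon<(1-\kappa)/2$, and finish with Lemma \ref{lem:no_hitting_the_barrier}. The only cosmetic point is that you should also take your working threshold $\eta'$ below $\pi/2$ so that $X_u(t)>0$ and the identity $X_u(t)=\sqrt{R_u(t)^2-Y_u(t)^2}$ is legitimate, exactly as the paper does.
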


\begin{proof}
	Let $\eta > 0$ and $a \geq 0$.
	First note that if the result holds for some $\eta$, then it directly holds for any other $\eta' > \eta$, hence we can assume that $\eta$ is smaller than the one given by Lemma \ref{lem:control_Y}.
	Fix $\varepsilon > 0$ such that $\kappa/2 +\varepsilon < 1/2$. By Lemmas \ref{lem:theta} and \ref{lem:control_Y}, we have
	\begin{align*}
	& \Pp{M_t \geq m(t)+a+1} \\
	& = \Pp{ \exists u \in \cN(t) : 
		R_u(t) \geq m(t)+a+1, 
		\max_{s\in [t/2,t]} \abs{\theta_u(s)} \leq \eta,
		\abs{Y_u(t)} < t^{\frac{\kappa}{2} +\varepsilon} }
	+o(1).
	\end{align*}
	Now consider $u \in \cN(t)$ satisfying the properties in the last probability. We can assume $\eta < \pi/2$ so that $X_u(t) \geq 0$.
	Then, we have $R_u(t) = X_u(t) + O(Y_u(t)^2/X_u(t)) = X_u(t) + o(1)$, because $X_u(t)$ is necessarily of order $t$ while $Y_u(t) = o(\sqrt{t})$. Therefore, we have $X_u(t) \geq m(t)+a$ for $t$ large enough (but deterministic). 
	This proves
	\begin{align*}
	\Pp{M_t \geq m(t)+a+1} 
	& \leq \Pp{ \exists u \in \cN(t) : 
		X_u(t) \geq m(t)+a, 
		\max_{s\in [t/2,t]} \abs{\theta_u(s)} \leq \eta}
	+ o(1).
	\end{align*}
	The conclusion of the corollary then follows directly from Lemma \ref{lem:no_hitting_the_barrier}.
\end{proof}

\subsection{Proof of the upper bound}
\label{sec:proof_UB}

\begin{proof}[Proof of the upper bound in Theorem \ref{thm:main}]
    We prove here that
    \begin{equation}
        \limsup_{a \to \infty} \limsup_{t\to\infty}
        \Pp{M_t \geq m(t)+ a} = 0.
    \end{equation}
    Let $\sigma > 0$ be the constant given by Lemma \ref{lem:branching_rate_upper_bound}.
    By Corollary \ref{cor:localization} and \eqref{eq:prob_A}, it is enough to prove, for any $s_0 >0$,
    \begin{equation} \label{eq:goal_for_E}
        \limsup_{a \to \infty} \limsup_{t\to\infty}
        \Pp{E} = 0,
    \end{equation}
    where we define
    \begin{equation*}
    E \coloneqq A_{s_0,t/2} \cap \left\{ \exists u \in \cN(t) : 
		X_u(t) \geq m(t)+a, 
		\max_{s\in [t/2,t]} \abs{\theta_u(s)} \leq \sigma,
		\max_{s \in [t/2,t]} X_u(s) \leq m^+(s) \right\},
    \end{equation*}
    which depends implicitly on $t$, $a$ and $s_0$.
    We summarize shortly the proof before diving into the details.
    Let $0<\delta<\kappa$ and set $t_0 = t-t^\delta$. The proof consists of three steps: we first work conditionally on $\mathcal{F}_{t_0}$, then on $\mathcal{F}_{t/2}$ and finally bound the non-conditional probability. 
    A key idea for the first step is that the upper barrier at $m^+(s)$ is too crude to get a good bound on the end of the trajectory. 
    Instead, we compare the $x$-coordinate of our BBM on time interval $[t_0,t]$ to a one-dimensional BBM with branching rate 1, for which sharp bounds are known. 
    This comparison is sufficient because $\delta < \kappa$, so the time window $t^\delta$ is smaller than the typical time scale $t^\kappa$ where the fact of having a varying branching rate plays a role%
    \footnote{Alternatively, we could have worked harder in Lemma~\ref{lem:no_hitting_the_barrier} to show that we can put a barrier closer to $m(t)$ at times close to $t$. However, we believe that the strategy presented here would be helpful to establish further results on the extremal particles, such as the convergence of $M_t-m_t$ or of the extremal process, because the comparison with a standard BBM allows for the use of known precise estimates.}. 
    The second step deals with the time interval $[t/2,t_0]$ and consists in calculating the conditional first moment of the bound obtained in the first step, and in bounding it in terms of the pseudo-derivative martingale $Z_{t/2}$.
    The final step, which deals with $[0,t/2]$ is a direct application of Lemma \ref{lem:Z}.

    \textit{Step 1.} Fix some $a \geq 0$. 
    We condition on $\mathcal{F}_{t_0}$ and count the number of particles at time $t_0$ which have a descendant that exceeds $m(t)+a$ at time $t$,
    \begin{align}
        \P(E | \mathcal{F}_{t_0}) 
        & \leq \1_{A_{s_0,t/2}} \sum_{u \in \mathcal{N}(t_0)} 
        \ind{\forall s\in [t/2, t_0], \vert \theta_u(s) \vert \leq \sigma}
        \ind{\forall s\in [t/2, t_0], X_u(s) \leq m^+(s)} 
        \nonumber \\
        & \hspace{3cm} {} \times 
        \P_{(t_0,X_u(t_0), Y_u (t_0))} \left( \max_{v\in \mathcal{N}(t), u\leq v} X_v(t) \geq m(t)+a \right).
        \label{eq:first_conditioning_on_t_0}
    \end{align}
    Next, we can dominate $\max_{v\in \mathcal{N}(t), u\leq v} X_v(t)$ by the maximum of a one-dimensional BBM with branching rate $1$, call $M^{1\mathrm{d}}_s$ its maximum at time $s$,
    \begin{equation}\label{eq:dominating_the_maximum}
        \P_{(t_0,X_u(t_0), Y_u (t_0))} \left( \max_{v\in \mathcal{N}(t), u\leq v} X_v(t) \geq m(t)+a \right) 
        \leq \P_{(0,X_u(t_0))} \left(M^{1\mathrm{d}}_{t^\delta}\geq m(t)+a \right),
    \end{equation}
    where we used that $t-t_0=t^\delta$.
    Now, we use the following bound for the tail of $M^{1\mathrm{d}}_s$, which follows from \cite[Corollary 10]{ArgBovKis2012}: letting $m^{1\mathrm{d}}(s)=\sqrt{2}s - \frac{3}{2\sqrt{2}}\log s$ for $s>1$, there exists $C>0$ such that, for any $x\in \R$ and $s \geq 1$,
    \begin{equation} \label{eq:UB_max_BBM}
        \P \left({M^{1\mathrm{d}}_s \geq m^{1\mathrm{d}}(s) + x} \right)
        \leq C (x\vee 1) e^{-\sqrt{2}x} \exp\left( -\frac{x_+^2}{4s} \right),
    \end{equation}
    where $x_+ = \max(x,0)$. 
    We apply this to \eqref{eq:dominating_the_maximum} with $s=t^\delta$ and 
    \[
    x = m(t) -m^{1\mathrm{d}}(t^\delta)-X_u(t_0)+a
    = \sqrt{2} t_0-X_u(t_0)+a
    - \frac{\vartheta_1}{\sqrt{2}} t^{1-\kappa} 
    -  \left(\frac{3}{2}(1-\delta) - \frac{\kappa}{4} \right) \frac{\log t}{\sqrt{2}}.
    \]
    Moreover, recall $t_0 = t-t^\delta$ and $\delta \in (0,\kappa)$, so we have $t^{1-\kappa} = t_0^{1-\kappa} +o(1)$ and 
    \[
    m(t)-m^{1\mathrm{d}}(t^\delta)+a
    = m^+(t_0) - \left(10\sqrt{2} +\frac{3}{2}(1-\delta) - \frac{\kappa}{4} \right) \frac{\log t}{\sqrt{2}} + a + o(1)
    \leq m^+(t_0).
    \]
    for $t$ large enough depending only on $a$.
    Hence, we get
    \begin{align*}
        \P_{(0,X_u(t_0))} \left(M^{1\mathrm{d}}_{t^\delta}\geq m(t)+a \right) 
        & \leq C \left( (m^+(t_0)- X_u(t_0)) \vee 1 \right)
        e^{- \sqrt{2}(\sqrt{2} t_0-X_u(t_0)+a)+\vartheta_1 t^{1-\kappa} } \\
        & \quad {} \times 
        t^{\frac{3}{2}(1-\delta) -  \frac{\kappa}{4}}
        \exp \left(- \frac{(m(t) -m^{1\mathrm{d}}(t^\delta)-X_u(t_0)+a)_+^2}{4t^\delta} \right).
    \end{align*}
    Note that the last exponential is a non-increasing function of $a$, so we can replace $a$ by $0$.
    Going back to \eqref{eq:first_conditioning_on_t_0}, this yields
    \begin{equation}
        \P(E | \mathcal{F}_{t_0}) 
        \leq C e^{- \sqrt{2}a}
        e^{\vartheta_1 t^{1-\kappa} } 
        t^{\frac{3}{2}(1-\delta) -  \frac{\kappa}{4}}
        \1_{A_{s_0, t/2}} 
        \Upsilon_{t_0},
        \label{eq:first_conditioning_on_t_0_result}
    \end{equation}
    where we set
    \begin{align*}
        \Upsilon_{t_0}
        & \coloneqq
        \sum_{u \in \mathcal{N}(t_0)} 
        (m^+(t_0)- X_u(t_0)+1)
        e^{\sqrt{2}(X_u(t_0)-\sqrt{2} t_0)} 
        \exp \left(- \frac{(m(t) -m^{1\mathrm{d}}(t^\delta)-X_u(t_0))_+^2}{4t^\delta} \right)
        \nonumber \\
        & \hspace{2cm} {} \times 
        \ind{\forall s\in [t/2, t_0], \vert \theta_u(s) \vert \leq \sigma}
        \ind{\forall s\in [t/2, t_0], X_u(s) \leq m^+(s)},
    \end{align*}
    and used that $X_u(t_0) \leq m^+(t_0)$ to bound $(m^+(t_0)- X_u(t_0)) \vee 1 \leq m^+(t_0)- X_u(t_0) + 1$.
    
    \textit{Step 2.} We now aim at bounding $\P(E \vert \mathcal{F}_{t/2})$.
    By \eqref{eq:first_conditioning_on_t_0_result}, it is enough to bound $\E[\Upsilon_{t_0} | \mathcal{F}_{t/2}]$.
    By the branching property at time $t/2$, the many-to-one lemma (Lemma \ref{lem:many_to_one}) and Lemma \ref{lem:branching_rate_upper_bound} (note that $X_u(s) \leq m^+(s)$ ensures that $X_u(s) \leq \sqrt{2} s$), we get
    \begin{align*}
        & \E[\Upsilon_{t_0} | \mathcal{F}_{t/2}] \\
        & \leq \sum_{u \in \mathcal{N}(t/2)} 
        \E_{(t/2,X_u(t/2), Y_u(t/2))}\Biggl[ 
        \exp \left( \left( t_0-\frac{t}{2} \right) - \beta \int_{t/2}^{t_0} \abs{\frac{Y_s}{\sqrt{2} s}}^\alpha (1+f(Y_s,s)) \diff s \right) \\  
        & \hspace{4.7cm} \times 
        (m^+(t_0)-X_{t_0}+1) e^{\sqrt{2}(X_{t_0}-\sqrt{2} t_0)}  \\
         & \hspace{4.7cm} \times 
        \exp \left(- \frac{(m(t) -m^{1\mathrm{d}}(t^\delta)-X_{t_0})_+^2}{4t^\delta} \right) 
        \ind{\forall s\in [t/2, t_0], X_s \leq m^+(s)} 
        \Biggr].
    \end{align*}
	Then, using the independence between $X$ and $Y$ and applying Proposition \ref{prop:bound_on_integrated_G_tilde} to the $Y$ part, we get
    \begin{equation} \label{eq:Upsilon_to_chi}
        \E[\Upsilon_{t_0} | \mathcal{F}_{t/2}] 
        \leq C \exp \left( t_0-\frac{t}{2} + \vartheta_1 \left((t/2)^{1-\kappa}-t_0^{1-\kappa} \right) \right)
        \sum_{u \in \mathcal{N}(t/2)} 
        \chi (\sqrt{2} \cdot t/2 - X_u(t/2)),
    \end{equation}
    where we set, for any $x \in \R$,
    \begin{align*}
        \chi(x) & \coloneqq 
        \E_{(t/2,\sqrt{2} \cdot t/2-x)} \Biggl[ 
        (m^+(t_0)-X_{t_0}+1) e^{\sqrt{2}(X_{t_0}-\sqrt{2} t_0)} 
        \exp \left(- \frac{(m(t) -m^{1\mathrm{d}}(t^\delta)-X_{t_0})_+^2}{4t^\delta} \right) \\
        & \hspace{9.2cm} {} \times 
        \ind{\forall s\in [t/2, t_0], X_s \leq m^+(s)}
        \Biggr].
    \end{align*}
    By Girsanov's theorem and invariance of Brownian motion by reflection, note that, under the measure with density $e^{-\sqrt{2}(X_{t_0}-[\sqrt{2} \cdot t/2-x])+(t_0-t/2)}$ w.r.t.\@ $\P_{(t/2,\sqrt{2} \cdot t/2-x)}$, the process $(\sqrt{2} s -X_s)_{s\geq t/2}$ has the same distribution as $(X_s)_{s\geq t/2}$ under $\P_{(t/2,x)}$.
    Therefore, we get
    \begin{align*}
        \chi(x) 
        & = e^{-\sqrt{2} x -t_0+\frac{t}{2}}
        \E_{(t/2,x)} \Biggl[ (X_{t_0}-f(t_0)+1)  
        \exp \left(- \frac{(m(t)-m^{1\mathrm{d}}(t^\delta)+X_{t_0}-\sqrt{2}t_0)_+^2}{4t^\delta} \right) \\
        & \hspace{9.2cm} \times
        \ind{\forall s\in [t/2, t_0], X_s \geq f(s)}
        \Biggr],
    \end{align*}
    with $f(s) \coloneqq \sqrt{2}s - m^+(s) = \frac{\vartheta_1}{\sqrt{2}} s^{1-\kappa} - 10 \log s$. 
    Note that $\chi(x) = 0$ if $x < f(t/2)$, so we consider now $x \geq f(t/2)$.
    Since $f$ is concave on $[t/2, t_0]$ for $t$ large enough, it stays above the straight line between its endpoint $(t/2,f(t/2))$ and $(t_0,f(t_0))$, that we denote by $f_\mathrm{line}$. Therefore, for any $y \geq f(t_0)$, we have
    \begin{align*}
    \P_{(t/2,x)} \left( \forall s\in [t/2, t_0], X_s \geq f(s) \middle| X_{t_0} = y \right)
    & \leq \P_{(t/2,x)} \left( \forall s\in [t/2, t_0], X_s \geq f_\mathrm{line}(s) \middle| X_{t_0} = y \right) \\
    & \leq \frac{2(x-f(t/2))(y-f(t_0))}{(t_0-t/2)},
    \end{align*}
    by \cite[Lemma 2]{Bra1978}.
    Therefore, integrating w.r.t.\@ $X_{t_0}$ first and noting that the density of $X_{t_0}$ under $\P_{(t/2,x)}$ is upper bounded by $1/\sqrt{2\pi(t_0-t/2)}$, we get
    \begin{align*}
        \chi(x) 
        & \leq \frac{Cx e^{-\sqrt{2} x -t_0+\frac{t}{2}}}{(t_0-t/2)^{3/2}} 
        \int_{f(t_0)}^\infty 
        (y-f(t_0)+1)^2 \exp \left(- \frac{(m(t)-m^{1\mathrm{d}}(t^\delta)+y-\sqrt{2}t_0)_+^2}{4t^\delta} \right) 
        \diff y.
    \end{align*}
    Then, recalling that $t^{1-\kappa} = t_0^{1-\kappa} +o(1)$, we have $m(t) -m^{1\mathrm{d}}(t^\delta)-\sqrt{2} t_0
    = -f(t_0) + O(\log t)$ and therefore, the integral in the last displayed equation is at most $Ct^{3\delta/2}$ for $t$ large enough.
    On the other hand, we have $t_0-t/2 \geq t/3$ for $t$ large enough.
    Combining this, we get 
    \[
        \chi(x) 
        \leq Cx t^{(\delta-1)3/2} e^{-\sqrt{2} x -t_0+\frac{t}{2}}.
    \]
    Coming back to \eqref{eq:first_conditioning_on_t_0_result} and \eqref{eq:Upsilon_to_chi} and using again that $t_0^{1-\kappa} = t^{1-\kappa} + o(1)$, we obtained that, for any $a \geq 0$ for $t$ large enough,
    \begin{align}
        \P(E | \mathcal{F}_{t/2}) 
        & \leq C e^{-\sqrt{2}a}
        e^{\vartheta_1 (\frac{t}{2})^{1-\kappa}} 
        t^{- \frac{\kappa}{4}}
        \1_{A_{s_0, t/2}} 
        \sum_{u \in \mathcal{N}(t/2)} 
        (\sqrt{2} \cdot t/2 - X_u(t/2)) 
        e^{-\sqrt{2} (\sqrt{2} \cdot t/2 - X_u(t/2)) } \nonumber \\
        & = C e^{-\sqrt{2}a} Z_{t/2} \1_{A_{s_0, t/2}}, 
        \label{eq:second_conditioning_on_t/2}
    \end{align}
    recalling the definition of the pseudo-derivative martingale $Z_{t/2}$ in \eqref{eq:def_Zt}.

    \textit{Step 3.} Taking the expectations of \eqref{eq:second_conditioning_on_t/2} and applying Lemma \ref{lem:Z}, we finally get: for any $a \geq 0$, for $t$ large enough,
    \begin{equation*}
        \Pp{E} \leq C e^{-\sqrt{2}a}.
    \end{equation*}
    This implies \eqref{eq:goal_for_E} and therefore concludes the proof.
\end{proof}

\section{Lower bound for the maximum}
\label{sec:lower_bound}

The goal of this section is to show that with high probability there will be particles at time $t$ with a modulus close to $m(t)$ in the sense that
$$\liminf_{a \to \infty} \liminf_{t\to\infty} \Pp{M_t \geq m(t)- a} = 1.$$ 
This shows the lower half of the tightness claimed in Theorem \ref{thm:main}.
The proof relies on a first and second moment calculation on the number of such particles satisfying further path conditions.

\subsection{A lower bound on the branching rate}

Before defining precisely the set of particles we consider, we present a key tool for the proof,  which is a lower bound for the branching rate of particles with appropriate trajectories. 
Recall that Lemma \ref{lem:branching_rate_upper_bound} provided an upper bound for the branching rate.

\begin{lem}\label{lem:lower_bound_branching_rate}
    Let $\eps\in (0,2\kappa-1)$.
    There exist $\sigma,L>0$ such that, for $s_0$ large enough and $t \geq s_0$, on the event $\{\forall s\in [s_0,t], X_s \geq \frac{m(t)}{t} s - s^{(1+\eps)/2} \text{ and } \vert Y_s \vert \leq \sigma s \}$, we have 
	\begin{equation*}
	\forall s\in [s_0,t], \quad  
	b(\theta_s)	
	\geq 1 - \beta \abs{\frac{Y_s}{\sqrt{2} s}}^\alpha (1+f(Y_s,s)),
	\end{equation*}
    where $f=f^+_{L,2-\alpha,(1-\eps)/2} = \left[ L \left( \abs{\frac{y}{s}}^{2-\alpha} + s^{-(1-\eps)/2} \right) \right] \wedge 1 $ which is defined in \eqref{eq:def_f_-_+}.
\end{lem}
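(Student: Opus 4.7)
The plan is to mirror the proof of Lemma~\ref{lem:branching_rate_upper_bound} with all inequalities reversed. By Assumption~\ref{assumption2} one can extract constants $K,\sigma_0>0$ such that $b(\theta)\geq 1-\beta|\theta|^\alpha-K\theta^2$ for all $|\theta|\leq\sigma_0$. The task then reduces to (i) checking that $|\theta_s|\leq\sigma_0$ on the event, and (ii) bounding $\beta|\theta_s|^\alpha+K\theta_s^2$ from above by $\beta|Y_s/(\sqrt 2 s)|^\alpha(1+f^+_{L,2-\alpha,(1-\eps)/2}(Y_s,s))$ pointwise for $s\in[s_0,t]$; integration then yields the claim.

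The key analytic input is a lower bound for $X_s$ relative to $\sqrt 2 s$. From the definition of $m(t)$ one has $m(t)/t\geq \sqrt 2-Ct^{-\kappa}$, so the event implies $X_s\geq \sqrt 2 s - Cs\,t^{-\kappa}-s^{(1+\eps)/2}$. Using $t\geq s$ (so $s\,t^{-\kappa}\leq s^{1-\kappa}$) I get
\[
\frac{X_s}{\sqrt 2\,s}\geq 1-C\bigl(s^{-\kappa}+s^{-(1-\eps)/2}\bigr).
\]
Here the hypothesis $\eps<2\kappa-1$ plays its role: it is equivalent to $(1-\eps)/2<\kappa$, so $s^{-(1-\eps)/2}$ dominates $s^{-\kappa}$ and the resulting error exponent matches the one appearing in $f^+_{L,2-\alpha,(1-\eps)/2}$. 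For $s_0$ large the bound yields $X_s\geq \sqrt 2\,s/2$, which inverts to $1/X_s\leq (1+C's^{-(1-\eps)/2})/(\sqrt 2\,s)$.

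Choosing $\sigma>0$ small enough that $|Y_s|/X_s\leq 2\sigma\leq\sigma_0$, I obtain $|\theta_s|=|\arctan(Y_s/X_s)|\leq|Y_s|/X_s\leq\sigma_0$ on the event, so the Taylor bound applies. Then
\[
|\theta_s|^\alpha\leq\left|\frac{Y_s}{X_s}\right|^\alpha\leq\left|\frac{Y_s}{\sqrt 2\,s}\right|^\alpha\bigl(1+C''s^{-(1-\eps)/2}\bigr),
\]
while for the quadratic term I split the exponent and use $|Y_s/(\sqrt 2\,s)|\leq|Y_s/s|$:
\[
\theta_s^2\leq\left(\frac{Y_s}{X_s}\right)^2\leq 2\left|\frac{Y_s}{\sqrt 2\,s}\right|^\alpha\left|\frac{Y_s}{s}\right|^{2-\alpha}.
\]
Summing the two contributions and choosing $L$ large enough in terms of $\beta,K,C'',\alpha$ gives
\[
\beta|\theta_s|^\alpha+K\theta_s^2\leq\beta\left|\frac{Y_s}{\sqrt 2\,s}\right|^\alpha\Bigl(1+L\bigl(s^{-(1-\eps)/2}+|Y_s/s|^{2-\alpha}\bigr)\Bigr).
\]

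Finally, to match the truncated form $f^+_{L,2-\alpha,(1-\eps)/2}=[L(|y/s|^{2-\alpha}+s^{-(1-\eps)/2})]\wedge 1$, I shrink $\sigma$ and enlarge $s_0$ so that on the event $L(s^{-(1-\eps)/2}+|Y_s/s|^{2-\alpha})\leq L(s_0^{-(1-\eps)/2}+\sigma^{2-\alpha})\leq 1$; the truncation is then inactive, the parenthesis above equals $1+f^+(Y_s,s)$ exactly, and integrating the pointwise bound over $[s_0,t]$ yields the lemma. I expect the only delicate point to be the interplay between the exponents $\kappa,\eps,\alpha$: the constraint $\eps<2\kappa-1$ is precisely what lets a single choice $(1-\eps)/2$ in $f^+$ simultaneously absorb the error $t^{-\kappa}$ coming from $m(t)/t-\sqrt 2$ and the explicit barrier tolerance $s^{(1+\eps)/2}$.
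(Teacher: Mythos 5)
Your proof is correct and essentially reproduces the paper's argument: expand $b$ via Assumption \ref{assumption2}, lower-bound $X_s$ by $\sqrt{2}s\bigl(1-O(s^{-(1-\eps)/2})\bigr)$ on the event, convert $\theta_s$ into $Y_s/(\sqrt{2}s)$ with a multiplicative $1+O(s^{-(1-\eps)/2})$ correction, factor $\theta_s^2 \lesssim |Y_s/(\sqrt{2}s)|^\alpha |Y_s/s|^{2-\alpha}$, and then shrink $\sigma$ and enlarge $s_0$ so that the truncation in $f^+$ is inactive. One small correction to your aside: the claim that $\eps<2\kappa-1$ is equivalent to $(1-\eps)/2<\kappa$ is wrong (it is equivalent to $(1+\eps)/2<\kappa$); in fact $(1-\eps)/2<\kappa$ holds automatically because $\kappa>1/2>(1-\eps)/2$ for $\eps>0$, so the restriction $\eps<2\kappa-1$ is not actually used in this lemma's proof --- it becomes essential only later, in the second-moment estimate, where one needs $1-\kappa\leq(1-\eps)/2$ to absorb the factor $e^{\vartheta_1(r^{1-\kappa}-rt^{-\kappa})}$ into $e^{-((r-s_0)\wedge(t-r))^{(1-\eps)/2}}$.
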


We remark that the parameters $a=2-\alpha$ and $b=(1-\eps)/2$ appearing in $f^+_{L,2-\alpha,1-\delta}$ satisfy the conditions of the propositions of Section \ref{sec:probabilistic_arguments}, which are $a > 1-\alpha/2$ and $b>1-\kappa$. 

\begin{proof}[Proof of Lemma \ref{lem:lower_bound_branching_rate}]
    We work on the event appearing in the statement.
    First note that $m(t)/t = \sqrt{2} + O(t^{-\kappa}) \geq \sqrt{2} - t^{-(1-\eps)/2}$ for $s_0$ large enough, because $\kappa > 1/2 > (1-\eps)/2$, and therefore, we have $X_s \geq \sqrt{2} s - 2 s^{(1+\eps)/2}$ for any $s \in [s_0,t]$.
    To obtain a lower bound on the branching rate, we need an upper bound on the angle: using $\arctan(x)\leq x$ for $x\geq 0$ together with $X_s \geq \sqrt{2}s-2 s^{(1+\eps)/2}$ we get
    \begin{equation}\label{eq:doe}
        \abs{\theta_s}
        = \arctan \left( \abs{\frac{Y_s}{X_s}} \right) 
        \leq \abs{\frac{Y_s}{X_s}}
        \leq \abs{\frac{Y_s}{\sqrt{2}s}} 
        \frac{1}{1- \sqrt{2} s^{-(1-\eps)/2}} 
        \leq \abs{\frac{Y_s}{\sqrt{2}s}} \left(1+ 2 s^{-(1-\eps)/2} \right),
    \end{equation}
    for $s_0$ large enough.
    Observe that our assumption on the trajectory also entails that $\abs{\theta_s} \leq C \sigma$ for $s_0$ large enough. 
    By Assumption \ref{assumption2}, $b(\theta) = 1 - \beta \abs{\theta}^\alpha + O(\theta^2)$ as $\theta \to 0$, so there exist $M> 0$ and $\sigma_0 \in (0,\pi/2)$ such that, for any $\abs{\theta} \leq \sigma_0$, 
    \begin{equation*}
        b(\theta) \geq 1- \beta \abs{\theta}^\alpha - M\abs{\theta}^2.
    \end{equation*}
    Take $\sigma \leq C^{-1}$ and combine this with \eqref{eq:doe}, for any $s_0$ large enough we have
    \begin{align*}
        b(\theta_s) 
        &\geq 1- \beta \abs{\frac{Y_s}{\sqrt{2}s}}^\alpha
        \left( 1+ 2 s^{-(1-\eps)/2} \right)^2
        \left( 1 + \frac{M}{\beta} \abs{\frac{Y_s}{\sqrt{2}s}}^{2-\alpha} \right)\\
        &\geq 1- \beta \abs{\frac{Y_s}{\sqrt{2}s}}^\alpha 
        \left( 1+ L \left( \abs{\frac{Y_s}{s}}^{2-\alpha} + s^{-(1-\eps)/2} \right) \right),
    \end{align*}
    for some $L>0$.
    Lastly, note that $L(\lvert Y_s/s \rvert^{2-\alpha} + s^{-(1-\eps)/2}) \leq 1$ when we choose $\sigma$ small enough and $s_0$ large, so that this term equals $f^+_{L,2-\alpha,(1-\eps)/2}$ and the result follows.
\end{proof}

We can now introduce the set of particles we consider for the first and second moment argument.
Recall that $\alpha \in (2/3,2)$ implies $\kappa \in (1/2,1)$. 
Let $\eps\in (0,2\kappa-1)$
and $\sigma$ be small enough so that the conclusions of Lemmas~\ref{lem:branching_rate_upper_bound} and~\ref{lem:lower_bound_branching_rate} are satisfied. 
These two parameters $\eps$ and $\sigma$ are now fixed for the whole section.

For $0 \leq s_0 \leq s \leq t$, we set
\begin{align}
    g_t^+(s) & \coloneqq \frac{m(t)}{t} s - \left( (s-s_0) \wedge(t-s) \right)^{(1-\eps)/2}, \nonumber \\
    g_t^-(s) & \coloneqq \frac{m(t)}{t} s - s^{(1+\eps)/2},  \nonumber \\
    \Gamma_{s_0}(t) & \coloneqq \big\{ u \in \cN(t) : 
    X_u(t) \in [m(t)-1,m(t)],  \nonumber \\
    & \hspace{2.7cm}
    \forall s \in [s_0,t]: \abs{Y_u(s)} \leq \sigma s, X_u(s) \in [g_t^-(s),g_t^+(s)] \big\}. \label{eq:def:g_and_Gamma}
\end{align}
The parameter $s_0$ will be chosen large enough throughout the section.

\subsection{First moment estimate}

\begin{lem} \label{lemma:first_momement_lower_bound}
For $s_0$ large enough, there exists $c > 0$ such that, for any $t$ large enough, for any $x\in [\sqrt{2}s_0 - 2\sqrt{s_0}, \sqrt{2}s_0 - \sqrt{s_0}]$, $y \in [-s_0^{\kappa/2},s_0^{\kappa/2}]$,
    \[
    \E_{(s_0,x,y)}\left[ \abs{\Gamma_{s_0}(t)} \right] \geq c.
    \]
\end{lem}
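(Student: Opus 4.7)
The plan is a first moment computation combining the lower bound on branching rate from Lemma \ref{lem:lower_bound_branching_rate} with the lower bound of Proposition \ref{prop:bound_on_integrated_G_tilde}.\ref{it:LB_integrated_G_tilde} for the $Y$-contribution, and a ballot-type estimate for the $X$-coordinate. Applying the many-to-one lemma, bounding $\int_{s_0}^t b(\theta_s)\,\diff s$ from below via Lemma \ref{lem:lower_bound_branching_rate} on the path-constraints defining $\Gamma_{s_0}(t)$, and using the independence of $X$ and $Y$, I obtain
\begin{align*}
\E_{(s_0,x,y)}\!\left[\abs{\Gamma_{s_0}(t)}\right]
&\geq e^{t-s_0}\cdot \E_{(s_0,y)}\!\left[e^{-\beta \int_{s_0}^t |Y_r/(\sqrt{2}r)|^\alpha (1+f(Y_r,r))\,\diff r} \1_{\forall r: |Y_r| \leq \sigma r}\right] \\
&\quad {}\times \P_{(s_0,x)}\!\left(X_t \in [m(t)-1,m(t)],\, \forall s \in [s_0,t]: X_s \in [g_t^-(s), g_t^+(s)]\right),
\end{align*}
with $f = f^+_{L,2-\alpha,(1-\eps)/2}$. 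For the $Y$-factor, I would pick $\eta>0$ small enough so that $r^{(\kappa+\eta)/2} \leq \sigma r$ for $r\geq s_0$, then apply Proposition \ref{prop:bound_on_integrated_G_tilde}.\ref{it:LB_integrated_G_tilde} (applicable since $|y|\leq s_0^{\kappa/2}$) to bound it below by $c\,(t/s_0)^{\kappa/4}\exp(\vartheta_1(s_0^{1-\kappa}-t^{1-\kappa}))$.

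For the $X$-probability, I set $Z_s \coloneqq \mu s - X_s$ with $\mu \coloneqq m(t)/t$. Under $\P_{(s_0,x)}$, $Z$ is a Brownian motion with drift $\mu$ starting at $z_0 \coloneqq \mu s_0 - x$, which is of order $\sqrt{s_0}$ for $s_0$ large since $\kappa>1/2$ forces $s_0^{1-\kappa}=o(\sqrt{s_0})$; the event translates to $\{Z_t\in[0,1],\, \forall s\in[s_0,t]: Z_s \in [a(s),b(s)]\}$ with $a(s)=((s-s_0)\wedge(t-s))^{(1-\eps)/2}$ and $b(s)=s^{(1+\eps)/2}$. Killing the drift by Girsanov and using that $\exp(\mu Z_t)\geq 1$ on the event, this probability is bounded below by $e^{-\mu z_0-\mu^2(t-s_0)/2}\,P'$, where $P'$ denotes the same probability under a driftless BM started at $z_0$ at time $s_0$. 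A direct expansion of $m(t)^2/(2t)$ using $\kappa>1/2$ then gives
\begin{equation*}
e^{t-s_0}\,e^{-\mu^2(t-s_0)/2}\,(t/s_0)^{\kappa/4}\,\exp(\vartheta_1(s_0^{1-\kappa}-t^{1-\kappa})) = C(s_0)\,t^{3/2}\,(1+o(1)),
\end{equation*}
so the task reduces to establishing $P'\geq c(s_0)/t^{3/2}$.

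The main obstacle, which I expect to be the hardest step, is precisely this ballot-type bound for $P'$ with curved barriers. The plan is to condition on $Z_t=y$ for $y\in[0,1]$: the Gaussian density contributes $\sim 1/\sqrt{t}$, and the remainder is the probability that a Brownian bridge from $(s_0,z_0)$ to $(t,y)$ stays inside $[a(s),b(s)]$. Since $(1-\eps)/2 < 1/2 < (1+\eps)/2$, both barriers are non-binding at the Brownian bridge scale $\sqrt{(s-s_0)(t-s)/(t-s_0)}$: the lower barrier $a$ is concave and vanishes at both endpoints of $[s_0,t]$, hence can be dominated by a straight chord and then \cite[Lemma 2]{Bra1978} gives a bridge survival probability of order $z_0\,y/(t-s_0)$; the upper barrier $b(s)=s^{(1+\eps)/2}$ lies at distance $\gtrsim s^{\eps/2}\sqrt{t-s_0}$ above the bridge mean, so a union-bound / reflection argument shows it is violated with vanishing probability. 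Integrating over $y\in[0,1]$ then yields $P'\gtrsim z_0/t^{3/2}\gtrsim \sqrt{s_0}/t^{3/2}$, and since $e^{-\mu z_0}$ is a fixed positive constant once $s_0$ is fixed, we obtain $\E_{(s_0,x,y)}[\abs{\Gamma_{s_0}(t)}]\geq c(s_0)>0$.
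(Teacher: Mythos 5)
Your overall architecture matches the paper's: many-to-one, Lemma~\ref{lem:lower_bound_branching_rate} to decouple, Proposition~\ref{prop:bound_on_integrated_G_tilde}.\ref{it:LB_integrated_G_tilde} for the $Y$-factor, Girsanov with drift $m(t)/t$ for the $X$-factor, and an expansion of $1-\mu^2/2$ using $\kappa>1/2$ to isolate the $t^{3/2}$ that the ballot estimate must cancel; the computation reducing the claim to $P'\geq c(s_0)/t^{3/2}$ is correct. The paper does exactly the same up to this point.

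The gap is in your ballot estimate, and it is a genuine one. You propose to lower-bound the bridge probability of staying above the concave barrier $a(s)=((s-s_0)\wedge(t-s))^{(1-\eps)/2}$ by ``dominating $a$ by a straight chord'' and invoking \cite[Lemma~2]{Bra1978}. But $a(s_0)=a(t)=0$ and $a\geq 0$, so the chord between the endpoints is identically $0$ and $a$ lies \emph{above} it. Replacing the constraint $Z_s\geq a(s)$ by $Z_s\geq 0$ enlarges the event, so \cite[Lemma~2]{Bra1978} gives an \emph{upper} bound $\lesssim z_0 y/(t-s_0)$ for the quantity you need to bound from \emph{below}. The correct direction requires a genuine curved-barrier ballot theorem asserting that a nonnegative, sublinear (here $O(s^{(1-\eps)/2})$, with $(1-\eps)/2<1/2$) lower barrier vanishing at the endpoints costs only a constant factor relative to the flat barrier; this is precisely the nontrivial input. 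The paper sidesteps this by first bounding the two-sided constraint set $I_s$ by a simpler pair of curves, adapted so that the barriers stay at distance $\gtrsim s_0^{(1\pm\eps)/2}$ from the endpoints, and then citing \cite[Lemma~2.8]{kim_maximum_2023}, which is a two-sided ballot estimate for exactly this kind of curved channel. Without such a lemma (or an equivalent local-limit-type argument à la Aïdékon--Shi or Mallein), the step $P'\gtrsim z_0/t^{3/2}$ is unproven. The upper barrier part (``a union-bound / reflection argument shows it is violated with vanishing probability'') is also too quick — what is needed is that, \emph{conditionally on} surviving the lower barrier and on $Z_t=y$, the upper barrier is violated with probability $o(1)$, which is a conditional estimate — but this is a matter of missing detail rather than wrong direction.
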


\begin{proof}
    Throughout the proof we absorb dependencies on $s_0$ into the constants. 
    There are three events that contribute to $\Gamma_{s_0}(t)$, 
    \begin{align*}
        B_1 &= \left\{ X_t \in [m(t) -1, m(t)] \right\}, 
        \hspace{1cm}
        B_2 = \left\{ \forall s\in [s_0,t], X_s \in [g_t^-(s), g_t^+(s)] \right\}, \\
        B_3 &= \left\{  \forall s\in [s_0,t], \vert Y_s \vert \leq \sigma s  \right\},  
    \end{align*}
    we define them for the Brownian motions $(X_s,Y_s)_{s\geq 0}$.
    They arise when applying the many-to-one lemma, Lemma \ref{lem:many_to_one},
    \begin{align}\label{eq:after_many_21}
        \E_{(s_0,x,y)}\left[ \abs{\Gamma_{s_0}(t)} \right] 
        &= \E_{(s_0,x,y)}\left[ \1_{\cap_{i=1}^3 B_i} \exp\left( \int_{s_0}^t b(\theta_s) \diff s \right)\right]. 
    \end{align}
    Because we work on the event $B_2 \cap B_3$, we can apply Lemma \ref{lem:lower_bound_branching_rate}. As a consequence of this, the $X$ and $Y$ contributions decouple and \eqref{eq:after_many_21} is at least
    \begin{equation}\label{eq:eve}
        e^{t-s_0} \P_{(s_0,x)}\left(B_1\cap B_2 \right)
        \E_{(s_0,y)}\left[ \1_{B_3} \exp\left( - \beta\int_{s_0}^t \left\vert \frac{Y_s}{\sqrt{2}s} \right\vert^\alpha \left(1+f(Y_s,s) \right) \diff s \right)\right].
    \end{equation}
    For the $Y$-contribution, we have by Proposition \ref{prop:bound_on_integrated_G_tilde}.\ref{it:LB_integrated_G_tilde} applied with any $\eta \in (0,2-\kappa)$, noting that $s^{(\kappa+\eta)/2} \leq \sigma s$ for any $s \geq s_0$ by choosing $s_0$ large enough,
    \begin{align} \label{eq:Y-contrib}
    	\E_{(s_0,y)}\left[ \1_{B_3} \exp\left( - \beta\int_{s_0}^t \left\vert \frac{Y_s}{\sqrt{2}s} \right\vert^\alpha \left(1+f(Y_s,s) \right) \diff s \right)\right]
    	& \geq c t^{\kappa/4} \exp\left( -\vartheta_1 t^{1-\kappa} \right),
    \end{align}
    where the factors depending on $s_0$ have been absorbed in $c$.

    We now consider the $X$-contribution. By Girsanov's theorem with drift $\lambda = m(t)/t$, we have
    \begin{equation*}
        e^{t-s_0} \P_{(s_0,x)}\left(B_1\cap B_2 \right) 
        = e^{(1-\frac{\lambda^2}{2})(t-s_0)} 
        \E_{(s_0,x-\lambda s_0)}\left[\1_{\hat{B}_1} \1_{\hat{B}_2} e^{-\lambda(X_t-X_{s_0})} \right],
    \end{equation*}
    where, recalling the definitions of $g_t^+$ and $g_t^-$ in \eqref{eq:def:g_and_Gamma},
    \begin{align*}
        \hat{B}_1 = \left\{ X_t \in [-1,0] \right\}, \qquad 
        \hat{B}_2 = \left\{ \forall s\in [s_0,t], X_s \in \left[- s^{(1+\eps)/2}, - \left( (s-s_0) \wedge(t-s) \right)^{(1-\eps)/2} \right] \right\}.
    \end{align*}
    Noting that $1-\frac{\lambda^2}{2} = \vartheta_1 t^{-\kappa} + (\frac{3}{2}-\frac{\kappa}{4}) \frac{\log t}{t} + o(\frac{1}{t})$ 
    and bounding $e^{-\lambda(X_t-X_{s_0})} \geq c$ by using $X_t \in [-1,0]$ and our assumption on $x$, we get
    \begin{equation} \label{eq:hat1}
        e^{t-s_0} \P_{(s_0,x)}\left(B_1\cap B_2 \right) \geq c t^{3/2-\kappa/4} e^{\vartheta_1 t^{1-\kappa}}
        \P_{(s_0,x-\lambda s_0)}\left(\hat{B}_1\cap \hat{B_2}\right).
    \end{equation}
    Shifting time by $s_0$ and integrating w.r.t.\@ the final value $X_{t-s_0}$, we have
    \begin{align}
        & \P_{(s_0,x-\lambda s_0)}\left(\hat{B}_1\cap \hat{B_2}\right) 
        \nonumber \\
        & \geq \int_{-1}^{-1/2} 
        \P_{(0,x-\lambda s_0)}\left( 
        \forall s\in [0,t-s_0], X_s \in I_s
        \middle| X_{t-s_0}=y \right)
        \P_{(0,x-\lambda s_0)}\left( 
        X_{t-s_0} \in \diff y \right),
        \label{eq:hat2}
    \end{align}
    where $I_s = [- (s+s_0)^{(1+\eps)/2}, - \left( s\wedge(t-s_0-s) \right)^{(1-\eps)/2}]$. Note that we restricted ourselves to $y \in [-1,-1/2]$ to be at distance at least $1/2$ from the upper barrier at the end.
    Then, on the one hand, for $t$ large enough, 
    $\P_{(0,x-\lambda s_0)}( X_{t-s_0} \in \diff y) \geq c t^{-1/2} \diff y$.
    On the other hand, note that our lower barrier satisfies
    \begin{align*}
        - (s+s_0)^{(1+\eps)/2} 
        \leq - \frac{1}{2} s_0^{(1+\eps)/2} - \frac{1}{2} s^{(1+\eps)/2}
        \leq - \frac{1}{2} s_0^{(1+\eps)/2} - \frac{1}{2} \left( s\wedge(t-s_0-s) \right)^{(1+\eps)/2}
    \end{align*}
    and by our assumption on $x_0$ we have $x-\lambda s_0 \in [-2\sqrt{s_0},-\sqrt{s_0}/2]$ for $s_0$ large enough, so by 
    \cite[Lemma 2.8]{kim_maximum_2023}, for $t$ large enough,
    \begin{equation*}
        \P_{(0,x-\lambda s_0)}\left( 
        \forall s\in [0,t-s_0], X_s \in I_s
        \middle| X_{t-s_0}=y \right)
        \geq \frac{c}{t} (\lambda s_0 - x) (-y) 
        \geq \frac{c}{t}.
    \end{equation*}
    Combining this with \eqref{eq:hat1} and \eqref{eq:hat2}, the total $X$-contribution is
    \begin{equation}\label{eq:ban}
        e^{t-s_0} \P_{(s_0,x)}\left(B_1\cap B_2 \right) \geq c t^{-\kappa/4} e^{\vartheta_1 t^{1-\kappa}}.
    \end{equation}
    Going back to \eqref{eq:eve}, combining \eqref{eq:ban} with \eqref{eq:Y-contrib}, we get the desired result.
\end{proof}

\subsection{Second moment estimate}

\begin{lem}\label{lemma:second_moment_estimate}
For $s_0$ large enough, there exists $C > 0$ such that, for any $t$ large enough, for any $x\in [\sqrt{2}s_0 - 2\sqrt{s_0}, \sqrt{2}s_0 - \sqrt{s_0}]$, $y \in \R$,
    \[
    \E_{(s_0,x,y)} \left[ \abs{\Gamma_{s_0}(t)}^2 \right] \leq C.
    \]
\end{lem}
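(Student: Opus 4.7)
My plan is to use the many-to-two lemma (Lemma~\ref{lem:many_to_two}). Splitting
\[
\E_{(s_0,x,y)}\bigl[|\Gamma_{s_0}(t)|^2\bigr] = \E_{(s_0,x,y)}\bigl[|\Gamma_{s_0}(t)|\bigr] + \E_{(s_0,x,y)}\Bigl[\textstyle\sum_{u\neq v}\1_{\{u,v\in \Gamma_{s_0}(t)\}}\Bigr],
\]
the diagonal first-moment piece is $O(1)$ by rerunning the argument of Lemma~\ref{lemma:first_momement_lower_bound} with the \emph{upper} branching-rate bound of Lemma~\ref{lem:branching_rate_upper_bound} and the total-mass upper bound of Proposition~\ref{prop:bound_on_integrated_G_tilde}.\ref{it:UB_integrated_G_tilde} in place of the lower bounds used there. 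The off-diagonal piece is expressed via Lemma~\ref{lem:many_to_two} as an integral over the branching time $r\in[s_0,t]$ in which the spine~$\xi^1$ and the second child~$\xi^{2,r}$ coincide on $[s_0,r]$ and evolve as independent $2$d Brownian motions on $[r,t]$, both required to lie in $\Gamma_{s_0}(t)$.

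Conditioning on $\cF_r$ and applying the Markov property, the integrand at fixed~$r$ factorises as
\[
e^{t-s_0}\,e^{t-r}\cdot\E_{(s_0,x,y)}\Bigl[\1_{\text{prefix}}\,e^{-\beta\int_{s_0}^r\abs{Y_s/(\sqrt{2}s)}^\alpha(1+f)\diff s}\,\Psi_X(r,X_r)^2\,\Psi_Y(r,Y_r)^2\Bigr],
\]
where the branching rates have been bounded by Lemma~\ref{lem:branching_rate_upper_bound} (applicable because $\Gamma_{s_0}(t)$ forces $|\theta|$ small and $X\leq \sqrt{2} s$), the factor $b(\xi^1_r)\leq 1$ is discarded, and $\Psi_X(r,\cdot)$, $\Psi_Y(r,\cdot)$ are the one-particle $X$- and $Y$-expectations on $[r,t]$. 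Since $X$ and $Y$ are independent, the conditional expectation further splits into an $X$-factor and a $Y$-factor. The $Y$-factor is controlled by Proposition~\ref{prop:bound_on_integrated_G_tilde}.\ref{it:UB_integrated_G_tilde} applied twice --- once to each child and once to the prefix --- yielding an estimate of order $C(t/r)^{\kappa/2}(r/s_0)^{\kappa/4}\exp(\vartheta_1(s_0^{1-\kappa}+r^{1-\kappa}-2t^{1-\kappa}))$. The $X$-factor is handled by Girsanov with drift $\lambda=m(t)/t$ on $[s_0,r]$ and on each child interval $[r,t]$, combined with the Brownian-bridge ballot bound \cite[Lemma~2]{Bra1978} for each child and the two-sided ballot estimate \cite[Lemma~2.8]{kim_maximum_2023} for the prefix, just as in Lemma~\ref{lemma:first_momement_lower_bound} but now used as upper bounds.

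Putting all the pieces together, the prefactor $e^{t-s_0}e^{t-r}$ combines with the Girsanov exponentials to cancel exactly the polynomial-in-$t$ prefactors and the $e^{\pm\vartheta_1(\cdot)^{1-\kappa}}$ exponentials produced by the $X$- and $Y$-contributions, and the integrand collapses to $C(r-s_0+1)^{-1/2}(t-r)^{-3/2}$, whose integral over $[s_0,t]$ is bounded uniformly in $t$; the short-time singularity at $r=s_0$ is absorbed by constants depending on $s_0$, and the $(t-r)^{-3/2}$ singularity at $r=t$ yields the usual $O(1)$ contribution familiar from the BBM second-moment analysis. The main obstacle will be precisely this coordinated bookkeeping: three distinct sources --- the squared child $Y$-kernel, the $X$-Girsanov transform on the two child intervals, and the $X$-Girsanov on the prefix --- each produce an $e^{\pm\vartheta_1(\cdot)^{1-\kappa}}$ factor, and the cancellation only succeeds because the corridor centre $m(t)s/t$ and the transversal envelope $|Y|\leq\sigma s$ are calibrated to the natural scales of the kernel~$\widetilde G$ studied in Sections~\ref{sec:BM_weighted}--\ref{sec:probabilistic_arguments}.
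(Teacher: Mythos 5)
Your overall architecture matches the paper's: many--to--two, Lemma \ref{lem:branching_rate_upper_bound} to decouple the two coordinates, Proposition \ref{prop:bound_on_integrated_G_tilde}.\ref{it:UB_integrated_G_tilde} for the $Y$-kernels (your bound $C(t/r)^{\kappa/2}(r/s_0)^{\kappa/4}e^{\vartheta_1(s_0^{1-\kappa}+r^{1-\kappa}-2t^{1-\kappa})}$ is exactly what the paper gets), and Girsanov plus ballot estimates for the $X$-part. Your treatment of the diagonal term differs harmlessly: the paper sidesteps a separate first-moment upper bound via $\E[\abs{\Gamma}^2]=\E[\abs{\Gamma}]+\E[\abs{\Gamma}(\abs{\Gamma}-1)]\le\E[\abs{\Gamma}^2]^{1/2}+C$, whereas you redo the first-moment computation with upper bounds; both routes work.

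The gap is in the final bookkeeping, where you assert that the prefactors and Girsanov exponentials ``cancel exactly'' the $e^{\pm\vartheta_1(\cdot)^{1-\kappa}}$ terms. They do not. Tracking the exponents at branch time $r$: the prefactor $e^{(t-s_0)+(t-r)}$ together with the Girsanov/Gaussian terms $e^{-\lambda^2 t+\lambda^2 r/2}$ (with $\lambda=m(t)/t$, so $\lambda^2/2=1-\vartheta_1 t^{-\kappa}-O(\log t/t)$) produces $e^{2\vartheta_1 t^{1-\kappa}-\vartheta_1 r t^{-\kappa}}$ up to polynomial factors, while the $Y$-kernels contribute $e^{\vartheta_1 r^{1-\kappa}-2\vartheta_1 t^{1-\kappa}}$. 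The residue is $e^{\vartheta_1(r^{1-\kappa}-r t^{-\kappa})}$, which is nonnegative and of order $e^{ct^{1-\kappa}}$ for $r\asymp t/2$, so with only the straight constraint $X_s\le\lambda s$ (your ``corridor centre'') the $r$-integral diverges at a stretched-exponential rate. What closes the argument is the upper barrier $g_t^+$ in \eqref{eq:def:g_and_Gamma}: the common ancestor at time $r$ must sit at depth $\overline{y}\ge((r-s_0)\wedge(t-r))^{(1-\eps)/2}$ below the line $\lambda r$, and after squaring the two children's $e^{-\lambda\overline{y}}$ factors and offsetting one of them against the prefix's Gaussian density, a net factor $e^{-\lambda\overline{y}}\le e^{-c((r-s_0)\wedge(t-r))^{(1-\eps)/2}}$ survives. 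Since $r^{1-\kappa}-r t^{-\kappa}\le C(r\wedge(t-r))^{1-\kappa}$ and $(1-\eps)/2>1-\kappa$ --- precisely the standing condition $\eps<2\kappa-1$ --- this absorbs the residue and leaves an integrable remainder. Your sketch never invokes $g_t^+$ at the branch point and attributes the success to the calibration of $m(t)s/t$ and the envelope $\abs{Y_s}\le\sigma s$, which is not where the mechanism lies; as written, the off-diagonal estimate does not close.
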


\begin{proof}
Throughout the proof, we absorb dependencies on $s_0$ into the constants. 
First, note that it is enough to prove $\E_{(s_0,x,y)}[ \abs{\Gamma_{s_0}(t)} (\abs{\Gamma_{s_0}(t)}-1)] \leq C$.
Indeed, we then have 
\begin{align*}
    \E_{(s_0,x,y)} \left[ \abs{\Gamma_{s_0}(t)}^2 \right] 
    &= \E_{(s_0,x,y)} \left[ \abs{\Gamma_{s_0}(t)} \right] 
    + \E_{(s_0,x,y)} \left[ \abs{\Gamma_{s_0}(t)} (\abs{\Gamma_{s_0}(t)}-1) \right] \\
    &\leq \E_{(s_0,x,y)} \left[ \abs{\Gamma_{s_0}(t)}^2 \right]^{1/2} + C,
\end{align*}
which yields the result because $x^2 \leq x + y$ implies $x \leq 1 + y^{1/2}$ for any $x,y \geq 0$.

To compute $\E_{(s_0,x,y)}[ \abs{\Gamma_{s_0}(t)} (\abs{\Gamma_{s_0}(t)}-1)]$, we want to apply the many-to-two lemma.
Recall that, under $\P_{(s_0,x,y)}$, $\xi^{1}$ and $\xi^{2,r}$ denote two Brownian motions on $\R^2$ starting from $(x,y)$ at time $s_0$, which are coinciding up to time $r$ and then moving independently.
Let $\xi_{1,t}^{i}$ and $\xi_{2,t}^{i}$ denote the $X$ and $Y$-coordinate of $\xi_{t}^{i}$ respectively, and define the events for $i\in\{1,2\}$,
\begin{align*}
    B_1^{(i)} &= \left\{ \xi^{i}_{1,t} \in [m(t) -1, m(t)] \right\},
    \hspace{1cm}
    B_2^{(i)} = \left\{ \forall s\in [s_0,t], \xi^{i}_{1,s} \in [g_t^-(s), g_t^+(s)] \right\}, \\
    B_3^{(i)} &= \left\{  \forall s\in [s_0,t], \lvert \xi^{i}_{2,s} \rvert \leq \sigma s  \right\}.
\end{align*}
Then, the many-to-two lemma (Lemma~\ref{lem:many_to_two}) yields
\begin{align}
    & \E_{(s_0,x,y)} \left[ \abs{\Gamma_{s_0}(t)} (\abs{\Gamma_{s_0}(t)}-1) \right] \nonumber \\
    & = \int_{s_0}^t \E_{(s_0,x,y)} \left[ \1_{\cap_{i=1}^2 \cap_{j=1}^3 B_j^{(i)}} b(\xi_t^1) 
    \exp\left(  \int_{s_0}^t b(\xi_s^1) \diff s  
    + \int_{r}^t b(\xi_s^{2,r}) \diff s \right) \right] 2 \diff r.
    \label{eq:many_to_two_used}
\end{align}
Choosing $s_0$ large enough, we have $g_t^+(s) \leq \sqrt{2} s$ for any $s \in [s_0,t]$, and therefore Lemma~\ref{lem:branching_rate_upper_bound} can be applied on the event $B_2^{(i)} \cap B_3^{(i)}$ to bound $b(\xi_s^i) \leq 1-\beta \vert \xi^i_{2,s} /{\sqrt{2}s}\vert^\alpha(1+f(\xi^i_{2,s},s))$, where $f$ satisfies the assumptions of Proposition~\ref{prop:bound_on_integrated_G_tilde}.\ref{it:UB_integrated_G_tilde}. 
This then completely decouples the $X$ and $Y$-coordinates of \eqref{eq:many_to_two_used}. 
We now bound the indicators of $B_3^{(i)}$ by $1$ and use $b(\xi_t^1)\leq 1$ to bound \eqref{eq:many_to_two_used} by
\begin{align}
    &\int_{s_0}^t e^{(t-s_0)}e^{(t-r)}\P_{(s_0,x)}\left(\bigcap_{i=1}^2\bigcap_{j=1}^2 B_j^{(i)} \right) \nonumber \\
    & \hspace{.5cm} \times \E_{(s_0,x,y)} \left[  \exp\left(  -\beta\int_{s_0}^t \abs{\frac{\xi^1_s}{\sqrt{2}s}} (1+f(\xi^1_s,s)) \diff s 
    -\beta \int_{r}^t \abs{\frac{\xi^{2,r}_s}{\sqrt{2}s}} (1+f(\xi^{2,r}_s,s)) \diff s \right) 
    \right] \diff r. 
    \label{eq:many_to_two_bound}
\end{align}
Recalling the definition of $\widetilde{G}$ from \eqref{eq:G_tilde}, the expectation in \eqref{eq:many_to_two_bound} equals
\begin{align}
    \int_\R \widetilde{G}(s_0,y;r,\hat{y})\bigg[\int_{\R} \widetilde{G}(r,\hat{y};t,z) \diff z \bigg]^2 \diff \hat{y} 
    & \leq C t^{\kappa/2} r^{-\kappa/4} e^{\vartheta_1 r^{1-\kappa}-2\vartheta_1 t^{1-\kappa}}
    \label{eq:from_y}
\end{align}
using Proposition~\ref{prop:bound_on_integrated_G_tilde}.\ref{it:UB_integrated_G_tilde} twice and incorporating the $s_0$ dependencies into $C$.
We now focus on the probability in \eqref{eq:many_to_two_bound}.
Integrating first w.r.t.\@ $\xi^{1,r}_r = \xi^{2,r}_r$ and weakening the barrier, we get
\begin{align}
    &\P_{(s_0,x)}\left(\bigcap_{i=1}^2\bigcap_{j=1}^2 B_j^{(i)} \right) \nonumber \\
    & \hspace{1cm} \leq \int_{-\infty}^{g_t^+(r)} 
    \P_{(s_0,x)}\left( B_2^{[s_0,r]} \middle| X_r = y \right) 
    \P_{(r,y)}\left(B_1 \cap  B_2^{[r,t]} \right)^2
    \frac{e^{-(y-x)^2/[2(r-s_0)]}}{\sqrt{2\pi (r-s_0)}} \diff y,
    \label{eq:from_x}
\end{align}
where $B_1 = \{ X_t \in [m(t)-1,m(t)] \}$ and $B_2^I = \{ \forall s\in I, X_s \leq \lambda s \}$ with $\lambda = m(t)/t$.
By \cite[Lemma 2]{Bra1978}, we have
\begin{equation*}
    \P_{(s_0,x)}\left( B_2^{[s_0,r]} \middle| X_r = y \right) 
    \leq \frac{2(\lambda s_0 - x)(\lambda r - y)}{r-s_0} \wedge 1
    \leq \frac{C (\overline{y}+1)}{(r-s_0+1)},
\end{equation*}
writing $\overline{y} = \lambda r - y$ and using the assumption on $x$.
On the other hand, by Girsanov's theorem, we have
\begin{align*}
    \P_{(r,y)}\left(B_1 \cap  B_2^{[r,t]} \right)
    & = \E_{(s_0,y-\lambda r)}\left[\ind{X_t \in [-1,0]} 
    \ind{\forall s \in [r,t], X_r \leq 0} 
    e^{-\lambda(X_t-X_{s_0}) -\frac{\lambda^2}{2} (t-r)} \right] \\
    & \leq \frac{C (\overline{y}+1)}{(t-r+1)^{3/2}} 
    e^{- \lambda \overline{y} -\frac{\lambda^2}{2} (t-r)},
\end{align*}
where we used $e^{-\lambda(X_t-X_{s_0})} \leq e^{- \lambda \overline{y}}$ and bounded the remaining probability by first integrating w.r.t.\@ $X_t$ and then applying \cite[Lemma 2]{Bra1978} again.
Coming back to \eqref{eq:from_x}, using also 
\[
    e^{-(y-x)^2/[2(r-s_0)]} 
    = e^{-(\lambda(r-s_0) -\overline{y}-x + \lambda s_0)^2/[2(r-s_0)]}
    \leq e^{-\frac{\lambda^2}{2}(r-s_0) + \lambda(\overline{y}+x -\lambda s_0)}
    \leq C e^{-\frac{\lambda^2}{2} r + \lambda \overline{y}},
\]
and changing $y$ for $\overline{y}$ in the integral, we obtain
\begin{align*}
    \P_{(s_0,x)}&\left(\bigcap_{i=1}^2\bigcap_{j=1}^2 B_j^{(i)} \right) \\
    & \leq \frac{C e^{-\lambda^2 t + \frac{\lambda^2}{2}r}}{(r-s_0+1)\sqrt{r-s_0} (t-r+1)^3}
    \int_{((r-s_0) \wedge(t-r))^{(1-\eps)/2}}^\infty e^{-\lambda \overline{y}} (\overline{y}+1)^3 \diff \overline{y} \\
    & \leq \frac{C t^{3-\frac{\kappa}{2}} e^{-2t + 2\vartheta_1 t^{-\kappa}} e^{r-\vartheta_1 r t^{-\kappa}-(\frac{3}{2}-\frac{\kappa}{4}) \frac{r}{t} \log(t)}}{(r-s_0+1)\sqrt{r-s_0} (t-r+1)^3}
    e^{-((r-s_0) \wedge (t-r))^{(1-\eps)/2}},
\end{align*}
using $\frac{\lambda^2}{2} = 1 - \vartheta_1 t^{-\kappa} - (\frac{3}{2}-\frac{\kappa}{4}) \frac{\log t}{t} + o(\frac{1}{t})$ for the prefactor, and $e^{-\lambda \overline{y}} (\overline{y}+1)^3 \leq C e^{-\overline{y}}$ for the integral.
Combining this with \eqref{eq:from_y},
we get that \eqref{eq:many_to_two_bound} is at most
\begin{equation}
    C t^3 
    \int_{s_0}^t r^{-\kappa/4} \frac{e^{\vartheta_1 (r^{1-\kappa}-r t^{-\kappa})-(\frac{3}{2}-\frac{\kappa}{4}) \frac{r}{t} \log(t)}}{(r-s_0+1)\sqrt{r-s_0} (t-r+1)^3}
    e^{- ((r-s_0) \wedge (t-r))^{(1-\eps)/2}}
    \diff r. 
    \label{eq:many_to_two_bound2}
\end{equation}
Then, using that $s \geq s_0 \mapsto (\log s)/s$ is decreasing for $s_0$ large enough, we have $e^{\frac{\kappa}{4} \frac{r}{t} \log(t)} \leq r^{\kappa/4}$ and $e^{-\frac{3}{2} \frac{r}{t} \log(t)} = t^{-3/2} e^{\frac{3}{2} \frac{t-r}{t} \log(t)} \leq t^{-3/2} (t-r)^{3/2}$.
Moreover, there exists $C(\kappa)>0$ such that $r^{1-\kappa}-r t^{-\kappa} \leq C (r \wedge (t-r))^{1-\kappa}$ for any $r \in [s_0,t]$: this is direct for $r \leq t/2$ and, for $r \geq t/2$, one has 
\[
r^{1-\kappa}-r t^{-\kappa} 
= r^{1-\kappa} \bigg(1-\big(1-\frac{t-r}{t}\big)^\kappa\bigg)
\leq C(\kappa) t^{1-\kappa} \frac{t-r}{t} 
\leq C(\kappa) (t-r)^{1-\kappa}.
\]
It follows that $e^{\vartheta_1 (r^{1-\kappa}-r t^{-\kappa})- ((r-s_0) \wedge (t-r))^{(1-\eps)/2}} \leq C$, where we recall that $C$ can depend on $s_0$. 
Applying these facts to \eqref{eq:many_to_two_bound2} shows
\begin{equation*}
    \E_{(s_0,x,y)} \left[ \abs{\Gamma_{s_0}(t)} (\abs{\Gamma_{s_0}(t)}-1) \right] 
    \leq C t^{3/2}
    \int_{s_0}^t \frac{1}{(r-s_0+1)\sqrt{r-s_0} (t-r+1)^{3/2}}
    \diff r
    \leq C,
\end{equation*}
which concludes the proof.
\end{proof}

\subsection{Proof of the lower bound}
\label{subsec:LB}

We are now in a position to show the lower bound of Theorem \ref{thm:main}. 
To be precise, we show that, for any $\delta>0$, there exists $x\in \R$ such that
\begin{equation}\label{eq:lower_bound_goal}
    \limsup_{t \to \infty} \P \left(M_t \leq m(t) - x \right) \leq \delta.
\end{equation}

\begin{proof}[Proof of the lower bound in Theorem \ref{thm:main}]
    We use the moment bounds on $\vert \Gamma_K(t) \vert$ to show that there are particles near $m(t)$ at time $t$. 
    For $s_0 \geq 0$, we consider the box 
    \begin{equation} \label{eq:def_B_s_0}
    \cB_{s_0} = [\sqrt{2}s_0 - 2\sqrt{s_0}, \sqrt{2}s_0 - \sqrt{s_0}] \times [-s_0^{\kappa/2},s_0^{\kappa/2}].
    \end{equation}
    In the sequel, we consider $s_0$ large enough to satisfy Lemmas \ref{lemma:first_momement_lower_bound} and \ref{lemma:second_moment_estimate} as well as other properties mentioned below.

    Combining Lemmas \ref{lemma:first_momement_lower_bound} and \ref{lemma:second_moment_estimate}, as well as the Cauchy-Schwarz inequality, there exist $t_0>s_0$ and $c_0 >0$ such that for all $t \geq t_0$ and $(x,y) \in \cB_{s_0}$,
    \begin{align*}
        \P_{(s_0,x,y)}\left( M_t \geq m(t) -1 \right) 
        \geq \P_{(s_0,x,y)}\left(\vert \Gamma_K(t) \vert \geq 1 \right)
        \geq \frac{\E_{(s_0,x,y)}[\vert \Gamma_K(t) \vert]^2}{\E_{(s_0,x,y)}[\vert \Gamma_K(t) \vert^2]}\geq c_0 >0.
    \end{align*}
    From this it follows that we also have, for all $t \geq t_0-s_0$ and $(x,y) \in \cB_{s_0}$,
    \begin{equation} \label{eq:lower_bound_box}
        \P_{(0,x,y)}\left( M_t \geq m(t) -1 \right) 
        \geq \P_{(s_0,x,y)}\left(M_{t+s_0} \geq m(t+s_0) -1 \right) 
        \geq c_0 > 0,
    \end{equation}
    using that $m(t+s_0) \geq m(t)$, if $s_0$ is chosen large enough.

    Let $\delta >0$. By Lemma \ref{lem:box} below, for every $N \in \N$, there is $r>0$ such that 
    \begin{equation} \label{eq:claim_box}
        \P_{(0,0,0)}\left(\# \left\{u\in \mathcal{N}(r) : (X_u(r),Y_u(r)) \in \cB_{s_0} \right\} \geq N \right) \geq 1-\delta/2. 
    \end{equation}
    Hence, restricting ourselves to the event in \eqref{eq:claim_box}, applying the branching property at time $r$ and then \eqref{eq:lower_bound_box} to the BBMs starting from particles in $\cB_{s_0}$ at time $r$, we get, for any $t \geq t_0-s_0+r$,
    \begin{equation*}
        \P_{(0,0,0)}\left(M_t \geq m(t-r) - 1\right) \geq (1-\delta/2)\left(1- (1-c_0)^N \right) \geq 1-\delta,
    \end{equation*}
    for $N=N(\delta, c_0)$ large enough. 
    And therefore we have
    \begin{equation*}
        \P_{(0,0,0)}\left( M_t \geq m(t) - \sqrt{2}r - 1\right) \geq 1-\delta,
    \end{equation*}
    for $t$ large enough such that $m(t-r) \geq m(t) - \sqrt{2}r$.
    This proves \eqref{eq:lower_bound_goal}.
\end{proof}

\begin{lem} \label{lem:box}
    For $s_0$ large enough, recalling the definition of $\cB_{s_0}$ in \eqref{eq:def_B_s_0}, we have $\P_{(0,0,0)}$-a.s.
    \[
        \# \left\{u\in \mathcal{N}(t) : (X_u(t),Y_u(t)) \in \cB_{s_0} \right\}
        \xrightarrow[t\to\infty]{} \infty
    \]    
\end{lem}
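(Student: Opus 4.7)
The proof combines a first-and-second moment calculation with a branching-structure argument to upgrade convergence in probability to almost sure convergence.

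For the moment bounds, the many-to-one lemma gives
\[
\E_{(0,0,0)}[N_t] = \E\left[\exp\left(\int_0^t b(\theta_s)\,\diff s\right) \1_{(X_t, Y_t) \in \cB_{s_0}}\right],
\]
where $N_t := \#\{u\in\cN(t) : (X_u(t), Y_u(t))\in\cB_{s_0}\}$. I would lower-bound this by restricting to 2D Brownian paths whose angle stays in $[-\delta,\delta]$ throughout $[1,t]$ (on which $b \geq b_0 := 1-\beta\delta^\alpha - O(\delta^2) > 0$ for $\delta$ small) and that end in $\cB_{s_0}$. Standard estimates for 2D Brownian motion in a cone (via the Dirichlet principal eigenvalue of the Laplacian on a circular arc) show that the probability of such a constrained path is at least $c\, t^{-\nu}$, yielding $\E[N_t] \geq c\, t^{-\nu} e^{b_0 t}$, which grows exponentially. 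An analogous computation based on the many-to-two lemma, closely mirroring Lemma~\ref{lemma:second_moment_estimate}, gives $\E[N_t^2] \leq C (\E[N_t])^2$. Paley--Zygmund then yields $\P(N_t \geq K) \geq p_K > 0$ uniformly in $t$ large, for any fixed $K$. The same bounds hold uniformly for BBMs starting at any $(x,y)\in\cB_{s_0}$ instead of the origin.

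To upgrade to almost sure convergence, I would consider discrete times $T_n = nT$ for $T$ large and set $M_n := |\cN(T_n)\cap\cB_{s_0}|$. Conditionally on $\cF_{T_n}$, each particle in $\cN(T_n)\cap\cB_{s_0}$ initiates an independent sub-BBM, whose expected number of descendants in $\cB_{s_0}$ at time $T_{n+1}$ is at least $3$ (say) by the first-moment bound with $T$ large. Thus $(M_n)$ stochastically dominates a supercritical Galton--Watson process, which survives and grows to infinity with positive probability $\pi > 0$ conditionally on any starting value $M_{n_0}\geq 1$. Since $|\cN(T_{n_0})| \to \infty$ a.s., we can consider many disjoint sub-BBMs at time $T_{n_0}$, each independently having probability at least $p_0\pi > 0$ of seeding a surviving line in $\cB_{s_0}$; Borel--Cantelli over these independent attempts forces at least one to succeed a.s. Hence $M_n \to \infty$ almost surely. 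The passage from the grid $(T_n)$ to continuous $t$ uses that a particle in a slightly smaller sub-box $\cB_{s_0}'\subset\cB_{s_0}$ remains in $\cB_{s_0}$ for a positive time with probability bounded below, by Brownian continuity, so the count in $\cB_{s_0}$ at any $t\in[T_n,T_{n+1}]$ inherits the growth.

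The hard part is the almost sure upgrade: Paley--Zygmund alone only produces positive probability, and $N_t$ is not monotone in $t$ since particles can exit $\cB_{s_0}$. Establishing the Galton--Watson domination rigorously requires uniformity of the one-step moment bounds over all starting positions in $\cB_{s_0}$, and pushing the survival probability to $1$ needs the Borel--Cantelli extraction across independent subtrees rather than just a single attempt. A secondary technical point is that obtaining $\E[N_t^2]\leq C(\E[N_t])^2$ with the correct exponential rate requires the sharp single-path estimates of Sections~\ref{sec:BM_weighted}--\ref{sec:probabilistic_arguments}, rather than the crude bound $b\leq 1$, in order to prevent the ratio from blowing up.
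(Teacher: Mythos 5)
Your overall architecture (establish positive probability of ``local survival'' in the box, then upgrade to probability one) is sound in outline, but the upgrade step has a genuine gap. You propose to apply Borel--Cantelli to the sub-BBMs rooted at the particles of $\cN(T_{n_0})$, each ``independently having probability at least $p_0\pi>0$ of seeding a surviving line in $\cB_{s_0}$''. That uniform lower bound is not available: the roots of these subtrees sit at arbitrary, unbounded positions of $\R^2$, and the probability that a subtree rooted at $z$ produces, within a fixed horizon, a descendant in the fixed box $\cB_{s_0}$ (let alone one that then seeds a surviving population there) tends to $0$ as $\abs{z}\to\infty$. Consequently $\P(\text{no subtree succeeds})\le \E\bigl[\prod_{u\in\cN(T_{n_0})}(1-p(z_u))\bigr]$ need not tend to $0$, and $\abs{\cN(T_{n_0})}\to\infty$ alone does not close the argument. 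The missing ingredient is the recurrence of planar Brownian motion, applied to a single line of descent: a uniformly chosen spine is a 2D Brownian motion, hence a.s.\ returns to a sub-box of $\cB_{s_0}$ infinitely often; since the branching rate is bounded below on the box, the spine a.s.\ branches \emph{inside} the box infinitely often; and each such branching launches an independent subtree rooted inside the box, where your positive survival probability is uniform. The second Borel--Cantelli lemma over these infinitely many in-box attempts then yields the almost sure statement. This is the paper's proof, and it is also the only way to repair your scheme (recurrence is what gives $\inf_z\P_z(\text{success})>0$).

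A secondary but nontrivial issue is the asserted bound $\E[N_t^2]\le C(\E[N_t])^2$, which you claim ``closely mirrors Lemma \ref{lemma:second_moment_estimate}''. It does not: that lemma counts particles following a prescribed large-deviation trajectory ending near $m(t)$, with barrier estimates doing the work, whereas here you count particles returning to a fixed box, and the many-to-two integral of Lemma \ref{lem:many_to_two} involves the square of the expected number of descendants in $\cB_{s_0}$ started from $(r,z)$ --- a quantity whose behaviour for this inhomogeneous branching rate is controlled nowhere in the paper (the exponential growth rate of $\E[N_t]$ itself is unclear, and is never needed). The paper sidesteps all moment computations at this step: inside a square sub-box on which $b\ge 1/2$, it compares with a branching Brownian motion killed upon exiting the box, which is supercritical because the principal Dirichlet eigenvalue of $\tfrac12\Delta$ on the box exceeds $-1/2$; classical results of Watanabe then give local survival with positive probability, uniformly over starting points in a smaller box. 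That route is shorter and avoids the uncontrolled second moment entirely.
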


\begin{proof}
    As $s_0 \to \infty$, the angular coordinate of points in $\cB_{s_0}$ is uniformly going to 0, so, using Assumption \ref{assumption2}, we can choose $s_0$ large enough so that the branching rate is at least $1/2$ on $\cB_{s_0}$.
    We also introduce a family of square boxes, indexed by $L>0$:
    \begin{equation} \label{eq:def_B_s_0_L}
    \cB_{s_0}^L = \left( \sqrt{2}s_0 - \frac{3}{2} \sqrt{s_0} + [-L,L] \right) \times [-L,L].
    \end{equation}
    We also assume that $s_0$ is large enough such that $\cB_{s_0}^4 \subset \cB_{s_0}$. We now fix $s_0$.
    
    We define a spine $(\xi_t)_{t\geq 0}$ with $\xi_t \in \cN(t)$, starting with the root and then choosing one of the two children uniformly at random at each branching point. We now make the two following claims
    \begin{itemize}
        \item \textit{Claim 1:} The spine branches infinitely many times in the box $\cB_{s_0}^2$ almost surely.
        \item \textit{Claim 2:} There exists $c>0$ such that, for all $(x,y) \in \cB_{s_0}^2$,
        \begin{equation} \label{eq:claim2}
            \P_{(0,x,y)} \left( \# \left\{u\in \mathcal{N}(t) : (X_u(t),Y_u(t)) \in \cB_{s_0}^4 \right\} \xrightarrow[t\to\infty]{} \infty \right)
            \geq c.
        \end{equation}
    \end{itemize}
    Combining these two claims together with the branching property along the spine and the second Borel--Cantelli lemma yields the result.

    We now prove the claims. For Claim 1, note that the trajectory of the spine is a two-dimensional Brownian motion, which is recurrent, so a.s.\@ the spine enters $\cB_{s_0}^1$, then leaves $\cB_{s_0}^2$, then enters $\cB_{s_0}^1$, then leaves $\cB_{s_0}^2$, and so on infinitely many times. Because the branching rate is at least $1/2$ on $\cB_{s_0}^2$ as mentioned at the beginning of the proof, the probability that the spine branches between an entry time of $\cB_{s_0}^1$ and the next exit time of $\cB_{s_0}^2$ is bounded away from 0 uniformly in the entry point in $\cB_{s_0}^1$. Therefore, Claim 1 follows from the second Borel--Cantelli lemma.

    We now deal with Claim 2. Using that the branching rate is at least $1/2$ on $\cB_{s_0}^4$, it is enough to prove the claim for a BBM with branching rate $1/2$ and where particles are killed when leaving $\cB_{s_0}^4$. But this branching Markov process is supercritical, because the largest eigenvalue of the Laplacian with Dirichlet boundary condition on $\cB_{s_0}^4$ is $-\frac{1}{2} (\frac{\pi^2}{4^2})$, which is larger than the opposite of the branching rate. The fact of being supercritical implies the claim as shown in \cite{Sev1958} or \cite{Wat1965}. More precisely, let $u_1(x,y)$ be the extinction probability when starting from $(x,y) \in \cB_{s_0}^4$. Then, \cite[Theorem 2.1]{Wat1965} implies that $u_1$ is continuous on $\cB_{s_0}^4$ and \cite[Theorem 2.2]{Wat1965} shows that $u_1 <1$ on the interior of $\cB_{s_0}^4$, so there exists $c >0$ such that, for all $(x,y) \in \cB_{s_0}^2$, $u_1(x,y) \leq 1-c$. But \cite[Lemma 2.1]{Wat1965} proves that $1-u_1(x,y)$ equals the probability in \eqref{eq:claim2}.
\end{proof}

\section{Proof of Proposition~\ref{porism}} 
\label{sec:porism}

\begin{proof}[Proof of Proposition~\ref{porism}]
	Part~\ref{it:Y}. This is a consequence of Lemmas \ref{lem:theta} and \ref{lem:control_Y}.
	
	Part~\ref{it:angle}. Lemma \ref{lem:theta} also implies that, for any $a \geq 0$,
	\begin{equation} \label{eq:angle_crude}
	\Pp{ \exists u \in \cN(t) : 
		R_u(t) \geq m(t)-a \text{ and } \abs{\theta_u(t)} > \frac{\pi}{2} }
	\xrightarrow[t\to\infty]{} 0.
	\end{equation}
	But, if $u\in\cN(t)$ satisfies $R_u(t) \geq m(t)-a$ and $\abs{\theta_u(t)} \geq t^{-\frac{2}{2+\alpha} +\varepsilon}$, then we have either $\abs{\theta_u(t)} > \frac{\pi}{2}$, or, in the opposite case, 
	\[
	\abs{Y_u(t)} 
	\geq R_u(t) \sin \abs{\theta_u(t)} 
	\geq R_u(t) \cdot \frac{2}{\pi} \abs{\theta_u(t)} 
	\geq (m(t)-a) \cdot \frac{2}{\pi} t^{-\frac{2}{2+\alpha} +\varepsilon}
	\geq t^{\frac{\alpha}{2+\alpha} + \frac{\varepsilon}{2}},
	\]
	for $t$ large enough. Therefore, Part~\ref{it:angle} follows from \eqref{eq:angle_crude} and Part~\ref{it:Y}.
	
	Part~\ref{it:X}. Fix $\varepsilon \in (0,\frac{2}{2+\alpha}-\frac{1}{2})$, which is non-empty because $\alpha<2$.
	Let $a \geq 0$. By Theorem \ref{thm:main} and Part~\ref{it:angle}, we have
	\[
	\Pp{ \left\{ \abs{M_t-m(t)} \leq a \right\} \cap
	\left\{ \exists u \in \cN(t) : 
		R_u(t) \geq m(t)-a,
		\abs{\theta_u(t)} \geq t^{-\frac{2}{2+\alpha} +\varepsilon} \right\} }
	\xrightarrow[t,a \to \infty]{} 1,
	\]
	where in the limit we first take $t \to \infty$ and then $a \to \infty$.
	We now work on the event in the probability above.
	Then, if $v$ denote the particle with the maximal displacement, then $R_v(t) = M_t \geq m(t)-a$ so $\abs{\theta_v(t)} \leq t^{-\frac{2}{2+\alpha} +\varepsilon}$ and thus
	\[
		X_v(t) = R_v(t) \cos \theta_v(t) 
		\geq R_v(t) \left( 1 - \frac{\theta_v(t)^2}{2} \right)
		\geq R_v(t) - (m(t)+a) \cdot \frac{t^{-\frac{4}{2+\alpha} +2\varepsilon}}{2},
	\]
	using that $R_v(t) = M_t \leq m(t)+a$.
	By our choice of $\varepsilon$, we have $t^{-\frac{4}{2+\alpha} +2\varepsilon} = o(t^{-1})$ so, for any $\delta>0$, $X_v(t) \geq R_v(t) - \delta$ for $t$ large enough.
	Altogether, this proves that
	\[
		\P \left( \max_{u\in \cN(t)} X_u(t) \geq M_t-\delta \right) \xrightarrow[t \to \infty]{} 1.
	\]
	Noting that $\max_{u\in \cN(t)} X_u(t) \leq M_t$, Part~\ref{it:X} follows.
\end{proof}

\begin{rem} \label{rem:optimal_exponent}
	We explain here why the exponent $\alpha/(2+\alpha) = \kappa/2$ appearing in Proposition~\ref{porism}.\ref{it:Y} is optimal (and therefore so is the exponent appearing in Proposition~\ref{porism}.\ref{it:angle}). 
	The key reason is that the $Y$-coordinate of an extremal particle is governed by the kernel $\widetilde{G}$ studied in Section~\ref{sec:probabilistic_arguments}, which is itself close to the kernel $G$ studied in Section~\ref{sec:BM_weighted}, and the density $G(s,x;t,\cdot)$ is supported by points on scale $t^{\kappa/2}$, see Proposition~\ref{prop:estimate_G}.
	
	More precisely, it follows from Proposition~\ref{prop:estimate_G} that there exist $K,C,c > 0$ such that, for any $s \geq K$, $t \geq s + Ks^\kappa$, $x \in \R$ and $\eta>0$,
	\begin{equation} \label{eq:new_bound_G}
		\int_{\abs{y} \leq \eta t^{\kappa/2}} G(s,x;t,y) \diff y
		\leq C \eta (t/s)^{\kappa/4} \exp\left(\vartheta_1 (s^{1-\kappa}-t^{1-\kappa})\right).
	\end{equation}
	Note the additional factor $\eta$ compared to Corollary \ref{cor:estimate_G}.\ref{it:UB_integrated_G}: when $\eta$ is small this integral is negligible w.r.t.\@ the integral on the full line.
	Then, under the same assumptions as in Proposition~\ref{prop:bound_on_integrated_G_tilde}.\ref{it:UB_integrated_G_tilde}, by following its proof, one can check that the same holds with $\widetilde{G}$ instead of $G$.
	
	With this bound, one can then prove that, for any $a \geq 0$ and $\varepsilon>0$,
	\begin{equation} \label{eq:lower_bound_extremal_Y}
	\Pp{ \exists u \in \cN(t) : 
		R_u(t) \geq m(t)-a,
		\abs{Y_u(t)} \leq t^{\kappa/2 -\varepsilon} }
	\xrightarrow[t\to\infty]{} 0,
	\end{equation}
	hence showing optimality of the exponent $\kappa/2$ appearing in Proposition~\ref{porism}.\ref{it:Y}.
	We now sketch the proof of \eqref{eq:lower_bound_extremal_Y}.
	By the same ideas as in the proof of Corollary~\ref{cor:localization}, it is enough to prove $\P(E') \to 0$ as $t\to\infty$, where 
	\begin{align*}
	&E' \coloneqq A_{s_0,t/2} \cap \biggl\{ \exists u \in \cN(t) : 
	X_u(t) \geq m(t)-a, 
	\max_{s\in [t/2,t]} \abs{\theta_u(s)} \leq \sigma,  \\
	&\hspace{6cm} \max_{s \in [t/2,t]} X_u(s) \leq m^+(s),
	\abs{Y_u(t)} \leq t^{\kappa/2 -\varepsilon} \biggr\}.
	\end{align*}
	The bound of $\P(E')$ follows the steps of the proof of the upper bound in Theorem~\ref{thm:main} in Section~\ref{sec:proof_UB}.
	Note that the difference between $E'$ and the event $E$ considered there is that $+a$ has been replaced by $-a$ and the constraint on $\abs{Y_u(t)}$ has been added.
	In Step 1 (between times $t_0 = t-t^\delta$ and $t$), one can argue that the event where $\{\abs{Y_u(t) - Y_u(t_0)} > 2(t-t_0)\}$ would have a negligible contribution (using that 2 is larger than the speed $\sqrt{2}$ of the maximum of a standard 1d BBM). On the opposite event, we can replace the control $\abs{Y_u(t)} \leq t^{\kappa/2 -\varepsilon}$ by $\abs{Y_u(t_0)} \leq 3t^{\kappa/2 -\varepsilon}$ if we choose $\delta= \kappa/2 -\varepsilon$ (we can assume that $\varepsilon$ is small enough such that $\delta>0$).
	Then, Step 2 (between times $t/2$ and $t_0$) is similar except for the $Y$-contribution which is replaced by
	\begin{align*}
	\E_{(t/2,X_u(t/2), Y_u(t/2))}\Biggl[ 
	\exp \left( - \beta \int_{t/2}^{t_0} \abs{\frac{Y_s}{\sqrt{2} s}}^\alpha (1+f(Y_s,s)) \diff s \right) 
	\1_{\abs{Y_u(t_0)} \leq 3t^{\kappa/2 -\varepsilon}}
	\Biggr].
	\end{align*}
	This is bounded using \eqref{eq:new_bound_G} (with $\widetilde{G}$ instead of $G$), resulting in the gain of a factor $t^{-\varepsilon}$. thus, a bound similar to \eqref{eq:second_conditioning_on_t/2} holds for $E'$, with $-a$ replaced by $+a$ and an additional factor $t^{-\varepsilon}$ and the conclusion follows.
\end{rem}

\section*{Acknowledgements}

We are grateful to the Centre de Recherche Mathématiques (CRM) in Montréal for hosting the semester ``Probability and PDE'' in Fall 2023, as well as the organizers of this program. The project of this paper came about during this semester, based on a suggestion of Nina Gantert, to whom we would like to express our warmest thanks. Julien Berestycki and Michel Pain  acknowledge support from the Simons Foundation over this period. Michel Pain is also grateful for the support of the  PEPS JCJC grant n°246644 and the MITI interdisciplinary
program 80PRIME GEx-MBB. David Geldbach is also grateful for the support of EPSRC grant EP/W523781/1. David Geldbach would also like to express his gratitude to mathoverflow users Mathias Rousset and Thomas Kojar for help with his questions.
Finally, we are grateful to two anonymous referees whose remarks have improved the presentation of this paper.

\addcontentsline{toc}{section}{References}
\bibliographystyle{abbrv}
\bibliography{biblio}

\end{document}